\documentclass[a4paper,reqno]{amsart}

\usepackage{amsmath, amssymb, amsthm, eucal}
\usepackage{graphicx}
\usepackage[shortlabels]{enumitem}

\usepackage[foot]{amsaddr}
\usepackage{hyperref}
\usepackage{verbatim}
\usepackage{pdfsync}

\usepackage{comment}

\usepackage{tikz}
\usepackage{scalefnt}

\usepackage[textheight=630pt,
  textwidth=468pt,
  centering]{geometry}

\usepackage{color}

\newtheorem{theorem}{Theorem}
\newtheorem{corollary}[theorem]{Corollary}
\newtheorem{lemma}[theorem]{Lemma}
\newtheorem{proposition}[theorem]{Proposition}
\newtheorem{conjecture}[theorem]{Conjecture}
\newtheorem{remark}[theorem]{Remark}

\theoremstyle{definition}
\newtheorem{definition}[theorem]{Definition}

\newcommand{\norm}[1]{\Vert #1 \Vert}

\newcommand{\kk}{\ell}

\newcommand{\NN}{\ensuremath{\mathbb{N}}}

\newcommand{\RR}{\mathbb{R}}

\newcommand{\CC}{\mathbb{C}}

\DeclareMathOperator{\tr}{tr}

\newcommand{\id}{I}

\newcommand{\Id}{I}

\DeclareMathOperator{\diag}{diag}

\DeclareMathOperator{\rank}{rank}
\DeclareMathOperator{\Hess}{Hess}

\newcommand{\CMAN}{\mathcal{M}}


\begin{document}

\title[Learning deep linear networks]
{Learning deep linear neural networks: Riemannian gradient flows and convergence
to global minimizers} 

\date{October 15, 2020}
\author[]{Bubacarr Bah$^1$}
\address{$^1$African Institute for Mathematical Sciences (AIMS) South Africa, \& Stellenbosch University, 6 Melrose Road, Muizenberg, Cape Town 7945, South Africa}
\curraddr{}
\email{bubacarr@aims.ac.za}
\thanks{}

\author{Holger Rauhut$^2$, Ulrich Terstiege$^2$}
\address{$^2$Chair for Mathematics of Information Processing, RWTH Aachen University, Pontdriesch 10, 52062 Aachen, Germany}
\email{rauhut@mathc.rwth-aachen.de}
\thanks{}

\email{terstiege@mathc.rwth-aachen.de}
\thanks{}

\author{Michael Westdickenberg$^3$}
\address{$^3$Institute for Mathematics, RWTH Aachen University, Templergraben 55, 52062 Aachen, Germany}
\curraddr{}
\email{mwest@instmath.rwth-aachen.de}
\thanks{}

\begin{abstract}
We study the convergence of gradient flows related to learning deep linear
neural networks (where the activation function is the identity map) from data.
In this case, the composition of the network layers amounts to simply
multiplying the weight matrices of all layers together, resulting in an
overparameterized problem. The gradient flow with respect to these
factors can be re-interpreted as a Riemannian gradient flow on the manifold of
rank-$r$ matrices endowed with a suitable Riemannian metric. We show that the
flow always converges to a critical point of the underlying functional.
Moreover, we establish that, for almost all initializations, the flow converges
to a global minimum on the manifold of rank $k$ matrices for some $k\leq r$.
\end{abstract}

\maketitle


\medskip

%

\section{Introduction}

Deep learning \cite{Goodfellow-et-al-2016} forms the basis of remarkable
breakthroughs in many areas of machine learning. Nevertheless, its inner
workings are not yet well-understood and mathematical theory of deep learning is
still in its infancy. Training a neural networks amounts to solving a suitable
optimization problem, where one tries to minimize the discrepancy between the
predictions of the model and the data. One important open question concerns the
convergence of commonly used gradient descent and stochastic gradient descent
algorithms to the (global) minimizers of the corresponding objective
functionals. Understanding this problem for general nonlinear deep neural
networks seems to be very involved. In this paper, we study the convergence
properties of gradient flows for learning deep \textit{linear} neural networks
from data. While the class of linear neural networks may be not
be rich enough for many machine learning tasks, it is nevertheless instructive
and still a non-trivial task to understand the convergence properties of
gradient descent algorithms. Linearity here means that the activation functions
in each layer are just the identity map, so that the weight matrices of all
layers are multiplied together. This results in an overparameterized problem.

Our analysis builds on previous works on optimization aspects for learning linear networks 
\cite{yahemo94, kawag16, AroraCohenHazan2018,
arora2018convergence, chlico18, TragerKohnBruna19}. In \cite{arora2018convergence} the gradient
flow for weight matrices of all network layers is analyzed and an equation for
the flow of their product is derived. The article \cite{arora2018convergence}
then establishes local convergence for initial points close enough to the
(global) minimum. In \cite{chlico18} it is shown that under suitable conditions
the flow converges to a critical point for any initial point. We contribute to
this line of work in the following ways:
\begin{itemize}
\item 
We show that in the balanced case (see definition \ref{D:BALA}) the evolution of the
product of all network layer matrices can be re-interpreted as a Riemannian
gradient flow on the manifold of matrices of some fixed rank (see Corollary~\ref{cor:Riemann}).
 This is remarkable because it is
shown in \cite{arora2018convergence} that the flow of this product cannot be
interpreted as a standard gradient flow with respect to some functional. Our
result is possible because we use a non-trivial Riemannian metric. 
\item We show in Theorem~\ref{globconv} that the flow always converges to a critical point of the loss
functional $L^N$, see \eqref{E:LN}. This results applies under significantly more general assumptions
than the mentioned result of \cite{chlico18}.
\item 
We show that the flow converges to the global optimum of $L^1$, see \eqref{E:LONE}, restricted to the manifold of rank $k$ matrices for almost all initializations
(Theorem~\ref{thm:main-general}), where the rank may be anything between $0$ and $r$ (the smallest of the involved matrix dimensions). 
In the case of two layers, 
we  show in the same theorem that for almost all initial conditions, the flow converges to a global optimum of $L^2$, see \eqref{E:LN}. 
Our result in the case of two layers again applies under significantly more general conditions than a similar result in \cite{chlico18}.
 For the proof, we extend an abstract result in
\cite{Lee19} that shows that strict saddle points of the functional are avoided
almost surely. Moreover, we give an analysis of the critical points and saddle points 
of $L^1$ and $L^N$, which generalizes and refines results of \cite{kawag16,TragerKohnBruna19}.
\end{itemize} 

We believe that our results shed new light on global convergence of gradient
flows (and thereby on gradient descent algorithms) for learning neural network.
We expect that the insights will be useful for extending them to learning
\textit{nonlinear} neural networks.

\subsection*{Structure} This article is structured as follows.
Section~\ref{sec-lin-gradflows} describes the setup of gradient flows for
learning linear neural networks and collects some basic results. 
Section~\ref{sec:convergence} shows convergence of the flow to a critical point
of the functional. 
Section~\ref{sec: Riemannian_gradient_flows} provides the interpretation as
Riemannian gradient flow on the manifold of rank-$r$ matrices.
For the special case of a linear autoencoder with two coupled
layers and balanced initial points, Section~\ref{sec:autoencoders} shows
convergence of the flow to a global optimum for almost all starting points by
building on \cite{yahemo94}. Section~\ref{sec:saddle-points} extends this result
to general linear networks with an arbitrary number of (non-coupled) layers by
first extending an abstract result in \cite{Lee19} that first order methods
avoid strict saddle points almost surely to gradient flows and then analyzing
the strict saddle point property for our functional under consideration.
Section~\ref{sec:numerics} illustrates our findings with numerical experiments.
Appendices~\ref{appendix-metric-explicit} and \ref{appendix-Prop-metric-C1}
contain detailed proofs of Propositions~\ref{metric-explicit} and
\ref{C1metric}; while Appendices~\ref{appendix:flows} and
\ref{appendix:nonsymmetric} collect additional results on flows on manifolds and
on the autoencoder case with two (non-coupled) layers, respectively.

\subsection*{Acknowledgement}

B.B., H.R. and U.T. acknowledge funding through the DAAD project 
\textit{Understanding stochastic gradient descent in deep learning} (project 
number 57417829). B.B. acknowledges funding by BMBF through the 
Alexander-von-Humboldt Foundation.

\section{Gradient flows for learning linear networks}
\label{sec-lin-gradflows}

Suppose we are given data points $x_1,\hdots, x_m\in \RR^{d_x}$  and label
points $y_1,\hdots, y_m\in \RR^{d_y}$. The learning task consists in finding a map $f$ such that $f(x_j) \approx y_j$.
In deep learning, candidate maps are given by deep neural networks of the form 
\[
f(x) = f_{W_1,\hdots,W_N,b_1,\hdots,b_N}(x) = g_{N} \circ g_{N-1} \circ \cdots \circ g_1(x),
\]
where each layer is of the form $g_j(z) = \sigma(W_j z + b_j)$ with matrices $W_j$ and vectors $b_j$
and an activation function $\sigma : \RR \to \RR$ that acts componentwise.
The parameters $W_1,\hdots,W_N, b_1,\hdots,b_N$ are commonly learned from the data via empirical risk minimization.
Given a suitable loss function $\ell : \RR^{d_y} \times \RR^{d_y} \to \RR$, one considers the optimization
problem
\[
\min_{W_1,\hdots,W_N,b_1,\hdots,b_N} \sum_{j=1}^m \ell( f_{W_1,\hdots,W_N,b_1,\hdots,b_N}(x_j), y_j).
\]
In this article, we are interested in understanding the convergence behavior of the gradient flow (as simplification of gradient descent)
for the minimization of this functional.
Since providing such understanding for the general case seems to be hard, we concentrate
on the special case of linear networks (with $b_j = 0$ for all $j$) and the $\ell_2$-loss $\ell(z,y) = \|y-z\|_2^2/2$ 
in this article, i.e., the network takes the form
\[
f (x) = W_N \cdot W_{N-1} \cdots W_1 x, \quad \mbox{ for } N \geq 2,
\]
where $W_j\in \RR^{d_j\times d_{j-1}}$ for $d_0 = d_x$, $d_N = d_y$ and $d_1,\hdots, d_{N-1}\in \NN$. Clearly, 
$f(x) = Wx$ with the factorization
\begin{equation}\label{E:PROD}
	W = W_N\cdots W_1,
\end{equation}
which can be viewed as an overparameterization of the matrix $W$. Note that the factorization imposes a rank constraint as the rank of 
$W$ is at most $r = \min\{d_0,d_1,\hdots,d_N\}$.
The $\ell_2$-loss leads to the functional
\begin{equation}\label{E:LN}
L^N(W_1,\hdots,W_N) = \frac 1 2 \sum_{j=1}^m \| y_j - W_N \cdots W_1 x_j\|_2^2 = \frac 1 2 \norm{Y-W_N\cdots W_1 X}_F^2
\end{equation}
where $X\in \RR^{d_x\times m}$ is the
matrix with columns $x_1,\hdots, x_m $ and $Y\in \RR^{d_y\times m}$ the
matrix with columns $y_1,\hdots, y_m$.
Here $\|\cdot\|_F$ denotes the Frobenius norm induced by the inner
product $\langle A, B\rangle_F := \tr(AB^T)$.

Empirical risk minimization is the optimization problem
\begin{equation}\label{eqNNMinimization}
\underset{W_1,\hdots, W_N}\min L^N(W_1,\hdots,W_N),\quad\mbox{where}\;W_j\in \RR^{d_j\times d_{j-1}},\ j=1,\hdots, N.
\end{equation}
For $W\in \RR^{d_y\times d_x }$, we further introduce the functional 
\begin{equation}\label{E:LONE}
	L^1(W) := \frac 1 2\norm{Y-WX}_F^2.
\end{equation} 
Since the rank of $W = W_N \cdots W_1$ is at most $r = \min\{d_0,d_1,\hdots,d_N\}$, minimization of $L^N$ is closely related
to the minimization of $L^1$ restricted to the set of matrices of rank at most $r$, but the optimization of $L^N$ does
not require to formulate this constraint explicitly. However, $L^N$ is not jointly convex in $W_1,\hdots,W_N$
so that understanding the behavior of corresponding optimization algorithms is not trivial.

The case of an autoencoder \cite[Chapter 14]{Goodfellow-et-al-2016}, studied in
detail below, refers to the situation where $Y = X$. Here one tries to find for
$W$ a projection onto a subspace of dimension $r$ that best approximates the data,
i.e., $Wx_\ell \approx x_\ell$ for $\ell=1,\hdots,m$. This task is relevant for
unsupervised learning and only the rank deficient case, where $r :=
\min_{i = 0,\hdots,N} d_i < m$ is of interest then, as otherwise one could simply set
$W = I_{d_x}$ and there would be nothing to learn.

The gradient of $L^1$ is given as
$$
\nabla_W L^1(W)=WXX^T-YX^T.
$$
For given initial values $W_j(0), \ j\in \{1,\hdots, N\}$,  we consider the system of  gradient flows 
\begin{equation}\label{gradflow}
\dot{W_j}=-\nabla_{W_j} L^N(W_1,\hdots, W_N).
\end{equation}
Our aim is to investigate when this system converges to an optimal solution,
i.e.,  one that is minimizing our optimization problem (\ref{eqNNMinimization}).
For $W=W_N\cdots W_1$ we also want to understand the 
behavior of $W(t)$ as $t$ tends to infinity. Clearly, the gradient flow is a
continuous version of gradient descent algorithms used in practice and has the
advantage that its analysis does not require discussing step sizes etc. We
postpone the extension of our results to gradient descent algorithms to later
contributions.

\begin{definition}\label{D:BALA}
Borrowing notation from  \cite{arora2018convergence}, for $W_j\in
\RR^{d_j\times d_{j-1}},\ j=1,\hdots, N$, we say that $W_1,\hdots, W_N$ are
{\it{$0$-balanced}} or simply {\it{balanced} } if
$$ W_{j+1}^TW_{j+1}=W_{j}W_{j}^T \text{ for } j=1,\hdots,N-1.$$
We say that the flow (\ref{gradflow}) has balanced initial conditions if $W_1(0),\hdots, W_N(0)$ are balanced.
\end{definition}

The following lemma summarizes basic properties of the flow which are well 
known; see \cite{AroraCohenHazan2018, arora2018convergence, chlico18}.

\begin{lemma}\label{L:PREL}
With the notation above, the following holds:
\begin{enumerate}
\item For  $j\in \{1,\hdots, N\} $,
$$
	\nabla_{W_j}L^N(W_1,\hdots,W_N)=W_{j+1}^T\cdots W_N^T\nabla_W L^1(W_N\cdots 
	W_1)W_1^T\cdots W_{j-1}^T.
$$

\item Assume the $W_j(t)$ satisfy (\ref{gradflow}). Then $W=W_N\cdots W_1$ 
satisfies
\begin{equation}\label{Wdot}
	\frac{dW(t)}{dt}= -\sum_{j=1}^{N} W_N\cdots W_{j+1}W_{j+1}^T\cdots W_N^T  
	\nabla_W L^1(W)W_1^T\cdots W_{j-1}^T W_{j-1}\cdots W_1.
\end{equation}

\item For all $j=1,\hdots,N-1$ and all $t\geq 0$ we have that
$$
	\frac{d}{dt} \bigg( W_{j+1}^T(t)W_{j+1}(t) \bigg) 
		= \frac{d}{dt} \bigg( W_{j}(t)W_{j}^T(t) \bigg).
$$
In particular, the differences 
$$
	W_{j+1}^T(t)W_{j+1}(t)-W_{j}(t)W_{j}^T(t), 
	\quad j=1,\hdots,N-1, 
$$ 
and the  differences 
$$
	\norm{W_j(t)}^2_F-\norm{W_i(t)}^2_F,
	\quad i,j=1,\hdots,N,
$$ 
are all constant in time.
 
\item If $W_1(0),\hdots, W_N(0)$ are \emph{balanced}, then
$$
	W_{j+1}^T(t)W_{j+1}(t) = W_{j}(t)W_{j}^T(t)
$$
for all $j\in\{1,\hdots,N-1\}$ and $t\geq 0$, 
and
\begin{equation}\label{Wdotbalanced}
	R(t) := \frac{dW(t)}{dt}
		+ \sum_{j=1}^{N} (W(t)W(t)^T)^{\frac{N-j}{N}}\nabla_W 
		L^1(W)(W(t)^TW(t))^{\frac{j-1}{N}} = 0.
\end{equation}
\end{enumerate}
\end{lemma}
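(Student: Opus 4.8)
These are standard facts (cf.\ the cited references); the plan is to establish the four items of Lemma~\ref{L:PREL} in order, each reducing to elementary matrix calculus except for one spectral input needed in item~(4).

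\emph{Items (1) and (2).} For item~(1) I would apply the chain rule to $L^N(W_1,\dots,W_N)=L^1(W_N\cdots W_1)$. Perturbing only the $j$-th factor, $W_j\mapsto W_j+H$, changes the product to $W+W_N\cdots W_{j+1}HW_{j-1}\cdots W_1$ with $W=W_N\cdots W_1$, so the directional derivative of $L^N$ in the $j$-th slot in direction $H$ equals $\abrac{\nabla_W L^1(W),\,W_N\cdots W_{j+1}HW_{j-1}\cdots W_1}_F$. Using the adjoint identity $\abrac{A,BHC}_F=\abrac{B^TAC^T,H}_F$ to move the outer factors across the Frobenius inner product then yields $\nabla_{W_j}L^N=W_{j+1}^T\cdots W_N^T\,\nabla_W L^1(W)\,W_1^T\cdots W_{j-1}^T$. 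Item~(2) is then immediate: differentiating $W(t)=W_N(t)\cdots W_1(t)$ by the Leibniz rule gives $\dot W=\sum_{j=1}^N W_N\cdots W_{j+1}\dot W_j W_{j-1}\cdots W_1$, and substituting $\dot W_j=-\nabla_{W_j}L^N$ from \eqref{gradflow} together with item~(1) produces \eqref{Wdot}.

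\emph{Item (3).} Here I would compute $\frac{d}{dt}\brac{W_{j+1}^TW_{j+1}}$ and $\frac{d}{dt}\brac{W_jW_j^T}$ separately via the product rule and item~(1). Writing $G:=\nabla_W L^1(W)$, $P:=W_{j+2}^T\cdots W_N^T$ and $Q:=W_1^T\cdots W_{j-1}^T$, one has $\dot W_{j+1}=-P\,G\,QW_j^T$ and $\dot W_j=-W_{j+1}^TP\,G\,Q$; plugging these in, both derivatives equal $-\,W_{j+1}^TP\,G\,QW_j^T-W_jQ^T\,G^T\,P^TW_{j+1}$, which is the claimed identity. The ``in particular'' statements follow at once: the difference $W_{j+1}^TW_{j+1}-W_jW_j^T$ has vanishing time-derivative, hence is constant; and taking traces, $\frac{d}{dt}\norm{W_j}_F^2=\frac{d}{dt}\tr(W_jW_j^T)=\frac{d}{dt}\tr(W_{j+1}^TW_{j+1})=\frac{d}{dt}\norm{W_{j+1}}_F^2$, so the quantities $\norm{W_j}_F^2$ all differ by time-independent constants.

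\emph{Item (4).} The first assertion is a direct consequence of item~(3): if the balance relations hold at $t=0$, then since each $W_{j+1}^TW_{j+1}-W_jW_j^T$ is constant in $t$ it vanishes for all $t\ge 0$. For \eqref{Wdotbalanced} the point is to rewrite the matrix products occurring in \eqref{Wdot}. Using the balance relations $W_{j+1}^TW_{j+1}=W_jW_j^T$ repeatedly in a telescoping fashion, I would show $W_N\cdots W_k\,W_k^T\cdots W_N^T=(W_NW_N^T)^{\,N-k+1}$ and, symmetrically, $W_1^T\cdots W_k^T\,W_k\cdots W_1=(W_1^TW_1)^{\,k}$ for every $k$. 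Taking $k=1$ in the first identity gives $(W_NW_N^T)^N=WW^T$; since $W_NW_N^T$ is positive semidefinite and positive semidefinite $N$-th roots are unique, $W_NW_N^T=(WW^T)^{1/N}$, hence $W_N\cdots W_{j+1}W_{j+1}^T\cdots W_N^T=(WW^T)^{(N-j)/N}$, and likewise $W_1^T\cdots W_{j-1}^TW_{j-1}\cdots W_1=(W^TW)^{(j-1)/N}$. Substituting these into \eqref{Wdot} gives $R(t)=0$. I expect the main obstacle to be precisely this last item: keeping the telescoping bookkeeping consistent and justifying the fractional powers — well-definedness and uniqueness of positive semidefinite roots, and the semigroup law $(WW^T)^a(WW^T)^b=(WW^T)^{a+b}$ on the common eigenbasis. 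Everything else reduces to the chain and product rules.
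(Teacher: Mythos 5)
Your proof is correct. The paper itself does not prove this lemma but refers to \cite[Theorem 1]{AroraCohenHazan2018} (and \cite[Lemma 1]{chlico18} for item (3)), and your argument is essentially the standard one given there: chain rule plus the adjoint identity for the Frobenius inner product for (1), Leibniz rule for (2), the symmetric cancellation for (3), and the telescoping identities $(W_N\cdots W_k)(W_N\cdots W_k)^T=(W_NW_N^T)^{N-k+1}$, $(W_k\cdots W_1)^T(W_k\cdots W_1)=(W_1^TW_1)^{k}$ together with uniqueness of principal roots of positive semidefinite matrices for (4). Concerning the bookkeeping you flag in item (4): the clean way is to induct on the number of factors of the partial product, i.e.\ write $(W_N\cdots W_k)(W_N\cdots W_k)^T=W_N\bigl[(W_{N-1}\cdots W_k)(W_{N-1}\cdots W_k)^T\bigr]W_N^T=W_N(W_{N-1}W_{N-1}^T)^{N-k}W_N^T=W_N(W_N^TW_N)^{N-k}W_N^T=(W_NW_N^T)^{N-k+1}$, using the inductive hypothesis on the inner block, the balance relation raised to a power, and regrouping; a naive downward induction that only remembers the value of the previous outer product does not suffice, because an extra $W_{k+1}^TW_{k+1}$ sits in the middle.
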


Here and the sequel, by the $p$-th root of a symmetric and  positive semidefinite matrix we mean the principal $p$-th root, i.e. the $p$-th root is symmetric and  positive semidefinite again.
A concrete reference for the statements of the lemma is  \cite[Theorem 1]{AroraCohenHazan2018} together with  its proof.
For point (3), see also  \cite[Lemma 1]{chlico18}.

\begin{definition}
For $W,Z\in  \RR^{d_y\times d_x}$ and $N\geq 2$ let
\begin{equation}\label{A_W}
	\mathcal{A}_W(Z)=\sum_{j=1}^{N} (WW^T)^{\frac{N-j}{N}}\cdot Z\cdot 
		(W^TW)^{\frac{j-1}{N}}.
\end{equation}
\end{definition}

Thus, if the $W_j(0)$ are balanced (see Definition~\ref{D:BALA}), then
\begin{equation}\label{gradflowprod}
	\frac{dW(t)}{dt}
		= -\mathcal{A}_{W(t)}\Big( \nabla_W L^1\big( W(t) \big) \Big).
\end{equation}
We will write this as a gradient flow with respect to a 
suitable Riemannian metric in Section~\ref{sec: Riemannian_gradient_flows}.

\section{Convergence of the gradient flow}
\label{sec:convergence}

In this section we will show that the gradient flow always converges to a
critical point of $L^N$, also called an equilibrium point in the following,
provided that $XX^T$ has full rank. We do not assume balancedness of the initial
data. A similar statement was shown in \cite[Proposition 1]{chlico18} and 
similarly as in loc.\ cit., our proof is based on Lojasiewicz's Theorem and uses the fact that for  $j=1,\hdots,N-1$ the terms $	W_{j+1}^T(t)W_{j+1}(t)-W_{j}(t)W_{j}^T(t) $ are constant (cf. Lemma \ref{L:PREL}), but the
technical exposition differs and we do not need the assumptions $d_y\leq d_x$ 
and $d_y \leq r = \min\{d_1,\hdots,d_{N-1}\}$ made in \cite{chlico18}, which, for instance, exclude the autoencoder case and imply that the set $\mathcal{M}_r$ of all admissible matrices appearing as a product $W = W_N \cdots W_1$, i.e., the variety of matrices of rank at most $r$, coincides
with the vector space $\RR^{d_x \times d_y}$. 
Let us first recall the following corollary of Lojasiewicz's Inequality; see
\cite{Absil05,lojasiewicz1982trajectoires,chlico18,Kurdyka02,Simon96}.

\begin{theorem}\label{T:LOJA}
If $f\colon \RR^n\rightarrow \RR$ is analytic and the curve $t \mapsto x(t) \in
\RR^n$, $t\in [0,\infty)$, is bounded and a solution of the gradient flow
equation $\dot x(t)=-\nabla f(x(t))$, then $x(t)$ converges to a critical point
of $f$ as $t\to\infty$.
\end{theorem}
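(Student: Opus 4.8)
The plan is to invoke the classical Łojasiewicz argument for analytic gradient flows; let me indicate the steps. First I would observe that $\frac{d}{dt} f(x(t)) = -\norm{\nabla f(x(t))}^2 \le 0$, so $t\mapsto f(x(t))$ is nonincreasing, and since $x([0,\infty))$ lies in a bounded set and $f$ is continuous it is bounded below; hence $f(x(t))$ converges to a finite limit $c$ as $t\to\infty$, and $\int_0^\infty \norm{\nabla f(x(t))}^2\,dt = f(x(0)) - c < \infty$. Subtracting a constant (which changes neither $\nabla f$ nor the flow) we may take $c = 0$, so $f(x(t)) \ge 0$ decreases to $0$. If $f(x(t_1)) = 0$ for some finite $t_1$, then $f(x(t)) = 0$ and hence $\dot x(t) = -\nabla f(x(t)) = 0$ for all $t \ge t_1$, so the trajectory is eventually constant and there is nothing to prove; thus we may assume $f(x(t)) > 0$ for all $t$. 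Boundedness of the trajectory guarantees that the $\omega$-limit set $\omega(x)$ is nonempty; fix any $p \in \omega(x)$, and note $f(p) = 0$ since $f$ is continuous and $f(x(t_k)) \to 0$ along a sequence $t_k \to \infty$ with $x(t_k) \to p$.

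Next I would localize around $p$ using analyticity. By the Łojasiewicz gradient inequality there exist $\rho > 0$, $C > 0$ and $\theta \in (0,\tfrac12]$ with $f(z)^{1-\theta} \le C\,\norm{\nabla f(z)}$ for all $z$ with $\norm{z - p} < \rho$ (here we used $f(p) = 0$). Choose $t_0$ so large that $\norm{x(t_0) - p} < \rho/2$ and $\tfrac{C}{\theta} f(x(t_0))^\theta < \rho/2$, which is possible since $x(t_k) \to p$ and $f(x(t_k)) \to 0$. As long as $x(t) \in B(p,\rho)$ one has, along the flow,
\[
	-\frac{d}{dt} f(x(t))^\theta = \theta\, f(x(t))^{\theta-1}\norm{\nabla f(x(t))}^2 \ge \frac{\theta}{C}\,\norm{\nabla f(x(t))} = \frac{\theta}{C}\,\norm{\dot x(t)},
\]
where the inequality is the Łojasiewicz estimate. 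Integrating from $t_0$ gives $\int_{t_0}^{t} \norm{\dot x(s)}\,ds \le \tfrac{C}{\theta}\big(f(x(t_0))^\theta - f(x(t))^\theta\big) \le \tfrac{C}{\theta} f(x(t_0))^\theta < \rho/2$ for every $t$ with $[t_0,t]$ inside the ball, whence $\norm{x(t) - p} \le \norm{x(t) - x(t_0)} + \norm{x(t_0) - p} < \rho$. A standard continuation argument then shows the exit time of $x(\cdot)$ from $B(p,\rho)$ cannot be finite, so $\int_{t_0}^\infty \norm{\dot x(s)}\,ds < \infty$; the trajectory has finite length, hence is Cauchy, and converges to some $x_\infty$. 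Finally, since $\int_0^\infty \norm{\nabla f(x(t))}^2\,dt < \infty$ and $x(t) \to x_\infty$ with $\nabla f$ continuous, we must have $\nabla f(x_\infty) = 0$, i.e., $x_\infty$ is a critical point of $f$.

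The one deep ingredient is the Łojasiewicz gradient inequality for analytic functions, which I would use as a black box (as in the cited references); granting it, the only genuinely delicate point is the continuation/bootstrap step, in which ``the trajectory remains in $B(p,\rho)$'' and ``the trajectory has finite length beyond $t_0$'' must be established simultaneously — one has to show that the length bound strictly confines $x(t)$ to the interior of the ball before the putative exit time, thereby ruling that exit time out. The remaining steps are elementary.
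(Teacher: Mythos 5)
Your proof is correct, and it is essentially a self-contained reconstruction of the classical \L ojasiewicz convergence argument: the monotonicity and finite-energy step, the localization at an $\omega$-limit point $p$ with $f(p)=0$, the gradient inequality, the differential inequality for $f^\theta$ giving a finite-length (hence Cauchy) trajectory via the continuation argument, and finally criticality of the limit from $\int_0^\infty \|\nabla f(x(t))\|^2\,dt<\infty$ together with continuity of $\nabla f$. All the delicate points (the case $f(x(t_1))=0$, positivity of $f$ along the flow where you differentiate $f^\theta$, and the bootstrap confining the trajectory strictly inside $B(p,\rho)$ before any putative exit time) are handled or at least correctly identified.

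The paper takes a shorter route: it cites Theorem~2.2 of Absil--Mahony--Andrews as a black box for the convergence $x(t)\to x^*$ (which is exactly the finite-length argument you carry out by hand), and then only supplies the small additional argument that $x^*$ is critical --- namely, $\dot x(t)=-\nabla f(x(t))\to -\nabla f(x^*)=:z$ by continuity, and if some component $z_k\neq 0$ then integrating $\dot x_k$ over long intervals would contradict convergence of $x_k(t)$. Your criticality step is different but equally valid: you use the integrability of $\|\nabla f(x(t))\|^2$ plus $x(t)\to x_\infty$, since a nonzero $\nabla f(x_\infty)$ would keep $\|\nabla f(x(t))\|$ bounded away from zero for large $t$ and make the integral diverge. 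So what you buy is a fully self-contained proof modulo only the \L ojasiewicz gradient inequality, whereas the paper buys brevity by outsourcing the convergence statement to the literature; the underlying mechanism is the same in both cases.
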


This result, sometimes called Lojasiewicz's Theorem, follows from Theorem~2.2 in
\cite{Absil05}, for example (see also Theorem~1 in \cite{chlico18}). Indeed it
is shown in \cite{Absil05} that under our assumptions $x(t)$ converges to a
limit point $x^*$. By continuity, it follows that also the time derivative $\dot
x(t)=-\nabla f(x(t))$ converges to a limit point $z := -\nabla f(x^*)$. Then
$z=0$, i.e., $x^*$ is a critical point of $f$. Indeed, if $z$ had a component
$z_k\neq 0$ then for $t_0$ large enough we would have $|\dot x_k(t)-z_k|\leq
\frac{|z_k|}{2}$ for all $t\geq t_0$ and hence for $t_2\geq t_1\geq t_0$ we
would have $|x_k(t_2)-x_k(t_1)|=|\int_{t_1}^{t_2}\dot x_k(t)dt|\geq
(t_2-t_1)\frac{|z_k|}{2}$, contradicting the convergence of $x_k$.

\begin{theorem}\label{globconv}
Assume $XX^T$ has full rank. Then the flows $W_i(t)$ defined by \eqref{gradflow}
and $W(t)$ given by \eqref{Wdot} are defined and bounded for all $t\geq 0$ and 
$(W_1,\hdots, W_N)$ converges  to a critical  point of $L^N$ as $t\to\infty$.
\end{theorem}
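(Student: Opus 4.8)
The plan is to invoke Łojasiewicz's Theorem (Theorem~\ref{T:LOJA}) applied to the analytic function $L^N$ on the Euclidean space $\RR^{d_1\times d_0}\times\cdots\times\RR^{d_N\times d_{N-1}}$. Since $L^N$ is a polynomial in the entries of the $W_j$, it is analytic, and the curve $(W_1(t),\dots,W_N(t))$ solves the gradient flow \eqref{gradflow} by hypothesis. Hence, once we show that the trajectory is \emph{bounded} and \emph{defined for all $t\ge 0$}, Theorem~\ref{T:LOJA} immediately yields convergence to a critical point of $L^N$; the convergence of $W(t)=W_N(t)\cdots W_1(t)$ then follows by continuity of matrix multiplication. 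So the entire content of the proof is the a priori bound.

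To get the bound, I would first observe that $L^N$ is nonincreasing along the flow (it is a gradient flow), so $\tfrac12\|Y-W_N(t)\cdots W_1(t)X\|_F^2\le L^N(W_1(0),\dots,W_N(0))$ for all $t$ in the maximal interval of existence. Since $XX^T$ has full rank, the map $W\mapsto WX$ is injective on $\RR^{d_y\times d_x}$ (it has a left inverse), so this bounds $\|W(t)X\|_F$ and hence $\|W(t)\|_F=\|W_N(t)\cdots W_1(t)\|_F$ uniformly in $t$. The key additional ingredient is Lemma~\ref{L:PREL}(3): the quantities $\|W_j(t)\|_F^2-\|W_i(t)\|_F^2$ are constant in time for all $i,j$. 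Combined with the product bound, this is what forces each individual factor $\|W_j(t)\|_F$ to stay bounded — heuristically, the factors cannot all blow up because their product stays bounded, and they cannot blow up at different rates because their squared norms differ by constants, so if one factor's norm grows, all do, contradicting boundedness of the product (at least when $W(t)$ is bounded away from having some factor degenerate; one has to argue this carefully using the invariant differences to control the smallest factor as well). This uniform bound on all $W_j(t)$ in turn, via standard ODE theory (the right-hand side of \eqref{gradflow} is a polynomial, hence locally Lipschitz, and bounded on bounded sets), precludes finite-time blow-up, so the maximal interval of existence is $[0,\infty)$, closing the loop.

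The main obstacle is exactly this last point: deducing boundedness of each $\|W_j(t)\|_F$ from boundedness of the product $\|W_N(t)\cdots W_1(t)\|_F$ together with the constancy of the pairwise differences $\|W_j\|_F^2-\|W_i\|_F^2$. The subtlety is that a bounded product is compatible with one factor tending to zero and another to infinity in the unbalanced case — but the invariant differences rule out divergence of one norm without divergence of all, and one must then show that simultaneous divergence of all factor norms contradicts boundedness of the product. Here one can use the constancy of $W_{j+1}^T(t)W_{j+1}(t)-W_j(t)W_j^T(t)$ (the matrix version in Lemma~\ref{L:PREL}(3), not just the trace version): this controls how the singular values of consecutive factors are linked, so that a factor with a large singular value forces its neighbors to have large singular values too, propagating along the chain to the product. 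Making this quantitative — e.g. by a suitable induction along $j=1,\dots,N$ on the singular values — is the technical heart of the argument; everything else is bookkeeping plus the black-box application of Łojasiewicz.
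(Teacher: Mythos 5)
Your overall strategy coincides with the paper's: Łojasiewicz's Theorem applied to the analytic function $L^N$, monotonicity of $L^N$ along the flow, the full-rank assumption on $XX^T$ to bound $\norm{W(t)}_F$ via $W=WXX^T(XX^T)^{-1}$, and the conserved quantities of Lemma~\ref{L:PREL}(3) to pass from the product to the individual factors. The gap sits exactly at the step you yourself flag as the technical heart: you never establish that boundedness of $\norm{W(t)}_F$ together with the constancy of $W_{j+1}^TW_{j+1}-W_jW_j^T$ bounds each $\norm{W_j(t)}_F$, and the heuristic you offer in its place does not suffice as stated. The observation that a large singular value of one factor forces large singular values in its neighbours already follows from the trace identities alone (the Frobenius norms differ by constants), yet it does not contradict boundedness of the product: matrices with arbitrarily large singular values can multiply to something small, or even to zero, if the relevant singular subspaces are misaligned, so ``largeness propagating along the chain to the product'' is not a valid inference from singular value magnitudes. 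What is genuinely needed is the directional information contained in the matrix identity, made quantitative.

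The paper closes exactly this gap by a substitution argument you could adopt: starting from $WW^T=W_N\cdots W_1W_1^T\cdots W_N^T$, repeatedly replace $W_jW_j^T$ by $W_{j+1}^TW_{j+1}+A_{j,j+1}$ with $A_{j,j+1}$ constant in time, which yields $WW^T=(W_NW_N^T)^N+P$, where $P$ is a polynomial (with matrix coefficients) in $W_2,\hdots,W_N,W_2^T,\hdots,W_N^T$ of degree at most $2N-2$. Writing $\sigma_N$ for the top singular value of $W_N$ and using $\norm{W_i}_F\le a_i\sigma_N+b_i$ (from the trace identities), one gets $\sigma_N^{2N}\le\norm{WW^T}_F+P_N(\sigma_N)$ with $\deg P_N\le 2N-2$, hence $\sigma_N\le B'\norm{W}_F^{1/N}+\widetilde B'$ and therefore $\norm{W_i(t)}_F\le C_i\norm{W(t)}_F^{1/N}+\widetilde C_i$ for all $i$. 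This is the quantitative ``induction along the chain'' you allude to; without it (or an equivalent), the a priori bound on the factors, and with it global existence and the applicability of Łojasiewicz, remains unproven in your proposal.
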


\begin{proof} 
Note that the right-hand sides of \eqref{gradflow} and \eqref{Wdot} are
continuous functions so existence of solutions locally in time follows from the
Cauchy-Peano theorem. In order to show that the solutions exist for all times
and to be able to apply Lojasiewicz's Theorem, we want to show that the
$\norm{W_i(t)}_F$ are bounded. We will first show that the flow  $W(t)$ given by
\eqref{Wdot} remains bounded for all $t$. We  observe that for all $t\geq 0$ for
which $W(t)$ is defined we have $L^1(W(t))\leq L^1(W(0))$. To see this, note
that
 \begin{align*}
\frac{d}{dt}L^1(W(t))&=\frac{d}{dt}L^N(W_1(t),\hdots,W_N(t))=\sum_{i=1}^N D_{W_i}L^N((W_1(t),\hdots,W_N(t))\dot{W_i}(t)\\&=-\sum _{i=1}^N \norm{\nabla_{W_i}L^N((W_1(t),\hdots,W_N(t))}_F^2\leq 0.
\end{align*}
Here the notation $D_{W_i}$ denotes the directional derivative  w.r.t. $W_i$.
Hence, for any $t\geq 0$ we have
\begin{align*}
\norm{W(t)}_F&= \norm{W(t)XX^T(XX^T)^{-1}}_F\leq \norm{W(t)X}_F\norm{X^T(XX^T)^{-1}}_F=\norm{W(t)X-Y+Y}_F\norm{X^T(XX^T)^{-1}}_F \displaybreak[2]\\
&\leq \left(\norm{W(t)X-Y}_F+\norm{Y}_F\right)\norm{X^T(XX^T)^{-1}}_F=
\left(\sqrt{2L^1(W(t))}+\norm{Y}_F\right)\norm{X^T(XX^T)^{-1}}_F\displaybreak[2]\\
&\leq \left(\sqrt{2L^1(W(0))}+\norm{Y}_F\right)\norm{X^T(XX^T)^{-1}}_F.
\end{align*}
In particular, $\norm{W(t)}_F$ is bounded. Recall that the Frobenius norm is submultiplicative.

Next, 
in order to show the boundedness of the $\norm{W_i(t)}_F$, we show the following claim:
 For any $i\in\{1,\hdots, N\}$, we have
\begin{equation} \label{normestimate}
\norm{W_i(t)}_F\leq C_i\norm{W(t)}^{1/N}_F+\widetilde C_i,
\end{equation} 
for all $t\geq 0$ (for which the $W_i(t)$ and hence also $W(t)$ are defined). 
Here  $C_i$ and $\widetilde C_i$ are suitable positive constants 
depending only on the initial conditions.

Before we prove the claim, we introduce the following notation.
\begin{definition}
Suppose we are given a set of (real valued) matrices $\{X_i, i\in I\}$, where 
$I$ is a finite set. A polynomial $P$ in the matrices $X_i$, $i\in I$, with matrix 
coefficients is a (finite) sum of terms of the form 
\begin{equation}\label{summand}
A_1X_{i_1}A_2X_{i_2}\cdots A_n X_{i_n}A_{n+1}.
\end{equation}
The $A_j$ are the matrix coefficients of the monomial (\ref{summand}) (where 
the dimensions of the $A_j$ have to be such that the product (\ref{summand}) as 
well as the sum of all the terms of the form (\ref{summand}) in the polynomial 
$P$ are well defined). The degree of the polynomial $P$ is the maximal value of 
$n$ in the summands of the above form (\ref{summand}) defining $P$ (where $n=0$ 
is also allowed).
\end{definition}

 In the 
following, the constants are allowed to depend on the dimensions $d_i$ and the 
initial matrices $W_i(0)$. We will suppress the argument $t$.

To prove the claim, we observe that 
$$
WW^T=W_N\cdots W_1W_1^T\cdots W_N^T.
$$
Replacing $W_1W_1^T$ by $W_2^TW_2+A_{12}$, where $A_{12}$ is a constant matrix 
(see Lemma~\ref{L:PREL} (3)), we obtain 
$$
WW^T=W_N\cdots W_3W_2W_2^T W_2W_2^TW_3^T\cdots W_N^T+W_N\cdots 
W_2A_{12}W_2^T\cdots W_N^T.
$$
We now replace $W_2W_2^T$ by $W_3^TW_3+A_{23}$ and, proceeding in this manner,
we finally arrive at
\begin{equation}\label{WNWpP}
WW^T=(W_NW_N^T)^N+P(W_2,\hdots,W_{N},W_2^T,\hdots,W_{N}^T),
\end{equation}
where $P(W_2,\hdots,W_{N},W_2^T,\hdots,W_{N}^T)$ is a polynomial in
$W_2,\hdots,W_{N},W_2^T,\hdots,W_{N}^T$  (with  matrix coefficients) whose
degree is at most $2N-2$.

In the following, we denote by $\sigma_N$ the maximal singular value of $W_N$. 
Thus  
\begin{equation}\label{WNvsW}
\sigma_N^{2N}\leq \norm{(W_NW_N^T)^N}_F\leq 
\norm{WW^T}_F+\norm{P(W_2,\hdots,W_{N},W_2^T,\hdots,W_{N}^T)}_F.
\end{equation}

Since $\norm{W_N}_F^2$ and  $\norm{W_i}_F^2$ differ only by a 
constant (depending on $i$), there are  suitable constants $a_i$ and $b_i$ such 
that $\norm{W_i}_F\leq a_i \sigma_N +b_i$ for all $i\in\{1,\hdots, N\}$. It 
follows that 
$$
\norm{P(W_2,\hdots,W_{N},W_2^T,\hdots,W_{N}^T)}_F\leq  P_N(\sigma_N),
$$ 
where $P_N$ is a polynomial in one variable of degree at most $2N-2$. Since the
degree of $P_N$ is strictly smaller than $2N$, there exists a constant $C$,
which depends on the coefficients of $P_N$, such that $|P_N(x)|\leq \frac 12
x^{2N}+C$ for all $x\geq 0$. Hence we obtain from (\ref{WNvsW}) 
\begin{equation}\label{sigmaNest}
\sigma_N^{2N}\leq B_N\norm{WW^T}_F+\widetilde B_N,
\end{equation} 
and therefore also
\begin{equation}\label{sigmaNest2}
\sigma_N\leq B^{\prime}_N\norm{W}^{1/N}_F+\widetilde B^{\prime}_N,
\end{equation} 
for suitable positive constants $B_N, \widetilde B_N, B^{\prime}_N, \widetilde 
B^{\prime}_N$ (we can choose $B_N=2$ by the discussion above). Since $\norm{W_i}_F\leq a_i \sigma_N +b_i$, estimate  
(\ref{normestimate}) for $\norm{W_i}_F$	follows.

The fact that all the $\norm{W_i}_F$ are bounded now follows from the fact that $\norm{W}_F$ is bounded as shown above together with
estimate (\ref{normestimate}). This ensures the existence of  solutions $W_i(t)$
(and hence $W(t)$) for all $t\geq 0$.
The convergence of $(W_1,\hdots, W_N)$ to an 
equilibrium point (i.e., a critical point of $L^N$) now follows from
Lojasiewicz's Theorem~\ref{T:LOJA}.
\end{proof}

\section{Riemannian gradient flows}\label{sec: Riemannian_gradient_flows}

Recall that in order to define a gradient flow, it is necessary to also specify
the local geometry of the space. More precisely, suppose that a $C^2$ 
manifold $\CMAN$ is given, on which a $C^2$-function $x \mapsto E(x) \in
\mathbb{R}$ is defined for all $x \in \CMAN$. Then the differential $dE(x)$ of
$E$ at the point $x$ is a \emph{co-tangent} vector, i.e., a linear map from the
tangent space $T_x\CMAN$ to $\mathbb{R}$. On the other hand, the derivative
along any curve $t \mapsto \gamma(t) \in \CMAN$ is a \emph{tangent} vector. If
now $g_x$ denotes a Riemannian metric on $\CMAN$ at $x$, then it is possible to
associate to the differential $dE(x)$ a unique tangent vector $\nabla E(x)$,
called the \emph{gradient} of $E$ at $x$, that satisfies
\[
	dE(x)v =: g_x(\nabla E(x), v)
	\quad\text{for all tangent vectors $v\in T_x\CMAN$.}
\]
It is the tangent vector $\nabla E(x)$ that enters in the definition of 
gradient flow $\dot{\gamma}(t) = -\nabla E(\gamma(t))$.

In this section, we are interested in minimizing the functional $L^N$ introduced
in \eqref{E:LN} over the family of all matrices $W_1,\ldots, W_N$. This can be
accomplished by considering the long-time limit of the gradient flow of $L^N$.
Alternatively, we observe that we can equivalently lump all matrices together in the product $W := W_N \cdots
W_1$ and minimize the functional $L^1$ defined in \eqref{E:LONE} over the set of all matrices $W$ having this product form.

We consider the manifold $\mathcal{M}_k$ of  real $d_y\times d_x$ matrices of
rank  $k \leq \min\{d_x,d_y\}$. We regard $\mathcal{M}_k$ as a submanifold of
the manifold of all real $d_y\times d_x$ matrices, from which we inherit the
structure of a differentiable manifold for $\mathcal{M}_k$. We denote by
$T_W(\mathcal{M}_k)$ the tangential space of $\mathcal{M}_k$ at the point $W\in
\mathcal{M}_k$. We have 
\begin{equation}\label{E:TANG}
	T_W(\mathcal{M}_k) = \{WA+BW \colon A\in \RR^{d_x\times d_x},
		B\in \RR^{d_y\times d_y} \};
\end{equation}
see \cite[Proposition 4.1]{helmkeshayman95}. We will need the following result on the orthogonal projection onto the tangent space, which is  well-known, cf. \cite[Equation 9]{Absil15}.
Below the notions \emph{self-adjoint}, \emph{positive definite}, and
\emph{orthogonal complement} are understood with respect to the Frobenius scalar
product, which we denote by $\langle\ ,\ \rangle_F$. Recall that $\langle A,
B\rangle_F=\tr(AB^T)$.

\begin{lemma}\label{lem:proj-tangent} Let $W \in \mathcal{M}_k$ with full singular value decomposition $W = U S V^T$ and reduced singular decomposition $W = \bar{U} \Sigma \bar{V}^T$, where $U \in \RR^{d_y \times d_y}$ and $V \in \RR^{d_x \times d_x}$ are orthogonal and $\bar{U} \in \RR^{d_y \times k}$ and $\bar{V} \in \RR^{d_x \times k}$ are submatrices consisting of the first $k$ columns of $U$ and $V$, respectively. 
Let $Q_U = \bar{U} \bar{U}^T = U P_k U^T$ denote the orthogonal projection onto the range of $\bar{U}$, where 
$P_k = \diag(1,\hdots,1,0,\hdots,0)$ is the diagonal matrix with $k$ ones on the diagonal, and likewise define $Q_V = \bar{V} \bar{V}^T = V P_k V^T$.
Then the orthogonal projection $P_W : \RR^{d_y \times d_x} \to T_W(\mathcal{M}_k)$ onto the tangent space at $W$ is given by
\[
P_W(Z) = Q_U  Z + Z Q_V - Q_U Z Q_V \quad  \mbox{ for } Z \in \RR^{d_y \times d_x}.
\]
\end{lemma}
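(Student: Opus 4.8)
The plan is to verify directly that the stated operator $P_W(Z) = Q_U Z + Z Q_V - Q_U Z Q_V$ is (i) self-adjoint with respect to $\langle\,,\rangle_F$, (ii) idempotent, (iii) has range contained in $T_W(\mathcal{M}_k)$, and (iv) acts as the identity on $T_W(\mathcal{M}_k)$; these four properties together characterize the orthogonal projection onto the tangent space. Throughout I would work in the orthonormal basis provided by the full SVD $W = USV^T$, so that $Q_U = UP_kU^T$ and $Q_V = VP_kV^T$ with $P_k = \diag(1,\dots,1,0,\dots,0)$; conjugating everything by the orthogonal matrices $U$ and $V$ reduces all identities to statements about the diagonal idempotent $P_k$, which are immediate.

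First I would record that $Q_U$ and $Q_V$ are symmetric idempotents, hence orthogonal projections on $\RR^{d_y}$ and $\RR^{d_x}$ respectively. For self-adjointness, I would note $\langle P_W(Z), Z'\rangle_F = \tr\!\big((Q_U Z + Z Q_V - Q_U Z Q_V)(Z')^T\big)$ and use cyclicity of the trace together with $Q_U^T = Q_U$, $Q_V^T = Q_V$ to rewrite this as $\langle Z, P_W(Z')\rangle_F$; each of the three terms is visibly symmetric under swapping $Z \leftrightarrow Z'$. For idempotency, a direct expansion of $P_W(P_W(Z))$ produces nine terms, and every occurrence of $Q_U Q_U$ collapses to $Q_U$ and $Q_V Q_V$ to $Q_V$, after which the terms cancel in pairs down to $P_W(Z)$; alternatively, substituting $\bar Z := U^T Z V$ turns the computation into the trivial identity $P_k \bar Z + \bar Z P_k - P_k \bar Z P_k$ applied twice.

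For the range inclusion, I would exhibit $P_W(Z)$ in the form $WA + BW$ from \eqref{E:TANG}. Writing $W = U S V^T$ with $S$ having the $k$ nonzero singular values $\sigma_1,\dots,\sigma_k$ in its leading diagonal block, I would observe that $Q_U Z = U P_k U^T Z = U P_k S^{+} S U^T Z$ where $S^+$ is the pseudoinverse, so $Q_U Z = (U S V^T)(V S^+ U^T Z) = W A$ with $A = V S^+ U^T Z$; similarly $Z Q_V - Q_U Z Q_V = (I - Q_U) Z Q_V = (I-Q_U) Z V P_k V^T = B W$ with $B = (I - Q_U) Z V S^+ U^T$. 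Hence $P_W(Z) = WA + BW \in T_W(\mathcal{M}_k)$. Conversely, for $W A + B W \in T_W(\mathcal{M}_k)$ I would check $P_W(WA + BW) = WA + BW$ using $Q_U W = W = W Q_V$ (which follow since $Q_U$ is the projection onto the range of $W$ and $Q_V$ onto the range of $W^T$): indeed $Q_U(WA) = WA$, $Q_U(BW)Q_V$ cancels against $(BW)Q_V = BW$ minus the cross term, and a short bookkeeping of the three defining terms applied to $WA$ and to $BW$ separately gives back $WA + BW$.

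I do not expect a serious obstacle here; the only mildly delicate point is making sure the dimension count in \eqref{E:TANG} is respected — namely that the $A, B$ constructed in the range-inclusion step have the correct sizes $d_x \times d_x$ and $d_y \times d_y$, which they do by construction — and that the tangent space as described by \eqref{E:TANG} indeed has the dimension $k(d_x + d_y - k)$ matching the rank of the projection $P_W$ (the rank of $P_W$ is $\tr P_W$, and in the SVD basis one reads off $\operatorname{rank}(P_k \otimes I + I \otimes P_k - P_k \otimes P_k) = k d_y + k d_x - k^2$). Matching dimensions then upgrades "range contained in $T_W(\mathcal{M}_k)$ together with identity on $T_W(\mathcal{M}_k)$" to "range equals $T_W(\mathcal{M}_k)$", completing the identification of $P_W$ as the orthogonal projection. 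Since \cite[Equation 9]{Absil15} already records this formula, I would keep the write-up brief and mainly point to the SVD-basis reduction as the mechanism that makes all identities transparent.
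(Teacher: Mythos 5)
Your proposal is correct and follows essentially the same route as the paper's proof: verify self-adjointness, show the range lies in $T_W(\mathcal{M}_k)$ by exhibiting $P_W(Z)$ in the form $WA+BW$ via the SVD, and check that $P_W$ acts as the identity on the tangent space, which together give idempotency and make your closing dimension count superfluous. The only blemish is the intermediate expression $P_k S^{+}S$, which should read $SS^{+}$ (equal to $P_k$ as a $d_y\times d_y$ matrix); the matrices $A=VS^{+}U^TZ$ and $B=(I-Q_U)ZVS^{+}U^T$ you then write down are correct.
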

\begin{proof} For convenience, we give a proof. 
For a matrix $Z = WA + BW \in T_W(\mathcal{M}_k)$, a simple computation using $\bar{U}^T \bar{U} = I_k = \bar{V}^T \bar{V}$ gives
\begin{align*}
P_W(Z) & = \bar{U} \bar{U}^T (\bar{U} \Sigma \bar{V}^T A + B \bar{U} \Sigma \bar{V}^T) +
(\bar{U} \Sigma \bar{V}^T A + B \bar{U} \Sigma \bar{V}^T) \bar{V} \bar{V}^T
- \bar{U} \bar{U}^T (\bar{U} \Sigma \bar{V}^T A + B \bar{U} \Sigma \bar{V}^T) \bar{V} \bar{V}^T\\
& = \bar{U} \Sigma \bar{V}^T A + B \bar{U} \Sigma \bar{V}^T = Z.
\end{align*}
Moreover, for an arbitrary $Z \in \RR^{d_y \times d_x}$ we have
\begin{align*}
P_W(Z) & =  \bar{U} \bar{U}^T Z (I_{d_x} - Q_V) + Z \bar{V} \bar{V}^T =
\bar{U} \Sigma \bar{V}^T \bar{V} \Sigma^{-1} \bar{U}^T Z (I_{d_x} - Q_V) + Z \bar{V} \Sigma^{-1} \bar{U}^T  \bar{U} \Sigma \bar{V}^T\\
&= W \bar{V}\Sigma^{-1} \bar{U}^T Z (I_{d_x} - Q_V) + Z \bar{V} \Sigma^{-1} \bar{U}^T W  
\end{align*}
so that $P_W(Z) \in T_W(\mathcal{M}_k)$. We conclude that $P_W^2 = P_W$. Moreover, it is easy to verify that $\langle P_W(Z), Y \rangle_F = \langle Z, P_W(Y)\rangle_F$
for all $Z,Y \in \RR^{d_y \times d_x}$
so that $P_W$ is self-adjoint. Altogether, this proves the claim.
\end{proof}

Inspired by \cite{carlenmaas12}, we
use the operator $\mathcal{A}_{W}$ to define a Riemannian metric on
$\mathcal{M}_k$. The following lemma can be seen as an extension of (a part of) claim 1 in the proof of \cite[Theorem 1]{AroraCohenHazan2018}, where  positive semidefiniteness of ${\mathcal{A}}_W$ is established.

\begin{lemma}\label{PosDefLemma}
For any given $W\in \RR^{d_y\times d_x}$ let $k$ be the rank of $W$, so that
$W\in \mathcal{M}_k$. Let $N\geq 2$. Then the map ${\mathcal{A}}_W \colon
\RR^{d_y\times d_x} \rightarrow \RR^{d_y\times d_x}$ defined in \eqref{A_W} is a
self-adjoint endomorphism. 
Its image is $T_W(\mathcal{M}_k)$ and its kernel is (consequently) the
orthogonal complement $T_W(\mathcal{M}_k)^{\perp}$ of $T_W(\mathcal{M}_k)$. The
restriction of ${\mathcal{A}}_W$ to arguments $Z\in T_W(\mathcal{M}_k)$ defines
a self-adjoint and positive definite endomorphism
$$
	\bar{\mathcal{A}}_W\colon T_W(\mathcal{M}_k) \rightarrow T_W(\mathcal{M}_k).
$$
In particular, $\bar{\mathcal{A}}_W$ is invertible and the inverse 
$\bar{\mathcal{A}}^{-1}_W$ is self-adjoint and positive definite as well.
\end{lemma}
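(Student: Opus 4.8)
## Proof Proposal for Lemma~\ref{PosDefLemma}

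The plan is to analyze the operator $\mathcal{A}_W$ by diagonalizing it simultaneously with the singular value decomposition of $W$, thereby reducing everything to an explicit computation with rank-one ``matrix units.'' First I would write $W = \bar U \Sigma \bar V^T$ in reduced SVD, so that $WW^T = \bar U \Sigma^2 \bar U^T$ and $W^TW = \bar V \Sigma^2 \bar V^T$, and consequently $(WW^T)^{(N-j)/N} = \bar U \Sigma^{2(N-j)/N} \bar U^T$ and $(W^TW)^{(j-1)/N} = \bar V \Sigma^{2(j-1)/N} \bar V^T$ (these are the principal roots, using that the non-diagonalized part contributes zero). Extending to full orthogonal matrices $U, V$ and passing to the coordinates $\tilde Z := U^T Z V$, the operator $\mathcal{A}_W$ becomes, entrywise, multiplication of $\tilde Z_{pq}$ by the scalar
\[
	\lambda_{pq} := \sum_{j=1}^N \sigma_p^{2(N-j)/N}\,\sigma_q^{2(j-1)/N},
\]
where $\sigma_1,\dots,\sigma_k > 0$ are the singular values and $\sigma_{k+1} = \dots = 0$. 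This immediately makes $\mathcal{A}_W$ self-adjoint with respect to $\langle\cdot,\cdot\rangle_F$ (it is diagonal in an orthonormal basis with real eigenvalues).

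Next I would read off the eigenvalues: $\lambda_{pq} = 0$ precisely when the index pair $(p,q)$ has \emph{both} $p > k$ and $q > k$ (every term in the sum contains either a factor $\sigma_p^{2(N-j)/N}$ or $\sigma_q^{2(j-1)/N}$, and for each $j$ at least one of these exponents is positive, so a single zero factor among $\sigma_p,\sigma_q$ is not enough to kill all terms unless... — here I need the elementary observation that for $j=1$ the term is $\sigma_p^{2(N-1)/N}\sigma_q^0 = \sigma_p^{2(N-1)/N}$, nonzero iff $p \le k$, and for $j=N$ the term is $\sigma_p^0 \sigma_q^{2(N-1)/N}$, nonzero iff $q \le k$; since all terms are nonnegative, $\lambda_{pq} = 0$ iff $p>k$ and $q>k$). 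Thus $\mathcal{A}_W$ is strictly positive on the span of matrix units $E_{pq}$ with $(p,q) \notin \{k+1,\dots\}^2$, and zero on the complement. It then remains to identify these two subspaces with $T_W(\mathcal{M}_k)$ and $T_W(\mathcal{M}_k)^\perp$. Using Lemma~\ref{lem:proj-tangent}, the projection $P_W$ in the $\tilde Z$ coordinates is exactly multiplication of $\tilde Z_{pq}$ by $0$ when $p>k$ and $q>k$ and by $1$ otherwise (since $Q_U = U P_k U^T$, $Q_V = V P_k V^T$ become $P_k$ on each side, and $P_k \cdot + \cdot P_k - P_k \cdot P_k$ acts as the stated mask). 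Hence the image of $\mathcal{A}_W$ coincides with the image of $P_W$, namely $T_W(\mathcal{M}_k)$, and its kernel is $T_W(\mathcal{M}_k)^\perp$, as claimed.

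Finally, the restriction $\bar{\mathcal{A}}_W$ is the operator $\mathcal{A}_W$ confined to its own image $T_W(\mathcal{M}_k)$, on which it acts as multiplication by the \emph{strictly positive} scalars $\lambda_{pq}$; hence $\bar{\mathcal{A}}_W$ is self-adjoint and positive definite, and therefore invertible with self-adjoint, positive-definite inverse (the inverse being multiplication by $\lambda_{pq}^{-1} > 0$). I expect the main obstacle to be purely bookkeeping: carefully justifying that the principal $p$-th roots of $WW^T$ and $W^TW$ are simultaneously diagonalized by the SVD bases $U$ and $V$ (including the zero block), and then matching the mask pattern of $P_W$ from Lemma~\ref{lem:proj-tangent} with the zero/nonzero pattern of the eigenvalues $\lambda_{pq}$; the positivity and self-adjointness claims are then immediate consequences. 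No genuinely hard inequality is needed — only the nonnegativity of each summand and the nonvanishing of the $j=1$ and $j=N$ terms.
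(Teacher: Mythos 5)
Your proof is correct, but it takes a different route from the paper's. The paper argues in four steps: self-adjointness via a cyclic trace computation; image contained in $T_W(\mathcal{M}_k)$ by rewriting each summand $(WW^T)^{\frac{N-j}{N}} Z (W^TW)^{\frac{j-1}{N}}$ as $WA$ or $BW$ using the reduced SVD; positive semidefiniteness by expressing each trace as $\tr(R_jR_j^T)\geq 0$, with definiteness obtained from the vanishing of the $j=1$ and $j=N$ terms ($\bar U^TZ=0$, $Z\bar V=0$) combined with Lemma~\ref{lem:proj-tangent}; and finally the kernel identified as $T_W(\mathcal{M}_k)^\perp$ by the abstract fact that a self-adjoint endomorphism has kernel equal to the orthogonal complement of its image. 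You instead diagonalize $\mathcal{A}_W$ outright: in the coordinates $\tilde Z=U^TZV$ it acts entrywise by the explicit eigenvalues $\lambda_{pq}=\sum_{j=1}^N\sigma_p^{2(N-j)/N}\sigma_q^{2(j-1)/N}$, and every claim of the lemma (self-adjointness, image, kernel, positive definiteness of the restriction, and even an explicit formula for $\bar{\mathcal{A}}_W^{-1}$ as multiplication by $\lambda_{pq}^{-1}$) is read off from the zero/nonzero pattern of $\lambda_{pq}$ matched against the mask of $P_W$ from Lemma~\ref{lem:proj-tangent}. Your approach buys an explicit spectral decomposition, which is stronger than what the paper extracts here (and would even give an alternative to the integral representation of $\bar{\mathcal{A}}_W^{-1}$ used later), at the price of the coordinate bookkeeping; the paper's argument is coordinate-free in spirit and leans on the same sum-of-squares structure used in \cite{AroraCohenHazan2018}, but it only establishes invertibility, not the spectrum.

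One point to state carefully: your displayed identity $(WW^T)^{\frac{N-j}{N}}=\bar U\Sigma^{2\frac{N-j}{N}}\bar U^T$ is valid only for strictly positive exponents; for $j=N$ (and symmetrically for the right factor at $j=1$) the zeroth power is the full identity $\id_{d_y}$, not $\bar U\bar U^T$. Your eigenvalue analysis does use the correct convention ($\sigma_p^0=1$ even when $\sigma_p=0$ in the $j=1$ and $j=N$ terms), which is exactly what makes $\lambda_{pq}>0$ whenever $p\leq k$ or $q\leq k$ and hence gives the correct image $T_W(\mathcal{M}_k)$; had you carried the reduced-SVD formula into those two terms, the argument would have produced a strictly smaller image and the lemma would fail, so this convention deserves an explicit sentence in a final write-up.
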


\begin{proof}
We split the proof into four steps.


\textbf{Step~1.} It is clear that $\mathcal{A}_W$ defines an endomorphism of  
$\RR^{d_y\times d_x}$. To see that it is self-adjoint, we calculate, for $Z_1,Z_2\in  \RR^{d_y\times
d_x}$,
\begin{align*}
	\langle \mathcal{A}_W(Z_1), Z_2\rangle_F
		&= \tr\left(\sum_{j=1}^{N} (WW^T)^{\frac{N-j}{N}} Z_1
			(W^TW)^{\frac{j-1}{N}} Z_2^T \right)
		=\tr\left(\sum_{j=1}^{N} Z_1
			(W^TW)^{\frac{j-1}{N}}Z_2^T(WW^T)^{\frac{N-j}{N}}\right)
\\
		&= \tr\left(Z_1\mathcal{A}_W(Z_2)^T\right)
		=\langle Z_1, \mathcal{A}_W(Z_2)\rangle_F.
\end{align*}
We conclude that ${\mathcal{A}}_W$ is indeed self-adjoint.


\textbf{Step~2.} Next we show that the image of ${\mathcal{A}}_W$ lies in
$T_W(\mathcal{M}_k)$; see \eqref{E:TANG}. Let $W = \bar{U} \Sigma \bar{V}^T$ be a (reduced) singular value
decomposition of $W$ in the following form: $\Sigma = \diag(\sigma_1,\hdots,\sigma_k) \in \RR^{k \times k}$ is the diagonal matrix containing
the non-zero singular values of $W$ and the columns of $\bar{U} \in \RR^{d_y \times k}$ and $\bar{V} \in \RR^{d_x\times k}$ are orthonormal,
so that $\bar{U}^T \bar{U} = I_k = \bar{V}^T \bar{V}$.
For any index $1\leq j<N$, we observe the identity
\begin{equation} 
	(WW^T)^{\frac{N-j}{N}} 
		= \bar{U} \Sigma^{2\frac{N-j}{N}} \bar{U}^T
		= \bar U \Sigma \bar V^T \bar V \Sigma^{2\frac{N-j}{N}-1} \bar U^T
		= W (\bar V \Sigma^{2\frac{N-j}{N}-1} \bar U^T).
\label{E:POL}
\end{equation}
%
Note that the second factor on the right-hand side of \eqref{E:POL} is
well-defined even though the exponent $2\frac{N-j}{N}-1$ may be negative because
the diagonal entries of $\Sigma$ are all positive. Similarly, for $1 < j \leq N$
we find
\[
	(W^TW)^{\frac{j-1}{N}} 
		= \bar V \Sigma^{2\frac{j-1}{N}} \bar V^T
		= \bar V \Sigma^{2\frac{j-1}{N}-1} (\bar U^T \bar U) \Sigma \bar V^T
		= (\bar V \Sigma^{2\frac{j-1}{N}-1} \bar U^T) W.
\]
We observe that every term in the sum \eqref{A_W} is of the form $WA$ or of the
form $BW$ for suitable $A\in  \RR^{d_x\times d_x}$ or $B\in  \RR^{d_y\times
d_y}$. Hence $\mathcal{A}_W(Z)\in T_W(\mathcal{M}_k)$ for any $Z\in
\RR^{d_y\times d_x}$. It follows that the restriction of ${\mathcal{A}}_W$ to
$T_W(\mathcal{M}_k)$, denoted by $\bar{\mathcal{A}}_W$, is a self-adjoint
endomorphism. 
To prove that
$\bar{\mathcal{A}}_W$ is injective, it therefore suffices to show that all
eigenvalues are non-zero. Since $T_W(\mathcal{M}_k)$ is a finite-dimensional
vector space, injectivity of $\bar{\mathcal{A}}_W$ then implies bijectivity.


\textbf{Step~3.} We show that $\bar{\mathcal{A}}_W$ is positive definite. For 
$Z \in T_W(\mathcal{M}_k)$, we need to establish that $\langle
{\mathcal{A}}_W(Z), Z\rangle_F>0$ if $Z\neq 0$. We will first show that for all
$ j\in\{1,\hdots,N\}$
\[
	\tr\left( (WW^T)^{\frac{N-j}{N}} Z (W^TW)^{\frac{j-1}{N}}Z^T\right)\geq 0.
\]
Let again $W = \bar U \Sigma \bar V^T$ be a (reduced) singular value decomposition of $W$ as in Step~2. If
$j=1$, then
\[
	\tr\left( (WW^T)^{\frac{N-1}{N}} ZZ^T\right)
		= \tr\left( \big( \bar U \Sigma^{2 \frac{N-1}{N}} \bar U^T \big) Z Z^T \right)
		= \tr(R_1 R_1^T) \geq 0,
\]
where $R_1 := \Sigma^{\frac{N-1}{N}} \bar U^T Z$. 
Similarly, for $j=N$ we get
\[
	\tr\left( Z (W^TW)^{\frac{N-1}{N}} Z^T\right)
		= \tr\left( Z \big( \bar V \Sigma^{2\frac{N-1}{N}} \bar V^T \big) Z^T \right)
		= \tr(R_N R_N^T) \geq 0,
\]
where $R_N := Z \bar V \Sigma^{\frac{N-1}{N}}$. 
Finally, if $1<j<N$, then
\[
	\tr\left( (WW^T)^{\frac{N-j}{N}} Z (W^TW)^{\frac{j-1}{N}}Z^T\right)
		= \tr\left( \big( \bar U \Sigma^{2\frac{N-j}{N}} \bar U^T \big) Z 
				\big( \bar V \Sigma^{2\frac{j-1}{N}} \bar V^T \big) Z^T \right)
		= \tr(R_j R_j^T) \geq 0,
\]
where $R_j := \Sigma^{\frac{N-j}{N}} \bar U^T Z \bar V \Sigma^{\frac{j-1}{N}}$. It follows that
$\langle {\mathcal{A}}_W(Z),Z\rangle_F\geq 0$ for all $Z \in
T_W(\mathcal{M}_k)$.

Suppose now that $\langle {\mathcal{A}}_W(Z),Z\rangle_F = 0$. Then $\|R_j\|_F^2
= \tr(R_j R_j^T) = 0$, thus $R_j = 0$ for every $j \in \{1, \ldots, N\}$. 
Since $\Sigma \in \RR^{k \times k}$ is invertible this implies for $j=1$ that $\bar U^T Z = 0$ and for $j=N$ that $Z \bar{V} = 0$.
By Lemma~\ref{lem:proj-tangent} we have 
\[
Z = P_W(Z) = \bar{U} \bar{U}^T Z + Z \bar{V} \bar{V}^T - \bar{U} \bar{U}^T Z \bar{V} \bar{V}^T = 0.
\]
This proves that $\bar{\mathcal{A}}_W$ is strictly positive definite, therefore
injective (bijective) as a map from $T_W(\mathcal{M}_k)$ to itself.

\textbf{Step~4.} It remains to prove that the  kernel of ${\mathcal{A}}_W$ is
the orthogonal complement of $T_W(\mathcal{M}_k)$. This follows from the general
fact that for any self-adjoint endomorphism $f$ of an inner product space, the
kernel of $f$ is the orthogonal complement of the image of the adjoint  of $f$.
\end{proof}

\begin{definition}
We introduce a Riemannian metric\footnote{Our use of the term {\em Riemannian metric} does not imply any smoothness properties. Nevertheless, we will show in Proposition \ref{C1metric}
	that our metric $g$ is of class $C^1$.} $g$ on the manifold $\mathcal{M}_k$ (for $k \leq \min\{d_x,d_y\}$) by
\begin{equation}\label{skp} 
	g_W(Z_1,Z_2) := \langle \bar{\mathcal{A}}_W^{-1}(Z_1),Z_2\rangle_F
\end{equation}
for any $W\in \mathcal{M}_k $ and for all tangent vectors $Z_1,Z_2 \in  
T_W(\mathcal{M}_k)$.
\end{definition}
By Lemma \ref{PosDefLemma}, the map $g_W$ is well defined and defines indeed a
scalar product on $ T_W(\mathcal{M}_k)$. We provide explicit expressions for this scalar product in the next result.


\begin{proposition}\label{metric-explicit}
For $N \geq 2$, the metric $g$ on $\mathcal{M}_k$ defined in \eqref{skp} satisfies
\begin{align}
g_W(Z_1,Z_2) & = \frac{\sin(\pi/N)}{\pi} \int_0^\infty \tr\left( (t \id_{d_y} + WW^T)^{-1} Z_1 (t \id_{d_x} + W^T W)^{-1} Z_2^T\right) t^{1/N} dt  \label{metric_F1}\\
& =  \frac{1}{N\Gamma(1-1/N)} \int_0^\infty \int_0^t \tr\left( e^{-s WW^T} Z_1 e^{-(t-s)W^TW} Z_2^T\right) ds \, t^{-(1/N+1)} dt \label{metric_F2}
\end{align}
for all $W \in \mathcal{M}_k$ and $Z_1, Z_2 \in T_W(\mathcal{M}_k)$,
where $\Gamma$ denotes the Gamma function.

In the case $N=2$, we additionally have 
\begin{equation}\label{metric_F3N2}
g_W(Z_1,Z_2)
		= \int_0^{\infty} \tr\Big( e^{-t(WW^T)^{\frac 1 2}} Z_1 
			e^{-t(W^TW)^{\frac 1 2}} Z_2^T \Big) \,dt.
\end{equation}
\end{proposition}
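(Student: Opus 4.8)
The plan is to diagonalize the operator $\mathcal{A}_W$ explicitly by means of a full singular value decomposition of $W$, thereby reduce each of the three claimed identities to a one-variable integral identity for the eigenvalues of $\bar{\mathcal{A}}_W^{-1}$, and then verify those by classical Mellin/Gamma computations.

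First I would take a full SVD $W = USV^T$ as in Lemma~\ref{lem:proj-tangent}, write $u_1,\dots,u_{d_y}$ and $v_1,\dots,v_{d_x}$ for the columns of $U$ and $V$, and let $s_1\ge\cdots\ge s_k>0=s_{k+1}=\cdots$ be the singular values of $W$. The rank-one matrices $u_iv_l^T$ form a Frobenius-orthonormal basis of $\RR^{d_y\times d_x}$. Since $(WW^T)^p = U\diag((s_i^2)^p)U^T$ and $(W^TW)^q = V\diag((s_l^2)^q)V^T$ — with the convention $0^0=1$ for the zeroth powers occurring in the $j=1$ and $j=N$ terms of \eqref{A_W} — each $u_iv_l^T$ is an eigenvector of $\mathcal{A}_W$ with eigenvalue
\[
\lambda_{il}=\sum_{j=1}^N (s_i^2)^{\frac{N-j}{N}}(s_l^2)^{\frac{j-1}{N}}=\frac{s_i^2-s_l^2}{s_i^{2/N}-s_l^{2/N}}
\]
(the second equality by summing the geometric series, with the obvious interpretation when $s_i=s_l$). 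Thus $\lambda_{il}>0$ unless $i,l>k$, in which case $\lambda_{il}=0$; for the other pairs I would record $\lambda_{il}^{-1}=(\alpha^{1/N}-\beta^{1/N})/(\alpha-\beta)$ with $\alpha:=s_i^2$, $\beta:=s_l^2$. By Lemma~\ref{lem:proj-tangent}, $Z\in T_W(\mathcal{M}_k)$ exactly when $(U^TZV)_{il}=0$ for all $i,l>k$; hence the $u_iv_l^T$ with $i\le k$ or $l\le k$ form an orthonormal basis of $T_W(\mathcal{M}_k)$ in which $\bar{\mathcal{A}}_W$ is diagonal, and, writing $\widehat Z:=U^TZV$, for $Z_1,Z_2\in T_W(\mathcal{M}_k)$
\[
g_W(Z_1,Z_2)=\langle\bar{\mathcal{A}}_W^{-1}(Z_1),Z_2\rangle_F=\sum_{i\le k\text{ or }l\le k}\lambda_{il}^{-1}(\widehat Z_1)_{il}(\widehat Z_2)_{il}.
\]

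Next I would establish the scalar identities: for every pair with $i\le k$ or $l\le k$,
\[
\lambda_{il}^{-1}=\frac{\sin(\pi/N)}{\pi}\int_0^\infty\frac{t^{1/N}}{(t+\alpha)(t+\beta)}\,dt=\frac{1}{N\,\Gamma(1-1/N)}\int_0^\infty\Big(\int_0^t e^{-s\alpha}e^{-(t-s)\beta}\,ds\Big)t^{-(1/N+1)}\,dt,
\]
and $\lambda_{il}^{-1}=(s_i+s_l)^{-1}=\int_0^\infty e^{-t(s_i+s_l)}\,dt$ when $N=2$. The first comes from the classical formula $\int_0^\infty t^{z-1}/((t+\alpha)(t+\beta))\,dt=\tfrac{\pi}{\sin\pi z}\cdot\tfrac{\alpha^{z-1}-\beta^{z-1}}{\beta-\alpha}$, valid for $0<\operatorname{Re} z<2$ (prove it for $0<\operatorname{Re} z<1$ by partial fractions and $\int_0^\infty t^{z-1}/(t+a)\,dt=\pi a^{z-1}/\sin\pi z$, then continue analytically across the removable singularity at $z=1$), evaluated at $z=1+1/N$ using $\sin(\pi+\pi/N)=-\sin(\pi/N)$; the degenerate cases $\alpha=\beta$ and $\beta=0$ follow by continuity, resp.\ from the $z=1/N$ instance of the basic formula. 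For the second, the inner integral equals $(e^{-t\beta}-e^{-t\alpha})/(\alpha-\beta)$, and one applies $\int_0^\infty(e^{-\beta t}-e^{-\alpha t})t^{z-1}\,dt=\Gamma(z)(\beta^{-z}-\alpha^{-z})$ for $-1<\operatorname{Re} z<0$ at $z=-1/N$, together with $\Gamma(-1/N)=-N\,\Gamma(1-1/N)$; the degenerate cases are analogous and the $N=2$ identity is immediate. Finally, to assemble the proof I would diagonalize the integrands in \eqref{metric_F1}, \eqref{metric_F2}, \eqref{metric_F3N2} — $(t\Id_{d_y}+WW^T)^{-1}=U\diag((t+s_i^2)^{-1})U^T$, $e^{-sWW^T}=U\diag(e^{-ss_i^2})U^T$, and similarly for $W^TW$ — so that each trace becomes $\sum_{i,l}(\text{scalar }t\text{-kernel})_{il}\,(\widehat Z_1)_{il}(\widehat Z_2)_{il}$, observe that for tangent $Z_1,Z_2$ the products $(\widehat Z_1)_{il}(\widehat Z_2)_{il}$ vanish precisely on the pairs $i,l>k$, interchange the resulting finite sum with the $t$-integration, and apply the scalar identities term by term, cancelling the prefactors against $\pi/\sin(\pi/N)$, resp.\ $N\Gamma(1-1/N)$, resp.\ $1$; this reproduces exactly the expression for $g_W(Z_1,Z_2)$ obtained above.

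I expect the only genuine difficulty to be the bookkeeping around the degenerate directions: each scalar $t$-integral in \eqref{metric_F1}, \eqref{metric_F2}, \eqref{metric_F3N2} diverges exactly on the \emph{bad} pairs $i,l>k$ — precisely those on which $\bar{\mathcal{A}}_W^{-1}$ is not even defined — so the interchange of summation and integration in the last step is legitimate only after invoking Lemma~\ref{lem:proj-tangent} to annihilate those terms for tangent arguments; one must also stay consistent about the $0^0=1$ convention in the matrix powers and about which SVD indices carry a vanishing singular value. The Mellin and Gamma identities themselves, and their degenerate limits, are routine once their strips of convergence are pinned down.
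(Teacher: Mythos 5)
Your proposal is correct, and it reaches the result by a genuinely different route than the paper. The paper's proof (Appendix~\ref{appendix-metric-explicit}) does not diagonalize $\mathcal{A}_W$ at all: it quotes the Bhatia--Uchiyama solution formula for the operator equation $\sum_j A^{N-j}XB^{j-1}=Y$ (Lemma~\ref{lem:inv}, via \cite{bhuc09} and the Berberian block trick), applies it to the regularized matrices $(u\id+WW^T)^{1/N}$, $(u\id+W^TW)^{1/N}$ with $u>0$ to get an explicit $\mathcal{A}_{W,u}^{-1}$, checks that $\mathcal{A}_{W,u}$ preserves $T_W(\mathcal{M}_k)$, and then passes to the rank-deficient limit $u\to 0$ by a uniform-integrability/dominated-convergence argument that uses Lemma~\ref{lem:proj-tangent} to split tangent vectors and obtain the bound $\min\{\sigma_k^{-2}t^{-1+1/N},\,t^{-2+1/N}\}$; the $N=2$ formula is taken from \cite[Theorem VII.2.3]{bhatia97} with the same approximation. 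You instead compute the spectral decomposition of $\mathcal{A}_W$ directly in the Frobenius-orthonormal basis $u_iv_l^T$, getting the eigenvalues $\lambda_{il}=(\alpha-\beta)/(\alpha^{1/N}-\beta^{1/N})$, identify the tangent space with the span of the nondegenerate eigendirections via Lemma~\ref{lem:proj-tangent}, and reduce all three formulas to scalar Mellin/Gamma identities, with the rank-deficiency handled simply by noting that the divergent kernels occur exactly on the eigendirections annihilated by tangent arguments. I checked the details: the eigenvalue computation (including the $0^0=1$ convention forced by the empty products in \eqref{Wdotbalanced}), the Mellin identity at $z=1+1/N$ with $\sin(\pi+\pi/N)=-\sin(\pi/N)$, the Gamma identity at $z=-1/N$ with $\Gamma(-1/N)=-N\Gamma(1-1/N)$, the degenerate cases $\alpha=\beta$ and $\beta=0$, and the $N=2$ case $\lambda_{il}^{-1}=(s_i+s_l)^{-1}$ all come out right, and the sum--integral interchange is a finite sum so no further justification is needed once the bad terms are killed. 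Your argument is arguably more elementary and self-contained (no external operator-equation lemma, no regularization), and it exhibits $\bar{\mathcal{A}}_W^{-1}$ completely explicitly; what the paper's route buys is that it avoids choosing an SVD and works directly at the operator level, and the regularized formula \eqref{inverseAW} with its uniform bounds is the same device reused for the $C^1$-regularity proof of Proposition~\ref{C1metric}.
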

\begin{proof}
The proof is postponed to Appendix~\ref{appendix-metric-explicit}.
\end{proof}

The next result states that the Riemannian metric is continuously differentiable as a function of $W \in \mathcal{M}_k$.
\begin{proposition}\label{C1metric}
	The metric $g$ on $\mathcal{M}_k$ given by \eqref{skp} is of class $C^1$. 
\end{proposition}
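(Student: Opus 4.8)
The plan is to exploit the integral representation of the metric established in Proposition~\ref{metric-explicit}, together with the explicit description of the orthogonal projection onto the tangent space from Lemma~\ref{lem:proj-tangent}, to reduce the claim to $C^1$-dependence of elementary matrix-valued maps. First I would observe that, since $g_W$ is a bilinear form on the (varying) tangent space $T_W(\mathcal{M}_k)$, the appropriate way to formulate and check differentiability is to extend $g_W$ to a bilinear form $\widetilde g_W$ on all of $\RR^{d_y \times d_x}$ by precomposing with the projection $P_W$, i.e. $\widetilde g_W(Z_1,Z_2) := g_W(P_W Z_1, P_W Z_2)$; the restriction of $\widetilde g$ to the tangent bundle agrees with $g$. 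It then suffices to show that $W \mapsto \widetilde g_W$ is $C^1$ as a map from $\mathcal{M}_k$ into the fixed finite-dimensional space of bilinear forms on $\RR^{d_y \times d_x}$, since $\mathcal{M}_k$ is a $C^\infty$-submanifold of $\RR^{d_y\times d_x}$ and restriction preserves differentiability.

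Next I would use formula \eqref{metric_F1}:
\[
g_W(Z_1,Z_2) = \frac{\sin(\pi/N)}{\pi} \int_0^\infty \tr\left( (t\, \id_{d_y} + WW^T)^{-1} Z_1 (t\, \id_{d_x} + W^T W)^{-1} Z_2^T\right) t^{1/N}\, dt.
\]
For fixed $t > 0$, the integrand depends smoothly on $W$ (the resolvents $(t\,\id + WW^T)^{-1}$ and $(t\,\id + W^TW)^{-1}$ are $C^\infty$ in $W$ since $t\,\id + WW^T \succeq t\,\id \succ 0$), and so does its derivative in $W$. The task is thus to differentiate under the integral sign, which requires a uniform-in-$t$ integrable bound on the $W$-derivative of the integrand, locally uniformly in $W$. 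Fix $W_0 \in \mathcal{M}_k$ and a small neighbourhood; since $\|(t\,\id + WW^T)^{-1}\|_F \le C \min\{t^{-1},\, 1\}$ uniformly for $W$ near $W_0$ (and similarly for the other resolvent), and differentiating a resolvent with respect to $W$ by the product/chain rule produces at most three resolvent factors and two factors linear in $W$, one gets a bound of the form $C(W_0)\,\min\{t^{-2}, 1\}\,t^{1/N}\|Z_1\|_F\|Z_2\|_F$ for the derivative of the integrand. Since $t^{1/N - 2}$ is integrable at $t = \infty$ (as $1/N - 2 < -1$) and $t^{1/N}$ is integrable at $t = 0$, the dominating function is integrable on $(0,\infty)$; hence by the standard theorem on differentiation under the integral sign, $W \mapsto g_W$ is differentiable with derivative obtained by differentiating the integrand, and that derivative is continuous in $W$ by a second application of dominated convergence (continuity of the integrand's $W$-derivative in $W$ plus the same uniform bound). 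This gives the $C^1$ property of $\widetilde g$, hence of $g$.

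The main obstacle is the bookkeeping at the two ends of the $t$-integral: one must confirm that differentiating in $W$ does not destroy integrability at $t = 0$ (where $t^{1/N}$ already decays, so even extra resolvent factors, which are $O(1)$ there, are harmless) nor at $t = \infty$ (where each resolvent contributes a factor $O(t^{-1})$; the derivative of a single resolvent $\partial_W (t\,\id + WW^T)^{-1}$ behaves like $O(t^{-2})\cdot O(\|W\|)$, so altogether the integrand's $W$-derivative is $O(t^{1/N - 2})$, comfortably integrable since $N \ge 2$ forces $1/N - 2 \le -3/2 < -1$). A cleaner alternative, which I would present if the resolvent estimates become cumbersome, is to use \eqref{metric_F2} instead: there the heat semigroups $e^{-sWW^T}$ and $e^{-(t-s)W^TW}$ are entire in $W$ with $\|e^{-sWW^T}\|_F \le \sqrt{d_y}$ for $s \ge 0$ (by positive semidefiniteness), and their $W$-derivatives are bounded by $C s\, e^{-s\lambda_{\min}}\le C'$ on the relevant range — actually just $Cs$ times a bounded factor suffices since the inner integral is over $s\in[0,t]$ — so the integrand's $W$-derivative is bounded by $C\,t^2 \cdot t^{-(1/N+1)} = C\,t^{1 - 1/N}$ near $t=0$, integrable, while for large $t$ one splits off the projection-onto-kernel part; but this requires a little care because the heat semigroups do not decay on $\ker(WW^T)$, which is exactly why precomposing with $P_W$ (whose range lies in $T_W(\mathcal{M}_k)$, on which $\bar{\mathcal{A}}_W$ is strictly positive) is essential. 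Either way, once differentiation under the integral is justified, continuity of the resulting derivative follows from the same dominated-convergence argument, completing the proof.
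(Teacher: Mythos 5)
Your overall strategy coincides with the paper's: represent $g$ via \eqref{metric_F1}, differentiate under the integral sign, and justify this by a locally uniform integrable dominating function. The gap is in the domination itself. Your key estimate, $\|(t\,\id_{d_y}+WW^T)^{-1}\|\le C\min\{t^{-1},1\}$ locally uniformly in $W$, is false precisely in the case of interest: for $W\in\mathcal{M}_k$ with $k<d_y$ (and similarly $k<d_x$ for the other resolvent), the resolvent equals $t^{-1}$ times the identity on $\ker(WW^T)$, so it blows up like $t^{-1}$ as $t\to 0$ rather than being $O(1)$ there. Consequently your claim that ``extra resolvent factors, which are $O(1)$ there, are harmless'' at $t=0$ fails, and the asserted dominating function $C\min\{t^{-2},1\}\,t^{1/N}$ is unjustified. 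With only norm bounds on the resolvents, the $W$-derivative of the integrand can a priori grow like $t^{-2}$ or $t^{-3}$ as $t\to 0$, and $t^{-2+1/N}$ is not integrable at $0$ (since $1/N\le 1/2$), so dominated convergence cannot be invoked as you state it. This small-$t$ difficulty is exactly what the bulk of the paper's Appendix~\ref{appendix-Prop-metric-C1} is devoted to: using Lemma~\ref{lem:proj-tangent} and the SVD of $W$, one writes the tangent arguments as $U P_k U^T Z(\id-VP_kV^T)+ZVP_kV^T$ and tracks which diagonal factors ($\sigma_i/(t+\sigma_i^2)$ versus $1/t$) actually occur, so that at most one uncompensated $t^{-1}$ survives; this yields bounds of the form $C\min\{t^{-1},t^{-3}\}$ and $C\min\{t^{-1},t^{-2}\}$ with constants involving $\sigma_k(W)^{-2}$, $\sigma_k(W)^{-3}$, which are locally uniform because $\sigma_k(W)\ge c>0$ near a fixed $W_0\in\mathcal{M}_k$. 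Only then is $t^{1/N}$ times the bound integrable on $(0,\infty)$.

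Two further points. First, in your setup $\widetilde g_W(Z_1,Z_2)=g_W(P_WZ_1,P_WZ_2)$ the derivative in $W$ also hits $P_W$; these terms (the analogues of the paper's $D\xi_i(W)[Y]$ terms for frame fields) are absent from your estimate, and they too are only integrable at $t=0$ after exploiting that the \emph{other} argument is tangent (the paper's bound $2\|D\xi_i(W)[Y]\|_F\|\xi_j(W)\|_F\sigma_k^{-2}t^{-1}$). Second, your proposed alternative via \eqref{metric_F2} has the mirror-image problem: without using tangency, the heat semigroups do not decay on the kernels, so the inner integral contributes $O(t)$ and the integrand is only $O(t^{-1/N})$ at infinity, which is not integrable; you acknowledge this but do not resolve it, and resolving it requires the same kernel/range bookkeeping. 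So the proposal identifies the right framework but omits the essential estimates that make the dominated-convergence argument valid.
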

\begin{proof}
The proof uses the representation \eqref{metric_F1}. The main step consists in showing that the directional derivates with respect to $W$ of the corresponding integrand remain integrable so that
Lebesgue's dominated convergence theorem can be applied to interchange integration and differentiation. The lengthy details are postponed to Appendix~\ref{appendix-Prop-metric-C1}.
\end{proof}

For any differentiable function $f\colon \RR^{d_y\times d_x} \rightarrow \RR$,
any $W\in \mathcal{M}_k \subset \RR^{d_y\times d_x}$, and any $Z\in
T_W(\mathcal{M}_k)$, we have
\[
	g_W\big( \mathcal{A}_{W}(\nabla f(W)),Z \big)
		= \Big\langle \bar{\mathcal{A}}_W^{-1}\Big( \mathcal{A}_{W}\big( \nabla 
		f(W) \big) \Big), Z \Big\rangle_F
		= \langle \nabla f(W), Z\rangle_F=D f(W)[Z],
\]
where $Df$ denotes the differential of $f$ (which can be computed from the
derivative with respect to $W$). Note here that by Lemma \ref{PosDefLemma}, the
two quantities $\bar{\mathcal{A}}_W^{-1}(\mathcal{A}_{W}(\nabla f(W)))$ and
$\nabla f(W)$ differ only by an element in $T_W(\mathcal{M}_k)^{\perp}$, which
is perpendicular to $Z$ with respect to the Frobenius norm, as noticed above.
This allows us to identify $\mathcal{A}_{W}(\nabla f(W))$ with the gradient of
$f$ with respect to the new metric $g$. We write
\begin{align}\label{ggrad}
	\mathcal{A}_{W}(\nabla f(W)) =: \nabla^g f(W).
\end{align}
In particular, we have for all $Z\in T_W(\mathcal{M}_k)$ that $g_W(\nabla^g
f(W),Z) = D f(W)[Z]$. 
Let now $k\leq \min\{d_0,\hdots,d_N\}$ and recall that, in the balanced case,
the evolution of the product $W=W_N\cdots W_1$ is given by \eqref{gradflowprod}.

We note that the solutions $W_1(t), \ldots, W_N(t)$ of the gradient flow \eqref{gradflow} of $L^N$ are unique  (given initial values), since  \eqref{gradflow} obviously satisfies a local Lipschitz condition. Therefore the tuple $W_1(t), \ldots, W_N(t)$ gives rise to a well defined product $W(t)=W_N(t)\cdots W_1(t)$ which in the balanced case solves equation \eqref{gradflowprod}. 
However, due to the appearance of  $N$-th roots in  \eqref{gradflowprod}, it is unclear at the moment whether there are also other solutions of  \eqref{gradflowprod}. 
The next proposition shows that in the balanced case (and for $XX^T$ of full rank) the solution $W(t)=W_N(t)\cdots W_1(t)$ of   \eqref{gradflowprod} stays in $\mathcal{M}_k$ for all finite times $t$ provided that $W(0) \in \mathcal{M}_k$.

\begin{proposition}\label{rank_const}
	Assume that $XX^T$ has full rank and
suppose  that $W_1(t), \ldots, W_N(t)$ are 
solutions of the gradient flow \eqref{gradflow} of $L^N$  with balanced initial values
$W_j(0)$  and 
define the product $W(t) := W_N(t) \cdots W_1(t)$. If $W(0)$ is
contained in $\mathcal{M}_k$ 
for some $k \leq \min\{d_0,\hdots,d_N\}$
 then $W(t)$  is
contained in $\mathcal{M}_k$ for all $t\geq 0$.
\end{proposition}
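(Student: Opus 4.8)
My plan is to reduce the statement to the claim that the column space of the first factor $W_1(t)$ does not change with $t$, and to obtain the latter from a \emph{linear} matrix differential equation $\dot W_1(t)=W_1(t)D_1(t)$ whose coefficient $D_1(t)$ is bounded uniformly in $t$. First the preliminaries. By Theorem~\ref{globconv} (this is where $XX^T$ of full rank enters) the curves $W_1,\dots,W_N$, and hence $W=W_N\cdots W_1$, are defined and bounded on $[0,\infty)$; fix $C_W$ with $\|W_j(t)\|_F\le C_W$ for all $j,t$. By Lemma~\ref{L:PREL}(4) the balancing identities $W_{j+1}^T(t)W_{j+1}(t)=W_j(t)W_j^T(t)$ hold for all $t$, so telescoping them (exactly as in the derivation of \eqref{WNWpP}, now with all correction terms vanishing) gives $W(t)^TW(t)=(W_1(t)^TW_1(t))^N$ and $W(t)W(t)^T=(W_N(t)W_N(t)^T)^N$; in particular $\rank W(t)=\rank W_1(t)=\rank W_N(t)$ for all $t$, and $\rank W_1(0)=\rank W(0)=k$.

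Next I apply Lemma~\ref{L:PREL}(1) with $j=1$ (the trailing product $W_1^T\cdots W_0^T$ is empty, $=I$), together with the telescoped identity $W_2^T\cdots W_N^T\,W=(W_2^TW_2)^{N-1}W_1=(W_1W_1^T)^{N-1}W_1=W_1(W_1^TW_1)^{N-1}$, to rewrite the $j=1$ equation of \eqref{gradflow} as $\dot W_1=-W_1(W_1^TW_1)^{N-1}XX^T+W_2^T\cdots W_N^T\,YX^T$. Balancedness also yields $\operatorname{col}\!\big(W_2^T\cdots W_N^TYX^T\big)\subseteq\operatorname{col}(W_2^T)=\operatorname{col}(W_2^TW_2)=\operatorname{col}(W_1W_1^T)=\operatorname{col}(W_1)$, so the second term equals $W_1\big(W_1^+W_2^T\cdots W_N^T\,YX^T\big)$ with $W_1^+$ the Moore--Penrose pseudoinverse. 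Thus $\dot W_1(t)=W_1(t)D_1(t)$ with $D_1(t):=-(W_1(t)^TW_1(t))^{N-1}XX^T+W_1(t)^+W_2(t)^T\cdots W_N(t)^T\,YX^T$.

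The heart of the matter is that $D_1$ is bounded. Using the balancing relations once more, $(W_2^T\cdots W_N^T)(W_2^T\cdots W_N^T)^T=(W_1W_1^T)^{N-1}$, so $W_2^T\cdots W_N^T$ and $W_1$ share a system of left singular vectors and the nonzero singular values of $W_1^+W_2^T\cdots W_N^T$ are the numbers $\sigma_i(W_1)^{N-2}$. Since $N\ge 2$ the exponent $N-2$ is $\ge 0$, whence $\|W_1^+W_2^T\cdots W_N^T\|_{\mathrm{op}}=\max_i\sigma_i(W_1)^{N-2}\le C_W^{N-2}$, and consequently $\|D_1(t)\|_{\mathrm{op}}\le C_W^{2(N-1)}\|XX^T\|_{\mathrm{op}}+C_W^{N-2}\|YX^T\|_{\mathrm{op}}=:C_D$ for all $t\ge 0$. (This is exactly where the overparameterization matters: for $N=1$ one would face $\sigma_i(W_1)^{-1}$, which is not bounded, and indeed the statement fails for $N=1$.) I expect this bound --- that is, making the telescoping of the balancing relations and the singular-value bookkeeping precise --- to be the only genuine work in the proof.

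Finally, fix $t_0\ge 0$, let $P^\perp$ be the orthogonal projection onto $\operatorname{col}(W_1(t_0))^\perp$, and put $\beta(t):=\|P^\perp W_1(t)\|_F^2$, a $C^1$ function with $\beta(t_0)=0$. From $\dot W_1=W_1D_1$ and the symmetry and idempotency of $P^\perp$ one computes $\dot\beta(t)=2\langle P^\perp\dot W_1,P^\perp W_1\rangle_F=2\tr\!\big(D_1(t)\,(P^\perp W_1(t))^T(P^\perp W_1(t))\big)$, hence $|\dot\beta(t)|\le 2\|D_1(t)\|_{\mathrm{op}}\beta(t)\le 2C_D\beta(t)$. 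Gronwall's inequality, applied on both sides of $t_0$, forces $\beta\equiv 0$, i.e.\ $\operatorname{col}(W_1(t))\subseteq\operatorname{col}(W_1(t_0))$ for all $t$; since $t_0$ is arbitrary, $\operatorname{col}(W_1(t))$ is independent of $t$, so $\rank W_1(t)\equiv\rank W_1(0)=k$, and by the first paragraph $\rank W(t)\equiv k$, i.e.\ $W(t)\in\mathcal{M}_k$ for all $t\ge 0$. (The degenerate case $k=0$ is covered automatically: then $W_1\equiv 0$ and $W\equiv 0\in\mathcal{M}_0$.)
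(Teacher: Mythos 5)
Your proof is correct, and it takes a genuinely different route from the paper's. You derive from Lemma~\ref{L:PREL}(1) and the propagated balancedness the linear equation $\dot W_1(t)=W_1(t)D_1(t)$, and the key point — that $D_1$ is \emph{uniformly} bounded because $(W_2^T\cdots W_N^T)(W_2^T\cdots W_N^T)^T=(W_1W_1^T)^{N-1}$ forces the nonzero singular values of $W_1^+W_2^T\cdots W_N^T$ to be $\sigma_i(W_1)^{N-2}$ with $N-2\ge 0$ — is verified correctly; Gronwall then makes $\operatorname{col}(W_1(t))$ time-invariant, and $\rank W(t)=\rank W_1(t)$ (from $W^TW=(W_1^TW_1)^N$) finishes the argument. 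The paper proceeds quite differently: it sets $P=W_1^TW_1$, $Q=W_NW_N^T$, observes that $(P,Q,W)$ satisfies a locally Lipschitz ODE system, and argues by contradiction — if the rank changed, it constructs a new balanced factorization through a hidden layer of width exactly $k$ realizing the same $(P,Q,W)$ at a suitable time, so that by uniqueness the subsequent product coincides with $W(t)$ while having rank at most $k$; a separate minimal-time argument is needed for the direction that would require running the flow backward, where only local existence is available. Your argument buys elementarity and symmetry in time: no auxiliary narrower network, no uniqueness argument for the lumped system, and no backward-in-time existence discussion, since the Gronwall estimate is applied within the forward flow on $[0,\infty)$ in both directions from $t_0$; it also isolates an explicit invariant (the column space of $W_1$). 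One point worth making explicit if you write this up: $D_1(t)$ involves $W_1(t)^+$ and need not be continuous a priori, but this is harmless because your differential inequality $|\dot\beta(t)|\le 2C_D\beta(t)$ is pointwise, $\beta$ is $C^1$, and only the uniform bound $C_D$ enters Gronwall. The paper's approach, in exchange, avoids pseudoinverse and singular-value bookkeeping and reuses machinery (uniqueness for the $(P,Q,W)$ system, the balanced-factorization construction) that it needs elsewhere, e.g.\ for Corollary~\ref{cor:Riemann} and Theorem~\ref{thm:balanced_flow_on_M_k}.
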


\begin{proof}
It follows from Theorem~\ref{globconv} that for any given $t_0\in \RR$ and
initial values $W_1(t_0),\hdots, W_N(t_0)$, a (unique) solution $W_1(t),\hdots,
W_N(t)$ of \eqref{gradflow} is defined for all $t\geq t_0$. By the Picard–Lindelöf
theorem, there also exists $\varepsilon>0$ such that the solution
$W_1(t),\hdots, W_N(t)$ is defined and unique on $(t_0-\varepsilon, 0]$, hence
on $(t_0-\varepsilon, \infty)$.

Since the initial values $W_j(0)$  are balanced, for any $t\geq 0$ the matrices
$W_1(t),\hdots, W_N(t)$ are balanced as well, cf. Lemma \ref{L:PREL}. It follows
that for any $t\geq 0$, we have $$W(t)W(t)^T=(W_N(t)W_N(t)^T)^N \text{ and }
W(t)^TW(t)=(W_1(t)^TW_1(t))^N,$$ compare also the proof of Theorem 1 in  \cite{AroraCohenHazan2018}.
 Both equations are directly verified for $N=2$
and easily follow by induction for any $N\geq 2$.
	
Let now   $P(t)=W_1(t)^TW_1(t)$ and $Q(t)=W_N(t)W_N(t)^T$. It follows that
$P(t)=(W(t)^TW(t))^{1/N}$ and $Q(t)=(W(t)W(t)^T)^{1/N}$ for all $t\in
[0,\infty)$.  Using $ \nabla_W L^1(W)=WXX^T-YX^T$ together with the explicit
form of the gradient flow \eqref{gradflow} for $W_1$ and $W_N$ given by Lemma
\ref{L:PREL}, point (1),   and substituting $P=(W^TW)^{1/N}$ and
$Q=(WW^T)^{1/N}$ in the flow equation \eqref{Wdotbalanced} for $W$, we
obtain the following system of differential equations for $P,Q,W$.
\begin{align}\label{PQW-ODE}
\begin{split}
\dot P &=-W^T(WXX^T-YX^T)-(WXX^T-YX^T)^TW,
\\
\dot Q &=-(WXX^T-YX^T)W^T-W(WXX^T-YX^T)^T,
\\
\dot W &=-  \sum_{j=1}^{N} Q^{N-j}(WXX^T-YX^T)P^{j-1}.
\end{split}
\end{align}
Since the right hand side of the system \eqref{PQW-ODE} is locally Lipschitz
continuous in $P,Q,W$, it follows in particular that $W(t)$ (and also $P(t)$ and
$Q(t)$) is uniquely determined by any initial values $P(t_0),Q(t_0),W(t_0)$.
	
Assume now that the claim of the proposition does not hold. Then there are
$t_0,t_1\in [0,\infty)$ with $\rank(W(t_1))> \rank(W(t_0))$. Since
$W(t)=W_N(t)\cdots W_1(t)$, it follows that
\[
  \min(d_0,\hdots, d_N)\geq \rank(W(t_1))> \rank(W(t_0)).
\]
We define  $\kk=\rank(W(t_0))$ and distinguish the cases $\kk=0$ and $\kk>0$.

\textbf{Case~1.} $\kk=0$. Then $W(t_0)=0$ and hence also $W_1(t_0)=0$. Due to balancedness it follows that $W_i(t_0)=0$ for all $i\in \{1,\hdots, N\}$. But then it follows that $W_i(t)=0$  for all $t\in[0,\infty)$  and for all $i\in \{1,\hdots, N\}$, hence also $W(t)=0$ for all $t\in[0,\infty)$, so the rank of $W$ is constant.

\textbf{Case~2.} $\kk>0$.
We  assume first that $t_1>t_0$ and will discuss the case $t_0>t_1$ below.

We  replace the first hidden layer (which has size $d_1$) by a new hidden layer
of size $\kk$ (all other layer sizes remain as before) and define new initial
values $\tilde W_1,\hdots, \tilde W_N$ (at $t_0$) for our new layer sizes in
such a way  that $\tilde W_1,\hdots, \tilde W_N$ are balanced and $\tilde
W:=\tilde W_N\cdots \tilde W_1=W(t_0)$ and $\tilde P:=\tilde W_1^T\tilde W_1=
W_1(t_0)^T W_1(t_0)$  and  $\tilde Q:=\tilde W_N\tilde W_N^T= W_N(t_0)
W_N(t_0)^T$. For $t\in [t_0,\infty)$, let $\tilde W_1(t),\hdots, \tilde W_N(t)$
be the corresponding solutions of  the gradient flow \eqref{gradflow} for the
new layer sizes with initial values at $t_0$ given by $\tilde W_1(t_0)=\tilde
W_1,\hdots, \tilde W_N(t_0)=\tilde W_N$. Similarly, let $\tilde W(t)=\tilde
W_N(t)\cdots \tilde W_1(t)$. Assuming that we can construct $\tilde W_1,\hdots,
\tilde W_N$ as above, it follows in particular that $\tilde W(t)=W(t)$ for all
$t\in [t_0,\infty)$, since, as discussed before,  $W(t)$ is uniquely determined
by $P(t_0)=\tilde P,Q(t_0)=\tilde Q, W(t_0)=\tilde W$ for all $t\in [0,\infty)$.
But  our new minimal layer size is $\kk$, so it follows that the product
$\tilde W(t)=\tilde W_N(t)\cdots \tilde W_1(t)$ has rank at most $\kk$ for any
$t\in [t_0,\infty)$. In particular, $\rank(W(t_1))=\rank(\tilde W(t_1))\leq
\kk$. This contradicts our assumption $\rank(W(t_1))> \rank(W(t_0))=\kk$.

Assume now that $t_0>t_1$. Here we cannot directly argue as above since
\emph{backward in time} we only have \emph{local} existence of solutions of
\eqref{gradflow}. However, since the set $\{W\in \RR^{d_y\times d_x}:
\rank(W)<\rank(W(t_1))\}$ is closed in $\RR^{d_y\times d_x}$, it follows that
the set $\{t\geq t_1: \rank(W(t))< \rank(W(t_1) \}$ has a minimum $\tau_0$,
which is larger than $t_1$. Then for any $\varepsilon>0$, there is a $\tau_1\in
(\tau_0-\varepsilon, \tau_0)$ with $\rank(W(\tau_1))>\rank (W(\tau_0))$. 

Now  replace $t_0$ by $\tau_0$, define as before
$\kk=\rank(W(t_0))=\rank(W(\tau_0))$ and assume that we can construct   $\tilde
W_1,\hdots, \tilde W_N$ as above. Then for some $\varepsilon>0$, the flows
$\tilde W_1(t),\hdots, \tilde W_N(t)$ solving the  gradient flow
\eqref{gradflow} for the new layer sizes and with initial values at $t_0$ given
by $\tilde W_1(t_0)=\tilde W_1,\hdots, \tilde W_N(t_0)=\tilde W_N$ are defined
(and balanced) on the interval $(t_0-\varepsilon,\infty)$. Next we replace $t_1$
by a suitable $\tau_1\in (\tau_0-\varepsilon, \tau_0)$ with
$\rank(W(\tau_1))>\rank(W(\tau_0))=\kk$. Now we can argue  as above: On the one
hand, the rank of $\tilde W(t_1)=\tilde W_N(t_1)\cdots \tilde W_1(t_1)$ is at most
$\kk$, on the other hand, we have $\tilde W(t_1)=W(t_1)$, so the rank of
$W(t_1)$ is also at most $\kk$, giving the desired contradiction.

 It remains to construct  $\tilde W_1,\hdots, \tilde W_N$ as announced. First, we introduce some notation.
Let ${\tilde d}_1=\kk$ and for $j\in \{0,\hdots, N\}\setminus \{1\}$ let $\tilde d_j=d_j$. (Thus the $\tilde d_j$ are our new layer sizes.)
 		Given  integers $a,b\geq \kk$ and $c_1,\hdots, c_{\kk}\in \RR$, we denote by $S_{a,b}(c_1,\hdots, c_\kk)\in \RR^{a\times b}$ the $a\times b$ diagonal matrix whose first $\kk$ diagonal entries are $c_1,\hdots, c_\kk$ and whose remaining entries are all equal to $0$.
 		
 		Now write $W:=W(t_0)=USV^T,$ where $U\in O(d_y)=O(d_N)$ and $V\in  O(d_x)=O(d_0) $ and $S=S_{d_N,d_0}(\sigma_1,\hdots,\sigma_\kk)$, where $\sigma_1\geq \hdots\geq \sigma_\kk>0$.
 		Let $W_N=W_N(t_0)$ and $W_1=W_1(t_0)$. Then
 		since 
 		$W^TW=(W_1^TW_1)^N$, we can write $W_1=U_1S_{d_1,d_{0}}(\sigma_1^{1/N},\hdots,\sigma_\kk^{1/N})V^T$ for some $U_1\in O(d_1)$. Similarly,
 		 since $WW^T=(W_NW_N^T)^N$, we have $W_N=US_{d_N,d_{N-1}}(\sigma_1^{1/N},\hdots,\sigma_\kk^{1/N})V_N^T$ for some $V_N\in O(d_{N-1})$.  
 			Define now $
 		\tilde W_1=S_{\tilde d_1, \tilde d_{0}}(\sigma_1^{1/N},\hdots,\sigma_\kk^{1/N})V^T$ and 
 		$\tilde W_N=US_{\tilde d_N,\tilde d_{N-1}}(\sigma_1^{1/N},\hdots,\sigma_\kk^{1/N})
 		$
 		and, for $j\in \{2,\hdots, N-1\}$, 
 		$\tilde W_j= S_{\tilde d_j,\tilde d_{j-1}}(\sigma_1^{1/N},\hdots,\sigma_\kk^{1/N}).
 		$
 		Note that this construction is possible since $\min(\tilde d_0,\hdots, \tilde d_N)=\kk$. (Compare \cite[Section 3.3]{arora2018convergence} for a similar construction of balanced initial conditions.)
 		Then  obviously, the $\tilde W_i$ are indeed balanced, and we have $\tilde W_1^T\tilde W_1= W_1^T W_1$  and  $\tilde W_N\tilde W_N^T= W_N W_N^T$ and $\tilde W_N\cdots \tilde W_1=W.$
 		This ends the proof.
\end{proof}

\begin{remark} Assume again the situation of Proposition \ref{rank_const}. Then 
	in the limit $t\to \infty$, the rank of $W$ still cannot increase, i.e., if $W(0)$ has rank $k$ then the rank of $\underset{t\to \infty}{\lim}W(t)$ is at most $k$. This follows from Proposition~\ref{rank_const} together with the fact that the set of matrices of rank at most $k$ is closed in $\RR^{d_y\times d_x}$. 
	However, it can happen that the rank of $\underset{t\to \infty}{\lim}W(t)$ is strictly smaller than $k$, see Remark~\ref{rem:conjecturecondition}  for an explicit example. 
\end{remark}

\begin{remark} Proposition~\ref{rank_const} may fail if the initial values $W_j(0)$, $j=1,\hdots,N$, are not balanced, i.e., the rank of $W(t)$ may then drop or increase in finite time. An example for such behaviour can be easily given in the case $N = 2$, $d_0 = d_1 = d_2 = 1$, $X = Y = 1$. Choosing $W_1(0) = 0$ and $W_2(0) = 1$ gives $W(0) = 0$. Moreover $\frac{d}{dt} W_1 (0) = W_2(0) = 1$ and $\frac{d}{dt} W_2 (0) = W_1(0) = 0$. This means that for $t\neq 0$ and $|t|$ sufficiently small we have $W_1(t)\neq 0$ and $W_2(t)\neq 0$, hence rank $W(t) = 1$. In other words, for small enough $\varepsilon > 0$, when moving with $t$ from $-\epsilon$ to $0$ the rank of $W(t)$ drops from $1$ to $0$ and when continuing from $t=0$ to $t=\epsilon$ the rank increases again to $1$.
\end{remark}

The statements of Lemma \ref{C1vectorfield} and Corollary \ref{uniquesolution} below are probably well known, but we include them here for completeness.
\begin{lemma}\label{C1vectorfield}
	Let $\mathcal M$ be a $C^2$-manifold which carries a Riemannian metric $g$ of class $C^1$ and let $L:\mathcal M\to \RR$ be a $C^2$-map. Then  $-\nabla^g(L)$ is a $C^1$-vector field. 
\end{lemma}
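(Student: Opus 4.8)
The statement to prove is Lemma \ref{C1vectorfield}: if $\mathcal{M}$ is a $C^2$-manifold with a $C^1$ Riemannian metric $g$ and $L : \mathcal{M} \to \RR$ is $C^2$, then the Riemannian gradient $-\nabla^g L$ is a $C^1$ vector field. The plan is to work in an arbitrary $C^2$ chart and verify that the coefficients of $\nabla^g L$ in the induced coordinate frame are $C^1$ functions; since the chart is arbitrary and transition maps are $C^1$ (being $C^2$), this gives that $\nabla^g L$ is a $C^1$ section of the tangent bundle.

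\textbf{Step 1: pass to a chart.} Fix a $C^2$ chart $\varphi : U \to \RR^n$ with coordinates $(x^1,\dots,x^n)$, and let $\partial_1,\dots,\partial_n$ be the associated coordinate vector fields. In these coordinates the metric is represented by the symmetric positive-definite matrix field $G(x) = (g_{ij}(x))$ with $g_{ij} = g(\partial_i,\partial_j)$; by hypothesis each $g_{ij}$ is $C^1$ on $\varphi(U)$. The differential $dL$ is represented by the covector field with components $(\partial_i L)(x)$; since $L$ is $C^2$, each $\partial_i L$ is $C^1$.

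\textbf{Step 2: invert the metric.} By definition of the gradient, $\nabla^g L = \sum_{i,j} g^{ij} (\partial_j L)\, \partial_i$, where $(g^{ij})$ is the inverse matrix of $(g_{ij})$. The key point is that matrix inversion is a $C^\infty$ (indeed real-analytic) map on the open set of invertible matrices — e.g. via Cramer's rule, $g^{ij} = \operatorname{cof}_{ji}(G)/\det(G)$, a rational function of the entries with nonvanishing denominator. Composing the $C^1$ map $x \mapsto G(x)$ with this smooth map shows that each $g^{ij}$ is $C^1$ on $\varphi(U)$. Hence each coefficient $\sum_j g^{ij}(x)\,(\partial_j L)(x)$ is a sum of products of $C^1$ functions, hence $C^1$.

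\textbf{Step 3: globalize.} The computation above shows that in every $C^2$ chart the local coordinate representation of $-\nabla^g L$ has $C^1$ coefficient functions, which is exactly the definition of a $C^1$ vector field on the $C^2$-manifold $\mathcal{M}$ (one should note that $C^1$-ness of the coefficients is chart-independent precisely because the transition maps of a $C^2$-atlas are $C^1$ with $C^1$ — in fact $C^1$ — Jacobians, so the vector-field transformation rule preserves the $C^1$ class; this is the natural regularity in which vector fields can be discussed on a $C^2$-manifold). This completes the proof. \emph{Main obstacle:} there is no serious obstacle here — the only thing to be careful about is the bookkeeping that $C^1$ regularity of the coefficients is a well-defined, chart-independent notion on a merely $C^2$ manifold, and that inversion of the $C^1$ matrix field $G$ does not lose a derivative, which it does not because inversion is smooth away from singular matrices.
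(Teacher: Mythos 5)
Your proof is correct and follows essentially the same route as the paper: express $\nabla^g L = \sum_{i,j} g^{ij}(\partial_j L)\,\partial_i$ in local coordinates, note that the inverse metric $(g^{ij})$ is $C^1$ because matrix inversion is smooth on invertible matrices, and that $\partial_j L$ is $C^1$ since $L$ is $C^2$. The extra details you supply (Cramer's rule, chart-independence of the $C^1$ class on a $C^2$-manifold) are fine but not a different argument.
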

\begin{proof} In local coordinates, we have 
$-\nabla^g (L) = -\sum_{i,j} g^{i,j}\frac{\partial L}{\partial x_i}\frac{\partial}{\partial x_j}$, compare  \cite[Lemma 4.3]{bahu04}.
Since by assumption the matrix  with entries $g_{i,j}$ is $C^1$, also the inverse matrix $(g^{i,j})_{i,j}$ is $C^1$. Since also by assumption $L$ is a $C^2$-map, the partial derivatives $\frac{\partial L}{\partial x_i}$ are  $C^1$. It follows that $ -\nabla^g (L)$ is indeed a $C^1$-vector field.
\end{proof}

\begin{corollary}\label{uniquesolution}
	In the situation of Lemma \ref{C1vectorfield}, for any $x_0\in \mathcal M$, there is a unique maximal integral curve $\phi: J\to M$ with $\phi(0)=x_0$ and $$
	\dot\phi(t)=-\nabla^g(L(\phi(t))) \ \forall t\in J.
	$$
	Here maximal means that the interval $J$ is the maximal open interval containing $0$ with this property. 
\end{corollary}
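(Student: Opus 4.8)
The plan is to reduce the statement to the classical Picard--Lindel\"of theorem in Euclidean space via local charts, and then to glue local solutions using uniqueness. By Lemma~\ref{C1vectorfield}, the vector field $V := -\nabla^g L$ is of class $C^1$ on $\mathcal M$. First I would establish local existence and uniqueness: given $x_0 \in \mathcal M$, pick a chart $(U,\kappa)$ with $x_0 \in U$; in these coordinates the equation $\dot\phi(t) = V(\phi(t))$ becomes $\dot y(t) = (\kappa_* V)(y(t))$, where $\kappa_* V$ is a $C^1$ (hence locally Lipschitz) vector field on the open set $\kappa(U) \subseteq \RR^n$ with $n = \dim \mathcal M$. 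The Picard--Lindel\"of theorem then yields $\eps > 0$ and a unique $C^1$ curve $y : (-\eps,\eps) \to \kappa(U)$ with $y(0) = \kappa(x_0)$ solving this ODE, and pulling back via $\kappa^{-1}$ gives a local integral curve of $V$ through $x_0$.

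Next I would prove the global uniqueness statement: if $\phi_1 : J_1 \to \mathcal M$ and $\phi_2 : J_2 \to \mathcal M$ are integral curves of $V$ with $\phi_1(0) = \phi_2(0) = x_0$, then $\phi_1 = \phi_2$ on $J_1 \cap J_2$. Since $J := J_1 \cap J_2$ is an interval containing $0$, it is connected, so it suffices to show that $A := \{t \in J : \phi_1(t) = \phi_2(t)\}$ is nonempty, open, and closed in $J$. Nonemptiness is clear since $0 \in A$, and closedness follows from continuity of $\phi_1,\phi_2$ and the Hausdorff property of $\mathcal M$. For openness, fix $t_* \in A$, set $p := \phi_1(t_*) = \phi_2(t_*)$, choose a chart around $p$, and apply the local uniqueness from the first step to the curves $t \mapsto \phi_i(t_* + t)$ to conclude that $\phi_1 = \phi_2$ on a neighborhood of $t_*$ in $J$.

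Finally, to obtain the maximal integral curve, I would let $\mathcal S$ be the family of all integral curves $\phi : J_\phi \to \mathcal M$ of $V$ with $\phi(0) = x_0$ and $J_\phi$ an open interval containing $0$; this family is nonempty by the first step. Set $J := \bigcup_{\phi \in \mathcal S} J_\phi$, which is again an open interval containing $0$, and define $\Phi : J \to \mathcal M$ by $\Phi(t) := \phi(t)$ for any $\phi \in \mathcal S$ with $t \in J_\phi$. By the global uniqueness just proved, $\Phi$ is well defined, and it is an integral curve of $V$ with $\Phi(0) = x_0$ because being an integral curve is a local condition in $t$. By construction $J_\phi \subseteq J$ for every $\phi \in \mathcal S$, so $\Phi$ is maximal, and uniqueness of $\Phi$ among maximal integral curves is immediate from the uniqueness statement.

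I do not anticipate a genuine obstacle here, as this is textbook ODE-on-manifolds material; the only points requiring a modicum of care are the gluing across different charts (handled by restricting to time intervals short enough that solutions stay inside a single chart) and the verification that ``integral curve of $V$'' is a chart-independent notion, which follows from the transformation behavior of vector fields under diffeomorphisms.
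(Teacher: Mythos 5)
Your proposal is correct. The paper itself does not prove these facts from scratch: it simply invokes Lemma~\ref{C1vectorfield} to get a $C^1$ vector field and then cites Theorem~\ref{uniqueness} (Lang's uniqueness theorem for integral curves, reproduced in Appendix~\ref{appendix:flows}) together with Lang's construction of the maximal existence interval $J(x_0)$. What you have done is essentially reprove the cited material: local existence and uniqueness by transporting the $C^1$ (hence locally Lipschitz) field $-\nabla^g L$ into a chart and applying Picard--Lindel\"of, global uniqueness on $J_1\cap J_2$ by the standard nonempty--open--closed connectedness argument (where, as you correctly note, closedness uses the Hausdorff property of $\mathcal M$, which holds here since $\mathcal M_k$ is an embedded submanifold of $\RR^{d_y\times d_x}$ and is in any case part of the standing manifold conventions), and maximality by taking the union of all integral curves through $x_0$ and gluing via uniqueness. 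Both routes are valid; yours buys a self-contained argument at the cost of length, while the paper's citation route is shorter and has the additional advantage that the appendix also records the regularity of the flow map (Theorem~\ref{flowprop}), which is needed later in the proof of Theorem~\ref{thm:avoid-saddles} and which your argument does not address (nor does the corollary require it). One minor point of care, which you implicitly handle: when invoking chart-local uniqueness at a time $t_*$ in the open--closed argument, you should restrict to a small time interval on which both curves remain in the chosen chart; with that said, the argument is complete.
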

\begin{proof}
	This follows from Lemma \ref{C1vectorfield} together with Theorem \ref{uniqueness} 
	in appendix \ref{appendix:flows}. For the existence of  $J$, see also appendix  \ref{appendix:flows} or directly \cite[Section IV, §2]{Lang99}.
\end{proof}

\begin{corollary}\label{cor:Riemann} Suppose that $XX^T$ has full rank and that $W_1(t), \ldots, W_N(t)$ are 
	solutions of the gradient flow \eqref{gradflow} of $L^N$, with initial values
	$W_j(0)$ that are balanced; recall Definition~\ref{D:BALA}. 
	Define the product $W(t) := W_N(t) \cdots W_1(t)$. If $W(0)$ is
	contained in $\mathcal{M}_k$ (i.e., has rank $k$), then $W(t)$ 
	solves for all $t\in[0,\infty)$ the gradient flow equation
	\begin{align}\label{riemflow}
	\dot{W}=-\nabla^g L^1(W)
	\end{align}
	on $\mathcal{M}_k$, where $\nabla^g$ denotes the Riemannian gradient of $L^1$ with respect to the
	metric $g$ on $\mathcal{M}_k$ defined in \eqref{skp}.
	Further this is the only solution of \eqref{riemflow} in $\mathcal M_k$.
\end{corollary}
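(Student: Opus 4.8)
The plan is to assemble the corollary from the ingredients already in place. First I would record that, by Theorem~\ref{globconv}, the solutions $W_j(t)$ of \eqref{gradflow} and hence the product $W(t)=W_N(t)\cdots W_1(t)$ are defined for all $t\in[0,\infty)$; since the right-hand side of \eqref{gradflow} is polynomial, these curves are in fact $C^\infty$. Next, because $XX^T$ has full rank and the $W_j(0)$ are balanced with $W(0)\in\mathcal{M}_k$, Proposition~\ref{rank_const} guarantees $W(t)\in\mathcal{M}_k$ for every finite $t$; as $\mathcal{M}_k$ is an embedded submanifold of $\RR^{d_y\times d_x}$, this makes $W$ a smooth curve \emph{in} $\mathcal{M}_k$, with $\dot W(t)\in T_{W(t)}(\mathcal{M}_k)$, so that the right-hand side $-\nabla^g L^1(W(t))$ of \eqref{riemflow} is well defined along it.

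For the flow equation itself I would use that balancedness of the $W_j(0)$, via Lemma~\ref{L:PREL}(4), yields \eqref{gradflowprod}, i.e.\ $\dot W(t)=-\mathcal{A}_{W(t)}\big(\nabla_W L^1(W(t))\big)$. By the identification \eqref{ggrad} --- which rests on Lemma~\ref{PosDefLemma} and on the defining relation $g_W(\nabla^g f(W),Z)=Df(W)[Z]$ for all $Z\in T_W(\mathcal{M}_k)$ --- the right-hand side equals $-\nabla^g L^1(W(t))$, the Riemannian gradient of the restriction $L^1|_{\mathcal{M}_k}$ with respect to $g$ (here $L^1$ is quadratic, hence $C^2$, so this gradient exists). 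This already gives the first assertion: $\dot W=-\nabla^g L^1(W)$ on $[0,\infty)$.

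For uniqueness I would invoke the regularity of the flow field. The manifold $\mathcal{M}_k$ is $C^2$ (indeed smooth), the metric $g$ is of class $C^1$ by Proposition~\ref{C1metric}, and $L^1$ is $C^2$; hence Lemma~\ref{C1vectorfield} shows $-\nabla^g L^1$ is a $C^1$-vector field on $\mathcal{M}_k$. Corollary~\ref{uniquesolution} then produces a unique maximal integral curve $\phi\colon J\to\mathcal{M}_k$ with $\phi(0)=W(0)$. By the previous paragraph $W|_{[0,\infty)}$ is such an integral curve with the same initial value, so $W=\phi$ on $J\cap[0,\infty)$, and since $W$ is defined on all of $[0,\infty)$ we get $[0,\infty)\subseteq J$ and $\phi\equiv W$ there; any other solution of \eqref{riemflow} in $\mathcal{M}_k$ with initial value $W(0)$, being an integral curve of the same $C^1$ field, must likewise coincide with $\phi=W$ on its domain. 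Hence $W$ is the unique solution.

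The argument is mostly bookkeeping; the one non-cosmetic input is the $C^1$-regularity of $g$ from Proposition~\ref{C1metric}, which is exactly what upgrades Corollary~\ref{uniquesolution} from mere existence to uniqueness of the Riemannian flow, and I expect that (rather than anything in the present corollary) to be the real work. The second point to be careful about is that \eqref{riemflow} is only meaningful because Proposition~\ref{rank_const} keeps the product on $\mathcal{M}_k$ for all finite times.
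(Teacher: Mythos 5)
Your argument is correct and follows essentially the same route as the paper: Proposition~\ref{rank_const} to keep $W(t)$ on $\mathcal{M}_k$, Lemma~\ref{L:PREL}(4) together with Lemma~\ref{PosDefLemma} and the identification \eqref{ggrad} to recognize \eqref{gradflowprod} as \eqref{riemflow}, and Proposition~\ref{C1metric} with Lemma~\ref{C1vectorfield} and Corollary~\ref{uniquesolution} for uniqueness. You merely spell out the bookkeeping (smoothness of $W(t)$, the maximal-interval argument) that the paper leaves implicit.
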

\begin{proof}  Proposition \ref{rank_const}  shows that 
	 $W(t) \in \mathcal{M}_k$ for all  $t\in[0,\infty)$.
	Lemma~\ref{PosDefLemma} and the discussion below Proposition \ref{C1metric}  show that $W(t)$ solves indeed  equation \eqref{riemflow} with the particular choice of $g$ as in \eqref{skp} as the
	metric. (Note that then   \eqref{riemflow} 
	is a reformulation of
	\eqref{gradflowprod}.) By Corollary \ref{uniquesolution}  there are no other solutions in $\mathcal{M}_k$.
\end{proof}

\begin{theorem}\label{thm:balanced_flow_on_M_k} Assume that $XX^T$ has full rank and let $N\geq 2$.
	Then for any initialization $W(0)\in \RR^{d_y\times d_x}$, denoting by $k$  the rank of $W(0)$,   there
	is a uniquely defined flow  $W(t)$ on  $\mathcal{M}_k$  for  $t\in [0,\infty)$
	which satisfies \eqref{riemflow}.
\end{theorem}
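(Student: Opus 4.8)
The plan is to deduce all-time existence of the Riemannian gradient flow \eqref{riemflow} on $\mathcal{M}_k$ from the all-time existence of the classical flow \eqref{gradflow} established in Theorem~\ref{globconv}, via the product representation of Corollary~\ref{cor:Riemann}; uniqueness will then come for free from Corollary~\ref{uniquesolution} (equivalently, from the last sentence of Corollary~\ref{cor:Riemann}). Concretely, given $W(0)\in\RR^{d_y\times d_x}$ of rank $k$ — note $k\le\min\{d_x,d_y\}$ — I would realize $W(0)$ as a product of balanced weight matrices of a suitably chosen $N$-layer network, run the gradient flow of the corresponding $L^N$, and read off the sought flow on $\mathcal{M}_k$ as the product curve.

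First I would fix auxiliary layer sizes $\tilde d_0=d_x$, $\tilde d_N=d_y$, and $\tilde d_1=\cdots=\tilde d_{N-1}=\min\{d_x,d_y\}$ (any common value $\ge k$ works), so that $\tilde r:=\min\{\tilde d_0,\ldots,\tilde d_N\}\ge k$. The key point, already visible from \eqref{A_W}--\eqref{skp}, is that the operator $\mathcal{A}_W$, and hence the metric $g$ on $\mathcal{M}_k$, depends only on $N$ and on the ambient dimensions $d_x,d_y$ — never on the hidden dimensions — so equation \eqref{riemflow} is one and the same whatever network we use to factor $W$. Next I would construct a balanced tuple $W_1(0),\ldots,W_N(0)$ for the sizes $\tilde d_j$ with $W_N(0)\cdots W_1(0)=W(0)$: this is precisely the construction carried out at the end of the proof of Proposition~\ref{rank_const} (write $W(0)=USV^T$ with the nonzero singular values $\sigma_1\ge\cdots\ge\sigma_k>0$ and distribute the factors $\sigma_i^{1/N}$ across the layers), and it is available exactly because $\tilde r\ge k$.

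Since $XX^T$ has full rank, Theorem~\ref{globconv} then guarantees that the solutions $W_1(t),\ldots,W_N(t)$ of \eqref{gradflow} with these balanced initial values are defined and bounded for all $t\ge0$. Because the initial data are balanced, Corollary~\ref{cor:Riemann} applies and shows that $W(t):=W_N(t)\cdots W_1(t)$ stays in $\mathcal{M}_k$ for all $t\in[0,\infty)$ and solves \eqref{riemflow} there with the metric $g$; this is the desired flow on $[0,\infty)$. For uniqueness I would invoke that $\mathcal{M}_k$ is a smooth submanifold of $\RR^{d_y\times d_x}$ carrying the $C^1$ metric $g$ (Proposition~\ref{C1metric}) and that $L^1$ restricted to $\mathcal{M}_k$ is $C^\infty$, so that Corollary~\ref{uniquesolution} gives at most one maximal integral curve of $-\nabla^g L^1$ through $W(0)$ — equivalently, the constructed curve is the unique solution of \eqref{riemflow} in $\mathcal{M}_k$ by Corollary~\ref{cor:Riemann}. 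The proof has essentially no hard step, since the analytic work is done in Theorem~\ref{globconv}, Proposition~\ref{rank_const}, and Corollaries~\ref{uniquesolution}--\ref{cor:Riemann}; the only point that requires a little care is the observation that $g$ is insensitive to the choice of hidden widths, which is what allows the argument to cover ranks $k$ exceeding the $r$ of any particular fixed network, together with the (routine) verification that the balanced factorization exists whenever $\tilde r\ge k$.
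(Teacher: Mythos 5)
Your proof is correct and follows essentially the same route as the paper: factor $W(0)$ into a balanced tuple with hidden widths at least $k$ (as in the proof of Proposition~\ref{rank_const}), invoke Theorem~\ref{globconv} for global existence, and use Corollary~\ref{cor:Riemann} (with Corollary~\ref{uniquesolution} and Proposition~\ref{C1metric}) for the fact that the product solves \eqref{riemflow} on $\mathcal{M}_k$ and is its unique solution. Your explicit remark that $g$ and $\mathcal{A}_W$ are insensitive to the choice of hidden dimensions is a point the paper leaves implicit, but it is the same argument.
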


\begin{proof}
	Any $W(0)\in \mathcal{M}_k$ can be written as a product $W(0)=W_N(0)\cdots
	W_1(0)$ for suitable balanced $W_i(0)\in \RR^{d_i\times d_{i-1}}, \ i=1,\hdots
	N$, where  $d_0=d_x$ and $d_N=d_y$ and the remaining $d_i$ (i.e. for $i\in
	\{1,\hdots, N-1\}$) are arbitrary integers greater than or equal to $k$; compare
	the proof of Proposition \ref{rank_const} or \cite[Section
	3.3]{arora2018convergence}. If $W_1(t),\hdots, W_N(t)$ satisfy equation
	\eqref{gradflow} with initial values  $W_1(0),\hdots, W_N(0)$, then $W(t)$ solves \eqref{riemflow} on  $\mathcal{M}_k$; see
	Corollary \ref{cor:Riemann}. Since the $W_i(t)$ are defined for all $t\geq 0$ by
	Theorem \ref{globconv}, the  claim follows (see again Corollary
	\ref{cor:Riemann} for the fact that $W$ is well defined, i.e., there are no other
	solutions on $\mathcal{M}_k$).
\end{proof}

\begin{remark}
Our Riemannian metric is (in the limit
$N\rightarrow \infty$) similar to the Bogoliubov inner product of quantum
statistical mechanics (when replacing $\mathcal{A}_W^{-1}$ with $\mathcal{A}_W$), which is defined on the manifold of positive definite
matrices; see \cite{carlenmaas12}.
\end{remark}

\section{Linear Autoencoders with one hidden layer}
\label{sec:autoencoders}

In this section we consider linear autoencoders with one hidden layer in the symmetric case, i.e., we assume $Y=X$ and $N=2$ and we impose that $W_2=W_1^T$. The nonsymmetric case with one hidden layer will be discussed in Appendix \ref{appendix:nonsymmetric}.
 
For $V:=W_2=W_1^T\in \RR^{d\times r}$ (where we write $d$ for $d_x=d_y$ and  $r$ for $d_1$), let
$$E(V)=L^2(V^T,V)=\frac 1 2\norm{X-VV^T X}_F^2.$$ 

We consider the gradient  flow:
\begin{equation}\label{gradflowSym}
\dot{V}=-\nabla E(V), \ \ V(0)=V_0,
\end{equation}
where we assume that $V_0^TV_0=I_r.$
Computing the gradient of $E$ gives  $$
\nabla E(V)= -(I_d-VV^T)XX^TV-XX^T(I_d-VV^T)V.
$$ 
Thus the gradient flow for $V$ is given by
\begin{equation}\label{gradflowSym2}
\hspace{-0.5cm} \dot{V}=(I_d-VV^T)XX^TV+XX^T(I_d-VV^T)V, \qquad V(0)=V_0, \ V_0^TV_0=I_r.
\end{equation}
This can be analyzed using results by Helmke, Moore, and Yan on Oja's flow  \cite{yahemo94}.

\begin{theorem}
	\begin{enumerate}
		\item The flow (\ref{gradflowSym2}) has a unique solution on the interval $[0,\infty)$. 
		\item $V(t)^TV(t)=I_r$ for all $t\geq 0$. 
		\item The limit $\overline V=\lim_{t\rightarrow \infty} V(t)$ exists and it is an equilibrium. 
		\item The convergence is exponential: There are positive constants $c_1,c_2$ such that $$\|V(t)-\overline V\|_F\leq c_1e^{-c_2t}$$ for all $t\geq 0$.

\item 	The equilibrium points of the  flow (\ref{gradflowSym2})  are precisely the matrices of the form $$\overline	V=\left ( v_{1}|\hdots| v_{r} \right ) Q,
	$$
	where
		  $v_{1}, \hdots, v_{r}$ are orthonormal eigenvectors of $XX^T$
		and $Q$ is an orthogonal $r\times r$-matrix.
		\end{enumerate}
\end{theorem}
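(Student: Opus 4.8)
The plan is to work on the Stiefel manifold $\mathcal{S} := \{V \in \RR^{d\times r} : V^TV = I_r\}$, on which the initial datum of \eqref{gradflowSym2} lies, to prove that $\mathcal{S}$ is invariant under the flow, and to observe that \emph{on} $\mathcal{S}$ the flow \eqref{gradflowSym2} is precisely Oja's flow; parts (3)--(5) then follow from the global analysis of Oja's flow due to Helmke, Moore and Yan \cite{yahemo94}.

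For parts (1) and (2): the right-hand side of \eqref{gradflowSym2} is polynomial in the entries of $V$, hence locally Lipschitz, so Picard--Lindel\"of gives a unique local solution. To prove that $V^TV = I_r$ is preserved, write $A := XX^T$ and $\Delta(t) := V(t)^TV(t) - I_r$; using the identity $(I_d - VV^T)V = -V\Delta$ and the symmetry of $A$, a short computation yields
\[
\dot\Delta = -2\big(M\Delta + \Delta M\big), \qquad M(t) := V(t)^T A\, V(t),
\]
which is a linear non-autonomous (Lyapunov-type) equation for $\Delta$; since $\Delta(0) = 0$, uniqueness forces $\Delta \equiv 0$, which is (2). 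Because $\mathcal{S}$ is compact, the solution cannot leave every compact set in finite time and is therefore global, which is (1).

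Along the flow $V(t)$ stays in $\mathcal{S}$, where $(I_d - VV^T)V = 0$, so the second summand of \eqref{gradflowSym2} vanishes identically and \eqref{gradflowSym2} reduces to
\[
\dot V = (I_d - VV^T)\,A\,V,
\]
which is Oja's flow for the symmetric positive semidefinite matrix $A = XX^T$ --- equivalently, the Riemannian gradient flow on $\mathcal{S}$ of $V \mapsto -\tfrac12\tr(V^T A V)$, consistently with the fact that $E(V) = \tfrac12\big(\tr(A) - \tr(V^T A V)\big)$ on $\mathcal{S}$. Parts (3) and (4) --- existence of $\overline V = \lim_{t\to\infty} V(t)$ as an equilibrium and exponential convergence --- then follow from the results of \cite{yahemo94}. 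The step where I expect the real work to lie is exactly this invocation: one must check that the global convergence and the exponential rate proved there apply to an arbitrary symmetric positive semidefinite $A$ (in particular with possibly repeated eigenvalues of $XX^T$ and in the rank-deficient regime $r < d$) and for initial data on $\mathcal{S}$, and reconcile any difference in normalization of the flow.

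For part (5): $\overline V \in \mathcal{S}$ is an equilibrium of the reduced flow exactly when $(I_d - \overline V\,\overline V^T) A \overline V = 0$, i.e.\ $A\overline V = \overline V\,(\overline V^T A \overline V)$. Setting $B := \overline V^T A \overline V$ and diagonalizing $B = Q\Lambda Q^T$ with $Q \in O(r)$ and $\Lambda$ diagonal, the relation $A\overline V = \overline V B$ gives $A(\overline V Q) = (\overline V Q)\Lambda$, so the columns $v_1, \dots, v_r$ of $\overline V Q$ are eigenvectors of $XX^T$; they are orthonormal since $(\overline V Q)^T(\overline V Q) = I_r$, and $\overline V = (v_1|\cdots|v_r)\,Q^T$. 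Conversely, if $v_1,\dots,v_r$ are orthonormal eigenvectors of $XX^T$ and $Q \in O(r)$, then $(v_1|\cdots|v_r)Q$ lies in $\mathcal{S}$ and its column space is $XX^T$-invariant, hence it is an equilibrium. Relabelling $Q^T$ as $Q$ in the first direction gives the stated description of the equilibrium set.
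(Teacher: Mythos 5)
Your proposal is correct, and its skeleton is the same as the paper's: reduce \eqref{gradflowSym2} to Oja's flow on the Stiefel manifold and import the global analysis of \cite{yahemo94} for the convergence statements. The difference is where the reduction comes from and how much is outsourced. The paper argues in the opposite direction: it takes the solution of Oja's flow with $V(0)^TV(0)=I_r$, quotes \cite[Corollary 2.2]{yahemo94} for the invariance $V(t)^TV(t)=I_r$, observes that the extra term $XX^T(I_d-VV^T)V$ then vanishes so that this Oja solution also solves \eqref{gradflowSym2} (and is the unique solution by local Lipschitz continuity), and then cites \cite[Theorem 2.1, Theorem 3.1, Corollary 4.1]{yahemo94} for (1), (3)--(4), and (5) respectively. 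You instead prove the invariance of $\{V^TV=I_r\}$ directly for \eqref{gradflowSym2} via the Lyapunov-type equation $\dot\Delta=-2(M\Delta+\Delta M)$ with $\Delta=V^TV-I_r$ (your computation checks out, using $(I_d-VV^T)V=-V\Delta$ and symmetry of $M=V^TXX^TV$), get global existence from compactness of the Stiefel manifold, and give a self-contained linear-algebra proof of the equilibrium characterization in (5) (diagonalizing $B=\overline V^TXX^T\overline V$), where the paper simply cites \cite[Corollary 4.1]{yahemo94}; as in the paper, (5) is to be read as describing the equilibria with $V^TV=I_r$, since e.g.\ $V=0$ is an equilibrium of the unconstrained vector field. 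What your route buys is independence from \cite{yahemo94} for (1), (2) and (5), at the cost of the short extra computation; both routes still rest on \cite[Theorems 2.1 and 3.1]{yahemo94} for the existence of the limit and the exponential rate, and the caveat you flag (applicability for general positive semidefinite $XX^T$, possibly with repeated eigenvalues) is exactly the hypothesis under which those theorems are stated and is accepted without further argument in the paper as well.
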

\begin{proof}
	In \cite{yahemo94} it is shown that Oja's flow given by $$\dot{V}=(I_d-VV^T)XX^TV $$
	satisfies all the claims in the proposition provided that  $V(0)^TV(0)=I_r$. In particular, 
	by \cite[Corollary 2.2]{yahemo94}, all $V(t)$ in any solution of Oja's flow with  $V(0)^TV(0)=I_r$
	fulfill  $V(t)^TV(t)=I_r$. It follows that under the initial condition  $V(0)^TV(0)=I_r$ the
	 flow (\ref{gradflowSym2}) is identical to Oja's flow because the term $XX^T(I_d-VV^T)V$ then vanishes for all $t$ if $V$ is a solution to Oja's flow.

	Hence,
	(2) follows from \cite[Corollary 2.2]{yahemo94}. In \cite[Theorem 2.1]{yahemo94} an existence and uniqueness result on $[0,\infty)$ is shown for Oja's flow and thus implies  (1). Statements  (3) and (4) follow from \cite[Theorem 3.1]{yahemo94} (which states that the solution to Oja's flow exponentially converges to an equilibrium point). Point (5) follows from \cite[Corollary 4.1]{yahemo94} (which shows that the equilibrium points $V$ of Oja's flow satisfying $V^TV=I_r$ are of the claimed form).
\end{proof}
\begin{remark}
	Choosing $v_{1}, \hdots, v_{r}$ orthonormal  eigenvectors corresponding to the largest $r$ eigenvalues of $XX^T$, we obtain 
	(for varying $Q$)
	precisely the possible solutions for the matrix  $V$ in  the $PCA$-problem.
\end{remark}
In order to make this more precise and to see this claim, we recall the PCA-Theorem, cf. \cite{murphy2013machine}.
Given: $x_1,\hdots,x_m\in \RR^d$
and $1\leq r\leq d$, we consider the following 
problem: Find  $v_1,\hdots, v_r \in\RR^d$ orthonormal and  $h_1,\hdots, h_m \in\RR^r$ such that \begin{equation}\label{pca}
\mathcal{L}(V; h_1 \hdots, h_m):=\frac{1}{m}\sum_i\|x_i-Vh_i\|_2^2
\end{equation}
is minimal. (Here $V=\left (v_1|\hdots | v_r \right )\in \RR^{d\times r} $.) 

\begin{theorem}[PCA-Theorem \cite{murphy2013machine}]	A minimizer of  (\ref{pca}) is obtained by choosing   $v_1, \hdots, v_r$ as  orthonormal eigenvectors corresponding to the $r$ largest eigenvalues of $\sum_i x_ix_i^T=XX^T$ and $h_i=V^T x_i$.

	The other possible solutions for $V$ are of the form 
	$V=\left (v_1|\hdots | v_r \right )Q$, where  $v_1, \hdots, v_r$ are chosen as above and  $Q$ is an orthogonal  $r\times r
	$-matrix. Again  $h_i=V^T x_i$.
\end{theorem}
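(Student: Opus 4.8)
The plan is to solve the problem in two stages: first optimize over the codes $h_1,\dots,h_m$ for a fixed frame $V$, then optimize over $V$, reducing the latter to Fan's trace maximum principle (so this is a form of the Eckart--Young--Mirsky theorem). \emph{Stage 1.} Fix $V\in\RR^{d\times r}$ with $V^TV=I_r$. Each summand $\|x_i-Vh_i\|_2^2$ is a strictly convex quadratic in $h_i$, so its unique minimizer satisfies $V^T(x_i-Vh_i)=0$, i.e.\ $h_i=V^Tx_i$ (using $V^TV=I_r$). Substituting back and writing $P=VV^T$ for the orthogonal projection onto the column space of $V$, I obtain
\[
 \mathcal{L}\bigl(V;V^Tx_1,\dots,V^Tx_m\bigr)=\frac1m\sum_i\|(I_d-P)x_i\|_2^2=\frac1m\bigl(\tr(XX^T)-\tr(V^TXX^TV)\bigr),
\]
so minimizing $\mathcal{L}$ is equivalent to maximizing $f(V):=\tr(V^TXX^TV)$ over $\{V\in\RR^{d\times r}:V^TV=I_r\}$.

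\emph{Stage 2.} Diagonalize $XX^T=U\Lambda U^T$ with $U$ orthogonal, $\Lambda=\diag(\lambda_1,\dots,\lambda_d)$, $\lambda_1\geq\cdots\geq\lambda_d\geq0$, and columns $u_1,\dots,u_d$ of $U$. With $W:=U^TV$ one has $W^TW=I_r$ and $f(V)=\tr(W^T\Lambda W)=\sum_{i=1}^d\lambda_i p_i$, where $p_i:=\sum_{j=1}^rW_{ij}^2=\|V^Tu_i\|_2^2=\|Pu_i\|_2^2$. Completing $W$ to an orthogonal matrix gives $0\leq p_i\leq1$, and $\sum_i p_i=\|W\|_F^2=r$. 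Since the $\lambda_i$ are nonincreasing and nonnegative, $f(V)\leq\lambda_1+\cdots+\lambda_r$, with equality exactly when $p_i=1$ for all $i$ with $\lambda_i>\lambda_r$ and $p_i=0$ for all $i$ with $\lambda_i<\lambda_r$. Since $p_i=1\iff u_i\in\operatorname{range}(P)=\operatorname{range}(V)$ and $p_i=0\iff u_i\perp\operatorname{range}(V)$, equality holds iff $\operatorname{range}(V)$ contains every eigenvector with eigenvalue $>\lambda_r$ and is orthogonal to every eigenvector with eigenvalue $<\lambda_r$, i.e.\ $\operatorname{range}(V)$ is an $r$-dimensional $XX^T$-invariant subspace sandwiched between the span of the strictly larger eigenvectors and the span of the eigenvectors with eigenvalue $\geq\lambda_r$; such a subspace exists because $|\{i:\lambda_i>\lambda_r\}|<r\leq|\{i:\lambda_i\geq\lambda_r\}|$.

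Being $XX^T$-invariant, such a subspace has an orthonormal basis $v_1,\dots,v_r$ of eigenvectors of $XX^T$, necessarily for the $r$ largest eigenvalues counted with multiplicity, and then $V=(v_1|\cdots|v_r)Q$ with $Q:=(v_1|\cdots|v_r)^TV\in O(r)$; conversely every $V$ of this form attains the maximum. Choosing $Q=I_r$ and $v_i=u_i$ gives the asserted explicit minimizer, with minimal value $\frac1m\bigl(\tr(XX^T)-\sum_{i=1}^r\lambda_i\bigr)$, and combining the two stages shows the global minimizers of $\mathcal{L}$ are precisely the pairs $\bigl(V,(V^Tx_i)_i\bigr)$ with $V$ as above. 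The step needing the most care is the equality analysis of Stage 2 in the presence of repeated eigenvalues --- in particular a boundary tie $\lambda_r=\lambda_{r+1}$, where the optimal subspaces $\operatorname{range}(V)$ form a positive-dimensional family; one must check this family is exactly the one parametrized by choosing $r$ orthonormal eigenvectors for the top eigenvalues and post-multiplying by an orthogonal matrix, which is the content of the ``other possible solutions'' clause.
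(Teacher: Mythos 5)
Your proof is correct. Note that the paper does not prove this statement at all --- it is quoted as a classical result from \cite{murphy2013machine} --- so there is no in-paper argument to compare against; what you give is the standard textbook route: eliminate the codes via $h_i=V^Tx_i$, reduce to maximizing $\tr(V^TXX^TV)$ over the Stiefel manifold, and bound $\sum_i\lambda_ip_i$ with $0\le p_i\le 1$, $\sum_ip_i=r$ (Ky Fan's principle). Your equality analysis is sound, including the tied case $\lambda_r=\lambda_{r+1}$: the conditions $p_i=1$ for $\lambda_i>\lambda_r$ and $p_i=0$ for $\lambda_i<\lambda_r$ force $\operatorname{range}(V)=E_>\oplus\bigl(\operatorname{range}(V)\cap E_=\bigr)$, which is automatically $XX^T$-invariant and hence spanned by $r$ orthonormal eigenvectors for the top $r$ eigenvalues counted with multiplicity, giving exactly the family $V=(v_1|\cdots|v_r)Q$, $Q\in O(r)$, asserted in the statement.
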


Let $\lambda_1\geq\hdots\geq\lambda_d$ be the eigenvalues of $XX^T$ and let $v_1,\hdots, v_d$ be corresponding orthonormal eigenvectors. 
\begin{theorem}\label{optconv1}
	Assume that  $XX^T$ has full rank and that $\lambda_{r}>\lambda_{r+1}$.
	Then $\lim_{t\rightarrow \infty} V(t)=\left ( v_{1}|\hdots| v_{r} \right ) Q$ for some orthogonal $Q$ 
	if and only if $  V_0^T\left( v_{1}|\hdots| v_{r} \right)$ has rank $r$.
\end{theorem}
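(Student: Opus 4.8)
The plan is to reduce everything to Oja's flow and then monitor a single scalar along the trajectory. Since $V_0^TV_0=I_r$, the solution $V(t)$ of \eqref{gradflowSym2} coincides with the solution of Oja's flow $\dot V=(I_d-VV^T)XX^TV$; by the preceding results (existence, invariance of $V^TV=I_r$, convergence, and the characterization of equilibria) the matrix $V(t)$ stays on the Stiefel manifold and converges to an equilibrium $\overline V=(w_1|\cdots|w_r)Q$ with orthonormal eigenvectors $w_j$ of $XX^T$ and $Q$ orthogonal. Put $V_+:=(v_1|\cdots|v_r)$ and
\[
	D(t):=\det\!\big(V_+^T V(t)\big),
\]
noting that $V_0^T(v_1|\cdots|v_r)$ has rank $r$ precisely when $D(0)\neq 0$, and that $|D(t)|\leq 1$ since $V_+^TV(t)$ has spectral norm at most $1$.

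First I would compute the evolution of $B(t):=V_+^TV(t)$. Using that the columns of $V_+$ are eigenvectors of $XX^T$, so $XX^TV_+=V_+\Lambda_1$ with $\Lambda_1=\diag(\lambda_1,\dots,\lambda_r)$, and writing $M(t):=V(t)^TXX^TV(t)$, a short computation from Oja's flow gives $\dot B=\Lambda_1 B-BM$. By Jacobi's formula $\dot D=\tr(\operatorname{adj}(B)\dot B)$, and using $\operatorname{adj}(B)B=\det(B)I_r$ together with the polynomial identity $\tr(\operatorname{adj}(B)\Lambda_1 B)=\det(B)\tr(\Lambda_1)$ (checked for invertible $B$, extended by continuity), this collapses to the scalar linear ODE
\[
	\dot D(t)=\big(\tr(\Lambda_1)-\tr M(t)\big)\,D(t)=:g(t)\,D(t).
\]
The decisive point is $g(t)\geq 0$: since $V(t)V(t)^T$ is a rank-$r$ orthogonal projection, the Ky Fan maximum principle gives $\tr M(t)=\tr\!\big(XX^T V(t)V(t)^T\big)\leq \lambda_1+\dots+\lambda_r=\tr(\Lambda_1)$. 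Hence $D(t)=D(0)\exp\!\big(\int_0^t g\big)$, so $|D(t)|$ is non-decreasing (and stays in $[0,1]$). Letting $t\to\infty$ and using $V(t)\to\overline V$ yields $|D(0)|\leq|\det(V_+^T\overline V)|\leq 1$.

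Both implications of the theorem now follow. If $V_0^T(v_1|\cdots|v_r)$ has rank $r$, i.e.\ $D(0)\neq 0$, then $\det(V_+^T\overline V)\neq 0$. The column space $S:=\linspan(\overline V)$ is an $r$-dimensional subspace spanned by eigenvectors of $XX^T$; since $\lambda_r>\lambda_{r+1}$, the space $U_+:=\linspan\{v_1,\dots,v_r\}$ is a sum of full eigenspaces with $\dim U_+=r$, whence $S=(S\cap U_+)\oplus(S\cap U_+^\perp)$. Now $\det(V_+^T\overline V)\neq 0$ forces the orthogonal projection $S\to U_+$ to be injective, so $S\cap U_+^\perp=\{0\}$ and, dimensions agreeing, $S=U_+$. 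As $\overline V$ and $V_+$ have the same column space and orthonormal columns, $\overline V=V_+Q$ with $Q:=V_+^T\overline V$ orthogonal, which is the claim. Conversely, if $\overline V=(v_1|\cdots|v_r)Q$ for some orthogonal $Q$, then $V_+^T\overline V=Q$ is invertible, so $D(t)\to\det Q\neq 0$; since $D(t)=D(0)\exp(\int_0^t g)$, this is only possible if $D(0)\neq 0$, i.e.\ $V_0^T(v_1|\cdots|v_r)$ has rank $r$.

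The main obstacle is setting up the scalar ODE for $D$ correctly — deriving $\dot B=\Lambda_1B-BM$ from Oja's flow and verifying the sign $g\geq 0$; once this is in place, the monotonicity of $|D|$ and the eigenspace decomposition (where $\lambda_r>\lambda_{r+1}$ enters) are routine. Alternatively, the statement can be extracted from the global phase-portrait analysis of Oja's flow in \cite{yahemo94}.
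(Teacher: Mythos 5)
Your proof is correct, but it follows a genuinely different route from the paper. The paper disposes of this theorem in one line: having already identified the flow \eqref{gradflowSym2} with Oja's flow on the Stiefel manifold (via $V(t)^TV(t)=I_r$), it simply invokes \cite[Theorem 5.1]{yahemo94} together with \cite[Corollary 2.1]{yahemo94}, i.e.\ it outsources the global phase-portrait analysis of Oja's flow to that reference. You instead reconstruct the key mechanism yourself: after reducing to Oja's flow and using only the facts already established in the preceding theorem of the section (existence, invariance of $V^TV=I_r$, convergence to an equilibrium, and the classification of equilibria as $(w_1|\cdots|w_r)Q'$ with $w_j$ orthonormal eigenvectors), you track $D(t)=\det\bigl(V_+^TV(t)\bigr)$, derive $\dot B=\Lambda_1B-BM$, obtain the scalar ODE $\dot D=(\tr\Lambda_1-\tr M)D$ via Jacobi's formula and the identity $\tr(\operatorname{adj}(B)\Lambda_1B)=\det(B)\tr\Lambda_1$, and use Ky Fan's maximum principle to get monotonicity of $|D|$; the spectral gap $\lambda_r>\lambda_{r+1}$ then enters exactly where it should, in showing that the limit's column space, being spanned by eigenvectors and having nonzero determinant against $V_+$, must equal the principal subspace, while the converse uses that $D(0)=0$ forces $D\equiv 0$. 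All the individual steps check out (the direct-sum decomposition of $S$ is justified because each spanning vector of $S$ is an eigenvector and hence lies in $U_+$ or $U_+^\perp$ — note that projection-injectivity plus equal dimensions alone would not suffice). What each approach buys: the paper's citation is shorter and consistent with how the whole section leans on \cite{yahemo94}, whereas your argument is essentially self-contained, makes transparent how the rank condition on $V_0^T(v_1|\cdots|v_r)$ and the eigenvalue gap are used, and is close in spirit to the original Yan--Helmke--Moore dominance analysis that the paper cites.
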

\begin{proof}
	This follows from \cite[Theorem 5.1]{yahemo94} (where an analogous statement for Oja's flow is made) together with \cite[Corollary 2.1]{yahemo94}.
\end{proof}

\begin{corollary}\label{cor-auto-global-convergence}
	Under the assumptions of Theorem \ref{optconv1},
	 for almost all initial conditions (w.r.t. the Lebesgue measure), the flow converges to an optimal equilibrium, i.e., one of the form $V=\left ( v_{1}|\hdots| v_{r} \right ) Q$ in the notation of Theorem \ref{optconv1}. 
\end{corollary}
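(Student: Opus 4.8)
The plan is to reduce everything to Theorem~\ref{optconv1} and then show that the set of ``bad'' initial values is a null set. Write $U_r := (v_1|\hdots|v_r) \in \RR^{d\times r}$ for the fixed matrix whose columns are the chosen orthonormal eigenvectors of $XX^T$ belonging to the $r$ largest eigenvalues $\lambda_1\geq\hdots\geq\lambda_r$; since $U_r^T U_r = I_r$, the matrix $U_r$ is itself an admissible initial value for \eqref{gradflowSym2}. By Theorem~\ref{optconv1}, the flow started at $V_0$ converges to an optimal equilibrium of the form $(v_1|\hdots|v_r)Q$ if and only if the $r\times r$ matrix $V_0^T U_r$ has rank $r$, i.e.\ if and only if $p(V_0) := \det(V_0^T U_r)\neq 0$. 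It therefore suffices to prove that $\mathcal Z := \{ V_0 \in \RR^{d\times r} : V_0^T V_0 = I_r,\ p(V_0) = 0 \}$ is a null set with respect to the natural (Riemannian volume, equivalently Haar-induced) measure on the Stiefel manifold $\mathrm{St}(d,r) := \{V_0 \in \RR^{d\times r}: V_0^T V_0 = I_r\}$, which is the relevant meaning of ``almost all initial conditions'' here, the flow being considered only for initial data with $V_0^T V_0 = I_r$.

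I would then observe that $V_0 \mapsto p(V_0)$ is the restriction to $\mathrm{St}(d,r)$ of a polynomial in the entries of $V_0$, hence a real-analytic function on the real-analytic (indeed real-algebraic) manifold $\mathrm{St}(d,r)$. The hypothesis $\lambda_r > \lambda_{r+1}$ forces $r+1\leq d$, so $r<d$ and $\mathrm{St}(d,r)$ is connected. Moreover $p$ is not identically zero on $\mathrm{St}(d,r)$: at the admissible point $V_0 = U_r$ one has $p(U_r) = \det(U_r^T U_r) = \det(I_r) = 1 \neq 0$.

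Finally I would invoke the standard fact that the zero set of a real-analytic function on a connected real-analytic manifold is either the whole manifold or a set of measure zero. Concretely, in any local chart $p$ pulls back to a real-analytic function on an open subset of $\RR^{\dim \mathrm{St}(d,r)}$; by the identity theorem for real-analytic functions this pullback cannot vanish identically on a chart unless $p\equiv 0$ on the connected manifold, which is excluded; and the zero set of a not-identically-zero real-analytic function on an open subset of Euclidean space has Lebesgue measure zero. Covering $\mathrm{St}(d,r)$ by countably many charts shows that $\mathcal Z$ is a null set, and combining this with Theorem~\ref{optconv1} yields the corollary.

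The only genuinely delicate point is the measure-theoretic bookkeeping: ``almost all'' must refer to a measure on the manifold of admissible initial values rather than to Lebesgue measure on $\RR^{d\times r}$, and one must appeal to the principle that a proper real-analytic subvariety has measure zero (which should be cited, e.g.\ via the identity theorem argument above). Everything else is immediate once Theorem~\ref{optconv1} is available. If one prefers to avoid manifold language, an alternative is to cover a full-measure portion of $\mathrm{St}(d,r)$ by an explicit coordinate patch (e.g.\ via a QR or Cayley parametrization) and to check that $p$ becomes a not-identically-zero real-analytic function of the coordinates there; the coordinate-free argument is, however, shorter.
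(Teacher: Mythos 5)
Your proposal is correct and follows essentially the same route as the paper: the corollary is deduced directly from Theorem~\ref{optconv1}, the only remaining point being that the rank condition on $V_0^T\left(v_1|\hdots|v_r\right)$ fails only on a null set of admissible initial values. The paper delegates this last step to the analogous \cite[Corollary 5.1]{yahemo94}, whereas you spell it out via the standard fact that the zero set of the not-identically-vanishing real-analytic function $V_0\mapsto\det\bigl(V_0^T(v_1|\hdots|v_r)\bigr)$ on the connected Stiefel manifold has measure zero; this is a sound and welcome filling-in of the measure-theoretic detail, not a different proof.
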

\begin{proof}
	This follows from Theorem \ref{optconv1}, cf. also the analogous \cite[Corollary 5.1]{yahemo94}.
\end{proof}
In Section~\ref{sec:saddle-points} we extend this result to 
autoencoders with $N > 2$ layers using a more abstract approach.

	The following theorem shows that the optimal equilibria are the only stable equilibria: 
\begin{theorem}\label{nonstable}
	Assume $V=\left ( v_{i_1}|\hdots| v_{i_r} \right ) Q$, where the orthonormal eigenvectors $ v_{i_1},\hdots, v_{i_r}$ are not eigenvectors   corresponding to the largest $r$ eigenvalues of $XX^T$.
	Then in any neighborhood of $V$ there is a matrix $\widetilde V$ with $E(\widetilde V)<E(V)$ (and $\widetilde V^T \widetilde V=I_r$).
\end{theorem}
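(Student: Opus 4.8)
The plan is to reduce the statement to an elementary fact about the trace $V\mapsto\tr(V^TXX^TV)$ on the Stiefel manifold $\{V\in\RR^{d\times r}\colon V^TV=I_r\}$, and then to exploit that the given $V$ sits at a sub\-optimal configuration of that trace. First I would observe that whenever $V^TV=I_r$ the matrix $I_d-VV^T$ is an orthogonal projection, so
\[
E(V)=\tfrac12\norm{(I_d-VV^T)X}_F^2
=\tfrac12\big(\norm{X}_F^2-\tr(V^TXX^TV)\big).
\]
Both $E(V)$ and $\tr(V^TXX^TV)$ depend on $V$ only through $VV^T$, hence are invariant under $V\mapsto VQ'$ for orthogonal $Q'$, and right multiplication by an orthogonal matrix also preserves the Frobenius distance. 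Therefore it suffices to produce, for an arbitrarily small prescribed $\eps>0$, a matrix $\widetilde V_0$ with $\widetilde V_0^T\widetilde V_0=I_r$, with $\norm{\widetilde V_0-(v_{i_1}|\cdots|v_{i_r})}_F<\eps$, and with $\tr(\widetilde V_0^TXX^T\widetilde V_0)>\sum_{j=1}^r\lambda_{i_j}$, where I used that $\tr\big((v_{i_1}|\cdots|v_{i_r})^TXX^T(v_{i_1}|\cdots|v_{i_r})\big)=\sum_{j=1}^r\lambda_{i_j}$ because the $v_{i_j}$ are orthonormal eigenvectors; the matrix $\widetilde V:=\widetilde V_0\,Q$ then satisfies $\widetilde V^T\widetilde V=I_r$, $E(\widetilde V)=E(\widetilde V_0)<E(V)$ and $\norm{\widetilde V-V}_F<\eps$.

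Next I would record the combinatorial input. Since $v_{i_1},\dots,v_{i_r}$ are not eigenvectors corresponding to the $r$ largest eigenvalues, the multiset $\{\lambda_{i_1},\dots,\lambda_{i_r}\}$ is not the multiset of the $r$ largest eigenvalues, so $\sum_{j=1}^r\lambda_{i_j}<\sum_{j=1}^r\lambda_j$. I claim this forces the existence of an index $a\in\{i_1,\dots,i_r\}$ and an index $b\notin\{i_1,\dots,i_r\}$ with $\lambda_b>\lambda_a$: otherwise every chosen eigenvalue would dominate every unchosen one, i.e.\ $\{\lambda_{i_1},\dots,\lambda_{i_r}\}$ would be a greedy top-$r$ selection and would attain $\sum_{j=1}^r\lambda_j$, a contradiction.

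Then comes the perturbation. After relabelling I may assume $a=i_r$. For $\theta\in\RR$ set $u(\theta):=\cos\theta\,v_{i_r}+\sin\theta\,v_b$ and $\widetilde V_0(\theta):=(v_{i_1}|\cdots|v_{i_{r-1}}|u(\theta))$. Since $v_b$ is orthogonal to $v_{i_1},\dots,v_{i_{r-1}},v_{i_r}$ and $\norm{u(\theta)}_2=1$, we get $\widetilde V_0(\theta)^T\widetilde V_0(\theta)=I_r$; and using $XX^Tv_b=\lambda_b v_b$ together with $v_{i_r}^Tv_b=0$ a direct computation gives
\[
\tr\big(\widetilde V_0(\theta)^TXX^T\widetilde V_0(\theta)\big)
=\sum_{j=1}^{r}\lambda_{i_j}+\sin^2\theta\,(\lambda_b-\lambda_{i_r}),
\]
which strictly exceeds $\sum_{j=1}^r\lambda_{i_j}$ for every $\theta$ with $0<|\theta|$ small. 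Moreover $\norm{\widetilde V_0(\theta)-(v_{i_1}|\cdots|v_{i_r})}_F=\norm{u(\theta)-v_{i_r}}_2\to 0$ as $\theta\to0$. Choosing $|\theta|$ small enough and setting $\widetilde V:=\widetilde V_0(\theta)\,Q$ completes the proof.

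The only place that requires care is the reading of the hypothesis when $XX^T$ has repeated eigenvalues: ``not eigenvectors corresponding to the largest $r$ eigenvalues'' has to be understood as ``the multiset of the associated eigenvalues differs from $\{\lambda_1,\dots,\lambda_r\}$'', which is exactly what makes the combinatorial step go through and, conversely, is exactly the situation not covered by the PCA characterisation; apart from this, the proof is just the two short trace computations above, plus a sanity check of the normalisation of $E$ and of the idempotency of $I_d-VV^T$ under $V^TV=I_r$.
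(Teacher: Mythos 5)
Your proof is correct and follows essentially the same route as the paper: rotate one suboptimally chosen eigenvector toward an orthogonal eigenvector with strictly larger eigenvalue, keep the frame orthonormal, and compare the values of $E$ via the trace identity $E(V)=\tfrac12\big(\tr(XX^T)-\tr(XX^TVV^T)\big)$. The only addition is your explicit combinatorial justification that such a pair of eigenvalues $\lambda_b>\lambda_a$ with $v_b$ orthogonal to the chosen columns exists, which the paper simply asserts; this is a welcome clarification but not a different method.
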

\begin{proof}
	Let $v_{i_j}$ be one of the eigenvectors $ v_{i_1},\hdots, v_{i_r}$ whose eigenvalue does not belong to the $r$ largest eigenvalues of $XX^T$. Let $v$ be an eigenvector of $XX^T$ of unit length which is orthogonal to the eigenvectors  $ v_{i_1},\hdots, v_{i_r}$ and whose eigenvalue  $\lambda$ belongs to the $r$ largest eigenvalues of $XX^T$. Now for any $\varepsilon\in [0,1]$ consider $v_{i_j}(\varepsilon):=\varepsilon v +\sqrt{1-\varepsilon^2}v_{i_j}$. Then $V(\varepsilon):=\left ( v_{i_1}|\hdots|v_{i_j}(\varepsilon)|\hdots| v_{i_r} \right ) Q$ satisfies $E(V(\varepsilon))<E(V)$ for $\varepsilon\in (0,1]$ and $V(\varepsilon)^TV(\varepsilon)=I_r$.
	To see that indeed $E(V(\varepsilon))<E(V)$, we compute $E(V)=\frac 12 \norm{X-VV^TX}_F^2=\frac 12 \tr(XX^T-XX^TVV^T)$ and  $E(V(\varepsilon))=\frac 12 \tr(XX^T-XX^TV(\varepsilon)V(\varepsilon)^T).$ Writing $XX^Tv_{i_k}=\lambda_{i_k}v_{i_k}$, we note that $\tr\left(XX^TVV^T\right)=\sum_{k=1}^r \lambda_{i_k}$ and 
	 $\tr\left(XX^TV(\varepsilon)V(\varepsilon)^T\right)=\varepsilon^2 \lambda +(1-\varepsilon^2)\lambda_{i_j}+\sum_{k=1, k\neq j}^r \lambda_{i_k}$. Since $\lambda> \lambda_{i_j}$, the claim follows.
\end{proof}

\section{Avoiding saddle points}
\label{sec:saddle-points}

In Section~\ref{sec:convergence} we have proven convergence of the gradient flow \eqref{gradflow} 
a to critical point of $L^N$. (Together with Proposition \ref{prop:critical-LN-general}  below, this also implies that the product $W$ converges to a critical point of 
 $L^1$ 
restricted to $\mathcal{M}_k$ for some $k\leq r$.)
Since we will remain in a saddle point forever if the initial point is a saddle point, 
the best we can hope for is convergence to global optima for almost all initial points
(as in Corollary~\ref{cor-auto-global-convergence} for the particular autoencoder case with $N=2$).

We will indeed establish such a result for both $L^N$ and $L^1$ restricted to $\mathcal{M}_r$
in the autoencoder case. We note, however, that we can only ensure that the limit corresponds
to an optimal point for $L^1$ restricted to $\mathcal{M}_k$ for some $k \leq r$ for almost all initialization.
We conjecture $k=r$ (for almost all initializations), but this remains open for now. 

We proceed by showing a general result on the avoidance of saddle points
by extending the main result of \cite{Lee19} from gradient descent to gradient flows. 
A crucial ingredient is the notion of a strict saddle point. The application of the general abstract result
to our scenario then requires to analyze the saddle points.


\subsection{Strict saddle points}

We start with the definition of a strict saddle point of a function on the Euclidean space $\mathbb{R}^d$.

\begin{definition}
Let $f : \Omega \to \mathbb{R}$ be a twice continuously differentiable function on an open domain $\Omega \subset \mathbb{R}^d$.
A critical point $x_0 \in \Omega$ is called a strict saddle point if the Hessian $Hf(x_0)$ has a negative eigenvalue.
\end{definition} 

Intuitively, the function $f$ possesses a direction of descent at a strict saddle point. 
Note that our definition also includes local maxima, which does not pose problems for our purposes.

Let us extend the notion of strict saddle points to functions on Riemannian manifolds $(\mathcal{M},g)$.
To this end, we first introduce the Riemannian Hessian of a $C^2$-function $f$ on $\mathcal{M}$.
Denoting by $\nabla$ be the Riemannian connection (Levi-Civita connection) 
on $(\mathcal{M},g)$ the Riemannian Hessian of 
$f$ at $x \in \mathcal{M}$ is the linear mapping $\operatorname{Hess} f(x) : T_x \mathcal{M} \to T_x\mathcal{M}$ defined by
\[
\operatorname{Hess}^g f(x)[\xi] := \nabla_\xi \nabla^{g} f.
\]
Of course, if $(\mathcal{M},g)$ is Euclidean, then this definition can be identified with the standard definition of
the Hessian. Moreover, if $x \in \mathcal{M}$ is a critical point of $f$, i.e., $\nabla^g f(x)=0$, then
the Hessian $\operatorname{Hess}^g f(x)$ is independent of the choice of the connection.
Below, we will need the following chain type rule for curves $\gamma$ on $\mathcal{M}$, 
see e.g.~\cite[Eq.~(3.1)]{ottowestdickenberg06},
\begin{equation}\label{Riemannian-chain:rule}
 \frac{d^2}{dt^2} f (\gamma(t))=g\left( \dot\gamma(t), \Hess^g f(\gamma(t))[\dot\gamma(t)] \right) 
 +g\left( \frac{D}{dt}\dot\gamma(t), \nabla^g f(\gamma(t))\right),
\end{equation}
where  $\frac{D}{dt}\dot\gamma(t)$ is related to the Riemannian connection that is used to define the Hessian,
see \cite[Section 5.4]{Absil08}.  We refer to \cite{Absil08} for more details on the Riemannian Hessian.

\begin{definition}
Let $(\mathcal{M},g)$ be a Riemannian manifold with Levi-Civita connection $\nabla$ and let 
$f : \mathcal{M} \to \mathbb{R}$ be a twice continuously differentiable function. A critical point $x_0 \in \mathcal{M}$, i.e.,
$\nabla^g f(x_0) = 0$ is called a strict saddle point if $\operatorname{Hess} f(x)$ has a negative eigenvalue.
We denote the set of all strict saddles of $f$ by $\mathcal{X} = \mathcal{X}(f)$.
We say that $f$ has the strict saddle point property, if all critical points of $f$ 
that are not local minima are strict saddle points.
\end{definition}

Note that our definition of strict saddle points includes local maxima, which is fine for our purposes.

\subsection{Flows avoid strict saddle points almost surely}

We now prove a general result that gradient flows on a Riemannian manifold $(\mathcal{M},g)$ 
for functions
with the strict saddle point property
avoid saddle point for almost all initial values. This result extends the main result of \cite{Lee19}
from time discrete systems to continuous flows and should be of independent interest.

For a twice continuously differentiable function $L : \mathcal{M} \to \mathbb{R}$, we consider the Riemannian gradient flow
\begin{equation}\label{abstract:flow}
\frac{d}{dt} \phi(t) = - \nabla^g L( \phi(t)), \quad \phi(0) = x_0  \in \mathcal{M},
\end{equation}
where $\nabla^g$ denotes the Riemannian gradient. When emphasizing the dependence on $x_0$, we write 
\begin{equation}\label{def:psi}
\psi_t(x_0) = \phi(t),
\end{equation}
where $\phi(t)$ is the solution to \eqref{abstract:flow} with initial condition $x_0$.

Sets of measure zero on $\mathcal{M}$ (as used in the next theorem) 
can be defined using push forwards of the Lebesgue measure on charts of the manifold $\mathcal{M}$.

\begin{theorem}\label{thm:avoid-saddles} Let  $L : \mathcal{M} \to \mathbb{R}$ be a 
$C^2$-function
on a second countable finite dimensional Riemannian manifold $(\mathcal{M},g)$, where we assume that $\mathcal{M}$ is of class $C^2$ as a manifold and the metric $g$ is of class $C^1$.  Assume that  $\psi_t(x_0)$ exists
for all $x_0 \in \mathcal{M}$ and  all $t \in [0,\infty)$. 
Then the set
\[
\mathcal{S}_L := \{ x_0 \in \mathcal{M}  : \lim_{t \to \infty} \psi_t(x_0) \in \mathcal{X} = \mathcal{X}(L) \}
\]
of initial points such that the corresponding flow converges to a strict saddle point of $L$ has measure zero. 
\end{theorem}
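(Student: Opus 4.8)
The plan is to reduce to the corresponding discrete-time statement by sampling the flow at unit time. Concretely, define the time-one map $\psi_1 : \mathcal{M} \to \mathcal{M}$. By Lemma~\ref{C1vectorfield} the vector field $-\nabla^g L$ is of class $C^1$, so by standard ODE theory the flow $(t,x) \mapsto \psi_t(x)$ is $C^1$ in $x$ for each fixed $t$, and each $\psi_t$ is a $C^1$ diffeomorphism of $\mathcal{M}$ onto its image with $C^1$ inverse $\psi_{-t}$ (using the global existence hypothesis for forward time; backward existence for the relevant short times is local and suffices to invert). In particular $\psi_1$ is a local diffeomorphism, hence a \emph{local diffeomorphism whose differential is everywhere invertible}, which is exactly the regularity needed to invoke the center-stable manifold machinery from \cite{Lee19}. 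First I would verify that the fixed points of $\psi_1$ are precisely the critical points of $L$ (a point with $\psi_1(x)=x$ forces $\nabla^g L(x) = 0$ because along the orbit $t \mapsto L(\psi_t(x))$ is monotone non-increasing and periodic, hence constant, hence $\nabla^g L \equiv 0$ on the orbit), and that at a strict saddle $x_0$ the differential $D\psi_1(x_0)$ has an eigenvalue of modulus strictly greater than $1$. This last point is the linearization step: at a critical point the linearization of the flow is $\xi \mapsto e^{-t\,\Hess^g L(x_0)}\xi$ on $T_{x_0}\mathcal{M}$ (here one uses that at a critical point $\Hess^g L(x_0)$ is well defined independent of the connection, as recalled after \eqref{Riemannian-chain:rule}), and a negative eigenvalue $-\lambda$ of $\Hess^g L(x_0)$ produces the eigenvalue $e^{\lambda} > 1$ of $D\psi_1(x_0)$. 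Thus every strict saddle of $L$ is an unstable fixed point of $\psi_1$ in the sense of \cite{Lee19}, with its local center-stable manifold of strictly positive codimension.

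Next I would quote the stable/center-stable manifold theorem (as used in \cite{Lee19}): around each fixed point $p$ of the $C^1$ local diffeomorphism $\psi_1$ with $D\psi_1(p)$ having at least one eigenvalue of modulus $>1$, there is a neighborhood $U_p$ and an embedded $C^1$ submanifold $W^{cs}_{loc}(p) \subset U_p$ of dimension equal to the number of eigenvalues of $D\psi_1(p)$ of modulus $\le 1$ (hence of positive codimension), such that any $x$ with $\psi_1^n(x) \in U_p$ for all $n \ge 0$ lies in $W^{cs}_{loc}(p)$. The key measure-theoretic observation is that a $C^1$ submanifold of positive codimension in a finite-dimensional manifold is a Lebesgue-null set (in every chart), and that $C^1$ images of null sets under the $C^1$ maps $\psi_{-n} = \psi_n^{-1}$ remain null. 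Since $\mathcal{M}$ is second countable, I would cover the (closed) set of strict saddles — or rather a suitable neighborhood structure around it — by countably many such charts/neighborhoods $U_{p_i}$; more precisely, following \cite{Lee19}, one writes $\mathcal{S}_L$ as a countable union, over these neighborhoods, of sets of the form $\bigcup_{n \ge 0} \psi_{-n}\big(W^{cs}_{loc}(p_i)\big)$, each a countable union of null sets, hence null. The only subtlety relative to \cite{Lee19} is that there the manifold is $\mathbb{R}^d$; here one has an abstract $C^2$ second-countable manifold, so I would do the covering argument intrinsically using charts, noting that "measure zero" is chart-independent for the push-forward Lebesgue class.

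The main obstacle, and the step deserving the most care, is the passage from "converges to a strict saddle under the continuous flow" to "the orbit eventually stays in a center-stable chart of $\psi_1$." If $\lim_{t\to\infty}\psi_t(x_0) = p$ with $p$ a strict saddle, then for large $t$ the whole tail $\psi_t(x_0)$ lies in the chosen neighborhood $U_p$, so in particular $\psi_1^n(x_0) \in U_p$ for all large integers $n$; pulling back by finitely many applications of $\psi_{-1}$ (a $C^1$ diffeomorphism, which preserves null sets) then lands $x_0$ itself in a countable union of null sets. One must also handle the fact that the limit point $p$ ranges over the possibly-uncountable set $\mathcal{X}(L)$ of strict saddles: this is resolved exactly as in \cite{Lee19} by the observation that $\mathcal{X}(L) \subset \bigcup_i U_{p_i}$ for a countable subcollection (second countability), and that for each $i$ the center-stable manifold $W^{cs}_{loc}(p_i)$ does the bookkeeping for \emph{every} strict saddle inside $U_{p_i}$, not just for $p_i$ — one uses that the center-stable set of a compact piece of $U_{p_i}$ is still contained in a positive-codimension $C^1$ set, or alternatively refines the cover so that each $U_{p_i}$ contains a single fixed point whose center-stable manifold captures all forward-invariant orbits. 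A secondary technical point is checking that $\psi_1$ is genuinely a $C^1$ diffeomorphism onto an open set and that its inverse is $C^1$; this follows from the $C^1$ dependence of ODE solutions on initial data applied to the flow and its time-reversal, given the hypothesis that $\psi_t$ exists for all $t \ge 0$ and the local existence backward in time guaranteed by $C^1$-ness of the field.
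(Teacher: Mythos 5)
Your proposal is correct and follows essentially the same route as the paper: discretize the flow in time, observe that the time-$T$ map is a $C^1$ diffeomorphism onto an open subset (so its differential is everywhere invertible), show that every strict saddle is a fixed point of this map whose differential has an eigenvalue of modulus greater than $1$ via the linearization at the critical point identified with $\Hess^g L$, and then invoke the discrete-time strict-saddle avoidance result of Lee et al.\ (which the paper cites in exactly the second-countable manifold form you sketch via the center-stable manifold theorem). The only divergence is minor: you work with the single time-one map and the exact relation $D\psi_1(\bar x)=e^{-\Hess^g L(\bar x)}$ from the variational equation, whereas the paper uses the small-time expansion $D\psi_{1/n}(\bar x)=\Id-\tfrac1n\Hess^g L(\bar x)+o(1/n)$ and a countable union over $n$; both are valid.
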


The proof of this relies on the following result for iteration maps (e.g., gradient descent iterations)
shown in \cite{Lee19}. 

\begin{theorem}\label{thm:saddles-discrete} Let $h : \mathcal{M} \to \mathcal{M}$ be a continuously differentiable function on a second countable differentiable finite-dimensional 
manifold such that $\det(Dh(x)) \neq 0$ for all $x \in \mathcal{M}$ (in particular,
$h$ is a local $C^1$ diffeomorphism). Let 
\[
\mathcal{A}^*_h = \{x \in \mathcal{M} : h(x) = x, \max_j |\lambda_j(Dh(x))| > 1 \},
\]
where $\lambda_j(Dh(x))$ denote the eigenvalues of $Dh(x)$, 
and consider sequences with initial point $x_0 \in \mathcal{M}$, $x_k = h(x_{k-1})$, $k \in \NN$.
Then the set $\{x_0 \in \mathcal{M} : \lim_{k \to \infty} x_k \in \mathcal{A}^*_h\}$ has measure zero.
\end{theorem}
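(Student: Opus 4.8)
The plan is to reproduce the dynamical-systems argument behind \cite{Lee19}, whose engine is the Center-Stable Manifold Theorem applied at every point of $\mathcal{A}^*_h$. First I would recall this theorem in the form needed here (as in \cite{Lee19}, going back to Shub's \emph{Global Stability of Dynamical Systems} and Hirsch--Pugh--Shub): if $h$ is a $C^1$ local diffeomorphism of a manifold and $x^*$ is a fixed point, write $E^{cs}$ for the sum of the generalized eigenspaces of $Dh(x^*)$ belonging to eigenvalues of modulus $\le 1$. Then there exist an open neighborhood $B_{x^*}$ of $x^*$ and an embedded $C^1$ disk $W^{cs}_{\mathrm{loc}}(x^*) \subseteq B_{x^*}$, tangent to $E^{cs}$ at $x^*$ with $\dim W^{cs}_{\mathrm{loc}}(x^*) = \dim E^{cs}$, such that every point whose \emph{entire} forward orbit under $h$ remains in $B_{x^*}$ lies in $W^{cs}_{\mathrm{loc}}(x^*)$. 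The crucial observation is that for $x^* \in \mathcal{A}^*_h$ the matrix $Dh(x^*)$ has an eigenvalue of modulus strictly greater than $1$, so $\dim E^{cs} < \dim \mathcal{M}$ and hence $W^{cs}_{\mathrm{loc}}(x^*)$ is a measure-zero subset of $\mathcal{M}$.

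Next I would reduce the global set $\mathcal{A}^*_h$ to a countable family of such neighborhoods. Since $\mathcal{M}$ is second countable it is Lindelöf, so the open cover $\{B_{x^*}\}_{x^* \in \mathcal{A}^*_h}$ of $\mathcal{A}^*_h$ admits a countable subcover $\{B_{x_i^*}\}_{i \in \NN}$. Now let $x_0 \in \mathcal{M}$ satisfy $\lim_{k\to\infty} x_k = x^* \in \mathcal{A}^*_h$, where $x_k = h^k(x_0)$. (Note $x^*$ is automatically a fixed point by continuity of $h$.) Choose $i$ with $x^* \in B_{x_i^*}$; by openness of $B_{x_i^*}$ and convergence there is $K \in \NN$ with $x_k \in B_{x_i^*}$ for all $k \ge K$. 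Then the forward orbit of $y_0 := x_K$ stays in $B_{x_i^*}$ forever, so $y_0 \in W^{cs}_{\mathrm{loc}}(x_i^*)$, and since $y_0 = h^K(x_0)$ we get $x_0 \in h^{-K}\bigl(W^{cs}_{\mathrm{loc}}(x_i^*)\bigr)$. This establishes the inclusion
\[
\{x_0 \in \mathcal{M} : \lim_{k\to\infty} x_k \in \mathcal{A}^*_h\} \ \subseteq\ \bigcup_{i \in \NN} \bigcup_{K \ge 0} h^{-K}\bigl(W^{cs}_{\mathrm{loc}}(x_i^*)\bigr).
\]

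Then I would show the right-hand side is null. Each $W^{cs}_{\mathrm{loc}}(x_i^*)$ is null by the first step. The hypothesis $\det(Dh(x)) \neq 0$ for every $x$ makes $h$ a local $C^1$ diffeomorphism; covering $\mathcal{M}$ by countably many open sets on each of which $h$ restricts to a diffeomorphism onto its image (inverse function theorem plus second countability), and using that $C^1$ diffeomorphisms map null sets to null sets, one sees that the $h$-preimage of a null set is null, hence by induction so is $h^{-K}$ of a null set for every $K$. A countable union of null sets over $i$ and $K$ is null, so the set in question is contained in a null set, which proves the theorem.

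I expect the main obstacle to be the careful invocation of the Center-Stable Manifold Theorem at the low $C^1$ regularity available here: one must ensure the ``trapping'' property (every point with forward orbit trapped in $B_{x^*}$ lies in the local manifold), that the manifold's dimension equals the number of eigenvalues of modulus $\le 1$ counted with multiplicity, and that the statement transfers to an abstract $C^1$ manifold via charts rather than only to $\RR^d$. Once that black box is in place, the covering argument and the measure-zero bookkeeping are routine given second countability and $\det Dh \neq 0$.
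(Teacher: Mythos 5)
Your reconstruction is correct and is essentially the argument of \cite{Lee19} that the paper cites for this theorem (the paper itself gives no proof, only the reference and a remark that the argument rests on the center-stable manifold theorem from Shub): local center-stable manifolds of positive codimension at each point of $\mathcal{A}^*_h$, a countable subcover via second countability, trapping of the orbit tail, and pulling back null sets through the local diffeomorphism $h$. All the delicate points you flag (the $C^1$ trapping statement, the dimension count, and that preimages of null sets under $h$ are null because $\det Dh \neq 0$) are exactly the ones the cited proof handles, and your treatment of them is sound.
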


\begin{proof}[Proof of Theorem \ref{thm:avoid-saddles}]
By Lemma \ref{C1vectorfield} and Theorem \ref{flowprop} in appendix \ref{appendix:flows}, the map $$h\colon \mathcal{M} \to \mathcal{M}, \ x_0 \mapsto \psi_T(x_0)$$ defines  a diffeomorphism of $\mathcal M$ onto an open subset of $\mathcal M$. 
In particular, $D h = D \psi_T$ is non-singular, i.e. $\det (Dh(x)) \neq 0$ for all $x \in \mathcal{M}$.

Because of the semigroup property $\psi_{t+s}(x_0) = \psi_t ( \psi_s (x_0))$ the
sequence $x_k = \psi_{kT}(x_0)$, $k \in \NN$, satisfies $x_{k} = h(x_{k-1})$ and
$\lim_{t \to \infty} \psi_t(x_0) \in \mathcal{X}$ implies $\lim_{k \to \infty}
x_k \in \mathcal{X}$.

By Theorem~\ref{thm:saddles-discrete} the set
\[
\{x_0 \in \mathcal{M} : \lim_{k \to \infty} \psi_{kT}(x_0) \in \mathcal{A}^*_{\psi_T} \}
\]
has measure zero. We need to show that if $\bar{x}$ is a strict saddle point of
$L$, then $\bar{x} \in \mathcal{A}^*_{\psi_T}$ for suitable (i.e., sufficiently
small) $T>0$. We will work with a sequence of parameters $T = \frac{1}{n}$ with
$n \in \NN$. 

Let $\bar{x} \in \mathcal{X}(L)$ be a strict saddle point of $L$. If we choose
local coordinates giving rise to an orthonormal basis with respect to the
Riemannian metric at $\bar x$, then it follows from (\ref{abstract:flow}) that,
for all $n \in \NN$,
\[
  D \psi_{1/n}(\bar{x}) = \Id - \frac{1}{n} \Hess^g L (\bar{x}) + o(1/n),
\]
where $\lim_{t \to 0} o(t)/t = 0$. Compare also \cite[Lemma 4.4]{bahu04} for the
fact that we can identify here the differential of $\nabla^g L(\bar{x})$ with
$\Hess^g L (\bar{x})$. (More precisely, it is shown in loc.\ cit.\ that the the
differential of $\nabla^g L(\bar{x})$ coincides with the matrix
$(\frac{\partial^2 L(\bar x)}{\partial x_i \partial x_j})_{i,j}$ at the critical
point $\bar x$, if we assume that the local coordinates give rise to an
orthonormal basis at this point. Using again that  $\bar x$ is  a critical
point, we see that this matrix is the Riemannian Hessian at $\bar x$ in our
local coordinates.) Since $\bar{x}$ is a strict saddle point of $L$, the matrix
$\Hess^g L(\bar{x})$ has at least one strictly negative eigenvalue. It follows
that there exists $N \in \NN$ such that for all $n \geq N$ the differential $D
\psi_{1/n}(\bar{x})$ has an eigenvalue larger than $1$. Hence $\bar{x} \in
\mathcal{A}^*_{\psi_{1/n}}$ and
\begin{align*}
& \{x_0 \in \mathcal{M} : \lim_{t \to \infty} \psi_t(x_0) = \bar{x} \} \subset \{x_0 \in \mathcal{M} : \lim_{k \to \infty} \psi_{k/n}(x_0) = \bar{x} \} 
 \subset \{x_0 \in \mathcal{M} : \lim_{t \to \infty} \psi_{k/n} \in \mathcal{A}^*_{\psi_{1/n}}\}  
\end{align*}
for all $n \geq N$.
It follows that 
\[
\{ x_0 \in \mathcal{M} : \lim_{t \to \infty} \psi_t(x_0) \in \mathcal{X}(L)\} \subset
\bigcup_{n \in \NN} \{x_0 \in \mathcal{M} : \lim_{k \to \infty} \psi_{k/n}(x_0) \in \mathcal{A}^*_{\psi_{1/n}} \}.
\]
The set on the right hand side is a countable union of null sets and therefore has measure zero.
This implies the claim of the theorem and the proof is completed.
\end{proof}

\begin{remark} The proof of 
	 Theorem~\ref{thm:saddles-discrete} uses the center and stable manifold theorem, see, e.g., \cite[Chapter 5, Theorem III.7]{Shub86}. If the absolute 
eigenvalues of $Dh(x)$ are all different from $1$, i.e., if all eigenvalues of the Hessian 
$\operatorname{Hess}^gf(x)$ are different from $0$ at a saddle point $x$, then slightly stronger conclusions may be
drawn, including the speed at which the flow moves away from saddle points. We will not elaborate on this point
here.
\end{remark}

\subsection{The strict saddle point property for $L^1$ on $\mathcal{M}_r$}

In this section we establish the strict saddle point property of $L^1$ on $\mathcal{M}_k$ 
by showing that the Riemannian Hessian $\operatorname{Hess} L^1$ at all critical points that are not a global minimizer
has a strictly negative eigenvalue. 
We assume  that $XX^T$ has full rank $d_x=d_0$ and start with an analysis of the critical points.
We first recall the following result of Kawaguchi \cite{kawag16}. 

\begin{theorem}\cite[Theorem 2.3]{kawag16}\label{thm-kawag}
Assume that $XX^T$ and $XY^T$ are of full rank with $d_y\leq d_x$ and that the 
matrix $YX^T(XX^T)^{-1}XY^T$ has $d_y$ distinct eigenvalues. Let $r$ be the 
minimum of the $d_i$. Then the loss function $L^N(W_1,\hdots, W_N)$ has the 
following properties.
\begin{enumerate}
\item It is non-convex and non-concave.

\item Every local minimum is a global minimum.

\item Every critical point that is not a global minimum is a saddle point.

\item If $W_{N-1}\cdots W_2$ has rank $r$ then the Hessian at any saddle point 
has at least one 
negative eigenvalue.
\end{enumerate} 
\end{theorem}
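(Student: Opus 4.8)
The plan is to reduce the statement to the structure of the critical points of $L^{1}$ restricted to matrices of bounded rank, and then transport the conclusions back to $L^{N}$ through the factorization $W=W_{N}\cdots W_{1}$. The starting point is a change of variables: writing $\Sigma:=XX^{T}\succ 0$ and completing the square,
\[
  L^{1}(W)=\tfrac12\bigl\|\,W\Sigma^{1/2}-YX^{T}\Sigma^{-1/2}\,\bigr\|_{F}^{2}+\text{const},
\]
so that with $\bar W:=W\Sigma^{1/2}$ and $B:=YX^{T}\Sigma^{-1/2}$ one has $BB^{T}=YX^{T}(XX^{T})^{-1}XY^{T}$, which by hypothesis has $d_{y}$ distinct eigenvalues, and $\rank(W)=\rank(\bar W)$. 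Thus minimizing $L^{N}$ amounts to best rank-$\le r$ Frobenius approximation of $B$, and since $BB^{T}$ has simple spectrum the left singular vectors of $B$ are unique up to sign; in particular the global minimizers of $L^{N}$ are precisely the $W$ whose column space is the span of the $r$ leading left singular vectors of $B$.

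Next I would classify the critical points. By Lemma~\ref{L:PREL}(1), a tuple is critical for $L^{N}$ iff, with $G:=\nabla_{W}L^{1}(W)=WXX^{T}-YX^{T}$,
\[
  W_{j+1}^{T}\cdots W_{N}^{T}\,G\,W_{1}^{T}\cdots W_{j-1}^{T}=0,\qquad j=1,\dots,N.
\]
The cases $j=1$ and $j=N$ give $GW^{T}=0$ and $W^{T}G=0$, i.e.\ $WXX^{T}W^{T}=YX^{T}W^{T}$ and $W^{T}WXX^{T}=W^{T}YX^{T}$. Passing to the $\bar W$-picture these force $\bar W$ to be the orthogonal projection of $B$ onto the span of some subset $S$ of left singular vectors of $B$ with $|S|=\rank(W)\le r$. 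If $S$ consists of $r$ leading singular vectors this is a global minimum; otherwise $S$ either omits a leading singular vector or has fewer than $r$ elements, and swapping in a larger one produces a curve through $W$ in the rank-$\le r$ variety along which $L^{1}$ strictly decreases — exactly the eigenvector-swap computation from the proof of Theorem~\ref{nonstable}. Lifting such a curve to the factors (one may move only the outer factors $W_{1},W_{N}$) yields a descent direction for $L^{N}$; hence a non-global critical point is never a local minimum, which proves (2), and always carries a direction of descent, which proves (3).

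For (4), assume $\rank(W_{N-1}\cdots W_{2})=r$. Then $(W_{1},W_{N})\mapsto W_{N}(W_{N-1}\cdots W_{2})W_{1}$ is a submersion onto a neighbourhood of $W$ in the rank-$\le r$ variety, so the improving curve lifts to a curve $s\mapsto(W_{1}(s),\dots,W_{N}(s))$ through the critical tuple along which only $W_{1},W_{N}$ vary. Since $L^{N}(W_{1}(s),\dots,W_{N}(s))=L^{1}(W(s))$ identically in $s$ and the tuple is a critical point of $L^{N}$, the second derivative $\tfrac{d^{2}}{ds^{2}}\big|_{s=0}L^{1}(W(s))$ equals $\operatorname{Hess}L^{N}$ evaluated on the lifted velocity; for the swap curve it is strictly negative (the computation behind Theorem~\ref{nonstable}, which gives a value $\sim-(\lambda-\lambda')<0$), so $\operatorname{Hess}L^{N}$ has a negative eigenvalue. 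Finally (1) is elementary: $L^{N}$ is a non-constant polynomial of even degree $2N$ in the entries of the $W_{j}$, hence not concave, and on a two-parameter slice perturbing two consecutive layers its Hessian is indefinite — for $N=2$ already the Hessian at the origin is the indefinite bilinear form $(Z_{1},Z_{2})\mapsto-2\langle Z_{2}Z_{1},YX^{T}\rangle_{F}$.

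The main obstacle is the critical-point classification: extracting from the first-order conditions — and crucially from the distinct-eigenvalue hypothesis, which excludes a continuum of degenerate critical configurations — that $\bar W$ is governed by a \emph{discrete} choice of singular directions, while carefully handling individual factors $W_{j}$ that may themselves be rank-deficient. The second subtle point is the lifting step: realizing a prescribed tangent direction to the rank-$\le r$ variety at $W$ by perturbing only the outer factors, for which the hypothesis $\rank(W_{N-1}\cdots W_{2})=r$ is precisely what is needed, and without which one obtains (3) but not the quantitative Hessian statement (4).
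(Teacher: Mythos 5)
The paper does not prove Theorem~\ref{thm-kawag} at all --- it is quoted from \cite{kawag16} --- so the only basis for comparison is the paper's own refinement of it, namely the chain Proposition~\ref{critpoints_general}, Proposition~\ref{prop:strict_saddle_property_general}, Proposition~\ref{prop:critical-LN-general}, Proposition~\ref{prop:critical-LNL1-general} and Proposition~\ref{prop:N=2strictsaddles-general}. Your reconstruction runs along essentially that same route: pass to $Q=YX^T(XX^T)^{-1/2}$, classify the critical points of $L^1$ on the rank strata by subsets of singular triples (which you assert rather than prove; the paper imports it from \cite{TragerKohnBruna19}, and under the distinct-eigenvalue hypothesis it is the classical Eckart--Young/Helmke--Shayman analysis), produce negative curvature by the eigenvector-swap curve of Theorem~\ref{nonstable}, and transport it to $L^N$ by perturbing only the outer factors. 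For critical tuples whose product $W$ has rank exactly $r$ and is not optimal this is sound and coincides with Propositions~\ref{prop:strict_saddle_property_general} and~\ref{prop:critical-LNL1-general}.

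The genuine gap is at the \emph{spurious} critical tuples, i.e.\ those whose product $W$ has rank $k<r$ and is a global minimizer of $L^1$ on $\mathcal{M}_k$; the zero tuple is the simplest instance, and for $N=3$ also $(W_1,W_2,W_3)=(0,W_2,0)$ with $\rank(W_2)=r$ is such a critical point satisfying the rank hypothesis of (4). There is no swap available there: every descent direction $Z$ for $L^1$ at such a $W$ increases the rank and has $\langle \nabla L^1(W),Z\rangle_F\neq 0$, so it cannot be the velocity of any curve in factor space through the critical tuple (criticality of $L^N$ forces every liftable velocity to be $F$-orthogonal to $\nabla L^1(W)$), and your submersion claim for $(W_1,W_N)\mapsto W_N(W_{N-1}\cdots W_2)W_1$ only yields $T_W\mathcal{M}_k$, not the rank-increasing directions of the rank-$\le r$ variety, which is singular at $W$. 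Hence your arguments for (2) and (3) do not cover these points, and no second-order argument can: as the paper shows in its unnumbered proposition on $(0,\hdots,0)$, for $N\geq 3$ the Hessian of $L^N$ there vanishes identically, so non-minimality requires an order-$N$ perturbation such as $t\mapsto(tV_1,\hdots,tV_N)$, producing the term $-t^N\tr(ZXY^T)$. Likewise (4) at a spurious saddle with $\rank(W_{N-1}\cdots W_2)=r$ is not obtained by lifting a velocity along which $L^1$ has negative second derivative; the negative Hessian value comes from the quadratic part of the product map pairing with the nonzero gradient $\nabla L^1(W)$ --- in the example above, $\frac{d^2}{dt^2}\big|_{0}L^N(tV_1,W_2,tV_3)=-2\tr(V_3W_2V_1XY^T)$ --- which is exactly the mechanism of the paper's Proposition~\ref{prop:N=2strictsaddles-general} for $N=2$ (the kernel-vector choice of $v$ and the balancing parameter $\kappa$), and this is where the rank hypothesis actually enters. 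As it stands, your proof establishes (1), and (2)--(4) only for saddles whose product has full rank $r$.
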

Below we will remove the assumption that $X Y^T$ has full rank and that
$YX^T(XX^T)^{-1}XY^T$ has distinct eigenvalues. Moreover, we will give more
precise information on the strict saddle points.

The following matrix, which is completely determined by the given matrices $X,
Y$ that define $L^1$ and $L^N$ (see \eqref{E:LN} and \eqref{E:LONE}), will play
a central role in our discussion. We define
\begin{equation}\label{def:Q-matrix}
Q:=YX^T(XX^T)^{-\frac 12}
\end{equation}
and let $q := \operatorname{rank}(Q)$ be its rank. We will use a reduced
singular value decomposition 
\[
  Q = U\Sigma V^T
    = \sum_{i=1}^q \sigma_iu_iv_i^T,
\]
of $Q$, where $\sigma_1 \geq \hdots \geq \sigma_q > 0$ are the singular values of $Q$ and  
$U \in \RR^{d_y \times q}$, $V \in \RR^{d_x \times q}$ have orthonormal columns
$u_1, \hdots, u_q$ and $v_1,\hdots,v_q$, respectively. Clearly, it holds $q \leq
n := \min\{d_x,d_y\}$.

Let $k\leq n$ and let $g$ be an arbitrary Riemannian metric on the manifold $\mathcal{M}_k$ of all matrices in  $\RR^{d_y\times d_x}$  of rank $k$, for example it could be the metric induced by the standard metric on $\RR^{d_y\times d_x}$ or the metric $g$ introduced in 
Section~\ref{sec: Riemannian_gradient_flows} for some number of layers $N$.

The next statement is similar in spirit to Kawaguchi's result, Theorem~\ref{thm-kawag}, and follows from \cite{TragerKohnBruna19}.

\begin{proposition}\label{critpoints_general} Let $Q$ be defined by
\eqref{def:Q-matrix} and $q = \operatorname{rank}(Q)$.
\begin{enumerate}
\item	
The critical points of $L^1$ on $\mathcal{M}_k$ are precisely the matrices of
the form 
\begin{equation}
  W =\sum_{j\in J}\sigma_j u_j v_j^T(XX^T)^{-\frac 12},
\label{E:FORK}
\end{equation}
where $J\subseteq \{1,\hdots, q\}$ consists of precisely $k$ elements.
Consequently, if $k>q$, then no such subset $J$ can exist and therefore $L^1$
restricted to $\mathcal{M}_k$ cannot have any critical points.
\item 
If $W$ is a critical point of $L$ (so that $W$ has the form \eqref{E:FORK}), then 
\[
  L^1(W) = \frac12 \left(\tr(YY^T)-\sum_{j\in J}\sigma^2_j\right).
\]
It follows that the critical point $W$ is a global minimizer of $L^1$ on
$\mathcal{M}_k$ if and only if
\[
  \{\sigma_j:j\in J\}=\{\sigma_1,\hdots, \sigma_k\},
\]
i.e., the set $J$ picks precisely the $k$ largest singular values of $Q$. In
particular, if $k = q$, then there cannot be any saddle points. Recall that
there are no critical points if $k>q$ because of (1).
\end{enumerate} 
\end{proposition}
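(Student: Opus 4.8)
The idea is to remove the data matrices by a linear change of variables, turning the question into the classical problem of describing the critical points of the distance function to a fixed matrix on the fixed-rank manifold, and then to read everything off from the singular value decomposition $Q = U\Sigma V^T$. Note first that a point $W\in\mathcal{M}_k$ is critical for $L^1$ with respect to \emph{some} (hence any) Riemannian metric $g$ on $\mathcal{M}_k$ if and only if the Euclidean gradient $\nabla_W L^1(W) = WXX^T-YX^T$ is orthogonal (in $\langle\cdot,\cdot\rangle_F$) to the tangent space $T_W(\mathcal{M}_k)$, so the metric plays no role here.

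\textbf{Step 1: reduction to low-rank approximation of $Q$.} Since $XX^T$ is invertible, the map $W\mapsto \widetilde W:=W(XX^T)^{1/2}$ is a linear automorphism of $\RR^{d_y\times d_x}$ that preserves rank, so it restricts to a diffeomorphism of $\mathcal{M}_k$ onto itself with inverse $\widetilde W\mapsto \widetilde W(XX^T)^{-1/2}$. Expanding the Frobenius norm and using $(XX^T)^{-1/2}XY^T = Q^T$ gives, after completing the square,
\[
L^1(W) = \tfrac12\|Y-WX\|_F^2 = \tfrac12\|Q-\widetilde W\|_F^2 + \tfrac12\Big(\tr(YY^T)-\textstyle\sum_{i=1}^q\sigma_i^2\Big).
\]
Therefore $W$ is critical for $L^1$ on $\mathcal{M}_k$ iff $\widetilde W$ is critical for $F(\widetilde W):=\tfrac12\|Q-\widetilde W\|_F^2$ on $\mathcal{M}_k$, and the two functionals agree up to the displayed additive constant.

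\textbf{Step 2: critical points of $F$ on $\mathcal{M}_k$.} Here $\nabla F(\widetilde W)=\widetilde W-Q$, so criticality means $\widetilde W-Q \perp T_{\widetilde W}(\mathcal{M}_k)$. By Lemma~\ref{lem:proj-tangent} the orthogonal complement of the tangent space at $\widetilde W$ consists of the matrices $M$ with $Q_U M=0$ and $MQ_V=0$, where $Q_U,Q_V$ are the projectors onto the column and row spaces of $\widetilde W$; since $Q_U\widetilde W=\widetilde W=\widetilde W Q_V$, the criticality condition is equivalent to $\widetilde W=Q_U Q=QQ_V$. From $\widetilde W=QQ_V$ one gets $\operatorname{col}(\widetilde W)\subseteq\operatorname{col}(Q)=\linspan\{u_1,\dots,u_q\}$ and symmetrically $\operatorname{row}(\widetilde W)\subseteq\linspan\{v_1,\dots,v_q\}$; in particular $k=\rank(\widetilde W)\le q$, which already yields the last sentence of part (1). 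Writing $Q=U_q\Sigma_q V_q^T$ (reduced SVD, $\Sigma_q$ invertible with positive diagonal) and $Q_U=U_qP_AU_q^T$, $Q_V=V_qP_BV_q^T$ for rank-$k$ orthogonal projectors $P_A,P_B$ on $\RR^q$, the identity $Q_UQ=QQ_V$ becomes $P_A\Sigma_q=\Sigma_qP_B$; transposing gives $\Sigma_qP_A=P_B\Sigma_q$, hence $P_A\Sigma_q^2=\Sigma_q^2P_A$, so $P_A$ commutes with $\Sigma_q$, and then $\Sigma_qP_A=\Sigma_qP_B$ forces $P_A=P_B=:P$, a rank-$k$ projector commuting with $\Sigma_q$. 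Thus $P$ is block diagonal along the eigenspaces of $\Sigma_q$, and after adjusting the non-unique singular vectors inside each repeated-singular-value block we may take $P=\sum_{j\in J}e_je_j^T$ with $|J|=k$, so $\widetilde W=U_q\Sigma_qPV_q^T=\sum_{j\in J}\sigma_ju_jv_j^T$, i.e.\ $W=\sum_{j\in J}\sigma_ju_jv_j^T(XX^T)^{-1/2}$. Conversely, for such $W$ one checks directly that $Q_U(\widetilde W-Q)=0$ and $(\widetilde W-Q)Q_V=0$, so it is critical; alternatively part (1) can be deduced from the critical-point analysis in \cite{TragerKohnBruna19}. This proves part (1), and plugging $\|Q-\widetilde W\|_F^2=\sum_{j\notin J}\sigma_j^2$ into Step 1 gives the value formula in part (2). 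For the global-minimizer claim, the Eckart–Young–Mirsky theorem gives $\|Q-\widetilde W\|_F^2\ge\sum_{i>k}\sigma_i^2$ for every matrix of rank at most $k$, with equality at the truncated SVD $\sum_{i\le k}\sigma_iu_iv_i^T\in\mathcal{M}_k$ when $k\le q$; hence the minimum of $L^1$ on $\mathcal{M}_k$ equals $\tfrac12(\tr(YY^T)-\sum_{i\le k}\sigma_i^2)$, attained among the critical points of Step 2 exactly when $\{\sigma_j:j\in J\}=\{\sigma_1,\dots,\sigma_k\}$ as a multiset. In particular, for $k=q$ the only admissible $J$ is $\{1,\dots,q\}$, so the unique critical point is the global minimizer and no saddle points exist, while for $k>q$ there are no critical points at all.

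\textbf{Main obstacle.} The change of variables in Step 1 and the value computations are routine; the real work is Step 2, namely converting the tangent-space orthogonality into the clean algebraic condition $P_A\Sigma_q=\Sigma_qP_B$, deducing $P_A=P_B$ and that it commutes with $\Sigma_q$, and—most delicately—handling repeated singular values of $Q$ by absorbing them into the choice of singular vectors so that the critical point genuinely has the stated form \eqref{E:FORK}.
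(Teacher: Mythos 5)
Your proof is correct, and its first step is exactly the paper's: the paper likewise completes the square to write $L^1(W)=\frac12\|W(XX^T)^{1/2}-Q\|_F^2+C$ and uses that $W\mapsto W(XX^T)^{1/2}$ is a rank-preserving linear isomorphism, reducing everything to the critical points of $\widetilde W\mapsto\frac12\|\widetilde W-Q\|_F^2$ on $\mathcal{M}_k$. The difference is in what happens next: the paper simply quotes \cite[Theorem 28]{TragerKohnBruna19} for this core case, whereas you prove it from scratch, characterizing criticality as $\widetilde W=Q_UQ=QQ_V$ via the projection formula of Lemma~\ref{lem:proj-tangent}, converting this into $P_A\Sigma_q=\Sigma_qP_B$ for projectors expressed in the singular basis of $Q$, deducing $P_A=P_B=P$ commuting with $\Sigma_q$, and handling repeated singular values by re-choosing the singular vectors inside each block so that $P$ becomes a coordinate projector; part (2) you then get from Eckart--Young--Mirsky rather than from the cited theorem. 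Each route has its merits: the paper's proof is a few lines but outsources the essential content, while your argument is self-contained, correctly notes that criticality on $\mathcal{M}_k$ is metric-independent (so the statement is unambiguous for any $g$), and makes explicit the point, left implicit in the statement \eqref{E:FORK}, that when $Q$ has repeated singular values the critical points are of that form only after an appropriate choice of the (non-unique) singular vectors $u_j,v_j$. I see no gaps in your Step 2; the commutation argument ($P_A\Sigma_q^2=\Sigma_q^2P_A$ via transposition, hence $P_A$ commutes with $\Sigma_q$, hence $P_A=P_B$ by invertibility of $\Sigma_q$) and the rank count $\operatorname{rank}(\Sigma_qP)=\operatorname{rank}(P)=k$ are all sound, and your converse verification and the $k>q$, $k=q$ conclusions match the proposition.
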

\begin{proof}
For $X=\id$ see the proof of \cite[Theorem 28]{TragerKohnBruna19}. To obtain the
general case we observe that
\begin{equation*}
    L^1(W) = \frac 12 \norm{WX-Y}_F^2
      = \frac 12 \norm{W(XX^T)^{\frac 12}-YX^T(XX^T)^{-\frac 12}}_F^2+C
      = \frac 12 \norm{W(XX^T)^{\frac 12}-Q}_F^2+C,
\end{equation*}
where $C := \frac12 \|Y\|_F^2 - \frac12 \|Q\|_F^2$ does not depend on $W$. Since
$XX^T$ has full rank, the map $W\mapsto W(XX^T)^{\frac 12}$ is invertible (on
any $\mathcal{M}_k$). Therefore the critical points of the map $W\mapsto\frac 12
\norm{W(XX^T)^{\frac 12}-Q}_F^2$ restricted to $\mathcal{M}_k$ are just the
critical points of the map  $W\mapsto\frac 12 \norm{W-Q}_F^2$ (restricted to
$\mathcal{M}_k$) multiplied by  $(XX^T)^{-\frac 12}$. Now we substitute the
results of \cite[Theorem 28]{TragerKohnBruna19} on the critical points of the
map $W\mapsto\frac 12 \norm{W-Q}_F^2$ restricted to $\mathcal{M}_k$ (which are
just as claimed here in the case $X=\id$) and we obtain the claim of the
proposition.
\end{proof}

\begin{proposition}\label{prop:strict_saddle_property_general}
	The function $L^1$ on $\mathcal{M}_k$ for $k \leq n$ 
	satisfies the strict saddle point property. More precisely, all critical points of $L^1$ on $\mathcal{M}_k$ except for the global minimizers are strict saddle points.
\end{proposition}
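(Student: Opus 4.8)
The plan is to reduce, by two elementary steps, the assertion to producing at every non-minimal critical point an explicit direction of descent: a ``Givens rotation'' that mixes an active singular component of the critical point into a larger, inactive one.

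\textbf{Reduction 1: peeling off $XX^T$ and passing to curves.} Exactly as in the proof of Proposition~\ref{critpoints_general}, since $XX^T$ has full rank the map $W\mapsto\tilde W:=W(XX^T)^{1/2}$ is the restriction to $\mathcal{M}_k$ of a linear isomorphism of $\RR^{d_y\times d_x}$, hence a diffeomorphism of $\mathcal{M}_k$ onto itself, and $L^1(W)=\frac12\|\tilde W-Q\|_F^2+C=:\tilde L(\tilde W)+C$ with $C$ independent of $W$. Thus $W$ is critical for $L^1$ on $\mathcal{M}_k$ iff $\tilde W$ is critical for $\tilde L$, and $\frac{d^2}{dt^2}L^1(\sigma(t))=\frac{d^2}{dt^2}\tilde L(\tilde\sigma(t))$ for any $C^2$ curve $\sigma$ in $\mathcal{M}_k$ and its image $\tilde\sigma$. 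Next, by the chain rule \eqref{Riemannian-chain:rule} applied at a critical point $x_0$ (where $\nabla^g f(x_0)=0$ annihilates the second summand), one has $\frac{d^2}{dt^2}\big|_{t=0}f(\sigma(t))=g_{x_0}\big(\dot\sigma(0),\operatorname{Hess}^g f(x_0)[\dot\sigma(0)]\big)$ for \emph{every} $C^2$ curve $\sigma$ with $\sigma(0)=x_0$; in particular this number depends only on the velocity $\dot\sigma(0)$, it is independent of the metric $g$, and $\operatorname{Hess}^g f(x_0)$ has a negative eigenvalue if and only if some curve $\sigma$ through $x_0$ has $\frac{d^2}{dt^2}\big|_{t=0}f(\sigma(t))<0$. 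Hence it suffices, for every critical point $\tilde W$ of $\tilde L$ on $\mathcal{M}_k$ that is not a global minimizer, to exhibit a $C^2$ curve $\gamma\colon(-\eps,\eps)\to\mathcal{M}_k$ with $\gamma(0)=\tilde W$ and $\frac{d^2}{dt^2}\big|_{t=0}\tilde L(\gamma(t))<0$.

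\textbf{Reduction 2: the descent curve.} By Proposition~\ref{critpoints_general} write $\tilde W=\sum_{j\in J}\sigma_j u_j v_j^T$ with $|J|=k$; we may assume $0<k<q$, since for $k\ge q$ there are no saddle points by Proposition~\ref{critpoints_general} and $k=0$ is trivial. As $\tilde W$ is not a global minimizer, $\{\sigma_j:j\in J\}\neq\{\sigma_1,\dots,\sigma_k\}$ as multisets, so $\sum_{j\in J}\sigma_j^2<\sum_{i=1}^k\sigma_i^2$; a short combinatorial argument (a set $J$ all of whose entries dominate all its non-entries would attain the maximum) then yields $a\in J$ and $b\in\{1,\dots,q\}\setminus J$ with $\sigma_b>\sigma_a>0$. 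Put
\[
  \gamma(t):=\sum_{j\in J\setminus\{a\}}\sigma_j u_j v_j^T+\sigma_a\,(\cos t\,u_a+\sin t\,u_b)(\cos t\,v_a+\sin t\,v_b)^T .
\]
Since $u_a,u_b$ (resp.\ $v_a,v_b$) are unit vectors, orthogonal to each other and to all $u_j,v_j$ with $j\in J\setminus\{a\}$, the $k$ rank-one summands of $\gamma(t)$ have pairwise orthogonal column spaces and pairwise orthogonal row spaces; hence $\gamma(t)$ has rank exactly $k$ for all $t$, so $\gamma$ is a smooth curve in $\mathcal{M}_k$ with $\gamma(0)=\tilde W$.

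\textbf{Computation and conclusion.} Expanding the rotated rank-one term and using $Q=\sum_{i=1}^q\sigma_i u_i v_i^T$ together with the pairwise Frobenius-orthogonality of the elementary matrices $u_\ast v_\ast^T$ appearing in $\gamma(t)-Q$, all cross terms vanish and, with $s:=\sin^2 t$,
\[
  \|\gamma(t)-Q\|_F^2=\sigma_a^2 s^2+2\sigma_a^2 s(1-s)+(\sigma_a s-\sigma_b)^2+\mathrm{const}=2\sigma_a(\sigma_a-\sigma_b)\,s+\mathrm{const}',
\]
the $s^2$ terms cancelling exactly. Therefore $\tilde L(\gamma(t))=\sigma_a(\sigma_a-\sigma_b)\sin^2 t+\mathrm{const}$ and $\frac{d^2}{dt^2}\big|_{t=0}\tilde L(\gamma(t))=2\sigma_a(\sigma_a-\sigma_b)<0$ because $0<\sigma_a<\sigma_b$. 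By Reduction~1, $\operatorname{Hess}^g\tilde L(\tilde W)$, and hence $\operatorname{Hess}^g L^1(W)$ at the corresponding critical point $W$, has a negative eigenvalue, so $W$ is a strict saddle. Combined with Proposition~\ref{critpoints_general}(2), which characterizes the global minimizers of $L^1$ on $\mathcal{M}_k$ as precisely the truncations of $Q$ at its $k$ largest singular values, this yields the strict saddle point property.

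\textbf{Main obstacle.} There is no deep difficulty; the care lies in the two reductions. One must argue cleanly that at a critical point the second derivative of $L^1$ along a curve equals the Hessian quadratic form and is metric-independent — this is where \eqref{Riemannian-chain:rule} and $\nabla^g L^1(W)=0$ are used — so that we are entitled to test descent along the special curve $\gamma$. One must also verify that $\gamma$ genuinely stays on $\mathcal{M}_k$ (rank exactly $k$, not merely at most $k$) and that the degree-four contributions in $\sin t$ cancel, leaving the strictly negative $\sin^2 t$ leading term. Extracting the pair $(a,b)$ with $\sigma_b>\sigma_a$ from the strict inequality of sums of squares, though elementary, needs a small argument to cover ties among the singular values.
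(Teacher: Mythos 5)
Your proof is correct and follows essentially the same route as the paper's: pick $a\in J$, $b\notin J$ with $\sigma_b>\sigma_a$, rotate the $(u_a,v_a)$ component toward $(u_b,v_b)$ along a curve in $\mathcal{M}_k$, compute $\left.\frac{d^2}{dt^2}L^1(\gamma(t))\right|_{t=0}=2\sigma_a(\sigma_a-\sigma_b)<0$, and invoke \eqref{Riemannian-chain:rule} at the critical point to get a negative Hessian eigenvalue for any metric. Your only deviations are cosmetic: the preliminary change of variables $\tilde W=W(XX^T)^{1/2}$ (which the paper instead absorbs into the curve via the factor $(XX^T)^{-1/2}$) and the $\cos t,\sin t$ parametrization in place of $\sqrt{1-t^2},t$.
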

\begin{proof} If $k\geq q = \operatorname{rank}(Q)$ then there are no saddle points by Proposition \ref{critpoints_general} so that the statement holds trivially.
Therefore, we assume $k < q$ from now on.
	By Proposition \ref{critpoints_general}, it is enough to show that the Riemannian Hessian of $L^1$ has a negative eigenvalue at any point of the   form 
	$$	W =\sum_{j\in J}\sigma_j u_j v_j^T(XX^T)^{-\frac 12},$$
	where $J\subseteq \{1,\hdots, q\}$ consists of  precisely $k$ elements and has the property that there is a $j_0\in J$ with $\sigma_{j_0}<\sigma_k$.
	Thus there is also a $\sigma_{j_1}\in \{\sigma_1,\hdots, \sigma_k\}$ with $\sigma_{j_1}>\sigma_{j_0}$ and $j_1\not\in J$.
	We define for  $t\in (-1,1)$: $$
	u_{j_0}(t)=t u_{j_1} +\sqrt{1-t^2}u_{j_0} \text{ and } v_{j_0}(t)=t v_{j_1} +\sqrt{1-t^2}v_{j_0}.
	$$
	Now consider the curve $\gamma:(-1,1)\rightarrow \mathcal{M}_k$ given by
	$$\gamma(t)=\left(\sigma_{j_0}u_{j_0}(t)v_{j_0}(t)^T+ \sum_{j\in J,j\neq j_0} \sigma_ju_jv_j^T\right)(XX^T)^{-\frac 12}.$$ Obviously we have $\gamma(0)=W$.
	We claim that it is enough to show that $$
	\left.\frac{d^2}{dt^2}L^1(\gamma(t))\right|_{t=0}<0.
	$$
	Indeed, by \eqref{Riemannian-chain:rule} it holds (for any Riemannian metric $g$) that
	$$
	\frac{d^2}{dt^2}L^1(\gamma(t))=g\left( \dot\gamma(t), \Hess^g L^1(\gamma(t))\dot\gamma(t) \right) +g\left( \frac{D}{dt}\dot\gamma(t), \nabla^g L^1(\gamma(t))\right),
	$$
	and since $\nabla^g L^1(\gamma(0))=\nabla^g L^1(W)=0$, it follows that $g\left( \dot\gamma(0), \Hess^g L^1(W)\dot\gamma(0) \right)<0$  if $
	\left.\frac{d^2}{dt^2}L^1(\gamma(t))\right|_{t=0}<0
	$ and hence that $\Hess^g L^1(W)$ has a negative eigenvalue in this case. (Note that $\Hess^g L^1(W)$ is self-adjoint with respect to the scalar product $g$ on $T_W(\mathcal{M}_k)$ and that it cannot be positive semidefinite (wrt.~$g$) if $g\left( \dot\gamma(0), \Hess^g L^1(W)\dot\gamma(0) \right)<0$, hence it has a negative eigenvalue in this case.)

	We note that 
	\begin{equation}\label{L1-curve-general}
	L^1(\gamma(t))=\frac 12 \norm{\gamma(t)X-Y}_F^2=\frac 12\tr(\gamma(t)^T\gamma(t)XX^T-2\gamma(t)XY^T+YY^T).
	\end{equation}
	We compute
	\begin{align*}
	& \left(\sigma_{j_0}v_{j_0}(t)u_{j_0}(t)^T+ \sum_{j\in J,j\neq j_0}\sigma_{j}v_j u_j^T\right) \left(\sigma_{j_0}u_{j_0}(t)v_{j_0}(t)^T+ \sum_{j\in J,j\neq j_0} \sigma_{j}u_jv_j^T\right) \\
	& = \sum_{j \in J \setminus \{j_0\}} \sigma_j^2 v_j v_j^T + \sigma_{j_0}^2 v_{j_0}(t) v_{j_0}(t)^T
	\end{align*}
	so that
	\begin{align*}
	\tr(\gamma(t)^T\gamma(t)XX^T) 
	&= \tr\left( (XX^T)^{-\frac 12} \left( \sum_{j \in J \setminus \{j_0\} } \sigma_j^2 v_j v_j^T + \sigma_{j_0}^2 v_{j_0}(t) v_{j_0}(t)^T
	 \right) (XX^T)^{-\frac 12}XX^T\right)\\
	&=\sum_{j\in J}\sigma_j^2.
	\end{align*}
	In particular, this expression is independent of $t$.
	Further,
	\begin{align*}
	\tr(-2\gamma(t)XY^T)& =-2\tr\left(\left(\sigma_{j_0}u_{j_0}(t)v_{j_0}(t)^T+ \sum_{j\in J,j\neq j_0} \sigma_{j}u_jv_j^T\right)(XX^T)^{-\frac 12}XY^T\right) \displaybreak[2]\\
	&=-2\tr\left(\left(\sigma_{j_0}u_{j_0}(t)v_{j_0}(t)^T+ \sum_{j\in J,j\neq j_0} \sigma_{j}u_jv_j^T\right)Q^T\right)\displaybreak[2]\\
	&=-2\tr\left(\left(\sigma_{j_0}u_{j_0}(t)v_{j_0}(t)^T+ \sum_{j\in J,j\neq j_0} \sigma_{j} u_j v_j^T\right)\sum_{j=1}^q\sigma_{j}v_j u_j^T\right)\displaybreak[2]\\
	&=-2\tr\left(\sigma_{j_0}u_{j_0}(t)v_{j_0}(t)^T(\sigma_{j_0}v_{j_0}u_{j_0}^T+\sigma_{j_1}v_{j_1}u_{j_1}^T)  \right) -2 \sum_{j\in J,j\neq j_0}\sigma_j^2\displaybreak[2]\\
	&= -2(\sigma_{j_0}^2(1-t^2)+t^2\sigma_{j_0}\sigma_{j_1}) -2 \sum_{j\in J,j\neq j_0}\sigma_j^2\displaybreak[2]\\
	&= 2t^2\sigma_{j_0}(\sigma_{j_0}-\sigma_{j_1})-2 \sum_{j\in J}\sigma_j^2.
	\end{align*}
	Together with equation \eqref{L1-curve-general} it follows that 
	$$
	\left.\frac{d^2}{dt^2}L^1(\gamma(t))\right|_{t=0}=2\sigma_{j_0}(\sigma_{j_0}-\sigma_{j_1})<0.
	$$	This concludes the proof.
\end{proof}
We note that a construction similar to the curve constructed in the preceding proof is considered in the proof of \cite[Theorem 28]{TragerKohnBruna19}. However,  it is not discussed there that this implies strictness of the saddle points.

\subsection{Strict saddle points of $L^N$}

Before discussing the strict saddle point property, let us first investigate the relation of the critical points of $L^N$
and the ones of $L^1$ restricted to $\mathcal{M}_r$, where 
\[
r = \min \{d_0,d_1,\hdots,d_N\}.
\]
Throughout this section we assume  that $X X^T$ has full rank. 
\begin{proposition}\label{prop:critical-LN-general}  
	\begin{itemize}
		\item[(a)] 
		Let $(W_1,\hdots,W_N)$ be a critical point of $L^N$. Define $W= W_N \cdots W_1$ and let $k := \operatorname{rank}(W) \leq r$.
		Then $W$ is a critical point of $L^1$ restricted to $\mathcal{M}_k$.
		\item[(b)] Let $W$ be a critical point of $L^1$ restricted to $\mathcal{M}_k$ for some $k\leq r$. Then
		there exists a tuple $(W_1,\hdots,W_N)$ with $W_N \cdots W_1 = W$ that is a critical point of $L^N$.
	\end{itemize}
\end{proposition}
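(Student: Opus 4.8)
The plan is to treat the two directions separately, using Lemma~\ref{L:PREL}(1) as the main computational bridge between $\nabla_{W_j}L^N$ and $\nabla_W L^1$.

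For part~(a), suppose $(W_1,\dots,W_N)$ is a critical point of $L^N$, i.e. $\nabla_{W_j}L^N=0$ for all $j$. By Lemma~\ref{L:PREL}(1) this says $W_{j+1}^T\cdots W_N^T\,\nabla_W L^1(W)\,W_1^T\cdots W_{j-1}^T=0$ for each $j$; taking $j=1$ and $j=N$ in particular gives $\nabla_W L^1(W)\,W_1^T\cdots W_{N-1}^T=0$ and $W_2^T\cdots W_N^T\,\nabla_W L^1(W)=0$. Writing $G:=\nabla_W L^1(W)=WXX^T-YX^T$, one observes that $W_1^T\cdots W_{N-1}^T$ and $W$ have the same column space (and $W_2^T\cdots W_N^T$, $W^T$ the same column space), since prepending/appending the remaining factor only shrinks to exactly the rank-$k$ range of $W$ when all factors are composed — more precisely $\operatorname{range}(W)=\operatorname{range}(W_N(W_{N-1}\cdots W_1))\subseteq \operatorname{range}(W_N)$, and since $\operatorname{rank}(W)=k$ forces equality of ranges through the chain, so $\operatorname{range}(W_N\cdots W_2)=\operatorname{range}(W)$ and similarly $\operatorname{range}(W_1^T\cdots W_{N-1}^T)=\operatorname{range}(W^T)$. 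Hence $G$ vanishes on $\operatorname{range}(W^T)$ from the right-multiplication identity and $G^T$ vanishes on $\operatorname{range}(W)$, i.e. $GW^T=0$ and $G^TW=0$ (after squaring the partial products, which have the same range as $W^T$ resp.\ $W$). Then, using the tangent space description \eqref{E:TANG}, $T_W(\mathcal{M}_k)=\{WA+BW\}$, we get $\langle G, WA+BW\rangle_F=\tr(G A^T W^T)+\tr(G W^T B^T) = \tr((G W^T)(\cdots)) = 0$ because $GW^T=0$ and $G^TW=0$ (equivalently $W^TG=0$). This is exactly the statement that the Euclidean projection of $\nabla L^1(W)$ onto $T_W(\mathcal M_k)$ vanishes, i.e. $W$ is a critical point of $L^1|_{\mathcal M_k}$ (and this is metric-independent since criticality means the differential annihilates the tangent space). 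The one subtlety to nail down is passing from "partial products have vanishing action" to "$GW^T=0$, $W^TG=0$": one should argue $\operatorname{range}(W_1^T\cdots W_{N-1}^T)=\operatorname{range}((W_1^T\cdots W_{N-1}^T)(W_{N-1}\cdots W_1))=\operatorname{range}(W^TW\cdot(\text{something}))$ — cleaner is to note $\operatorname{range}(W^T)\subseteq\operatorname{range}(W_1^T\cdots W_{N-1}^T)$ always, and the reverse inclusion would fail only if a rank drop occurred, but $\operatorname{rank}(W)=k$ and $\operatorname{rank}(W_j)\ge k$ throughout; hence equality.

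For part~(b), given $W$ a critical point of $L^1|_{\mathcal M_k}$, we must exhibit a factorization $W=W_N\cdots W_1$ that is critical for $L^N$. By Proposition~\ref{critpoints_general}, $W$ has the explicit form \eqref{E:FORK}, so in particular $W$ has rank exactly $k\le r$. We build a \emph{balanced} factorization as in the proof of Proposition~\ref{rank_const}: write $W=USV^T$ with $S$ containing the singular values $s_1,\dots,s_k>0$ of $W$, and set $W_1,\dots,W_N$ to be diagonal-type matrices carrying the $N$-th roots $s_i^{1/N}$, so that $W_N\cdots W_1=W$ and $W_{j+1}^TW_{j+1}=W_jW_j^T$ for all $j$. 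Then $W_j W_j^T = (WW^T)^{1/N}$ and $W_j^T W_j = (W^TW)^{1/N}$ (on the appropriate block). It remains to check $\nabla_{W_j}L^N=0$ for all $j$; by Lemma~\ref{L:PREL}(1), $\nabla_{W_j}L^N = W_{j+1}^T\cdots W_N^T\, G\, W_1^T\cdots W_{j-1}^T$ with $G=\nabla_W L^1(W)$. Since $W$ is critical for $L^1|_{\mathcal M_k}$ we have $GW^T=0$ and $W^TG=0$ (by the projection computation, reversed, using that $\langle G,\cdot\rangle_F$ kills $T_W(\mathcal M_k)\supseteq\{WA\}\cup\{BW\}$, which forces $G^TW=0$ and $GW^T=0$ respectively — here one uses that $\{WA:A\}$ alone already spans $\operatorname{range}(W)$ worth of directions). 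Now $W_1^T\cdots W_{j-1}^T$ has column space contained in $\operatorname{range}(W^T)=\ker(G)^\perp$... more directly, $W_1^T\cdots W_{j-1}^T = V(\text{diagonal})$ so $G\cdot W_1^T\cdots W_{j-1}^T = GV(\cdots)$; and $GV=0$ because $\operatorname{range}(V)=\operatorname{range}(W^T)$ and $GW^T=0$ implies $G$ annihilates $\operatorname{range}(W^T)$. Hence $\nabla_{W_j}L^N=0$ for all $j$, as desired.

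The main obstacle I expect is the bookkeeping in part~(a) to rigorously convert the vanishing of the \emph{partial} products $W_{j+1}^T\cdots W_N^T\,G\,W_1^T\cdots W_{j-1}^T$ (with the full chain of remaining factors absent) into the two clean identities $GW^T=0$ and $W^TG=0$ — this requires being careful that the partial products $W_1^T\cdots W_{N-1}^T$ and $W_2^T\cdots W_N^T$ really do have the same range as $W^T$ and $W$ respectively, which relies on $\operatorname{rank}(W)=k$ together with $\operatorname{rank}(W_j)\ge k$ for all $j$ (the latter because $W=W_N\cdots W_1$ has rank $k$ forces every factor to have rank at least $k$). Everything else is a direct computation with traces and the explicit balanced factorization already used earlier in the paper.
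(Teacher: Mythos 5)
Your plan is essentially correct, and both parts can be completed along the lines you sketch, but it is a genuinely different route from the paper's. For part (a) the paper never passes through identities like $GW^T=0$: it takes an arbitrary tangent vector $Z=WA+BW\in T_W(\mathcal{M}_k)$, lifts it to the curve $\gamma(t)=(W_N+tBW_N)\,W_{N-1}\cdots W_2\,(W_1+tW_1A)$, and notes that $\frac{d}{dt}L^1(\gamma(t))\big|_{t=0}=\langle\nabla L^N(W_1,\hdots,W_N),(W_1A,0,\hdots,0,BW_N)\rangle=0$ by criticality of $L^N$; Lemma~\ref{L:PREL}(1) is not needed there. Your route---characterizing criticality of $L^1|_{\mathcal{M}_k}$ via \eqref{E:TANG} as $W^TG=0$ and $GW^T=0$ and extracting these from the $j=1$ and $j=N$ components of $\nabla L^N=0$---works equally well, and it has the advantage of making the criticality conditions explicit so that they can be reused in (b). For part (b) the paper builds the factors from the explicit form \eqref{E:FORK} using standard unit vectors and verifies the two sufficient conditions $GW_1^T=0$ and $W_N^TG=0$; your balanced SVD-based factorization (as in the proof of Proposition~\ref{rank_const}) is an equally valid choice, with essentially the same verification.

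Two corrections, neither fatal. First, the range-equality claim in (a)---that $W_1^T\cdots W_{N-1}^T$ has the same range as $W^T$, justified by $\operatorname{rank}(W_j)\ge k$---is false in general: take $N=2$, $W_1=I_2$, $W_2=\diag(1,0)$, so $\operatorname{rank}(W)=1$ and both factors have rank at least $1$, yet $\operatorname{range}(W_1^T)=\RR^2\neq\operatorname{range}(W^T)$. Fortunately this claim is never needed: only the trivial inclusion $\operatorname{range}(W^T)\subseteq\operatorname{range}(W_1^T\cdots W_{N-1}^T)$ enters, or more directly $GW^T=(G\,W_1^T\cdots W_{N-1}^T)\,W_N^T=0$ and $W^TG=W_1^T\,(W_2^T\cdots W_N^T\,G)=0$, so the ``main obstacle'' you flag is not an obstacle at all and the reverse-inclusion discussion should simply be deleted. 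Second, in (b) the assertion ``$GV=0$'' with $V$ the full orthogonal factor of the SVD is generally false (it would force $G=0$); what is true, and all you need, is $G\bar V=0$ and $\bar U^TG=0$ for the first $k$ columns $\bar U,\bar V$, which follow from $GW^T=0$ and $W^TG=0$ since the $k\times k$ singular-value block is invertible. The rectangular diagonal factors in your construction only engage those first $k$ columns and rows, so this suffices; note also that for $j=1$ the gradient has no right factor, so there you must invoke $\bar U^TG=0$ rather than $G\bar V=0$.
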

\begin{proof} For (a), let $Z \in T_W(\mathcal{M}_k)$ be arbitrary, i.e., $Z =  WA + BW$ for some matrices 
	$A \in \RR^{d_x \times d_x}$ and $B\in  \RR^{d_y \times d_y}$. It suffices to show that for a curve $\gamma : \RR \to \mathcal{M}_k$ with
	$\gamma(0) = W$ and $\dot{\gamma}(0) = Z$ that $\left.\frac{d}{dt} L^1(\gamma(t))\right|_{t=0}  = 0$.
	We choose the curve
	\begin{equation}\label{def:gammaZ-global}
	\gamma(t) = (W_N + t V_n) \cdot W_{N-1} \cdots W_2 \cdot (W_1 + t V_1),
	\end{equation}
	where $V_1 = W_1 A$ and $V_N =   B W_N$. Then, indeed $\gamma(0) = W_N \cdots W_1 = W$ and 
	$\dot{\gamma}(0) = W_N W_{N-1} \cdots W_1 A + B W_N \cdots W_1 = Z$. Next, observe that
	\begin{align*}
	\left.\frac{d}{dt} L^1(\gamma(t))\right|_{t=0}  &= \left. \frac{d}{dt} L^N(W_1 + tV_1, W_2,\hdots, W_{N-1}, W_N + t V_N) \right|_{t=0}\\
	& = \langle \nabla L^N(W_1,\hdots,W_N), (V_1,0,\hdots,0,V_N)\rangle = 0,
	\end{align*}
	since $(W_1,\hdots,W_N)$ is a critical point of $L^N$. Since $Z$ was arbitrary, this shows (a).
	
	For 
	(b) we first note that by Lemma \ref{L:PREL}, for a point $(W_1,\hdots, W_N)$ to be a critical point of $L^N$, it suffices that  
	\begin{equation}\label{critcond-general}
	(WXX^T-YX^T)W_1^T=0 \quad \text{ and } \quad W_N^T(WXX^T-YX^T)=0.
	\end{equation}
	This is equivalent to 
	\begin{equation}\label{critcond-general2}
	WXX^TW_1^T=Q(XX^T)^{\frac 12}W_1^T \quad \text{ and } \quad W_N^TWXX^T=W_N^TQ(XX^T)^{\frac 12}.
	\end{equation}
	Since $W$ is a critical point of $L^1$ restricted to $\mathcal{M}_k$, we can write
	$$	W =\sum_{j\in J}\sigma_j u_j v_j^T(XX^T)^{-\frac 12},$$
	where $J\subseteq \{1,\hdots, q\}$ consists of $k$ elements, see Proposition \ref{critpoints_general}. We write $J=\{j_{i_1},\hdots,j_{i_k}\}$ to enumerate the elements in $J$.
	For $i,l\in \NN$ with $i\leq l$ we denote by $e_i^{(l)}$ the $i$-th standard unit vector of dimension $l$ (i.e., it has $l$ entries, the $i$-th entry is $1$ and all other entries are $0$).
	Now we define
	\begin{align*}
	W_1:&=\sum_{i=1}^k e^{(d_1)}_iv_{j_i}^T(XX^T)^{-\frac 12},\displaybreak[2]\\
	W_l:&=\sum_{i=1}^k e^{(d_l)}_i(e^{(d_{l-1})}_i)^T  \quad \text{ for } l=2,\hdots, N-1, \displaybreak[2]\\
	W_N:&=\sum_{i=1}^k \sigma_{j_i} u_{j_i} (e^{(d_{N-1})}_i)^T.
	\end{align*}
	Since $k\leq r$ this is well defined and one easily checks that $$W_N\cdots W_1=\sum_{j\in J}\sigma_j u_j v_j^T(XX^T)^{-\frac 12}=W$$ and that the conditions in  \ref{critcond-general2} are fulfilled (recall that  $Q=\sum_{i=1}^q \sigma_iu_iv_i^T$ ).
\end{proof}

Let us now analyze the Hessian of $L^N$ in critical points.

\begin{proposition}\label{prop:critical-LNL1-general}  
Let $(W_1,\hdots,W_N)$ be a critical point of $L^N$ such that $W = W_N \cdots
W_1$ has $\operatorname{rank}(W) = k$. If $W$ is not a global optimum of $L^1$
on $\mathcal{M}_k$ then $(W_1,\hdots,W_N)$ is a strict saddle point of $L^N$.
\end{proposition}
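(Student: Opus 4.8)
The plan is to exhibit an explicit direction of descent for $L^N$ at the critical point $(W_1,\hdots,W_N)$, i.e., a curve $t\mapsto(W_1(t),\hdots,W_N(t))$ in the parameter space with $(W_1(0),\hdots,W_N(0))=(W_1,\hdots,W_N)$ along which the second derivative of $L^N$ is strictly negative. Since $(W_1,\hdots,W_N)$ is a critical point, a strictly negative second derivative along any smooth curve forces the Euclidean Hessian $HL^N(W_1,\hdots,W_N)$ to have a negative eigenvalue (the first-order term in the Taylor expansion vanishes and the self-adjoint Hessian cannot be positive semidefinite), which is exactly the strict saddle property. This mirrors the argument already used in the proof of Proposition~\ref{prop:strict_saddle_property_general} for $L^1$ on $\mathcal{M}_k$.

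The natural strategy is to transport the descent curve $\gamma$ constructed in the proof of Proposition~\ref{prop:strict_saddle_property_general} from $\mathcal{M}_k$ up to the parameter space. Concretely, by Proposition~\ref{critpoints_general}, $W=\sum_{j\in J}\sigma_j u_j v_j^T(XX^T)^{-1/2}$ with $|J|=k$, and since $W$ is not a global optimum there are indices $j_0\in J$ and $j_1\notin J$ with $\sigma_{j_1}>\sigma_{j_0}$. I would lift $W$ to a balanced-type factorization $W=W_N\cdots W_1$ built from the singular vectors $u_j,v_j$, much as in the construction at the end of the proof of Proposition~\ref{prop:critical-LN-general} (or as in \cite[Section 3.3]{arora2018convergence}), and then define the curve by rotating the pair of singular directions $u_{j_0},v_{j_0}$ toward $u_{j_1},v_{j_1}$ simultaneously in the first and last factors, i.e.\ replace $u_{j_0}$ by $t u_{j_1}+\sqrt{1-t^2}\,u_{j_0}$ inside $W_N$ and $v_{j_0}$ by $t v_{j_1}+\sqrt{1-t^2}\,v_{j_0}$ inside $W_1$, leaving the other columns of $W_1,W_N$ and all intermediate factors $W_2,\hdots,W_{N-1}$ fixed. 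Call the resulting curve of products $\gamma(t)=W_N(t)\cdots W_1(t)$; by design it agrees with the curve of Proposition~\ref{prop:strict_saddle_property_general}, so $L^N(W_1(t),\hdots,W_N(t))=L^1(\gamma(t))$ and we already know $\left.\frac{d^2}{dt^2}L^1(\gamma(t))\right|_{t=0}=2\sigma_{j_0}(\sigma_{j_0}-\sigma_{j_1})<0$.

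There is, however, a subtlety: the critical point $(W_1,\hdots,W_N)$ given to us in the hypothesis need \emph{not} be the particular balanced factorization I would construct; it is an arbitrary critical point of $L^N$ whose product has rank $k$. So the main obstacle is to show that the strict-saddle conclusion does not depend on which factorization realizes $W$. I see two ways to handle this. The cleaner one is to argue that at a critical point of $L^N$, a descent direction for $L^1$ along a curve in $\mathcal{M}_k$ through $W$ always lifts to a descent direction for $L^N$: given any smooth curve $\delta(t)$ in $\mathcal{M}_k$ with $\delta(0)=W$ and $\left.\frac{d^2}{dt^2}L^1(\delta(t))\right|_{t=0}<0$, one writes $\delta(t)=W+tZ+\tfrac{t^2}{2}Z'+o(t^2)$ with $Z=WA+BW\in T_W(\mathcal{M}_k)$ and lifts it, as in \eqref{def:gammaZ-global}, to $\gamma(t)=(W_N+tV_N+\tfrac{t^2}2 V_N')\cdot W_{N-1}\cdots W_2\cdot (W_1+tV_1+\tfrac{t^2}2 V_1')$ with $V_1=W_1A$, $V_N=BW_N$ and the second-order coefficients $V_1',V_N'$ chosen so that $\ddot\gamma(0)=Z'$ (this is possible because the map sending the factor-perturbations to $\dot\gamma(0),\ddot\gamma(0)$ is, on the relevant directions, surjective onto pairs with $\dot\gamma(0)\in T_W(\mathcal{M}_k)$); since $L^N\circ(\text{factors})=L^1\circ(\text{product})$ and $(W_1,\hdots,W_N)$ is critical, $\left.\frac{d^2}{dt^2}L^N(\cdots)\right|_{t=0}=\left.\frac{d^2}{dt^2}L^1(\delta(t))\right|_{t=0}<0$. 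Combining this with the existence of such a curve $\delta$ from the proof of Proposition~\ref{prop:strict_saddle_property_general} gives a direction of negative curvature for $L^N$ at $(W_1,\hdots,W_N)$, and hence a negative eigenvalue of its Hessian, completing the proof. The only point requiring care is the surjectivity claim for the second-order coefficients, for which it suffices to note that one may freely choose $V_1',V_N'$ and that varying them moves $\ddot\gamma(0)$ over a full affine subspace containing the required value $Z'$ once $\dot\gamma(0)=Z$ is fixed; alternatively, one avoids the issue entirely by using the \emph{specific} curve $\gamma$ above (no second-order correction needed) together with the observation that any critical point of $L^N$ with product $W$ has $W_1,W_N$ related to the SVD data of $W$ in a way that makes the same computation go through.
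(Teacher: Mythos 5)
Your overall strategy coincides with the paper's: lift a direction of negative curvature of $L^1$ on $\mathcal{M}_k$ at $W$ to the factor space via $V_1=W_1A$, $V_N=BW_N$ and the curve \eqref{def:gammaZ-global}, and conclude that the Euclidean Hessian of $L^N$ at the critical point has a negative eigenvalue. The execution, however, has a genuine gap, and it comes from missing the one observation that makes the paper's proof short. Your ``cleaner'' route tries to match the full second-order jet of the lifted product curve with that of the descent curve $\delta\subset\mathcal{M}_k$, and for this you assert that varying $V_1',V_N'$ sweeps out an affine subspace containing $\ddot\delta(0)$. That is precisely the nontrivial point and it is not justified: the attainable set is $2BWA+\{V_N'(W_{N-1}\cdots W_1)+(W_N\cdots W_2)V_1'\}$, which is in general a \emph{proper} subspace of $\RR^{d_y\times d_x}$ (row and column spaces of its elements are constrained by the partial products; already for $N=2$ with $\operatorname{rank}(W_1)=\operatorname{rank}(W_2)=k<\min d_i$ it is not everything). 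To see that $\ddot\delta(0)-2BWA$ nevertheless lies in it one would have to argue, e.g., that two curves in $\mathcal{M}_k$ through $W$ with the same velocity $Z$ have accelerations differing by a tangent vector (equal second fundamental form), and that every tangent vector $WA'+B'W$ is attainable as $(B'W_N)(W_{N-1}\cdots W_1)+(W_N\cdots W_2)(W_1A')$; none of this is in your proposal. Your fallback suggestion, that the explicit SVD-based curve computation ``goes through'' for an arbitrary critical factorization, is likewise unsubstantiated.

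The paper avoids all of this: no second-order matching is needed. Since $W$ is a critical point of $L^1$ on $\mathcal{M}_k$ by Proposition \ref{prop:critical-LN-general}(a) and, not being a global minimizer, a \emph{strict saddle} there by Proposition \ref{prop:strict_saddle_property_general}, one picks $Z=WA+BW$ with $g_W(\Hess^g L^1(W)Z,Z)<0$ and takes the plain first-order lift \eqref{def:gammaZ-global}; its product is $\gamma(t)=(I+tB)W(I+tA)$, which stays in $\mathcal{M}_k$ for small $t$ and has $\dot\gamma(0)=Z$. By the chain rule \eqref{Riemannian-chain:rule} and $\nabla^g L^1(W)=0$, the value $\left.\frac{d^2}{dt^2}L^1(\gamma(t))\right|_{t=0}=g_W(\Hess^g L^1(W)Z,Z)<0$ is independent of $\ddot\gamma(0)$, and it equals $\left.\frac{d^2}{dt^2}L^N(W_1+tV_1,W_2,\hdots,W_{N-1},W_N+tV_N)\right|_{t=0}$, so $\Hess L^N$ has a negative direction. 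Note also that invoking the abstract conclusion of Proposition \ref{prop:strict_saddle_property_general} (existence of such a $Z$), rather than re-running its proof's specific curve for a specially constructed balanced factorization, removes the ``which factorization realizes $W$'' concern that pushed you toward the jet-matching detour in the first place.
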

\begin{proof} Since $(W_1,\hdots,W_N)$ is a critical point of $L^N$, the matrix
$W = W_N \cdots W_1$ is a critical point of $L^1$ restricted to $\mathcal{M}_k$,
by Proposition~\ref{prop:critical-LN-general} (a). Since $W$ is not a global
optimum of $L^1$ on $\mathcal{M}_k$ it must be a strict saddle point of $L^1$ on
$\mathcal{M}_k$, by Proposition~\ref{prop:strict_saddle_property_general}.
Therefore, there exists $Z \in T_W(\mathcal{M}_k)$ such that (for some
Riemannian metric $g$) it holds $g(\Hess^g L^1(W) Z,Z) < 0$. Write $Z = WA + BW$
and choose again the curve \eqref{def:gammaZ-global} with $V_1 = W_1 A$ and $V_N
=   B W_N$. Then 
\[
	\left.\frac{d^2}{dt^2} L^1(\gamma(t))\right|_{t = 0} = g_W(\Hess^g L^1(W) Z,Z) < 0.
\]
On the other hand
\begin{align*}
  0 & > \left.\frac{d^2}{dt^2} L^1(\gamma(t))\right|_{t = 0} = \left.\frac{d^2}{dt^2} L^N(W_1 + t V_1,W_2,\hdots,W_{N-1}, W_N + tV_N)\right|_{t = 0} \\
	  &= \langle \Hess L^N(W)(V_1,0,\hdots,0,V_N), (V_1,0,\hdots,0,V_N)\rangle, 
\end{align*}
which implies that $\Hess L^N(W)$ is not positive semidefinite, i.e., has a
negative eigenvalue. In other words, $(W_1,\hdots,W_N)$ is a strict saddle
point.
\end{proof}

We note that the global minimizers of $L^1$ restricted to $\mathcal{M}_k$ for some $k < r$ are not
covered by the above proposition, i.e., the proposition does not identify the corresponding tuples $(W_1,\hdots,W_N)$ (such that the product $W=W_N\cdots W_1$ is a  global minimizer of $L^1$ restricted to $\mathcal{M}_k$) as strict saddle points of $L^N$. (In the language of \cite{TragerKohnBruna19} such points are called spurious local minima and 
they may lead to saddle points of $L^N$, see also Propositions 6 and 7 in \cite{TragerKohnBruna19}.)
The above proposition does not exclude that such points correspond to non-strict saddle points
of $L^N$. In fact, in the special case of $k=0$, the point $(0,\hdots,0)$ is indeed
not a strict saddle point if $N \geq 3$ as shown in the next result, which
extends \cite[Corollary 2.4]{kawag16} to the situation that $X X^T$ does not necessarily need to have distinct eigenvalues.

\begin{proposition} 
	If $X Y^T \neq 0$, the point $(0,\hdots,0)$ is a saddle point of $L^N$, which is strict if $N=2$ and not
	strict if $N \geq 3$.
\end{proposition}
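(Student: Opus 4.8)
The plan is to compute the Hessian of $L^N$ at the origin $(0,\hdots,0)$ and examine its signature. First I would write $L^N(W_1,\hdots,W_N) = \frac12\|Y - W_N\cdots W_1 X\|_F^2$ and expand around zero: if all $W_j$ are small, then $W_N\cdots W_1$ is of order $N$ in the perturbation, so the only second-order contribution to $L^N$ comes from the constant term $\frac12\|Y\|_F^2$ plus cross terms. For $N=2$ we have $L^2(W_1,W_2) = \frac12\|Y - W_2 W_1 X\|_F^2$, and expanding near $(0,0)$ with $W_i = t V_i$ gives $L^2 = \frac12\|Y\|_F^2 - t^2\langle Y, V_2 V_1 X\rangle + O(t^4)$, so the quadratic form is $(V_1,V_2)\mapsto -\langle Y X^T, V_2 V_1\rangle = -\operatorname{tr}(V_1 X Y^T V_2)$ (up to a harmless rescaling). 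Since $XY^T \neq 0$ one can choose $V_1, V_2$ making this negative (e.g. pick $V_1$ a rank-one matrix aligned with a nonzero singular direction of $XY^T$ and $V_2$ correspondingly), so the Hessian has a negative eigenvalue; thus $(0,0)$ is a strict saddle point of $L^2$. It is genuinely a saddle (not a local max) because $L^N \ge 0$ while $L^N(0,\hdots,0) = \frac12\|Y\|_F^2$ can be decreased.

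Next, for $N \ge 3$ I would argue that the Hessian of $L^N$ at the origin vanishes identically. Indeed, writing the perturbation as $(tV_1,\hdots,tV_N)$, the product $W_N\cdots W_1 = t^N V_N\cdots V_1$ is of order $t^N \ge t^3$, so $L^N(tV_1,\hdots,tV_N) = \frac12\|Y\|_F^2 + O(t^N)$, which has zero second derivative in $t$ at $t=0$; polarizing over all choices of $(V_1,\hdots,V_N)$ shows $\Hess L^N(0,\hdots,0) = 0$. Hence there is no negative eigenvalue and $(0,\hdots,0)$ is not a strict saddle point for $N \ge 3$. To confirm it is nevertheless a (non-strict) saddle point — i.e. a critical point that is neither a local minimum nor a local maximum — I note that $\nabla_{W_j} L^N(0,\hdots,0) = 0$ by Lemma~\ref{L:PREL}(1) (each gradient term contains a factor of some $W_i = 0$), so it is a critical point; it is not a local minimum since along the curve $(tV_1,\hdots,tV_N)$ with $V_N\cdots V_1 X$ correlated positively with $Y$ the value drops below $\frac12\|Y\|_F^2$; and it is not a local maximum since $L^N \ge 0 = L^N$ would be violated only if $Y = 0$, but in fact along directions where $V_N\cdots V_1 = 0$ the value stays at $\frac12\|Y\|_F^2$, and one can also find directions increasing it, confirming it is a genuine saddle.

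The main obstacle is the bookkeeping for the claim that $(0,\hdots,0)$ is still a saddle point (as opposed to, say, a local minimum) when $N\ge 3$ — one must exhibit a perturbation direction that strictly decreases $L^N$, which requires choosing the rank-one building blocks $V_j = a_j b_j^\top$ so that $b_{j+1}$ is aligned with $a_j$ for a telescoping product and the resulting rank-one matrix $V_N\cdots V_1$ has positive inner product with $YX^T$; this is possible precisely because $XY^T \neq 0$. The $N=2$ strict-saddle computation and the vanishing of the Hessian for $N\ge 3$ are both short. I would organize the write-up as: (i) verify criticality via Lemma~\ref{L:PREL}(1); (ii) Taylor expand to second order and read off the Hessian, getting a nonzero indefinite form for $N=2$ and the zero form for $N\ge 3$; (iii) for $N\ge 3$ produce an explicit descent direction to certify it is a (non-strict) saddle rather than an extremum.
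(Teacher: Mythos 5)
Your proposal is correct and follows essentially the same route as the paper: check criticality, restrict $L^N$ to the curves $(tV_1,\hdots,tV_N)$ so that the product becomes $t^N V_N\cdots V_1$, read off the second derivative at $t=0$ (an indefinite form $-2\tr(V_N\cdots V_1\,XY^T)$ for $N=2$, identically zero for $N\ge 3$), and use $XY^T\neq 0$ to produce a product $Z=V_N\cdots V_1$ with $\tr(ZXY^T)>0$ giving a descent direction, hence a saddle. The extra remarks about local maxima are harmless but unnecessary, since the paper's notion of saddle point only requires the critical point not to be a local minimum.
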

\begin{remark}
Note that if $X Y^T = 0$, then $(0,\hdots,0)$ is a global minimum of $L^N$.
\end{remark}
\begin{proof} 
For convenience, we give a different proof than the one in \cite[Corollary 2.4]{kawag16}.
	It is easy to see that $\nabla_{W_j} L^N(0,\hdots,0) = 0$ for every $j=1,\hdots,N$ so that
	$(0,\hdots,0)$ is a critical point of $L^N$.
	Consider a tuple $(V_1,\hdots,V_N)$ of matrices, set $Z = V_N \cdots V_1$ and 
	\[
	\gamma(t) = (tV_N)\cdot (tV_{N-1}) \cdots (tV_1) = t^N Z.
	\]
	Note that by \eqref{L1-curve-general}
	\begin{align*}
	L^N(tV_1,\hdots,tV_N))  & = L^1(\gamma(t)) = \frac{1}{2} 
	\tr(\gamma(t)^T\gamma(t)XX^T-2\gamma(t)XY^T+YY^T) \\
	& =  \frac{1}{2} t^{2N} \tr(Z^T Z X X^T) - t^{N} \tr(Z X Y^T)+ \frac{1}{2} \tr(YY^T).
	\end{align*}
	Hence,
	$$
    \frac{d^2}{dt^2}L^N(tV_1,\hdots,tV_N)) 
      = \frac{d^2}{dt^2}L^1(\gamma(t)) 
      = N(2N-1) t^{2N-2} \tr(Z^T Z X X^T) - N(N-1) t^{N-2} \tr(Z X Y^T).
	$$
  Note that $\tr(Z^T Z X X^T) \geq 0$. Recall also that $N\geq 2$. Since $X Y^T
  \neq 0$, there clearly exist matrices $(V_1,\hdots,V_N)$ such $\tr(Z X Y^T) >
  0$ for $Z= V_N \cdots V_1$, so that $L^N(tV_1,\hdots,tV_N) < L^N(0,\hdots,0)$
  for small enough $t$. Hence, $(0,\hdots,0)$ is not a local minimum, but a
  saddle point. Moreover,
	\begin{align*}
	& \langle \Hess L^N(0,\hdots,0)(V_1,\hdots,V_N), (V_1,\hdots,V_N) \rangle
	= \left.\frac{d^2}{dt^2} L^1(\gamma(t))\right|_{t = 0} \\
	&= \left\{ \begin{array}{ll} -2 \tr(Z X Y^T) & \mbox{ if } N = 2\\
	0 & \mbox{ if } N\geq 3
	\end{array} \right.
	\end{align*}
	If $N=2$, we can find matrices $V_1,V_2$ such that $\tr(Z X Y^T) > 0$ for $Z = V_2 V_1$ so that
	$(0,0)$ is a strict saddle for $N=2$. If $N \geq 3$ it follows that $\Hess L^N(0,\hdots,0) = 0$ so that
	$(0,\hdots,0)$ is not a strict saddle.
\end{proof}

In the case $N=2$, the following result implies that all local minima of $L^2$ are global and
all saddle points of $L^2$ are strict.

\begin{proposition}\label{prop:N=2strictsaddles-general} Let $N=2$ and $k < \min\{r,q\}$, where  $r=\min\{d_0,d_1,d_2\}$ and $q = \operatorname{rank}(Q)$. 
	Let $W$ be a global minimum of $L^1$ restricted to $\mathcal{M}_k$, i.e., 
		$	W =\sum_{j\in J}\sigma_j u_j v_j^T(XX^T)^{-\frac 12},$
	where $|J|=k$ and $ \{\sigma_j:j\in J\}=\{\sigma_1,\hdots, \sigma_k\}$. Then any critical point $(W_1,W_2) \in \RR^{d_1 \times d_0} \times \RR^{d_2 \times d_1}$ such that 
	$W_2 \cdot W_1 = W$ is a strict saddle point of $L^2$.
\end{proposition}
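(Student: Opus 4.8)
The plan is to produce, at the given critical point $(W_1,W_2)$ of $L^2$, a direction $(V_1,V_2)\in\RR^{d_1\times d_0}\times\RR^{d_2\times d_1}$ along which the Hessian of $L^2$ is strictly negative; since $(W_1,W_2)$ is a critical point by hypothesis, this shows that $\operatorname{Hess}L^2(W_1,W_2)$ is indefinite and hence that $(W_1,W_2)$ is a strict saddle point of $L^2$. To set up the computation, write $W=W_2W_1$ and $G:=WXX^T-YX^T=\nabla_WL^1(W)$, and note that along the affine curve $t\mapsto(W_1+tV_1,\,W_2+tV_2)$ the product is $P(t):=(W_2+tV_2)(W_1+tV_1)=W+tM+t^2V_2V_1$ with $M:=V_2W_1+W_2V_1$. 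Since $L^2(W_1+tV_1,W_2+tV_2)=L^1(P(t))$, differentiating twice and evaluating at $t=0$ (using $\nabla_WL^1(Z)=ZXX^T-YX^T$) gives
\[
  \left.\frac{d^2}{dt^2}L^2(W_1+tV_1,W_2+tV_2)\right|_{t=0}
  =\tr\!\big(MXX^TM^T\big)+2\,\langle G,\,V_2V_1\rangle_F ,
\]
where $\tr(MXX^TM^T)=\|M(XX^T)^{1/2}\|_F^2\ge 0$. It therefore suffices to choose $(V_1,V_2)$ making the right-hand side negative.

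First I would identify the relevant descent direction at the level of the product. Since $|J|=k<q$, there is an index $j^{*}\in\{1,\dots,q\}\setminus J$; set $D_0:=u_{j^{*}}v_{j^{*}}^{T}(XX^T)^{-1/2}$. Using the form $W=\sum_{j\in J}\sigma_ju_jv_j^{T}(XX^T)^{-1/2}$ from Proposition~\ref{critpoints_general} together with $Q=\sum_{i=1}^{q}\sigma_iu_iv_i^{T}$ and orthonormality of $\{u_i\}$ and $\{v_i\}$, one checks $W(XX^T)^{1/2}v_{j^{*}}=0$ and $Qv_{j^{*}}=\sigma_{j^{*}}u_{j^{*}}$, whence
\[
  \langle G,\,D_0\rangle_F=-\,\sigma_{j^{*}}<0 .
\]
It would be ideal to realise $V_2V_1=D_0$ together with $M=0$, but this is in general impossible, so I aim instead to make $V_2V_1$ a positive multiple of $D_0$ while keeping $\tr(MXX^TM^T)$ arbitrarily small.

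This is achieved by rescaling. Since $k=\operatorname{rank}(W_2W_1)<r\le d_1$, the rank inequality $\operatorname{rank}(W_2W_1)\ge\operatorname{rank}(W_1)+\operatorname{rank}(W_2)-d_1$ rules out $\operatorname{rank}(W_1)=\operatorname{rank}(W_2)=d_1$, so at least one of the subspaces $\operatorname{range}(W_1)^{\perp}\subset\RR^{d_1}$ and $\ker(W_2)\subset\RR^{d_1}$ is nonzero. In the first case, pick a unit vector $w\perp\operatorname{range}(W_1)$ and put, for small $\eps>0$,
\[
  V_2:=\tfrac1\eps\,u_{j^{*}}w^{T},\qquad V_1:=\eps\,w\,v_{j^{*}}^{T}(XX^T)^{-1/2}.
\]
Then $w^{T}W_1=0$ annihilates the $\eps^{-1}$ term of $M$, leaving $M=\eps\,(W_2w)\,v_{j^{*}}^{T}(XX^T)^{-1/2}$, so that $\tr(MXX^TM^T)=\eps^2\|W_2w\|^2$, while $V_2V_1=D_0$ and hence $\langle G,V_2V_1\rangle_F=-\sigma_{j^{*}}$; the second derivative above then equals $\eps^2\|W_2w\|^2-2\sigma_{j^{*}}$, which is negative for $\eps$ small. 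In the second case, take a unit vector $w\in\ker(W_2)$ and set $V_1:=\tfrac1\eps\,w\,v_{j^{*}}^{T}(XX^T)^{-1/2}$ and $V_2:=\eps\,u_{j^{*}}w^{T}$; now $W_2w=0$ gives $M=\eps\,u_{j^{*}}(w^{T}W_1)$, and the same estimate applies. In either case we obtain a direction of strictly negative Hessian, so $(W_1,W_2)$ is a strict saddle point of $L^2$.

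The only genuinely delicate point is the rank bookkeeping in the last paragraph, together with the fact that one cannot simply keep $M=0$: the naive construction would use a single vector $w\ne 0$ lying in both $\operatorname{range}(W_1)^{\perp}$ and $\ker(W_2)$, but at a critical point these two subspaces of $\RR^{d_1}$ may be mutually orthogonal (this already happens for explicit two-layer examples), so no such $w$ exists in general. The rescaling circumvents this by requiring $w$ in only one of the two subspaces, at the price of $M$ being merely small rather than zero, and the elementary remark that $W_2W_1$ has rank strictly less than $d_1$ guarantees that at least one of the two subspaces is nonzero. Everything else --- the second-order Taylor expansion and the bookkeeping with the singular value decomposition of $Q$ --- is routine.
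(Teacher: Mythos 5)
Your proof is correct and is essentially the paper's own argument: the paper perturbs along $V_2=\kappa\, u_{j_0}v^T$, $V_1=\kappa^{-1}v\,w^T$ with $w=(XX^T)^{-1/2}v_{j_0}$ and the same kernel dichotomy ($v\in\ker W_1^T$ or $v\in\ker W_2$, guaranteed by $\operatorname{rank}(W)=k<r\le d_1$), which is exactly your construction with $\kappa=1/\varepsilon$. Your only (minor) streamlining is organizing the computation through the identity $\left.\tfrac{d^2}{dt^2}L^2\right|_{t=0}=\tr(MXX^TM^T)+2\langle \nabla_W L^1(W),V_2V_1\rangle_F$, which evaluates the key term directly as $-2\sigma_{j^*}$ and lets you bypass the paper's use of the criticality relation $W_2^Tu_{j_0}=0$.
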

\begin{proof} 
	For $\kappa\in \RR\setminus\{0\}$ and  $u\in \RR^{d_2}$, $v\in \RR^{d_1}$, $w\in \RR^{d_0}$ with $u^Tu=1$ and $v^Tv=1$  
	we define
	the curve
	\[
	\gamma(t) = (W_2 + t\kappa uv^T)\cdot(W_1+ t \kappa^{-1}vw^T) = W + t (\kappa uv^T W_1 +  \kappa^{-1}W_2 vw^T) + t^2 uw^T.
	\]
	Then by \eqref{L1-curve-general}
	\begin{align*}
	& L^2(W_1 + t\kappa^{-1}vw^T, W_2 + t\kappa uv^T)  = L^1(\gamma(t)) =  \frac{1}{2} 
	\tr(\gamma(t)^T\gamma(t)XX^T-2\gamma(t)XY^T+YY^T).
	\end{align*}
	We compute 
	\begin{align*}
	\gamma(t)^T\gamma(t)=t^2 &\left(W_1^Tvu^TW_2vw^T+(W_1^Tvu^TW_2vw^T)^T+ \kappa^{2}W_1^Tvv^TW_1+ \kappa^{-2}wv^TW_2^TW_2vw^T+wu^TW\right. \\ & \left.+W^Tuw^T\right) + \text{ terms which are not of order } t^2.
	\end{align*}
	It follows that 
	\begin{align*}
	\left.\frac{d^2}{dt^2}L^1(\gamma(t))\right|_{t=0} =& \tr   \left((  W_1^Tvu^TW_2vw^T+(W_1^Tvu^TW_2vw^T)^T \right. + \kappa^{2}W_1^Tvv^TW_1  \\ & \left.  + \kappa^{-2}wv^TW_2^TW_2vw^T\right.  \left. +wu^TW + W^Tuw^T )XX^T -2uw^TXY^T   \right).
	\end{align*}	
Let us now choose the vectors $u,v,w$.	
	Note that since $(W_1,W_2)$ is a critical point of $L^2$, we have 
	$W_2^T(WXX^T-YX^T)=0$ by  Lemma~\ref{L:PREL}, point~1, and hence
	$$
	W_2^T(\sum_{j\in J}\sigma_j u_j v_j^T-\sum_{j=1}^q\sigma_j u_j v_j^T)(XX^T)^{\frac 12}=0.
	$$
	Since $XX^T$ has full rank it follows that for any $j_0\in \{1,\hdots,q\}\setminus J$ we have $W_2^Tu_{j_0}=0$.
	Since $k<q$ such a $j_0$ exists. Thus we may choose  $j_0\in \{1,\hdots,q\}\setminus J$ and define $u=u_{j_0}$ and $w=(XX^T)^{-\frac 12}v_{j_0}$.
	
	If the kernel of $W_1^T$ is trivial then $d_1\leq d_0$ and $W_1$ has rank $d_1$.
	It follows that then the kernel of $W_2$ cannot be trivial since otherwise $W_2$ would be injective and the rank of $W=W_2W_1$ would be $d_1$. But the rank of $W$ is $k<r\leq d_1$. 
	Hence we may choose $v$ as follows:
	We choose $v$ to be an element of the kernel of $W_1^T$ with $\|v\|_2 =1$ if such a $v$ exists and otherwise we choose  $v$ to be an element of the kernel of $W_2$ with $\|v\|_2 = 1$.
	
	With these choices for $u,v,w$ we have $W_1^Tvu^TW_2vw^T = 0$ and $W^Tuw^T= W_1^TW_2^Tu_{j_0} w^T = 0$ so that 
	\begin{align*}
	\left.\frac{d^2}{dt^2}L^1(\gamma(t))\right|_{t=0} =\tr   \left((  \kappa^{2}W_1^Tvv^TW_1    + \kappa^{-2}wv^TW_2^TW_2vw^T)XX^T -2uw^TXY^T   \right),
	\end{align*}
	where at least one of the terms $W_1^Tvv^TW_1 $ and $ wv^TW_2^TW_2vw^T$ vanishes.
	We have 
	$$
	\tr(uw^TXY^T)=w^TXY^Tu=v_{j_0}^T(XX^T)^{-\frac 12}XY^Tu_{j_0}=v_{j_0}^TQ^Tu_{j_0}= v_{j_0}^T\sum_{j=1}^q\sigma_j v_ju_j^Tu_{j_0}=\sigma_{j_0}.
	$$
	Hence 
		\begin{align*}
	\left.\frac{d^2}{dt^2}L^1(\gamma(t))\right|_{t=0} =\kappa^{2}\tr   \left( W_1^Tvv^TW_1XX^T\right)    + \kappa^{-2}\tr\left(wv^TW_2^TW_2vw^TXX^T\right) -2\sigma_{j_0}.
	\end{align*}
	Since $\sigma_{j_0}>0$ and since at least one of the terms $W_1^Tvv^TW_1XX^T$ and 
	$wv^TW_2^TW_2vw^TXX^T$ vanishes, we can always choose $\kappa>0$ such that 
	$	\left.\frac{d^2}{dt^2}L^1(\gamma(t))\right|_{t=0}<0$.

	As in the proof of Propositon \ref{prop:strict_saddle_property_general}, this shows that $(W_1,W_2)$ is a strict saddle point.
\end{proof}

\subsection{Convergence to global minimizers}

We now state the main result of this article about convergence to global minimizers. Part (b) of Theorem \ref{thm:main-general} for two layers generalizes 
a result in \cite[Section 4]{chlico18}, where it is assumed that $d_x \geq d_y$ and $d_y \leq \min\{d_1,\hdots,d_{N-1}\}$ on top of
some mild technical assumptions on matrices formed with $X$ and $Y$ (see Assumptions 3 and 4 of \cite{chlico18}).

\begin{theorem}\label{thm:main-general}
Assume  that  $XX^T$ has full rank, let $q = \operatorname{rank}(Q)$, $r = \min\{d_0,\hdots,d_N\}$ and let $\bar{r} := \min\{q,r\}$.
\begin{itemize}
\item[(a)] 
For almost all initial values $W_1(0),\hdots, W_N(0)$, the flow
(\ref{gradflow}) converges to a critical point $(W_1,\hdots, W_N)$ of $L^N$ such
that $W:=W_N\cdots W_1$ is a global minimizer of $L^1$ on the manifold
$\mathcal{M}_k$ of matrices in $\RR^{d_N\times d_0}$ of rank $k:=\rank(W)$, where
$k$ lies  between $0$ and  $\bar{r}$ and depends  on the initialization.
\item[(b)] 
For $N=2$,	for almost all initial values  $W_1(0),\hdots, W_N(0)$, the flow
(\ref{gradflow}) converges to a global minimizer of $L^N$ on $\RR^{d_0\times
d_1}\times\hdots\times \RR^{d_{N-1}\times d_N}$. 
\end{itemize}
	
\end{theorem}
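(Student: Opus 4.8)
The plan is to assemble the global-convergence result Theorem~\ref{globconv}, the structural description of critical points (Propositions~\ref{critpoints_general}, \ref{prop:critical-LN-general}, \ref{prop:critical-LNL1-general}, \ref{prop:N=2strictsaddles-general}), and the abstract saddle-avoidance statement Theorem~\ref{thm:avoid-saddles}. I would apply the latter with $\mathcal{M}$ the Euclidean parameter space $\RR^{d_1\times d_0}\times\cdots\times\RR^{d_N\times d_{N-1}}$ equipped with its standard (hence $C^\infty$) metric and $L=L^N$, which is a polynomial and therefore $C^2$. Since $XX^T$ has full rank, Theorem~\ref{globconv} guarantees that the flow $\psi_t(x_0)$ from \eqref{gradflow} is defined for all $x_0$ and all $t\in[0,\infty)$ and converges to a critical point of $L^N$, so all hypotheses of Theorem~\ref{thm:avoid-saddles} are met; it yields that the set $\mathcal{S}_{L^N}$ of initial data whose flow converges to a strict saddle of $L^N$ is a Lebesgue null set.

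For part (a), fix an initial value outside $\mathcal{S}_{L^N}$. By Theorem~\ref{globconv} the flow converges to a critical point $(W_1,\dots,W_N)$ of $L^N$; set $W=W_N\cdots W_1$ and $k=\operatorname{rank}(W)$. By Proposition~\ref{prop:critical-LN-general}(a), $W$ is a critical point of $L^1$ restricted to $\mathcal{M}_k$, and by Proposition~\ref{critpoints_general}(1) this forces $k\le q$; together with the always-valid bound $k\le r$ this gives $k\le\bar r$. Since the chosen initial value is not in $\mathcal{S}_{L^N}$, the limit is not a strict saddle of $L^N$, so the contrapositive of Proposition~\ref{prop:critical-LNL1-general} shows that $W$ is a global minimizer of $L^1$ on $\mathcal{M}_k$. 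This is exactly the assertion of (a).

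For part (b) take $N=2$ and again start from an initial value outside $\mathcal{S}_{L^2}$, so that the flow converges to a critical point $(W_1,W_2)$ of $L^2$ that is not a strict saddle, with $W=W_2W_1$ and $k=\operatorname{rank}(W)$. Part (a) already gives that $W$ is a global minimizer of $L^1$ on $\mathcal{M}_k$ with $k\le\bar r$. If we had $k<\bar r=\min\{r,q\}$, then Proposition~\ref{prop:N=2strictsaddles-general} would say that every critical point of $L^2$ with product $W$, in particular $(W_1,W_2)$, is a strict saddle, contradicting the choice of initial value; hence $k=\bar r$. It then remains to observe that a global minimizer of $L^1$ on $\mathcal{M}_{\bar r}$ is a global minimizer of $L^1$ over all matrices of rank at most $r$: writing $L^1(W)=\tfrac12\|W(XX^T)^{1/2}-Q\|_F^2+C$ as in the proof of Proposition~\ref{critpoints_general} and invoking the Eckart--Young--Mirsky theorem, the best rank-$\le r$ approximation of $Q$ has rank exactly $\bar r=\min\{q,r\}$ (all of $\sigma_1,\dots,\sigma_{\bar r}$ being positive). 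Since the products $W_2W_1$ range over precisely the matrices of rank at most $r=\min\{d_0,d_1,d_2\}$, we obtain $L^2(W_1,W_2)=L^1(W)=\min_{\operatorname{rank}(W')\le r}L^1(W')=\min L^2$, so $(W_1,W_2)$ is a global minimizer of $L^2$.

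I expect the only real subtlety to be the bookkeeping of the rank of the limiting product $W$: one must argue that it lies in $\{0,\dots,\bar r\}$ (via Propositions~\ref{prop:critical-LN-general} and \ref{critpoints_general}) and, for (b), that it is forced up to exactly $\bar r$ by Proposition~\ref{prop:N=2strictsaddles-general}, after which the Eckart--Young identification closes the gap left open by Proposition~\ref{prop:critical-LNL1-general}. The heavy analytic lifting---global existence of the flow, the $C^1$-regularity of the Riemannian metric, and the strict-saddle Hessian computations---has already been carried out in the earlier sections, so the present argument is essentially an assembly; the one point requiring care is that Theorem~\ref{thm:avoid-saddles} is applied to $L^N$ on the \emph{flat} parameter space, where its hypotheses are immediate, rather than to $L^1$ on the rank-$r$ manifold.
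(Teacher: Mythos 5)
Your proposal is correct and follows essentially the same route as the paper: Theorem~\ref{globconv} for convergence to a critical point, Propositions~\ref{prop:critical-LN-general} and \ref{critpoints_general} to bound $k\leq\bar r$, Proposition~\ref{prop:critical-LNL1-general} (resp.\ \ref{prop:N=2strictsaddles-general} for $N=2$) to identify non-minimizing limits as strict saddles, and Theorem~\ref{thm:avoid-saddles} applied to $L^N$ on the flat parameter space to exclude them for almost all initializations. Your explicit Eckart--Young argument at the end of (b) merely spells out the final implication that the paper leaves implicit, so there is no substantive difference.
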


By {\em ``for almost all initial values''} we mean that there exists a set $N$
with Lebesgue measure zero in $ \RR^{d_0\times d_1}\times\hdots\times
\RR^{d_{N-1}\times d_N} $ such that the statement holds for all initial
values outside $N$.

\begin{proof}
By Theorem \ref{globconv}, under the flow \eqref{gradflow}, the curve
$(W_1(t),\hdots, W_N(t))$ converges to some critical point $(W_1,\hdots, W_N)$
of $L^N$. Let $k$ be the rank of $W:=W_N\cdots W_1$. Then $k \leq r$, by
construction. But also $k \leq q$ because, if $(W_1, \ldots, W_N)$ is a critical
point of $L^N$, then $W$ as above is a critical point of $L^1$ restricted to
$\mathcal{M}_k$, by Proposition~\ref{prop:critical-LN-general} (a). But we know
that there are no critical points of $L^1$ in $\mathcal{M}_k$ with rank larger
than $q$ because of Proposition~\ref{critpoints_general} (a). This proves that
$0 \leq k \leq \bar{r}$.

If $W$ is not a global minimizer of $L^1$ restricted to $\mathcal{M}_k$, then
$(W_1, \ldots, W_N)$ must be strict a saddle point of $L^N$ because of
Proposition~\ref{prop:critical-LNL1-general}. 
By Theorem~\ref{thm:avoid-saddles} only a negligible set of initial values
$W_1(0), \ldots, W_N(0)$ converges to a strict saddle point of $L^N$. All other
initial values therefore converge to a limit point $(W_1, \ldots, W_N)$ for
which $W = W_N \cdots W_1$ is a global minimizer of $L^1$ restricted to
$\mathcal{M}_k$. This proves part (a) of Theorem~\ref{thm:main-general}. Note
that $W$ being a minimizer of $L^1$ restricted to $\mathcal{M}_k$ does not imply
that the corresponding matrix tuple $(W_1, \ldots, W_N)$ is a minimizer of
$L^N$. This happens only if the rank of $W$ is as large as possible, i.e., if $k
= r$.

In the case $N=2$, Proposition~\ref{prop:N=2strictsaddles-general} shows that if
the limit $(W_1,W_2)$ has the property that $W = W_2 W_1$ is a global minimizer
of $L^1$ in $\mathcal{M}_k$ but $k < \bar{r}$, then $(W_1,W_2)$ is a strict
saddle point of $L^2$. But we already know that the set of initial values
$W_1(0), W_2(0)$ that converge to a strict saddle point of $L^2$ is negligible.
We conclude that generically the solution of \eqref{gradflow} converges to a
limit $(W_1,W_2)$ for which $W=W_2W_1$ is a global minimizer of $L^1$ on
$\mathcal{M}_k$ with $k = \bar{r}$, which implies that $(W_1, W_2)$ is a global
minimizer of $L^N$.
\end{proof}

Balanced initial values $(W_1(0),\hdots,W_N(0))$ are of special interest as they
give rise to a Riemannian gradient flow on $\mathcal{M}_k$. Unfortunately,
Theorem~\ref{thm:main-general} does not allow to make conclusions about the set
of balanced initial values because this is a set of Lebesgue measure zero in
$\RR^{d_1 \times d_0} \times \cdots \times \RR^{d_{N} \times d_{N-1}}$. We are
nevertheless able to derive the following convergence result by applying
Theorem~\ref{thm:avoid-saddles} to the Riemannian gradient flow on
$\mathcal{M}_k$.

\begin{theorem}\label{thm:main_result_balanced} Assume that $XX^T$ has full rank and let $N\geq 2$. 
%
Then for almost all initializations $W(0)\in \RR^{d_y\times d_x}$ on $\mathcal{M}_k$,
the  flow $W(t)$  on  $\mathcal{M}_k$ solving \eqref{riemflow} (cf. Theorem \ref{thm:balanced_flow_on_M_k}) converges to a global minimum of
$L^1$ restricted to $\mathcal{M}_k$ or to a critical point on some
$\mathcal{M}_{\ell}$, where $\ell<k$. Note that for $k > \operatorname{rank}(Q)$ there is no global minimum of $L^1$ on $\mathcal{M}_k$ so that
then the second option applies.
Here  {\em ``for almost all $W(0)$''}
means for all $W(0)$ up to a set of measure zero. 
\end{theorem}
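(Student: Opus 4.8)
The plan is to apply the abstract saddle-avoidance result Theorem~\ref{thm:avoid-saddles} to the Riemannian gradient flow \eqref{riemflow} on $\mathcal{M}_k$ and to combine it with the structural description of the critical points obtained in Propositions~\ref{critpoints_general}, \ref{prop:strict_saddle_property_general} and \ref{prop:critical-LN-general}.

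First I would check that the flow in the statement is literally an instance of \eqref{abstract:flow}. For a fixed $k \leq \min\{d_x,d_y\}$ and any $W(0)\in\mathcal{M}_k$, Theorem~\ref{thm:balanced_flow_on_M_k} provides a unique flow $t\mapsto W(t)$ on $\mathcal{M}_k$ solving \eqref{riemflow} for all $t\in[0,\infty)$; by Corollary~\ref{cor:Riemann} this flow is realized as the product $W(t)=W_N(t)\cdots W_1(t)$ of solutions of \eqref{gradflow} for a balanced factorization $W(0)=W_N(0)\cdots W_1(0)$. Writing $\psi_t\colon\mathcal{M}_k\to\mathcal{M}_k$ for the associated flow map, the hypotheses of Theorem~\ref{thm:avoid-saddles} are satisfied: $\mathcal{M}_k$ is a smooth (in particular $C^2$) second-countable finite-dimensional manifold, the metric $g$ is of class $C^1$ by Proposition~\ref{C1metric}, the loss $L^1$ is polynomial hence $C^2$ on $\mathcal{M}_k$, and $\psi_t(W(0))$ exists for all $W(0)\in\mathcal{M}_k$ and all $t\geq 0$.

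Next I would identify the limit. By Theorem~\ref{globconv} the tuple $(W_1(t),\dots,W_N(t))$ converges to a critical point $(W_1^*,\dots,W_N^*)$ of $L^N$, so $W(t)$ converges to $W^*:=W_N^*\cdots W_1^*$, whose rank $\ell$ satisfies $\ell\leq k$ (the rank cannot increase in the limit, by the remark following Proposition~\ref{rank_const}). By Proposition~\ref{prop:critical-LN-general}(a), $W^*$ is a critical point of $L^1$ restricted to $\mathcal{M}_\ell$. If $\ell<k$, this is exactly the second alternative of the theorem. If $\ell=k$, then $W^*\in\mathcal{M}_k$ is a critical point of $L^1|_{\mathcal{M}_k}$, and Proposition~\ref{prop:strict_saddle_property_general} forces $W^*$ to be either a global minimizer of $L^1$ on $\mathcal{M}_k$ or a strict saddle point of $L^1$ on $\mathcal{M}_k$, i.e.\ $W^*\in\mathcal{X}(L^1)$. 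Applying Theorem~\ref{thm:avoid-saddles} with $\mathcal{M}=\mathcal{M}_k$ and $L=L^1$, the set $\mathcal{S}_{L^1}$ of $W(0)\in\mathcal{M}_k$ for which the flow converges to a strict saddle of $L^1|_{\mathcal{M}_k}$ has measure zero; hence for almost every $W(0)\in\mathcal{M}_k$ either $\ell<k$, or $\ell=k$ and $W^*$ is a global minimizer of $L^1$ on $\mathcal{M}_k$, which is the claim. Finally, when $k>\operatorname{rank}(Q)$, Proposition~\ref{critpoints_general}(1) shows that $L^1|_{\mathcal{M}_k}$ has no critical points at all, so $\ell=k$ is impossible and the second alternative always occurs.

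I expect the only real subtlety to be of a bookkeeping nature: Theorem~\ref{thm:avoid-saddles} only rules out convergence to strict saddles lying \emph{on} $\mathcal{M}_k$ and says nothing about limits that fall into a lower-rank stratum $\mathcal{M}_\ell$ with $\ell<k$, which is precisely why the statement must allow the second alternative. One should also make sure that $\psi_t$, viewed as a self-map of $\mathcal{M}_k$ for each finite $t$, agrees with the flow built from balanced factorizations, so that the exceptional null set delivered by Theorem~\ref{thm:avoid-saddles} really is a null set in $\mathcal{M}_k$ in the intended sense; this follows from the uniqueness statements in Corollaries~\ref{uniquesolution} and~\ref{cor:Riemann} and Theorem~\ref{thm:balanced_flow_on_M_k}.
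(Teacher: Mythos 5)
Your proposal is correct and follows essentially the same route as the paper: realize the Riemannian flow on $\mathcal{M}_k$ via a balanced factorization (Theorem~\ref{thm:balanced_flow_on_M_k}, Corollary~\ref{cor:Riemann}), use Theorem~\ref{globconv} and Proposition~\ref{prop:critical-LN-general} to see that the limit is a critical point of $L^1$ on some $\mathcal{M}_\ell$ with $\ell\leq k$, invoke Proposition~\ref{prop:strict_saddle_property_general} to identify non-minimizing critical points on $\mathcal{M}_k$ as strict saddles, and conclude with Theorem~\ref{thm:avoid-saddles}, whose hypotheses are verified exactly as you indicate via Proposition~\ref{C1metric}. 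Your explicit case split $\ell<k$ versus $\ell=k$ and the remark on $k>\operatorname{rank}(Q)$ merely spell out details the paper leaves implicit.
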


\begin{proof}
	
As in the proof of Theorem \ref{thm:balanced_flow_on_M_k} we  write $W(t)=W_N(t)\cdots W_1(t)$, where the $W_i(t)\in \RR^{d_{i} \times d_{i-1}}$ (for some $d_i\geq k$ with $d_0=d_x$ and $d_N=d_y$) are balanced and solve \eqref{gradflow}.
Since the tuple $(W_1(t),\hdots,
W_N(t))$ satisfies \eqref{gradflow}, it converges to a critical point of $L^N$ by
Theorem~\ref{globconv}, hence the flow $W(t)$ converges for all initial values to
some $W$ that is a critical point of $L^1$ on some $\mathcal{M}_{\ell}$, where
$\ell\leq k$; see Proposition \ref{prop:critical-LN-general}. Since by
Proposition \ref{prop:strict_saddle_property_general} all critical points of
$L^1$ on $\mathcal{M}_k$ except for the global minimizers are strict saddle
points, the  claim follows from Theorem  \ref{thm:avoid-saddles}, whose
assumptions are satisfied by Proposition \ref{C1metric} and Theorem 
\ref{thm:balanced_flow_on_M_k}.
\end{proof}


The reason why we cannot choose $k=\bar{r}$ in Theorem \ref{thm:main-general} (a), i.e., state that the flow for $N \geq 3$ converges to the global minimum of 
$L^1$ on $\mathcal{M}_{\bar{r}}$ for almost all initializations is that not all saddle points of $L^N$ are necessarily strict for $N \geq3$. Nevertheless, 
we conjecture a more precise version of the previous result in the spirit of Theorem~\ref{optconv1}.
Part (a) below is a strengthened version of the overfitting conjecture in \cite{chlico18}, where additional assumptions are made.

\begin{conjecture}	\label{con:main}
Assume  that    $XX^T$  has full rank. 
	\begin{itemize}
		\item[(a)] The statement in Theorem \ref{thm:main-general} (b) also holds for $N>2$.
		\item[(b)] Consider the autoencoder case $X=Y$ and let $d=d_0=d_N$.
		Let 
		$r = \min_{i=1,\hdots,N} d_i$.
		Let $\lambda_1\geq\hdots\geq\lambda_d$ be the eigenvalues of $XX^T$ and let $u_1,\hdots, u_d$ be corresponding
		orthonormal eigenvectors. Let $U_r$  be the matrix with columns $u_1,\hdots, u_r$. Assume  that $\lambda_r>\lambda_{r+1}$. Assume further that $W(0)U_r$ has rank $r$ and that for all $i \in \{1,\hdots, r\}$ we have 
		\begin{equation}\label{eqn:conjecture}
		u_i^T W(0)u_i>0,
		\end{equation}
		where $W(t)=W_N(t)\cdots W_1(t)$. Then $W(t)$ converges to $\sum_{i=1}^r u_iu_i^T$.
\item[(c)] In  Theorem \ref{thm:main_result_balanced}, if $k\leq \rank(Q)$, convergence to a critical point on some $\mathcal{M}_{\ell}$, where $\ell<k$, happens only for a set of  initial values that has measure zero.
	\end{itemize}

\end{conjecture}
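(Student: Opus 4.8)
\medskip
\noindent The three parts above can be approached as follows, in each case keeping the structure of the proof of Theorem~\ref{thm:main-general} intact and replacing only the appeal to the strict saddle point property by a substitute adapted to the degenerate critical points that obstruct it.

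For part (a), recall from Proposition~\ref{prop:critical-LNL1-general} that the only critical points of $L^N$ not already known to be strict saddles are the tuples $(W_1,\hdots,W_N)$ whose product $W=W_N\cdots W_1$ is a \emph{global} minimizer of $L^1$ on $\mathcal{M}_k$ with $k<\bar r$, and that for $N\geq 3$ these may fail to be strict (cf.\ the analysis of $(0,\hdots,0)$). First I would show that such a point is never a local minimizer of $L^N$: perturbing all layers along a curve $\gamma(t)=(W_N+tV_N)\cdots(W_1+tV_1)$ with the $V_i$ chosen so that the product acquires an additional singular value (extending \eqref{def:gammaZ-global} and the curves used in the proofs of Propositions~\ref{prop:strict_saddle_property_general} and \ref{prop:N=2strictsaddles-general}), one obtains a direction along which $L^N$ decreases like $-c\,t^{p}$ for some integer $p\geq 2$, the obstruction showing up at order $p$ rather than at the Hessian order $2$. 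Then I would perform a center manifold reduction at the degenerate critical point and argue, as in the proof of Theorem~\ref{thm:avoid-saddles}, that the set of initial values whose flow converges there lies in the stable set of the reduced dynamics on the finitely many center manifolds. The crux — and the reason the statement is still conjectural — is the last step: showing that this stable set is Lebesgue null, which requires a quantitative higher-order version of the stable/center manifold theorem for gradient-like flows that exploits the $-c\,t^{p}$ descent.

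For part (b) I would combine part (a) (for $N=2$, Theorem~\ref{thm:main-general}(b) already suffices) with a rank-preservation estimate and an identification of the limit. In the autoencoder case $X=Y$ one has $Q=(XX^T)^{1/2}$, so the singular vectors $u_j$ of $Q$ in Proposition~\ref{critpoints_general} are the eigenvectors of $XX^T$ and $\sigma_j=\lambda_j^{1/2}$, and the critical points of $L^1$ on $\mathcal{M}_k$ are precisely the matrices $\sum_{j\in J}u_ju_j^T$ with $|J|=k$; under $\lambda_r>\lambda_{r+1}$ the unique global minimizer on $\mathcal{M}_r$ is $\sum_{i=1}^r u_iu_i^T$. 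It therefore remains to show that, under $\operatorname{rank}(W(0)U_r)=r$ and \eqref{eqn:conjecture}, the limit $W=\lim_{t\to\infty}W(t)$ has rank exactly $r$ and coincides with this minimizer. For the rank I would track a determinantal quantity such as $\det\!\big(U_r^TW(t)^TW(t)U_r\big)$, the deep-network analogue of the invariant behind Theorem~\ref{optconv1} for Oja's flow, and show it stays bounded away from $0$, while the positivity condition \eqref{eqn:conjecture}, which I expect to be preserved along the flow, should exclude convergence to the strict saddles $\sum_{j\in J}u_ju_j^T$ with $J\neq\{1,\hdots,r\}$ and single out $\sum_{i=1}^r u_iu_i^T$. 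The main obstacle is propagating such monotone or invariant quantities through the coupled $N$-layer system \eqref{gradflow}, which for $N\geq 3$ does not decouple the way Oja's flow does.

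For part (c), the claim is that rank drop in the limit is non-generic for the Riemannian flow \eqref{riemflow} on $\mathcal{M}_k$ with $k\leq\operatorname{rank}(Q)$; the exceptional set is $\bigcup_{W^*}\{W(0)\in\mathcal{M}_k:\lim_{t\to\infty}W(t)=W^*\}$, the union over the finitely many critical points $W^*$ of $L^1$ lying on strata $\mathcal{M}_\ell$ with $\ell<k$. Near such a $W^*$ the operator $\bar{\mathcal{A}}_W$ acquires eigenvalues tending to $0$, the metric $g$ degenerates, and a trajectory in $\mathcal{M}_k$ reaching $\overline{\mathcal{M}_k}\setminus\mathcal{M}_k$ must traverse an infinite $g$-distance. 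I would lift $W(t)$ to a balanced layer representation $W(t)=W_N(t)\cdots W_1(t)$ (possible as in Theorem~\ref{thm:balanced_flow_on_M_k}); then $W(t)\to W^*$ of rank $\ell$ forces $d_N-\ell$ singular values of $W_N(t)$ to tend to $0$, and a strict saddle avoidance argument on the corresponding lower-dimensional stratum of balanced layer tuples should show that the admissible $W(0)$ form a null set. The obstacle is the non-compactness of $\mathcal{M}_k$ together with the degeneracy of $g$ along $\partial\mathcal{M}_k$, which blocks a direct appeal to Theorem~\ref{thm:avoid-saddles} and demands a careful blow-up analysis near the boundary.
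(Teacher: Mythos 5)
You are addressing Conjecture~\ref{con:main}, which the paper does not prove: it is stated as an open problem, and the only rigorous content the paper attaches to it is Remark~\ref{rem:conjecturecondition}, showing that hypothesis \eqref{eqn:conjecture} in part (b) cannot be dropped. Your text is likewise not a proof; in each part you reduce the claim to exactly the step that is genuinely open, and you say so yourself. Concretely: in (a) the whole difficulty is that for $N\geq 3$ the critical points of $L^N$ sitting over spurious minimizers (global minimizers of $L^1$ on $\mathcal{M}_k$ with $k<\bar r$) need not be strict saddles --- the paper's computation at $(0,\hdots,0)$ shows the Hessian can vanish identically --- so Theorem~\ref{thm:avoid-saddles} and Theorem~\ref{thm:saddles-discrete}, which rest on the center--stable manifold theorem and an eigenvalue of $D\psi_T$ of modulus greater than $1$, give nothing there. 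The ``quantitative higher-order center manifold theorem'' you invoke, guaranteeing that the stable set of such a degenerate critical point is Lebesgue null, is precisely what is missing and is neither supplied nor cited; a descent direction of order $t^{p}$, $p\geq 2$, shows the point is not a local minimum (which the paper already knows) but does not by itself bound the measure of its basin of attraction. In (b) your plan hinges on two unproved dynamical facts for the coupled system \eqref{gradflow} with $N\geq 3$: that $\det\bigl(U_r^T W(t)^T W(t) U_r\bigr)$ stays bounded away from zero, and that \eqref{eqn:conjecture} is preserved along the flow. Neither is established, and there is no analogue of the Oja-flow machinery of \cite{yahemo94} (invariance of $V^TV=I_r$, Theorem~\ref{optconv1}) once the layers no longer decouple; Remark~\ref{rem:conjecturecondition} shows that the rank condition on $W(0)U_r$ alone does not prevent rank collapse, so any argument must genuinely consume the positivity hypothesis, which you only ``expect'' to propagate. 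Moreover your parenthetical that for $N=2$ Theorem~\ref{thm:main-general}(b) ``already suffices'' is incorrect: that theorem is an almost-everywhere statement and neither covers the specific deterministic class of initializations satisfying \eqref{eqn:conjecture} nor identifies the limit as $\sum_{i=1}^r u_iu_i^T$. In (c) you again need saddle avoidance at points of $\overline{\mathcal{M}_k}\setminus\mathcal{M}_k$, where the metric $g$ degenerates and Theorem~\ref{thm:avoid-saddles} does not apply; the ``blow-up analysis'' is named but not carried out.

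Your diagnosis of where the strict-saddle argument breaks down does agree with the paper's own discussion preceding Conjecture~\ref{con:main} (the reason $k=\bar r$ cannot be forced in Theorem~\ref{thm:main-general}(a) is that not all saddles of $L^N$ are strict for $N\geq 3$), and the reductions you sketch are sensible directions. But each part of your proposal stops where the actual mathematical work would have to begin, so it should be presented as a program, not as a proof of the conjecture.
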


\begin{remark}\label{rem:conjecturecondition}
	Without the condition that $
	u_i^T W(0)u_i>0
	$ for all $i \in \{1,\hdots, r\}$, the above conjecture (b) is wrong.
\end{remark}
\begin{proof}
	Indeed, in  the autoencoder  case with $N=2$ and  $r=1$ with  $W_1(0)=u_1^T$ and $W_2(0)=-u_1$ (which is a balanced starting condition and $W(0)U_1$ has obviously rank $1$), we  show that $W_1, W_2$ and $W$ all converge to the zero-matrix of their respective size.
	Write $W_1=(\alpha_1,\hdots, \alpha_d)$ and 
	$W_2=(\beta_1,\hdots, \beta_d)^T$.
	We may assume that $XX^T$ is a diagonal matrix with entries  $\lambda_1\geq\hdots\geq\lambda_d>0$. (In particular,   the $u_i$ are given by the standard unit vectors $u_i=e_i$.)	Then the system (\ref{gradflow}), see also (\ref{flow2layers}), becomes
	\begin{align}
	\begin{split}
	\dot \alpha_j=-\lambda_j\alpha_j\sum_{i=1}^d \beta_i^2 +\lambda_j\beta_j,\ \ \ \alpha_j(0)=\delta_{j1},
	\\
	\dot \beta_j=-\beta_j\sum_{i=1}^d \lambda_i\alpha_i^2 +\lambda_j\alpha_j,\  \ \ \beta_j(0)=-\delta_{j1}.
	\end{split}
	\end{align}
	This system is solved by the following functions:
	\begin{align}
	\begin{split}
	\alpha_1(t)=\frac{1}{\sqrt{2e^{2\lambda_1 t}-1}}, \ \ \  
	\alpha_j(t)=0 \text{ for all } j\geq 2, \\
	\beta_1(t)=\frac{-1}{\sqrt{2e^{2\lambda_1 t}-1}},\ \  \  
	\beta_j(t)=0 \text{ for all  } j\geq 2. \\
	\end{split}
	\end{align}
	Obviously, all $\alpha_j$ and $\beta_j$ converge to $0$
	as $t$ tends to infinity. From this the claim follows.
	(By Theorem \ref{saddlepoints}, this equilibrium is not stable, so this behavior may not be obvious  in numerical simulations.)
\end{proof}

\section{Numerical results}\label{sec:numerics}

We numerically study the convergence of gradient flows in the linear supervised learning setting as a proof of concept of the convergence results presented above in both the general supervised learning case and the special case of autoencoders. Moreover, in the autoencoder case the experiments also computationally explore the conjecture (Conjecture \ref{con:main}) of the manuscript.

\subsection{Autoencoder case}
We study the gradient flow \eqref{gradflow} in the autoencoder setting, where $Y=X\in \RR^{d_x\times m}$ in \eqref{eqNNMinimization} for  different dimensions of $X$ (i.e., $d_x$ and $m$) 
and different values of the number $N$ of layers, where we typically use $N \in \{2, 5, 10, 20\}$.
A Runge-Kutta method (RK4) is used to solve the gradient flow differential equation with appropriate step sizes $t_n = t_0 + nh$ for large $n$ and $h\in (0,1)$. The experiments fall into two categories based on initial conditions of the gradient flow: a) {\em balanced} -- where the balanced conditions are satisfied; and b) {\em non-balanced} -- where the balanced conditions are not satisfied. Under a) we investigate the general case in these balanced conditions where condition \eqref{eqn:conjecture} of Conjecture~\ref{con:main} is satisfied, but also a special case were the balanced conditions are satisfied but condition \eqref{eqn:conjecture} of Conjecture~\ref{con:main} is not satisfied. 

The results in summary, considering $W = W_N\cdots W_1$ as the limiting solution of the gradient flow, that is $W = \lim_{t \to \infty} W(t)$, where $W(t) = W_N(t)\cdots W_1(t)$: We show that with balanced initial conditions, the solutions of the gradient flow converges to $U_rU_r^T$, where the columns of $U_r$ are the $r$ eigenvectors corresponding to the $r$ largest eigenvalues of $XX^T$. The convergence rates decrease with an increase in either $d$ or $N$ or both. We see similar results for the non-balanced case.

\subsubsection{Balanced initial conditions}\label{sec:bic}
In this section and Section~\ref{sec:nic} the data matrix $X\in \RR^{d_x\times m}$ is generated with columns drawn i.i.d.~from a Gaussian distribution, i.e., $x_i \sim \mathcal{N}(0,\sigma^2 I_{d_x})$, where $\sigma = 1/\sqrt{d_x}$. Random realization of $X$ with sizes $d_x=d$ and $m = 3d$ are varied to investigate different dimensions of the input data, i.e., with $2N\leq d \leq 20N$. For each fixed $d$, the dimensions $d_j$ of the $W_j \in \mathbb{R}^{d_j \times d_{j-1}} $ for $j=1,\ldots, N$ are selected as follows: We set $d_1 = r = [d/2]$, where $[\cdot]$ rounds to the nearest integer,
and put $d_j = [r+(d-r)(j-1)/(N-1)]$, $j=2,\hdots,N$ (generating an integer ``grid'' of numbers between $d_1= r$ and 
$d_N = d_x = d$).

In the first set of experiments, we consider a general case of the balanced initial conditions, precisely $W_{j+1}^T(0)W_{j+1}(0) = W_{j}(0)W_{j}^T(0), \ j=1,\ldots,N-1,$ where condition \eqref{eqn:conjecture} of Conjecture \ref{con:main} is satisfied. The dimensions of the $W_j$ and their initializations are as follows.
Recall, $W_j\in\RR^{d_j\times d_{j-1}}$ for $j=1,\ldots,N$ where $d_N = d_0 = d_x = d$ and $d_1 = r$ is the rank of $W=W_N\cdots W_1$. We randomly generate $d_j \times d_{j}$ orthogonal matrices $V_j$ and then form $W_j(0) = V_j I_{d_j d_{1}}U_{j-1}^T$ for $j = 1,\ldots, N$, where $U_{j} \in \RR^{d_j\times d_1}$ is composed of the $d_1$ columns of $V_j$, and $I_{ab}$ is the (rectangular) $a\times b$ identity matrix.
For all the values of $N$ and the different ranks of $W$ considered, Figure \ref{fig:gen_balance} shows that the limit of $W(t)$ as $t\rightarrow \infty$ is $U_rU_r^T$, where the columns of $U_r$ are $r$ eigenvectors of $XX^T$ corresponding to the largest $r$ eigenvalues of $XX^T$. This agrees with the theoretical results obtained for the autoencoder setting.
\begin{figure}[h]
	\centering
	\includegraphics[width=0.45\textwidth,height=0.3\textwidth]{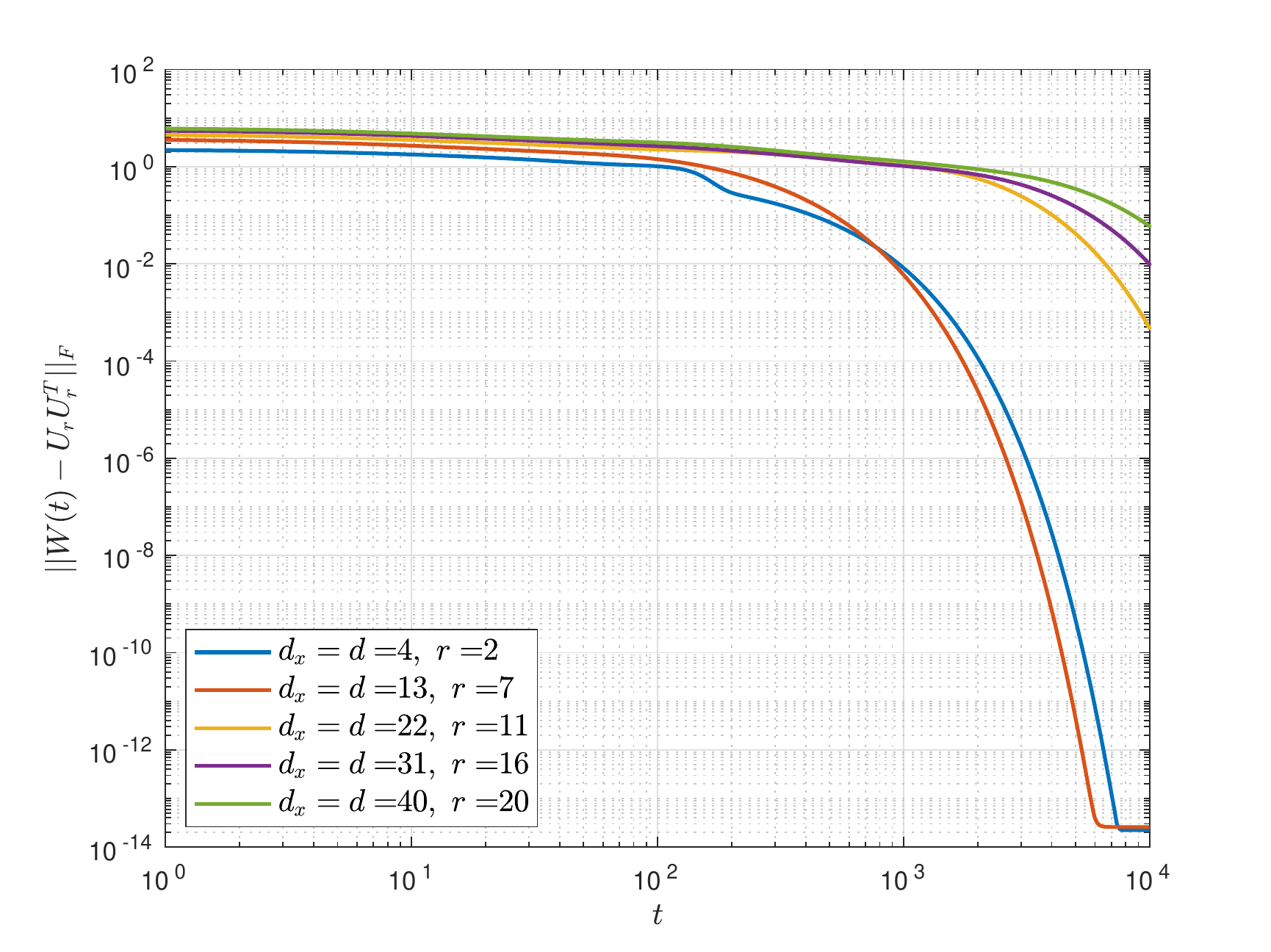}
	\includegraphics[width=0.45\textwidth,height=0.3\textwidth]{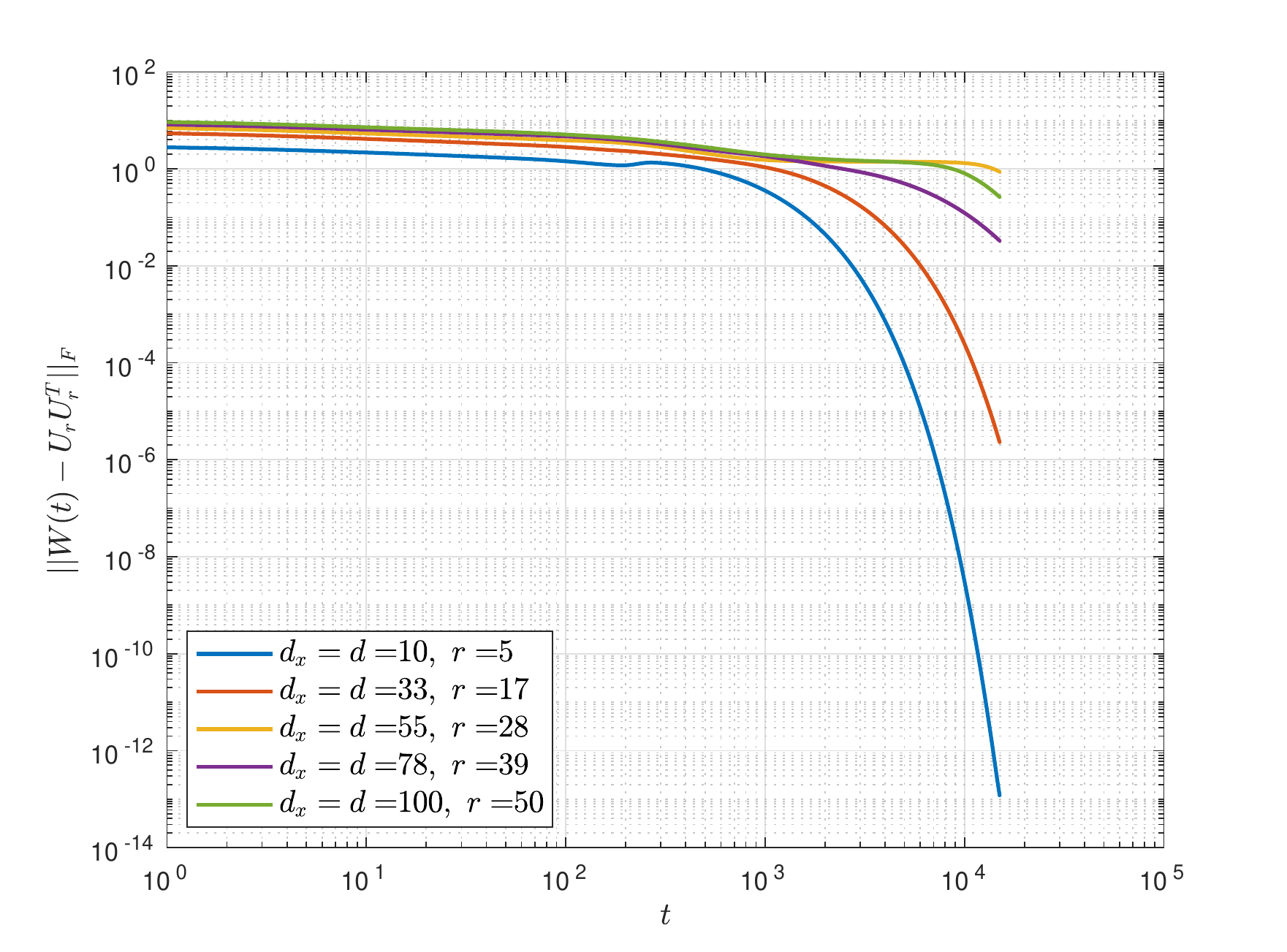} 
	\includegraphics[width=0.45\textwidth,height=0.3\textwidth]{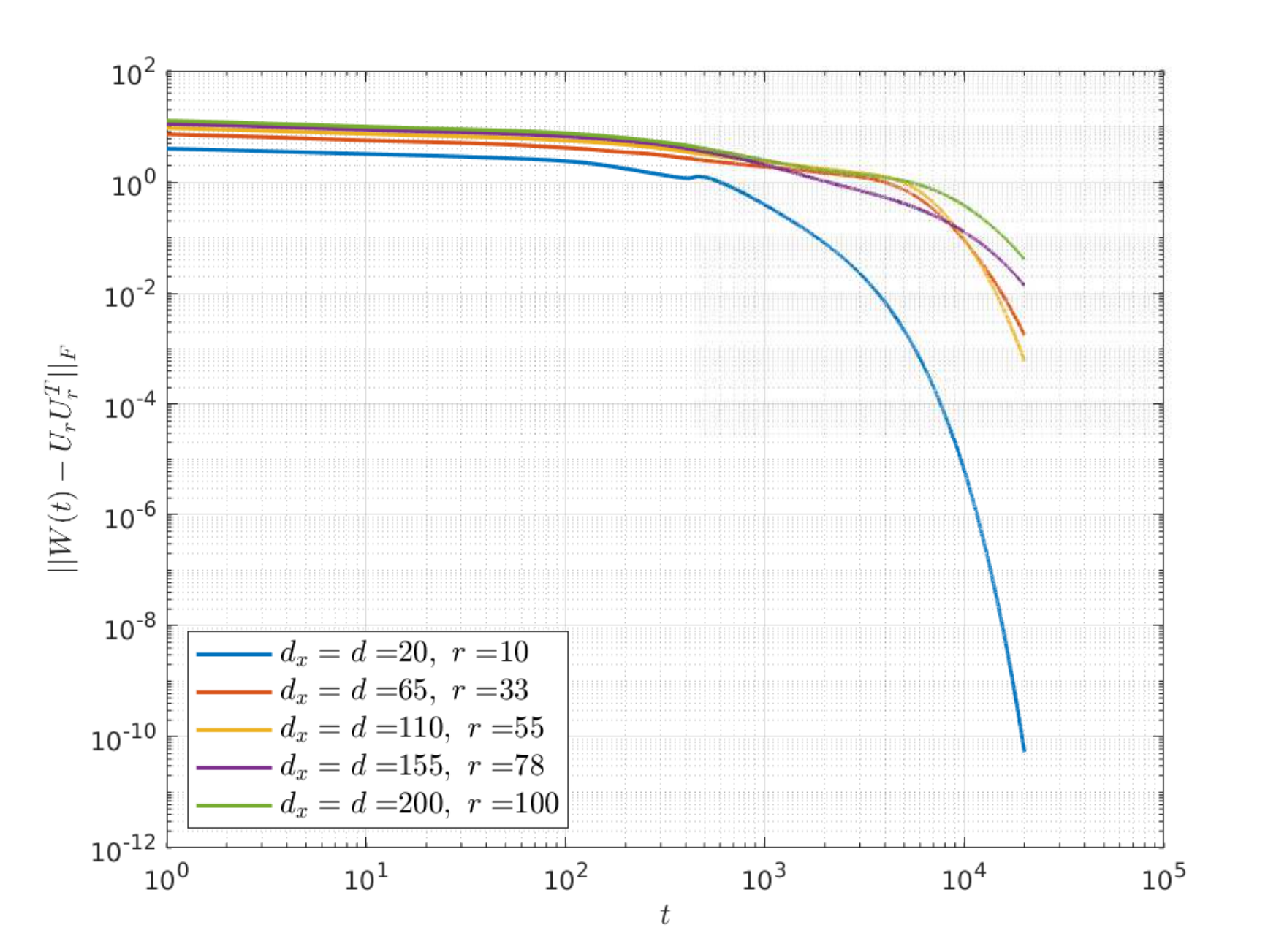} 
	\includegraphics[width=0.45\textwidth,height=0.3\textwidth]{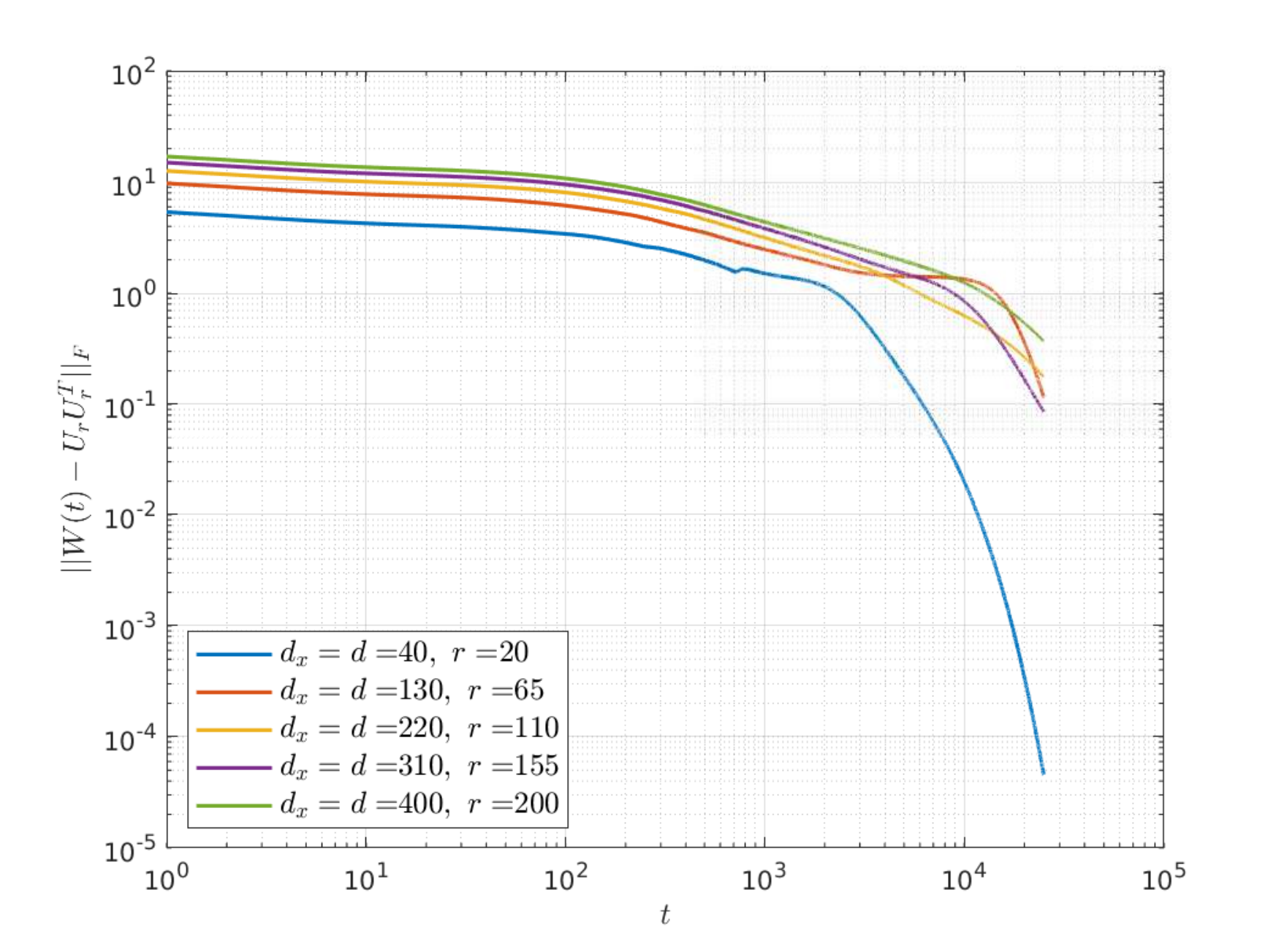}
	\vspace{-1mm}
	\caption{Convergence of solutions for the general balanced case. Error between $W(t)$ and $U_rU_r^T$ for different $r$ and $d$ values. {\em Top left panel}:  $N=2$; {\em top right panel}:  $N=5$; {\em bottom left panel}:  $N=10$; {\em bottom right panel}:  $N=20$.}
	\label{fig:gen_balance}\vspace{-2mm}
\end{figure}

\begin{figure}[h]
	\centering
	\includegraphics[width=0.45\textwidth,height=0.3\textwidth]{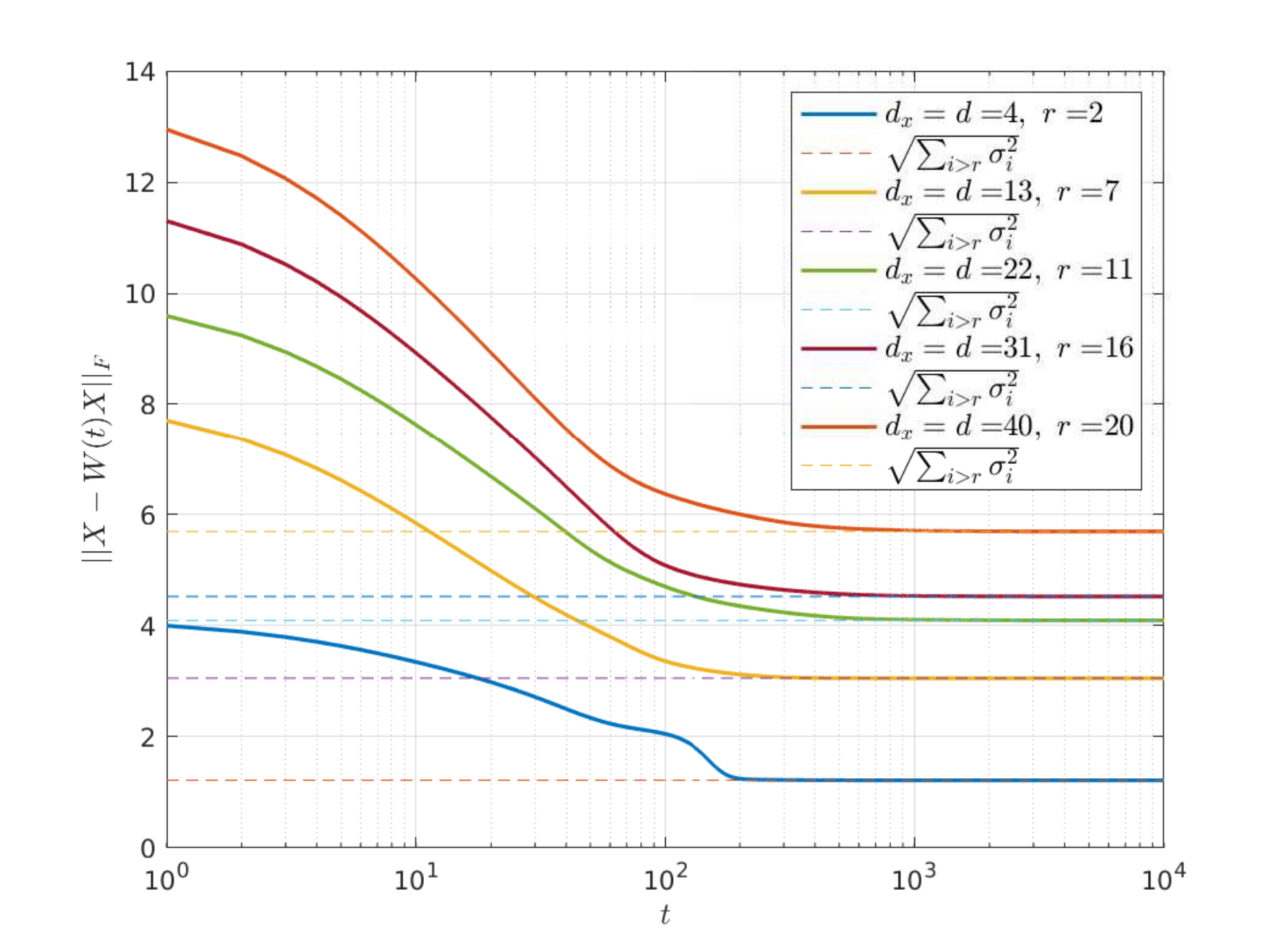} 
	\includegraphics[width=0.45\textwidth,height=0.3\textwidth]{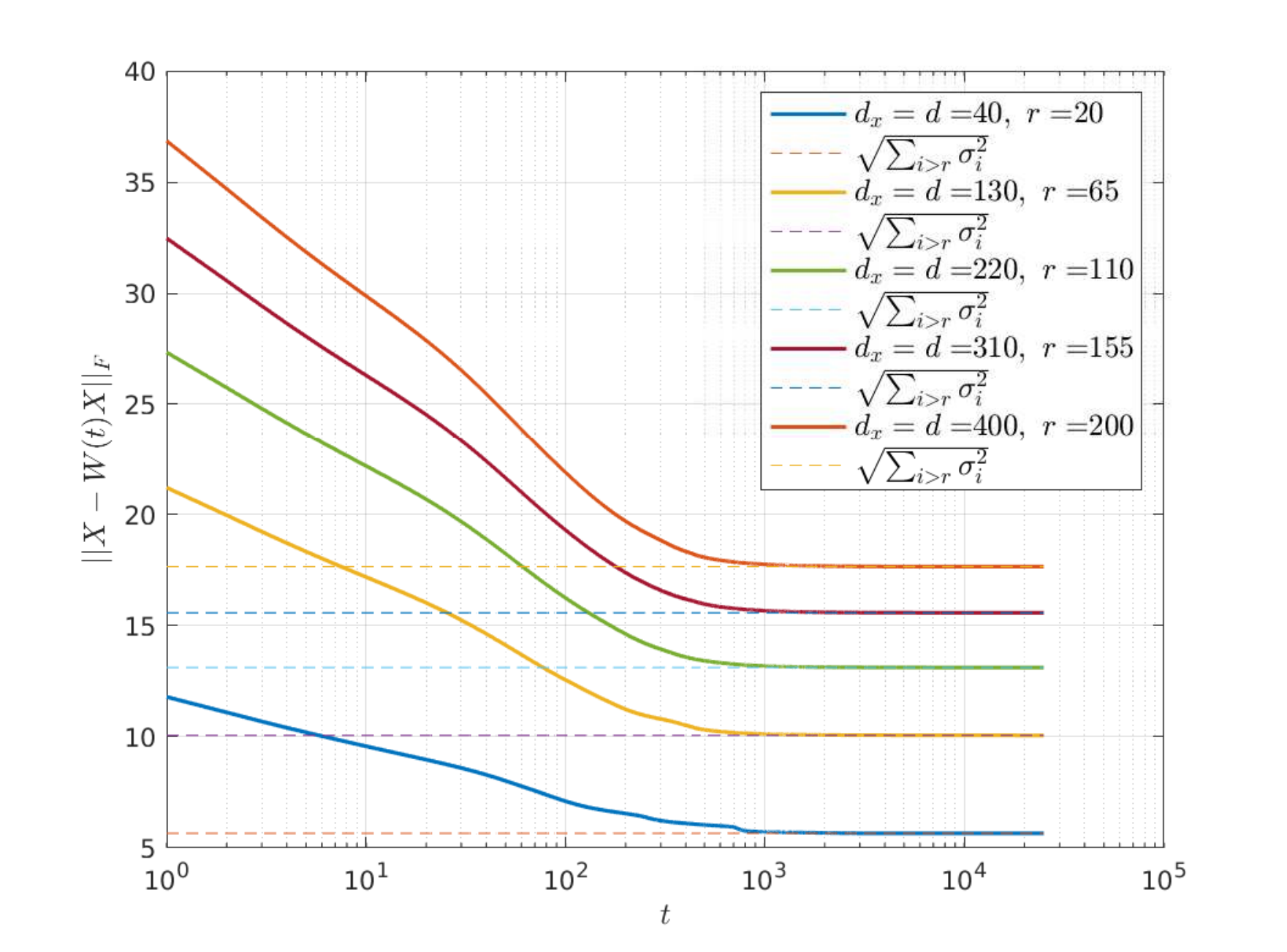} 
	\vspace{-1mm}
	\caption{Convergence of solutions for the general balanced case. Errors between $X$ and $W(t)X$ for different $r$ and $d$ values. {\em Left panel}: $N=2$; {\em right panel}: $N=20$.}
	\label{fig:gen_balance2}\vspace{-4mm}
\end{figure}

In addition, when $W(t)$ converges to $U_rU_r^T$ then $\|X-W(t)X\|_F$ converges to $\sqrt{\sum_{i>r}\sigma_i^2}$. This is also tested and confirmed for $N=2, 5, 10, 20$, but for the purpose of saving space we show results for $N=2$ and $N=20$ in Figure~\ref{fig:gen_balance2}. This depicts convergence of the functional $L^1(W(t))$ to the optimal error, which is the square-root of the sum of the tail eigenvalues of $XX^T$ of order greater than $r$. 
Moreover, in the autoencoder setting when $N=2$ we showed in Lemma \ref{equilpoints} that the optimal solutions are $W_2=W_1^T$. This is also confirmed in the numerics as can be seen in the left panel plot of Figure~\ref{fig:ww1w2diff}.
\begin{figure}[h]
	\centering
	\includegraphics[width=0.45\textwidth,height=0.3\textwidth]{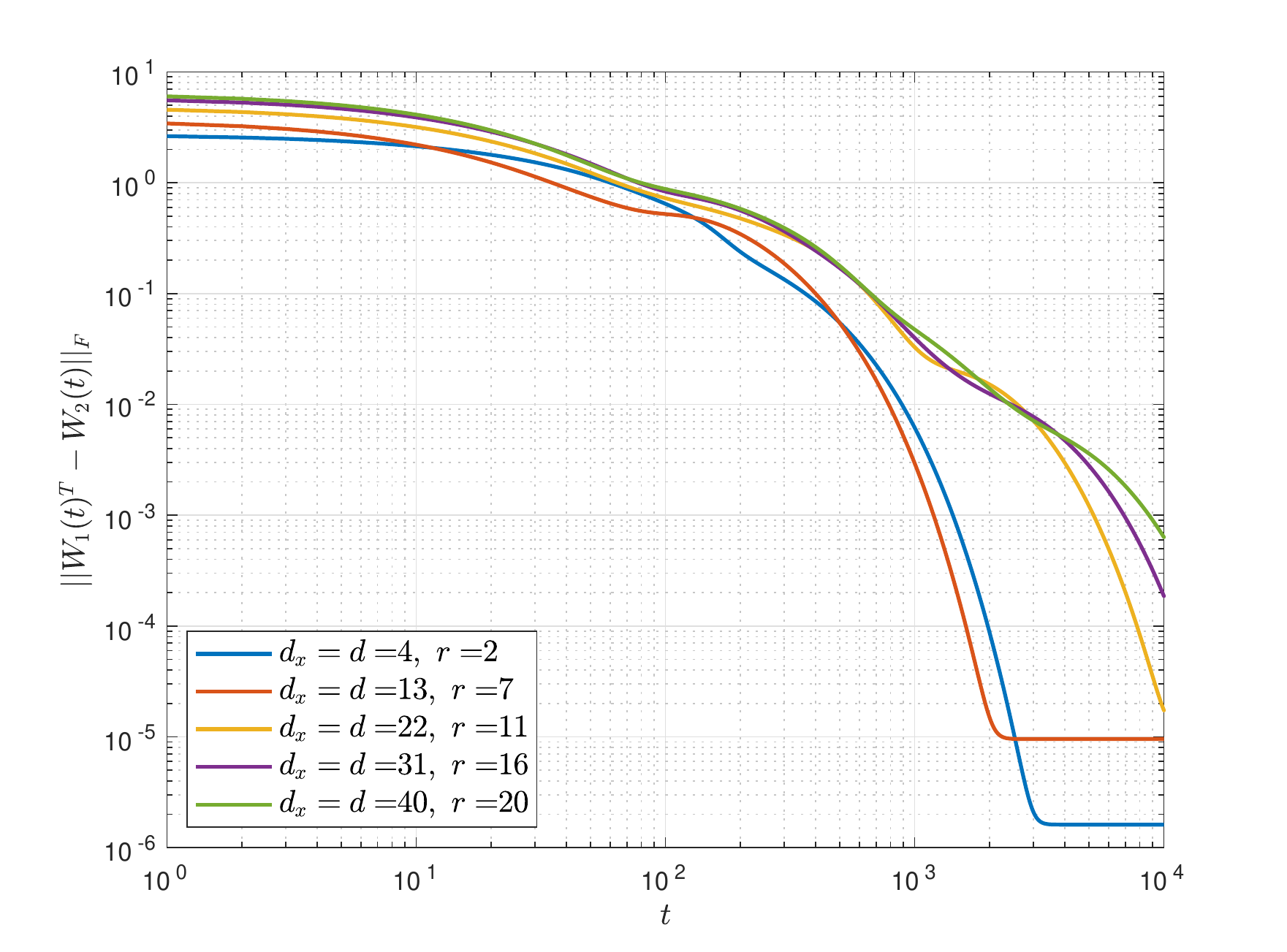}
	\includegraphics[width=0.45\textwidth,height=0.3\textwidth]{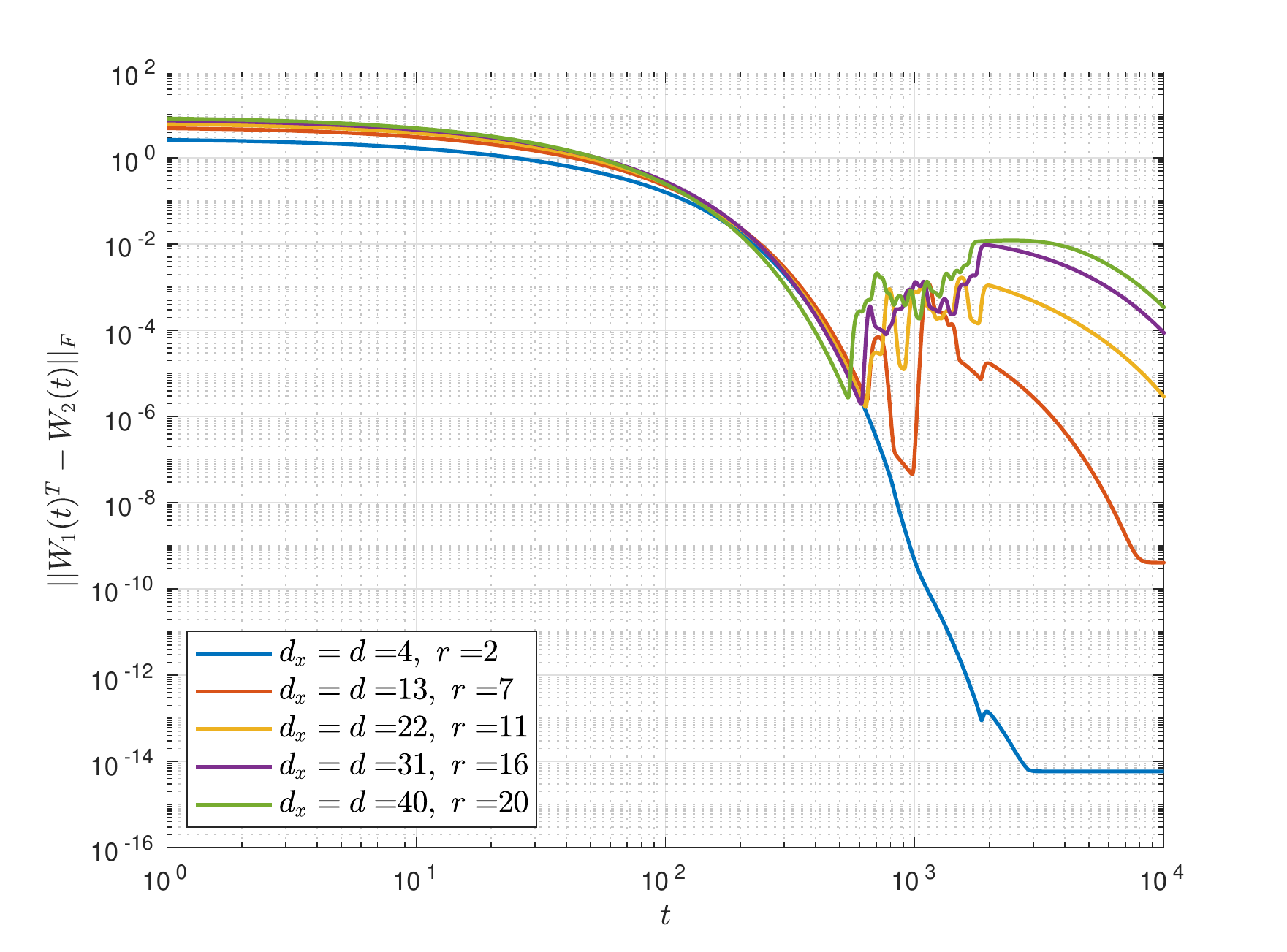}  
	\vspace{-1mm}
	\caption{Difference between $W_1(t)$ and $W_2(t)^T$ in the $N=2$ settings, for {\em left panel:} general balanced case; {\em right panel:} special balanced case.}
	\label{fig:ww1w2diff}\vspace{-2mm}
\end{figure}
\begin{figure}[h]
	\centering 
	\includegraphics[width=0.45\textwidth,height=0.3\textwidth]{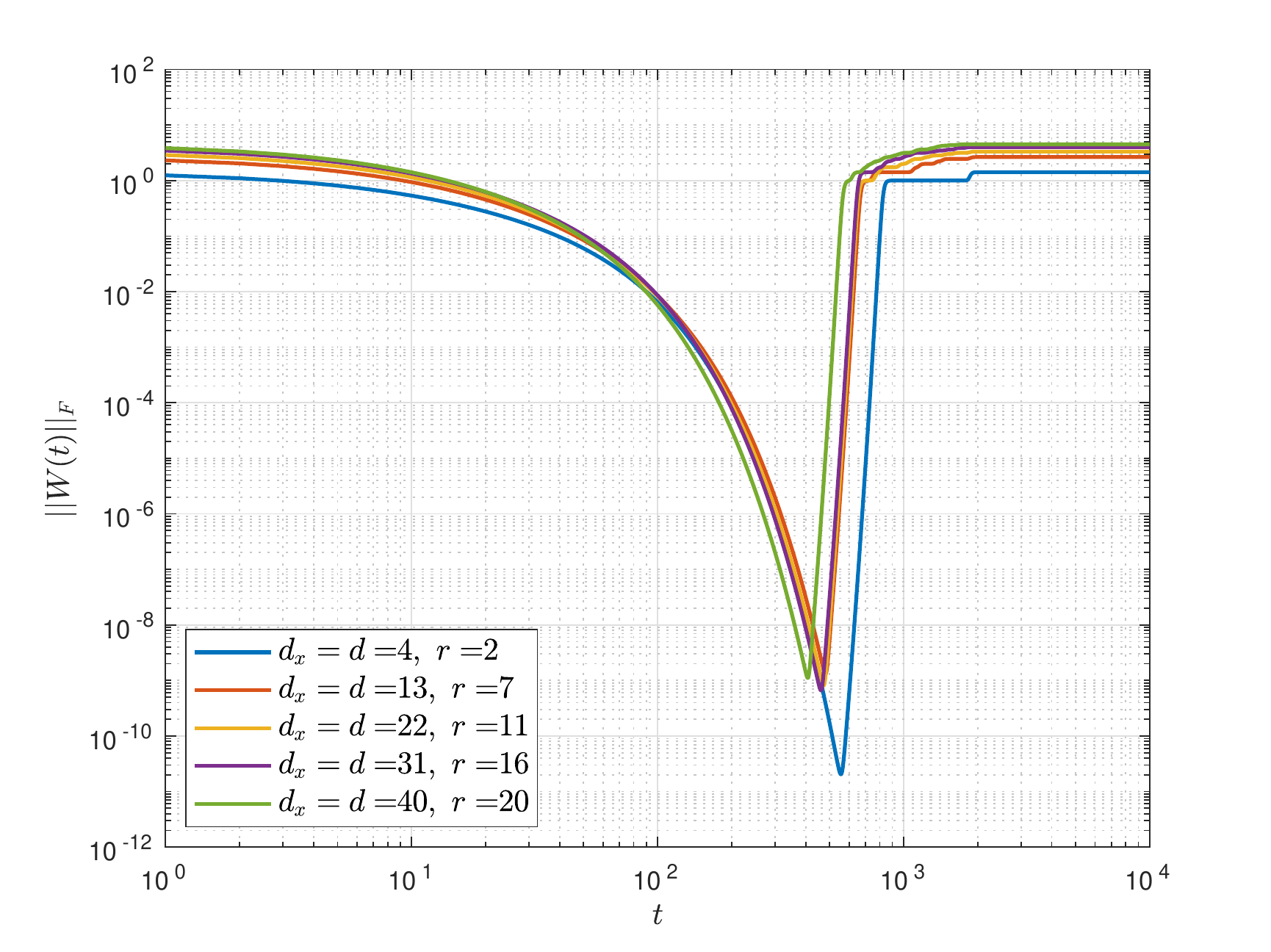} 
	\includegraphics[width=0.45\textwidth,height=0.3\textwidth]{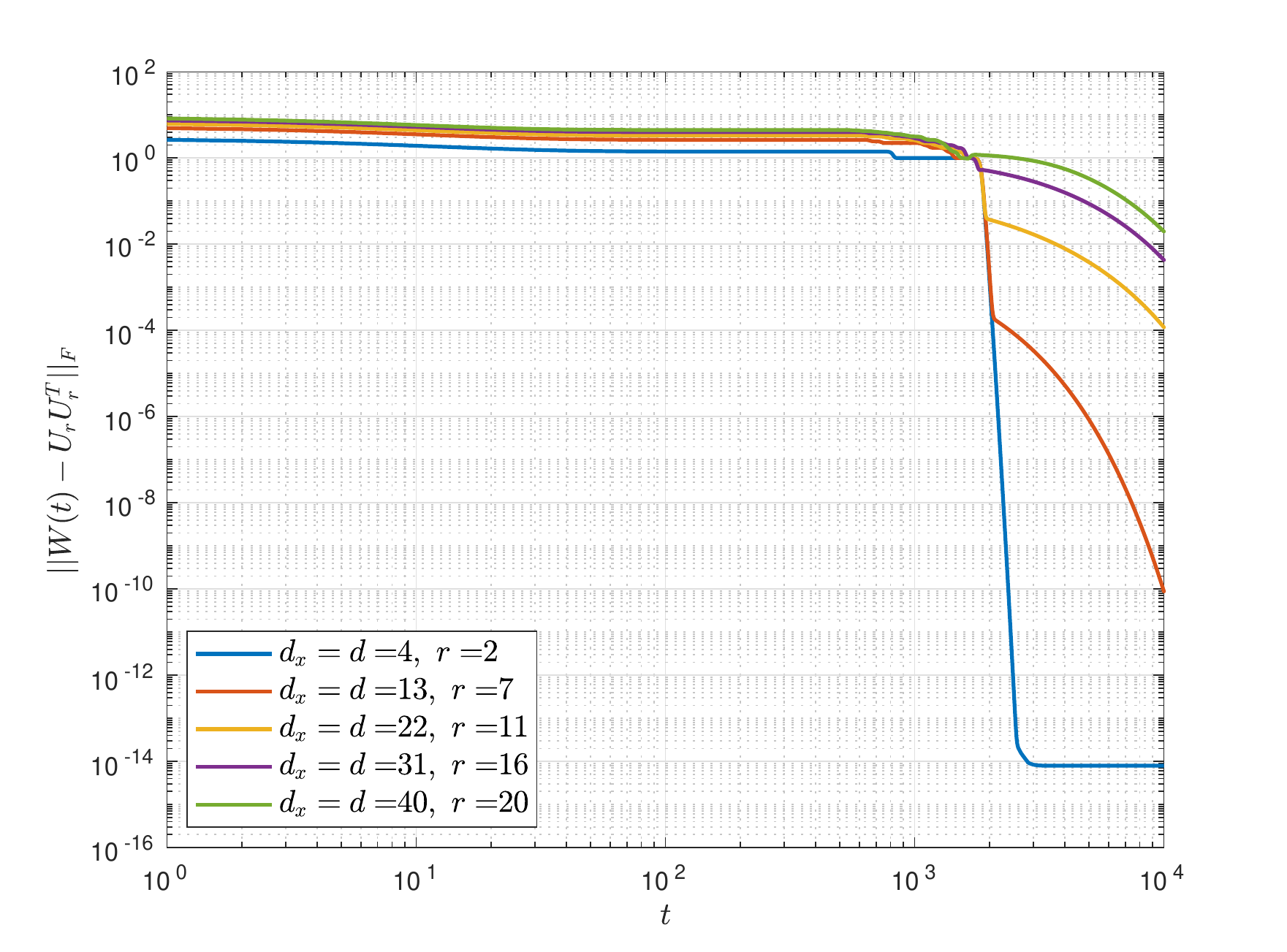} 
	\vspace{-1mm}
	\caption{In the special balanced case, {\em left panel:} norm of $W(t)$; {\em right panel:} errors between $W(t)$ and $U_rU_r^T$ for different $r$ and $d$ values.}
	\label{fig:pathobalance}\vspace{-2mm}
\end{figure}

In the second set of experiments, we attempt to test Conjecture~\ref{con:main} by constructing pathological examples, where we have balanced initial conditions, but $W(0)$ violates condition \eqref{eqn:conjecture} of Conjecture \ref{con:main}. Precisely, in the case $N=2$  we take $W_1(0) = V_r^T$ and $W_2(0) = -V_r$, where 
the columns of $V_r$ are the top $r$ eigenvectors of $XX^T$. Such $W(0)$ clearly violates the condition of the conjecture $u_i^TW(0)u_i > 0$ for all $i\in[r]$. 

The hypothesis is that in such a setting the solution will not converge to the optimal solution proposed in Conjecture \ref{con:main}. Remark \ref{rem:conjecturecondition} showed that in such a case the solution should converge to $0$, that is $\lim_{t\rightarrow \infty}W(t) = 0$. This can be seen in the left panel plot of 
Figure~\ref{fig:pathobalance}. The dip in the left panel shows that $W(t)$ is approaching zero in a first phase. However, probably due to numerical errors the flow escapes the equilibrium point at zero. In fact, zero is an unstable point (a strict saddle point), so that, numerically, the flow will hardly converge to zero. The right panel plot of Figure~\ref{fig:pathobalance} shows very slow convergence to $U_rU_r^T$. Moreover, the limiting solutions (despite slow convergence) satisfy $W_2=W_1^T$ as shown in the right plot of Figure \ref{fig:ww1w2diff}.

\subsubsection{Non-balanced initial conditions}\label{sec:nic}
For $W_j(0)$, $j = 1,\ldots,N$, we randomly generate Gaussian matrices. The two plots in Figure~\ref{fig:nonbalance1} and the left panel plot of Figure~\ref{fig:nonbalance2} show that $W(t)$ converges to $U_rU_r^T$. As in the balanced case we can confirm that $\|X-W(t)X\|_F$ converges to $\sqrt{\sum_{i>r}\sigma_i^2}$. On the other hand, for $N=2$ in this case we see that $W_2(t)$ does not converge to $W_1(t)^T$ in contrast to the balanced case, as can be seen in the right panel plot of Figure~\ref{fig:nonbalance2}.
\begin{figure}[h]
	\centering 
	\includegraphics[width=0.45\textwidth,height=0.3\textwidth]{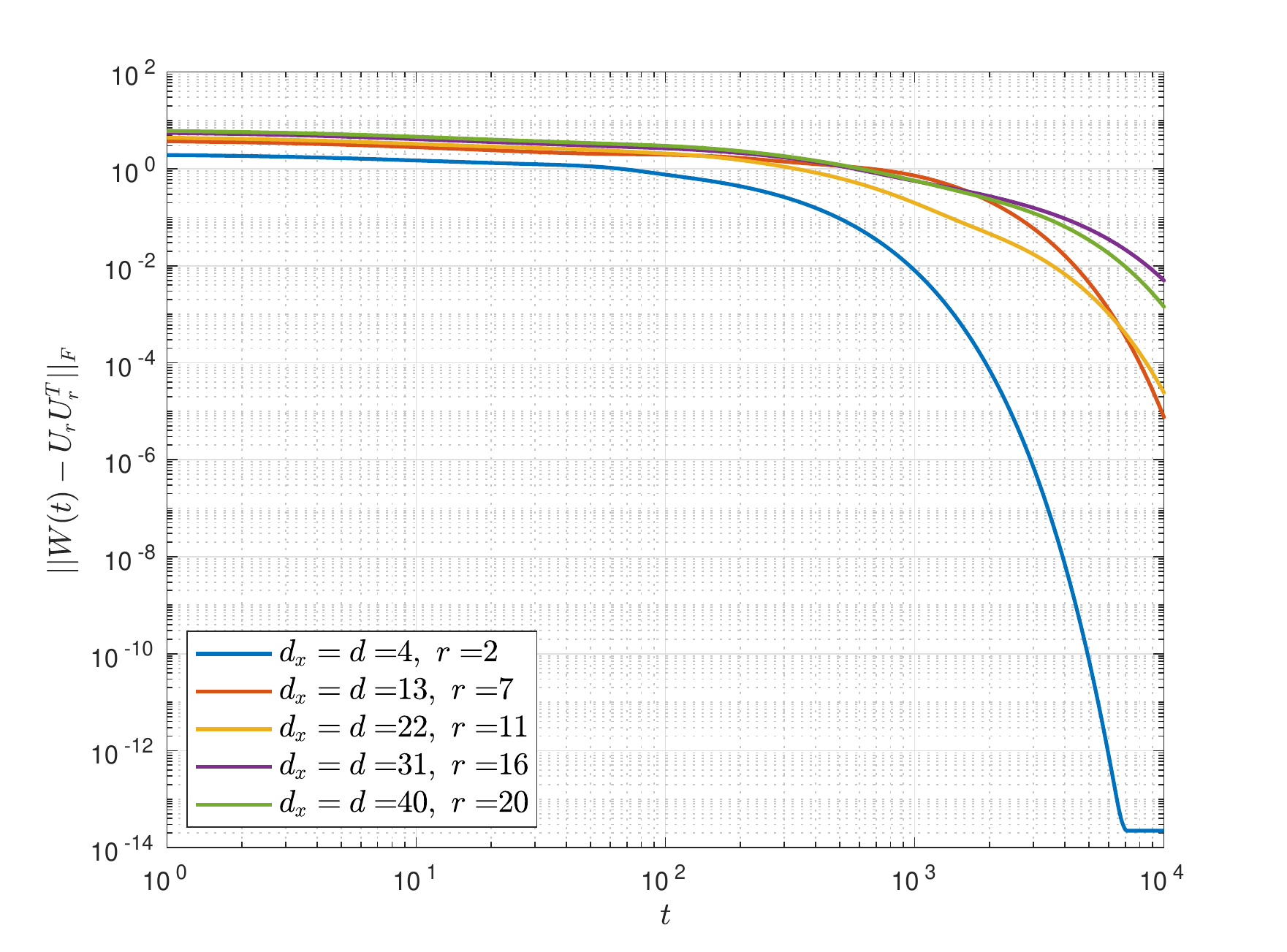}
	\includegraphics[width=0.45\textwidth,height=0.3\textwidth]{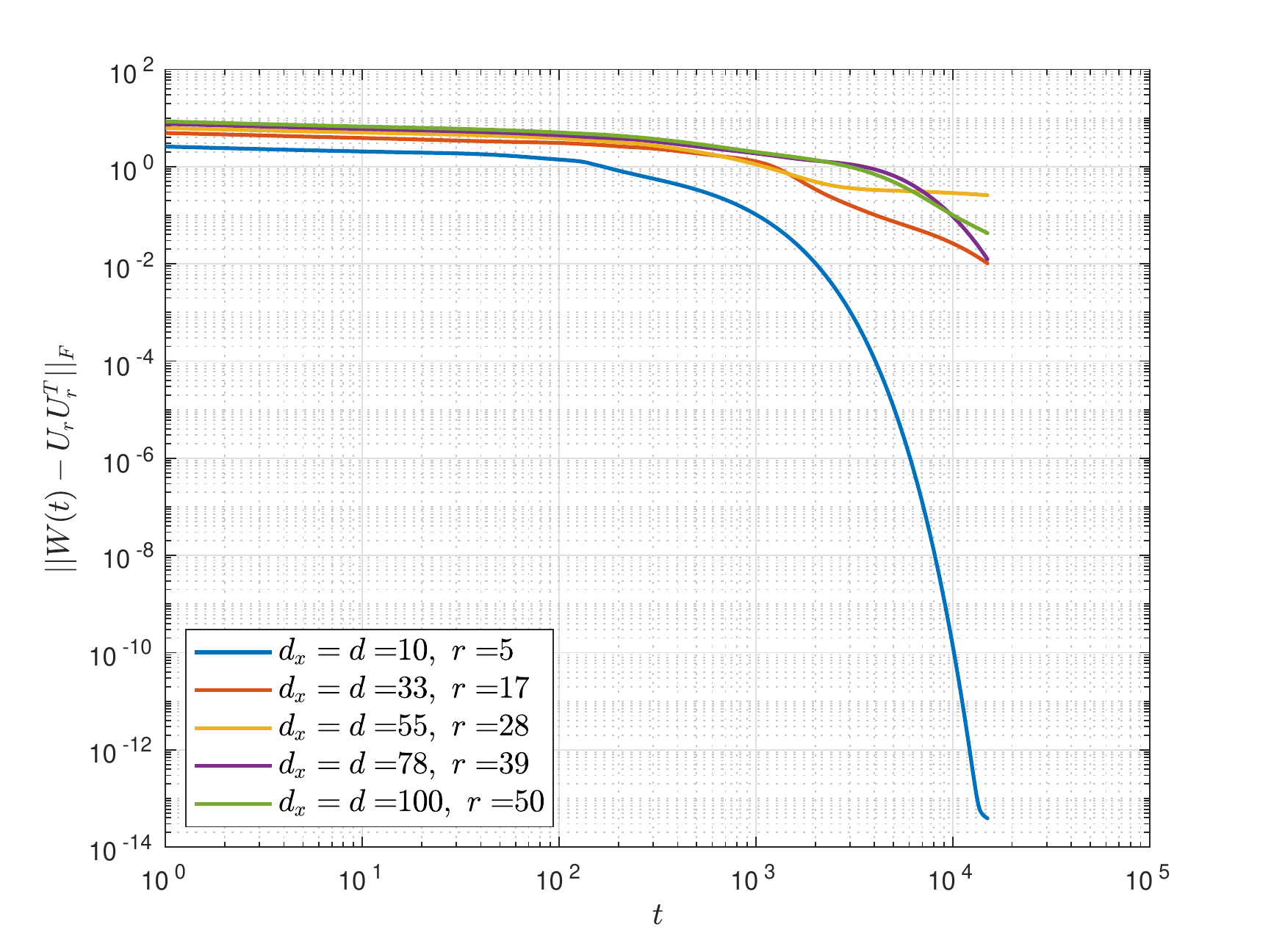} 
	\caption{Non-balanced case convergence of solutions of the gradient flow. Errors between $W(t)$ and $U_rU_r^T$ for different $r$ and $d$ values for {\em left panel:} $N=2$, {\em right panel:} $N=5$.}
	\label{fig:nonbalance1}
\end{figure}

\begin{figure}[h]
	\centering  
	\includegraphics[width=0.45\textwidth,height=0.3\textwidth]{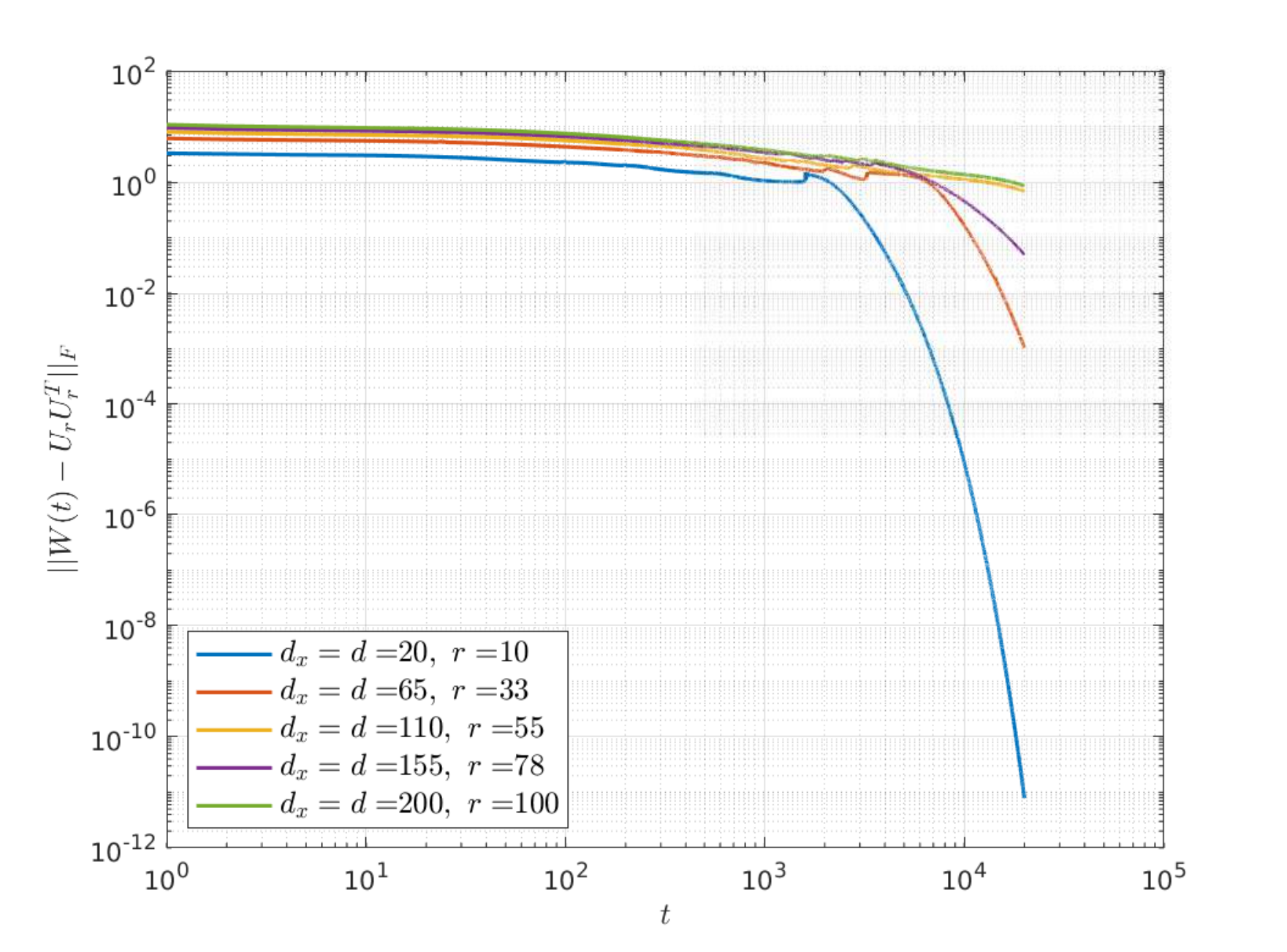} 
	\includegraphics[width=0.45\textwidth,height=0.3\textwidth]{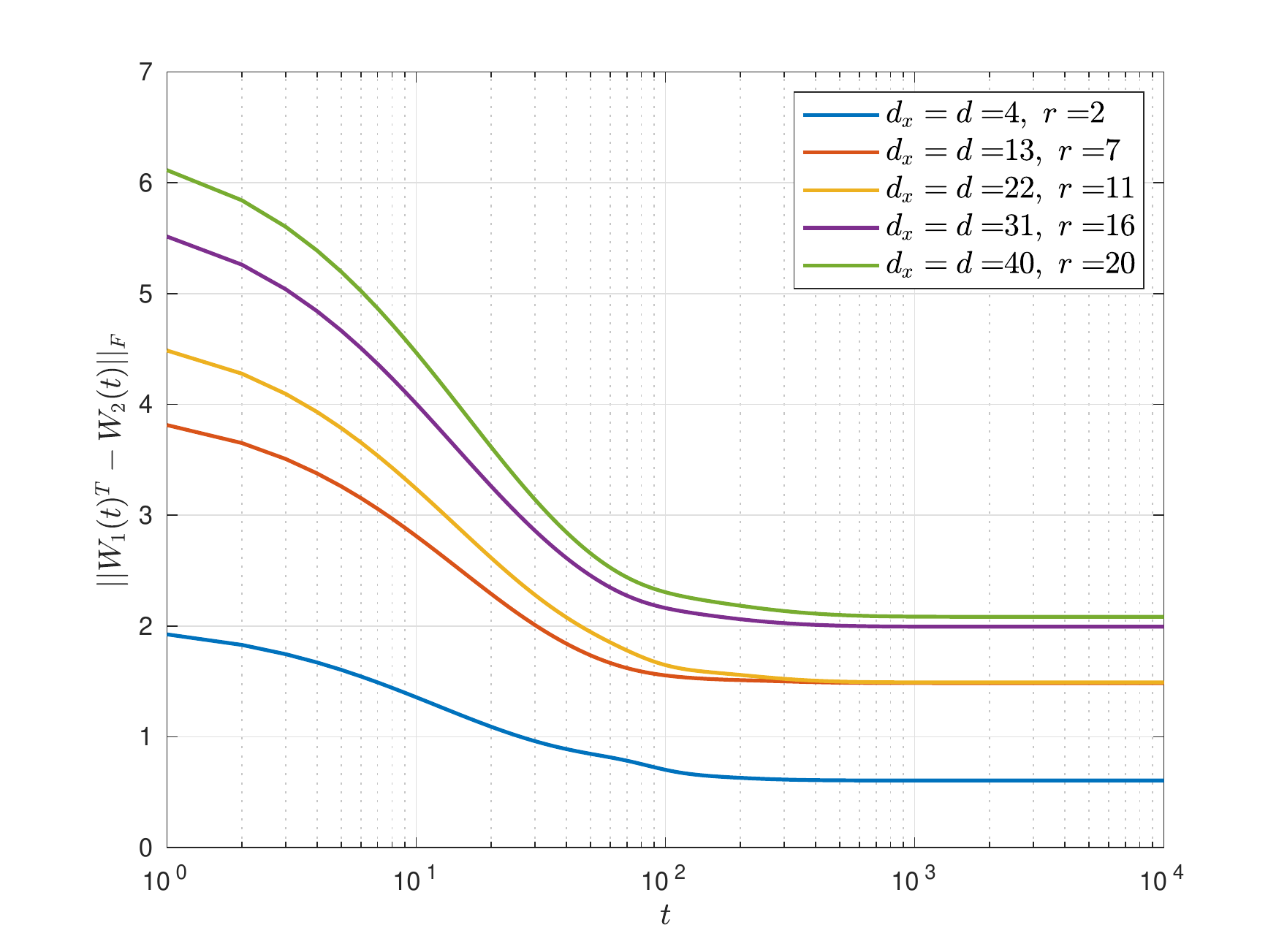}
	\caption{Non-balanced case convergence. {\em Left panel:} Errors between $W(t)$ and $U_rU_r^T$ for different $r$ and $d$ values for $N=10$. {\em Right panel:} Errors between $W_2(t)$ and $W_1(t)^T$.} 
	\label{fig:nonbalance2}
\end{figure}

\subsubsection{Convergence rates}\label{sec:convrates}
Here the data matrix $X\in \RR^{d_x\times m}$ is generated with columns drawn i.i.d.from a Gaussian distribution, i.e., $x_i \sim \mathcal{N}(0,\sigma^2 I_{d_x})$, where $\sigma = 1/\sqrt{d_x}$. Random realization of $X$ with two different values for $d_x$ (as in above $m = 3d$) and different $r$, the rank of $W(t)$, are used. For each fixed $d$, the dimensions $d_j$ of the $W_j \in \mathbb{R}^{d_j \times d_{j-1}}$ are selected using an arbitrarily chosen $r$ and setting $d_j = [r + (d-r)(j-1)/(N-1)]$ for $j=1,\ldots, N$.
The value of $r$  is stated in the caption of the figures.
The experiments show very rapid convergence of the solutions but also the dependence of the convergence rate on $N$, $d_x$, and $r$. We investigate this for different values of $N$, $d_x$ and $r$, in both the balanced and non-balanced cases. Convergence plots for the balanced initial conditions are shown in Figure~\ref{fig:convrate_bic}, depicting smooth convergence. Similarly, we have convergence rates of the non-balanced case in Figure~\ref{fig:convrate_nic}. These plots also show a slightly faster convergence for the balanced case than for the non-balanced case.
\begin{figure}[h!]
	\centering 
	\includegraphics[width=0.45\textwidth,height=0.3\textwidth]{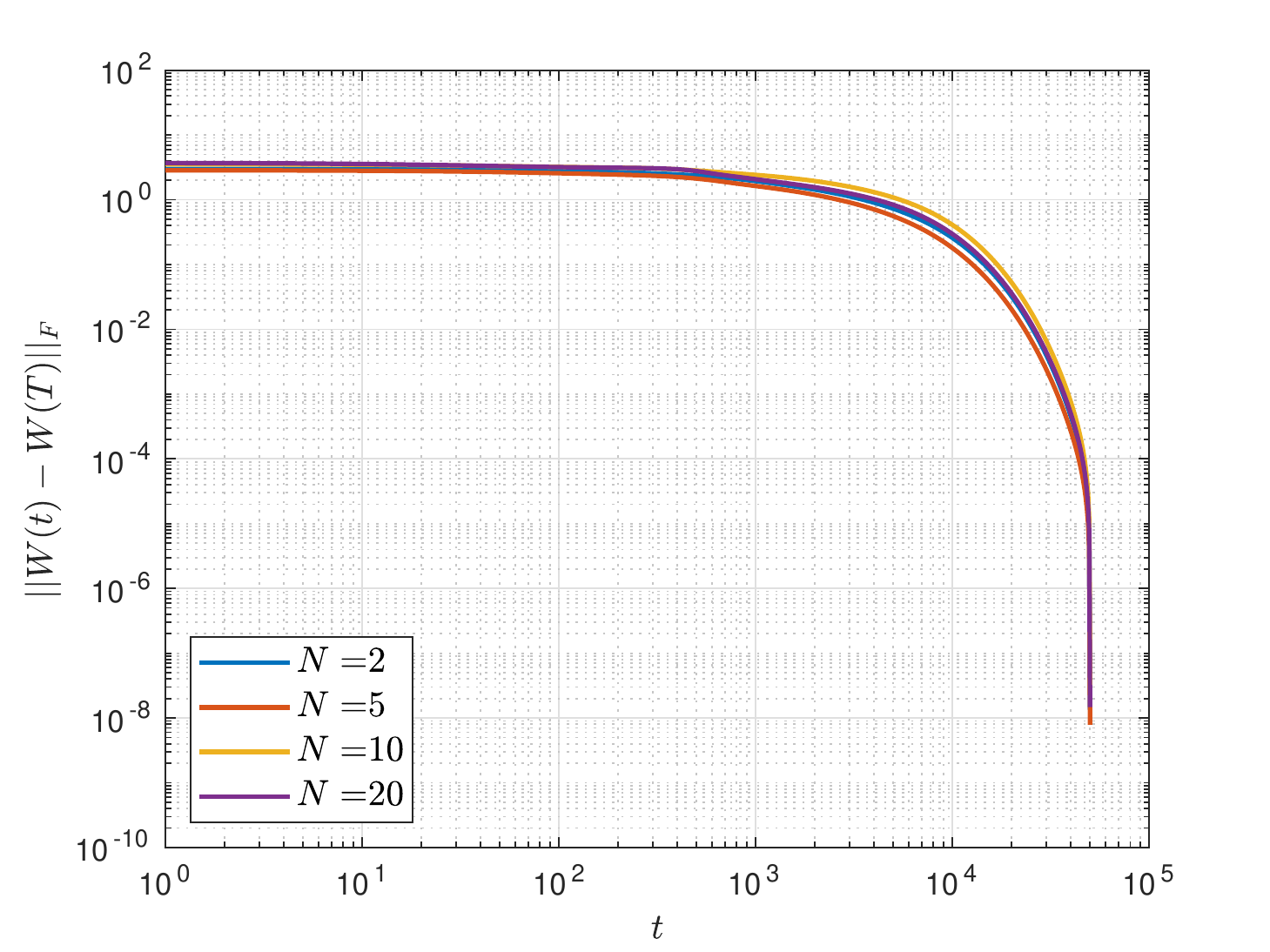}
	\includegraphics[width=0.45\textwidth,height=0.3\textwidth]{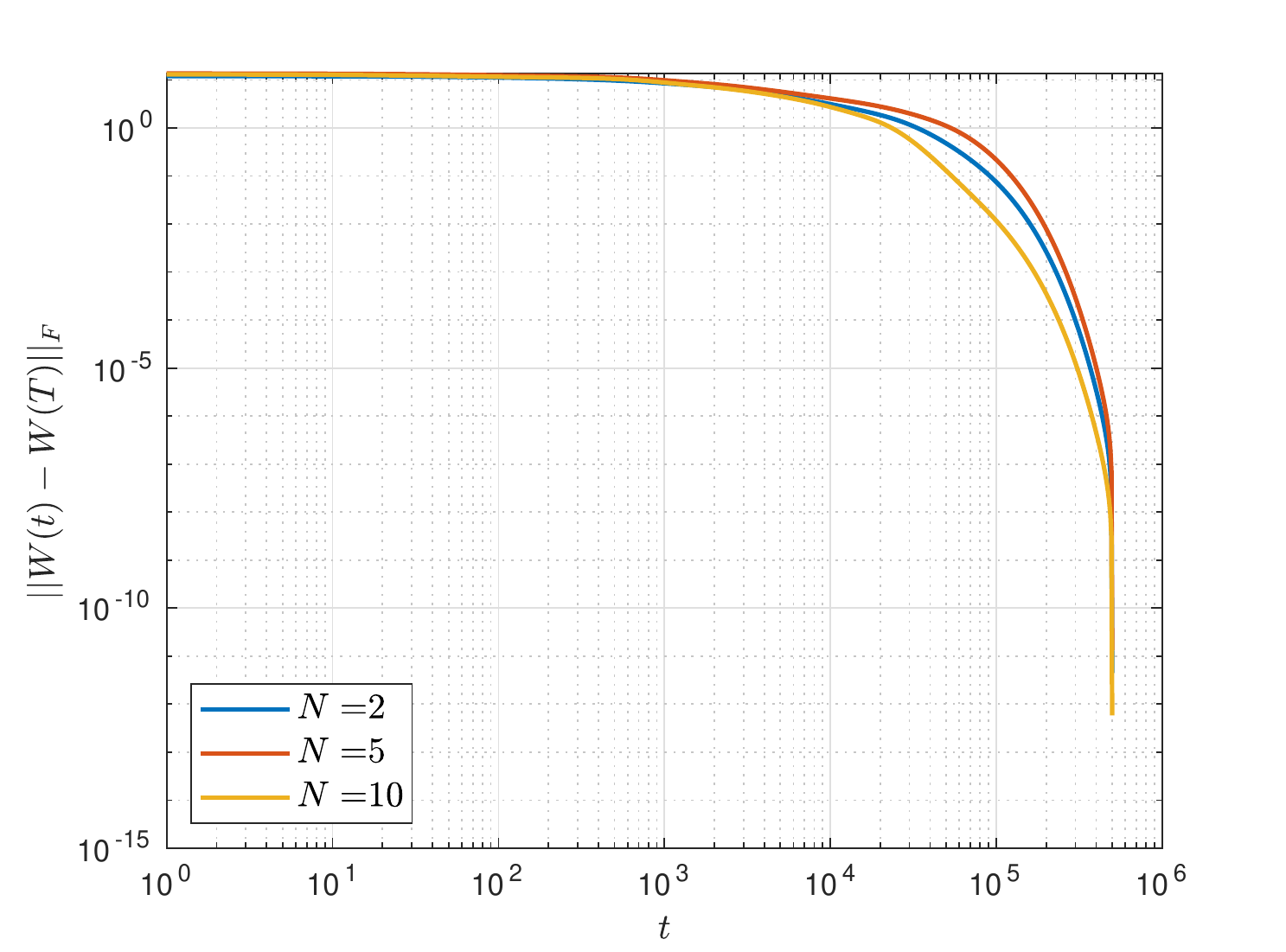}  
	\caption{Convergence rates of solutions of the gradient flow in the autoencoder case with balanced initial conditions -- errors between $W(t)$ and $W(T)$ for different $N$ values, where $T$ is the final time. Dimensions {\em Left panel}: $d_x = 20$, $r=1$; {\em Right panel}: $d_x = 200$, $r=10$.}
	\label{fig:convrate_bic}
\end{figure}
\begin{figure}[h!]
	\centering  
	\includegraphics[width=0.45\textwidth,height=0.3\textwidth]{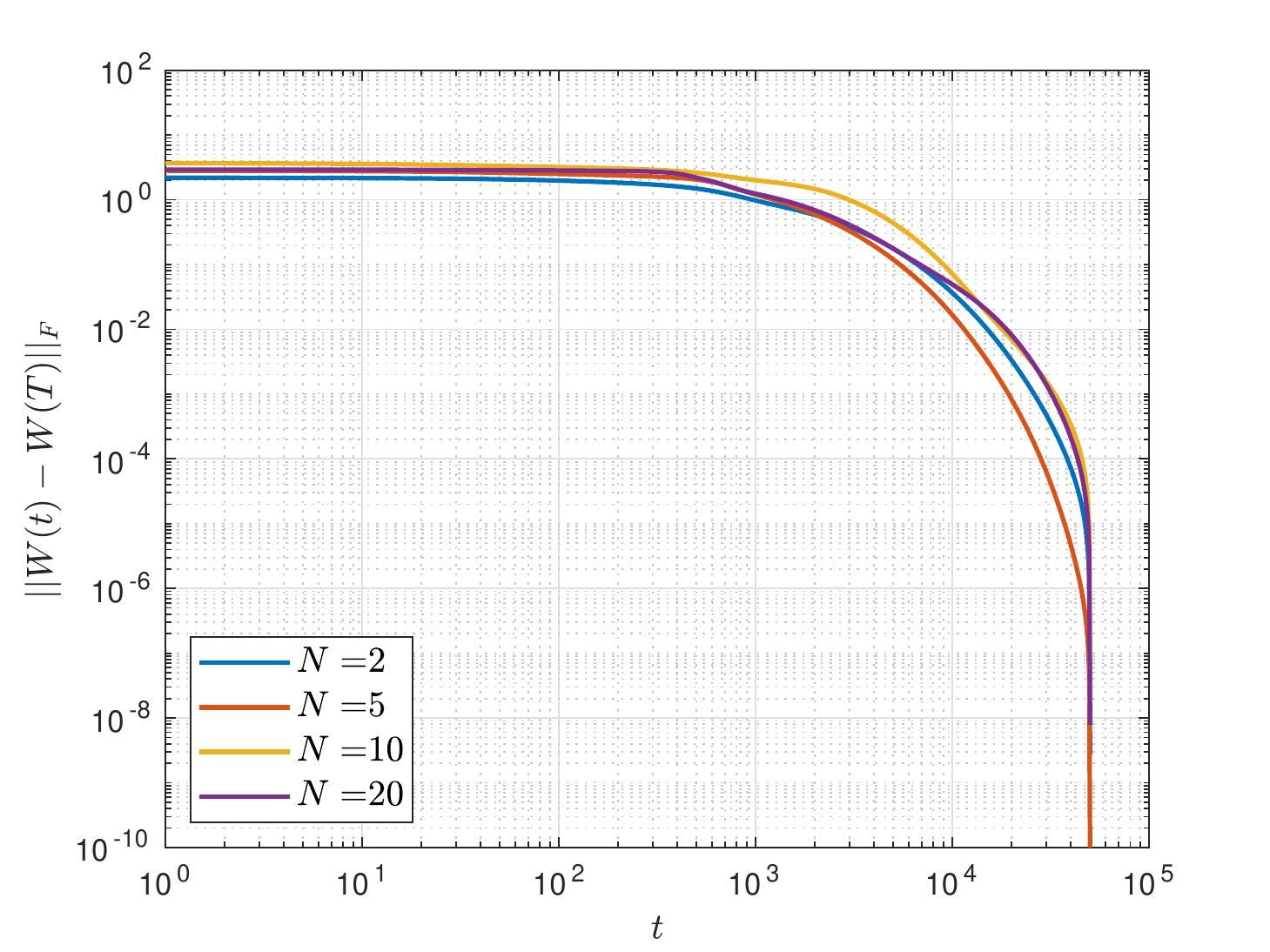} 
	\includegraphics[width=0.45\textwidth,height=0.3\textwidth]{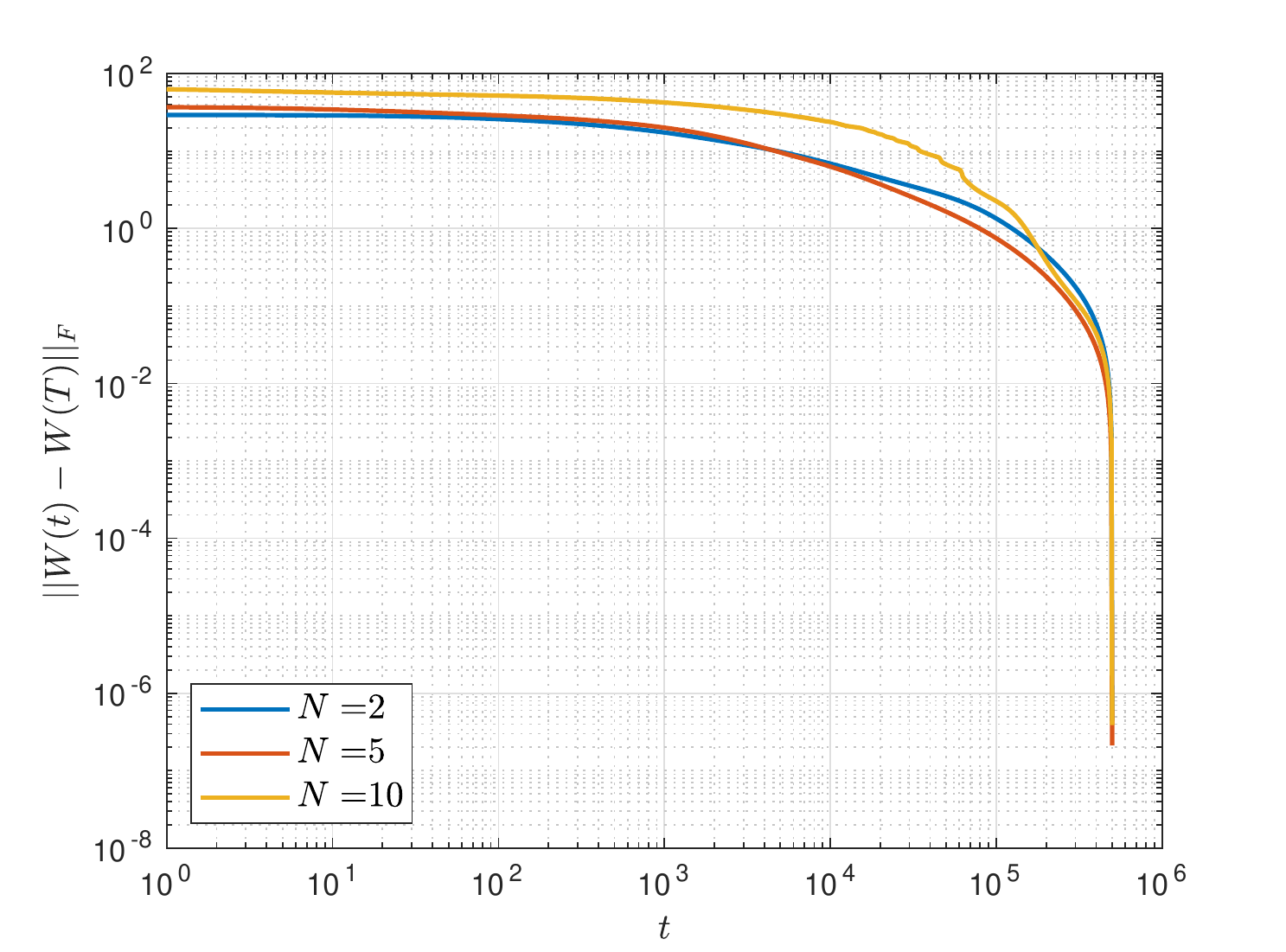} 
	\caption{Convergence rates of solutions of the gradient flow in the autoencoder case with non-balanced initial conditions -- errors between $W(t)$ and $W(T)$ for different $N$ values, where $T$ is the final time. Dimensions {\em Left panel}: $d_x = 20$, $r=1$; {\em Right panel}: $d_x = 200$, $r=10$.}
	\label{fig:convrate_nic}
\end{figure}

\subsection{General supervised learning case}\label{sec:supervisedlearning}
Experiments were also conducted to test the results in the general supervised learning setting to support theoretical results in Theorem~\ref{thm-kawag} and Propositions \ref{critpoints_general} and \ref{prop:strict_saddle_property_general}. We show results for $N=2,5,10,20$, and two sets of values for $d_x$ and $r$ (rank of $W(t)$ and $\widetilde{W}$, the true parameters). The data matrix $X$ is generated as in the autoencoder case and $Y = \widetilde{W}X$, where $\widetilde{W}=\widetilde{W}_N\cdots \widetilde{W}_1$, with $\widetilde{W}_j\in\RR^{d_j\times d_{j-1}}$ for $j=1,\ldots,N$ with $d_N = d_0 = d_x = d$ and $d_1 = r$ is the rank of $\widetilde{W}$. The entries of $\widetilde{W}_j$ are randomly generated independently from a Gaussian distribution with standard deviation $\sigma = 1/\sqrt{d_j}$. The dimensions $d_j\times d_{j-1}$ of the $W_j$ for $j=1,\ldots, N$, are again selected respectively in an integer grid, i.e., $d_j = [r + (d_x-r)(j-1)/(N-1)]$, 
where $r$ is arbitrarily fixed. The initial conditions are generated as was done in the autoencoder case.
We investigate the convergence rates for the balanced and non-balanced initial conditions of the gradient flows. The results of the experiments are plotted in Figures \ref{fig:SLconv_bic} and \ref{fig:SLconv_nic}. In these plots $k$ is the rank of $Q \in \mathbb{R}^{d_y \times d_x}$ defined in \eqref{def:Q-matrix}, and 
$Q = U_k \Sigma_k V_k$ is the (reduced) singular value decomposition, i.e., 
$U_k \in \mathbb{R}^{d_x \times k}$ and $V_k \in \mathbb{R}^{d_y \times k}$ 
have orthonormal columns and $\Sigma_k \in \mathbb{R}^{k \times k}$ is a 
diagonal matrix containing the non-zero singular values of $Q$.
\begin{figure}[h!]
	\centering 
	\includegraphics[width=0.45\textwidth,height=0.3\textwidth]{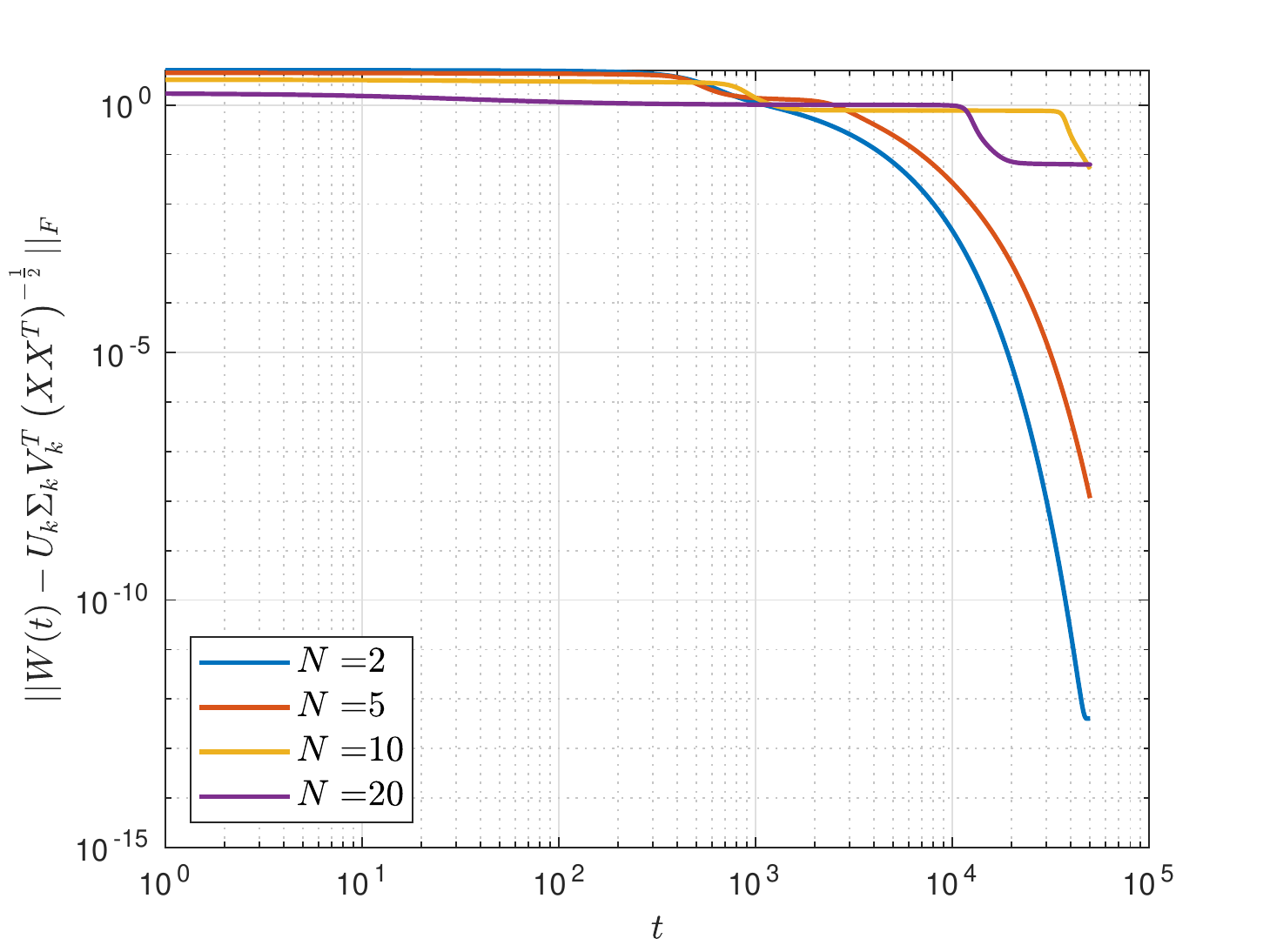}
	\includegraphics[width=0.45\textwidth,height=0.3\textwidth]{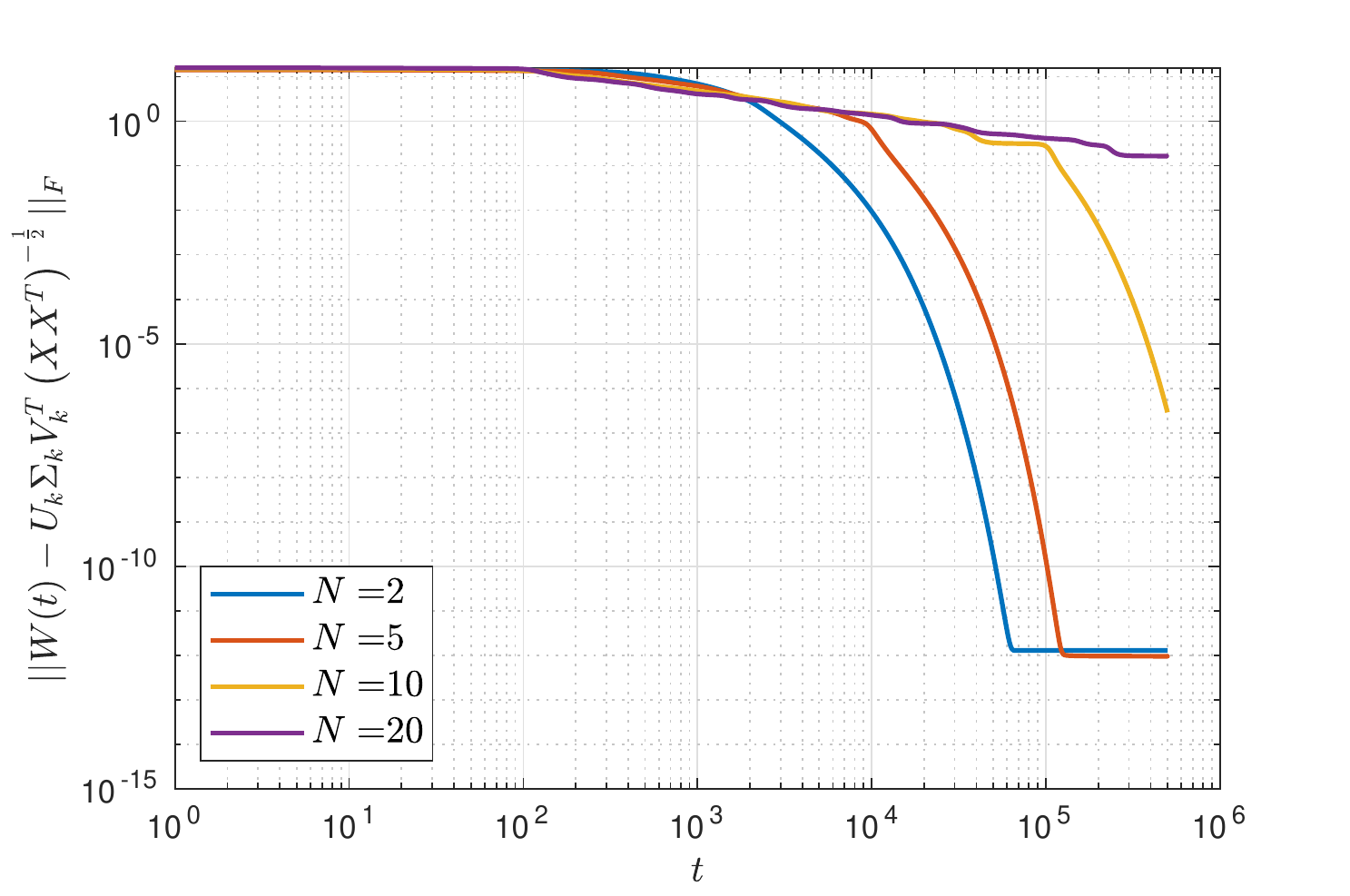} 
	\caption{Convergence rates of solutions of the gradient flow of the general supervised learning problem depicted by convergence to $W$ in $(1)$ of Proposition~\ref{critpoints_general} with balanced initial conditions for {\em left panel:} $d_x = 20$, $r=2$; {\em right panel:} $d_x = 200$, $r=20$.}
	\label{fig:SLconv_bic}
\end{figure}

With balanced initial conditions the plots of Figure~\ref{fig:SLconv_bic} show convergence rates of the flow 
to $W$ in $(1)$ of Proposition \ref{critpoints_general}. With non-balanced initial conditions the plots of 
Figure~\ref{fig:SLconv_nic} show convergence rates to $W$ in $(1)$ of Proposition~\ref{critpoints_general}. 
These results show rapid convergence of the flow and the dependence of the convergence rate on $N$, $r$ and $d_x$ with either balanced or non-balanced initial conditions. Note that $W$ in $(1)$ of 
Proposition~\ref{critpoints_general} is the same as the true parameters $\widetilde{W}$. 
This can be seen by comparing the left panel plot of 
Figure~\ref{fig:SLconv_bic} to the left panel plot of Figure~\ref{fig:SLconvtrue} 
and the left panel plot of Figure~\ref{fig:SLconv_nic} to the right panel plot of Figure~\ref{fig:SLconvtrue}.

\begin{figure}[h!]
	\centering 
	\includegraphics[width=0.45\textwidth,height=0.3\textwidth]{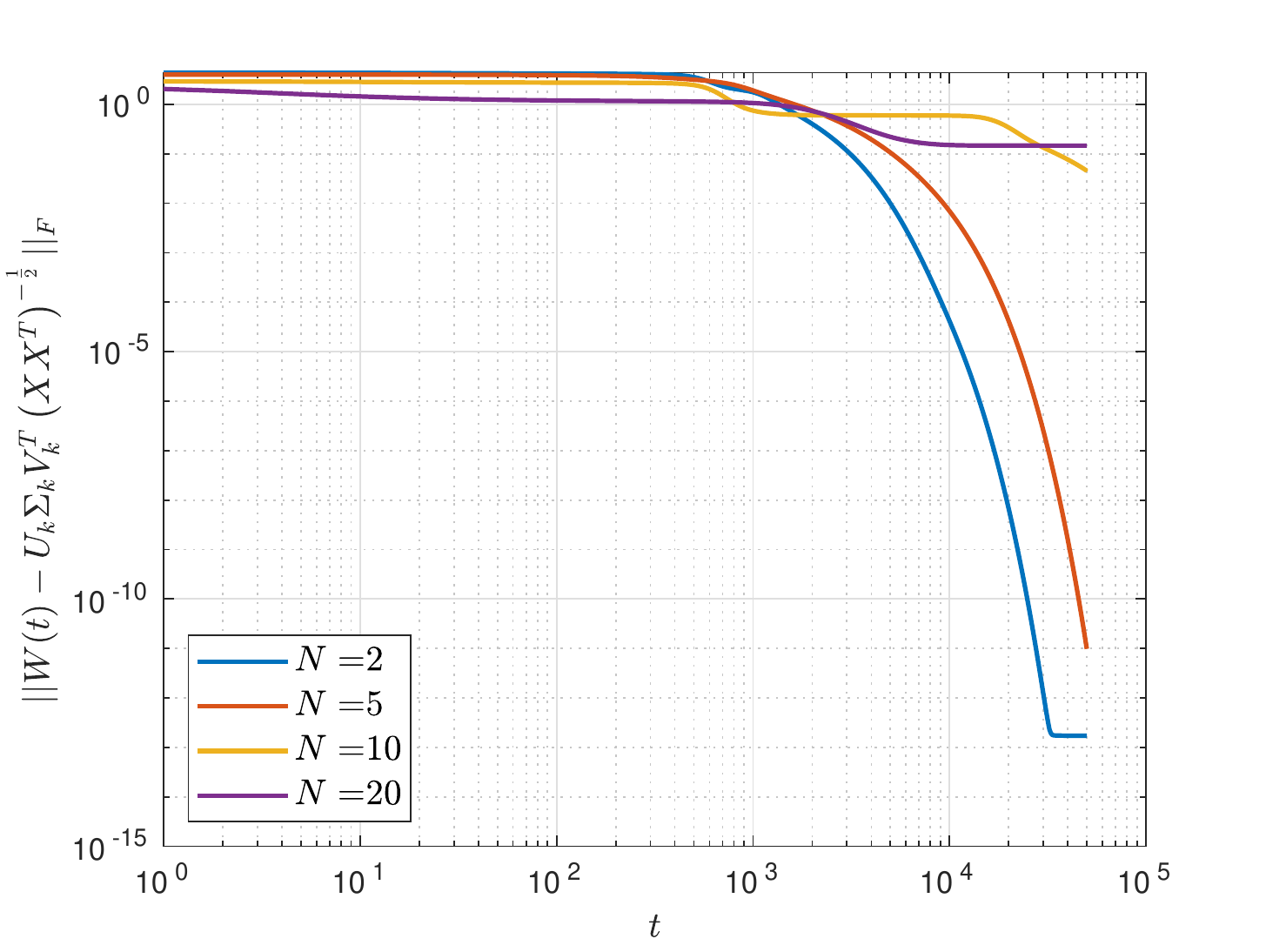} 
	\includegraphics[width=0.45\textwidth,height=0.3\textwidth]{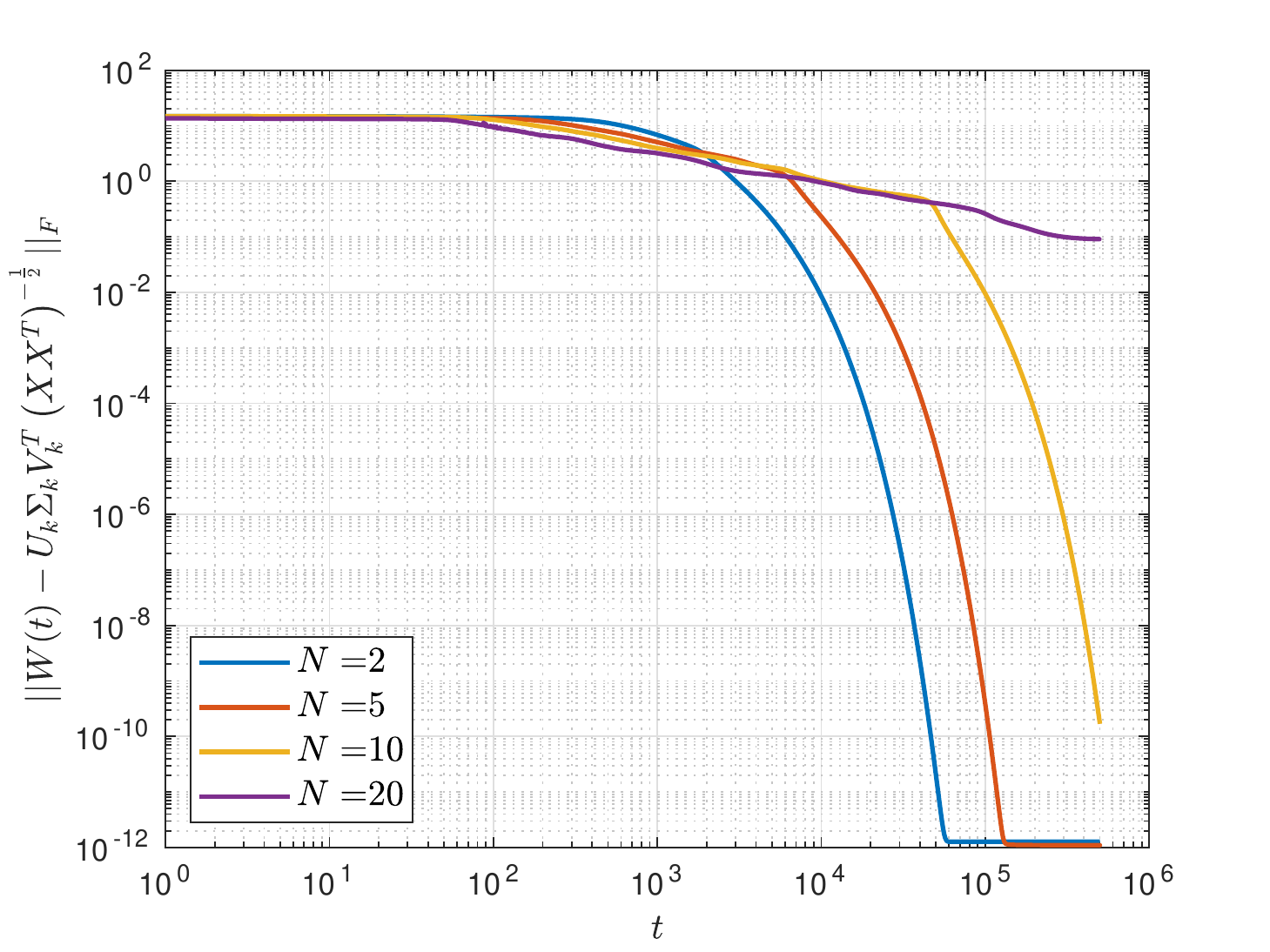} 
	\caption{Convergence rates of solutions of the gradient flow of the general supervised learning problem depicted by convergence to $W$ in $(1)$ of Proposition \ref{critpoints_general} with non-balanced initial conditions for {\em left panel:} $d_x = 20$, $r=2$; {\em right panel:} $d_x = 200$, $r=20$.}
	\label{fig:SLconv_nic}
\end{figure}

Convergence is slower for larger $N$, and it seems not to depend on the initial conditions, 
balanced or non-balanced, see the plots of Figures~\ref{fig:SLconv_bic} and \ref{fig:SLconv_nic}. 
Equivalently, this can be seen from the error of the supervised learning loss shown in the plots of 
Figure~\ref{fig:SLerror} for balanced initial conditions.
There is much stronger dependence on $N$ in this setting than in the autoencoder setting.
\begin{figure}[h!]
	\centering 
	\includegraphics[width=0.45\textwidth,height=0.3\textwidth]{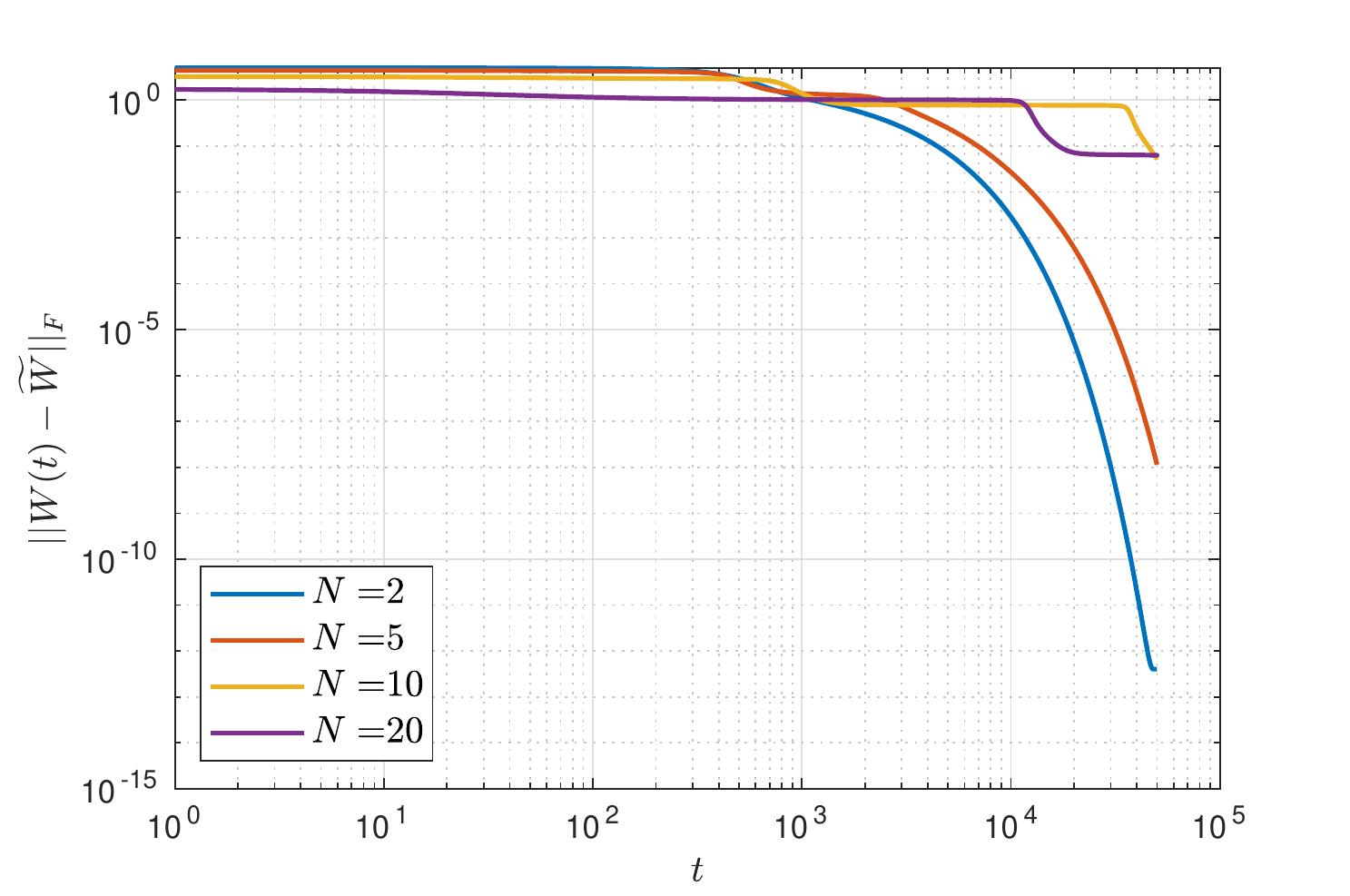}
	\includegraphics[width=0.45\textwidth,height=0.3\textwidth]{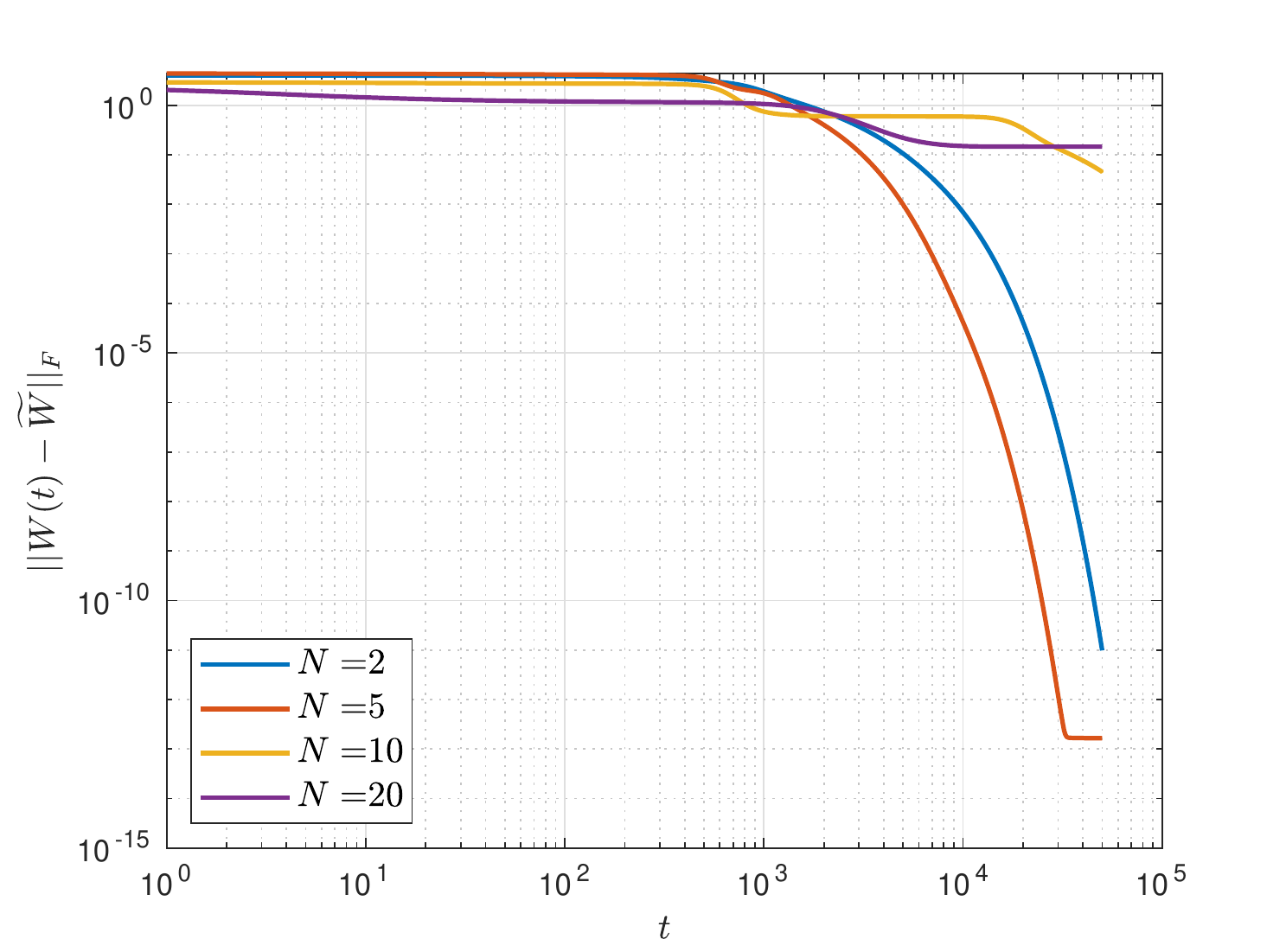}
	\caption{Convergence to the true parameters $\widetilde{W}$ for ($d_x = 20$, $r=2$) with {\em left panel:} balanced initial conditions; {\em right panel:} non-balanced initial conditions.}
	\label{fig:SLconvtrue}
\end{figure}

\begin{figure}[h!]
	\centering 
	\includegraphics[width=0.45\textwidth,height=0.3\textwidth]{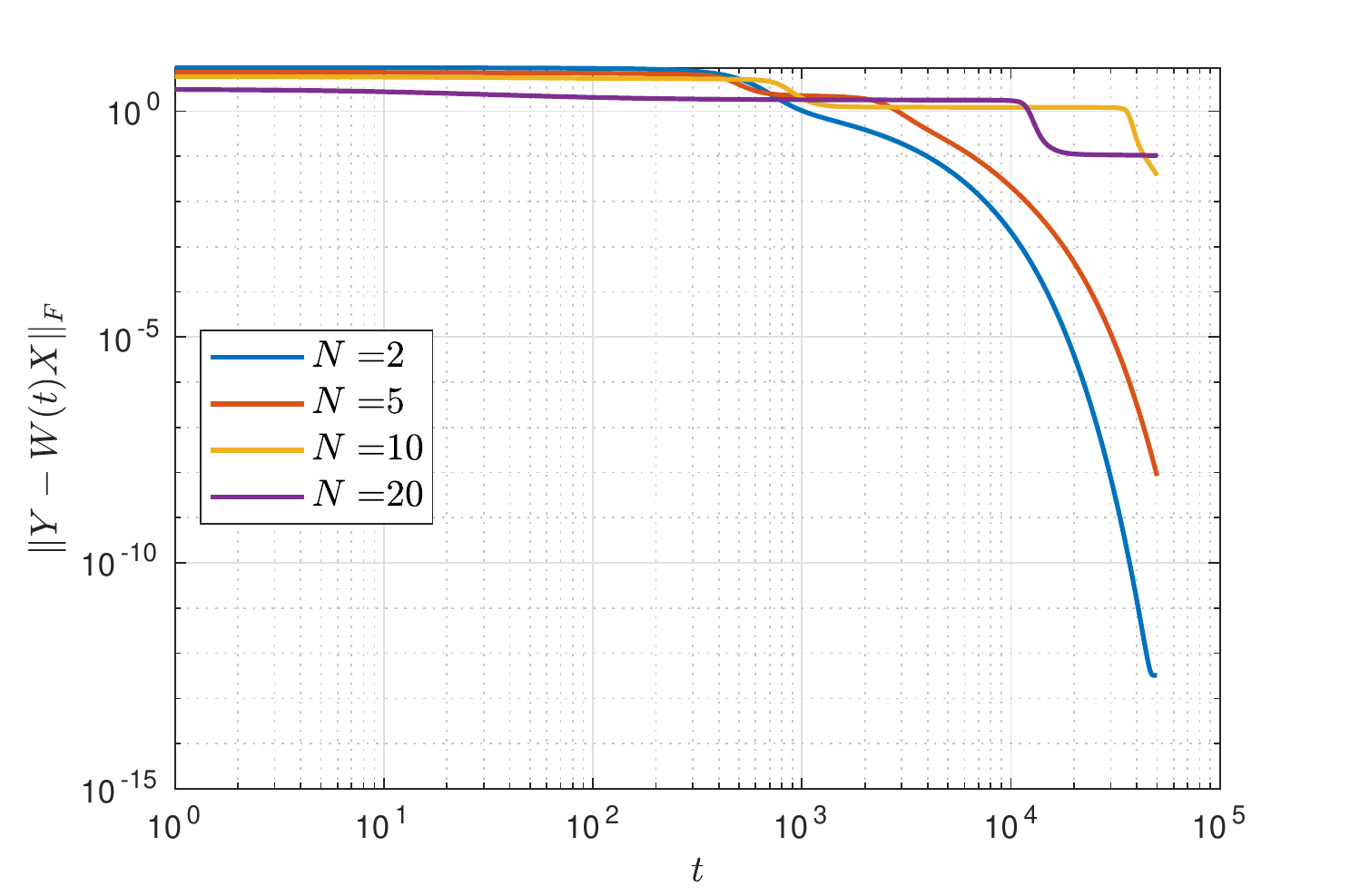}
	\includegraphics[width=0.45\textwidth,height=0.3\textwidth]{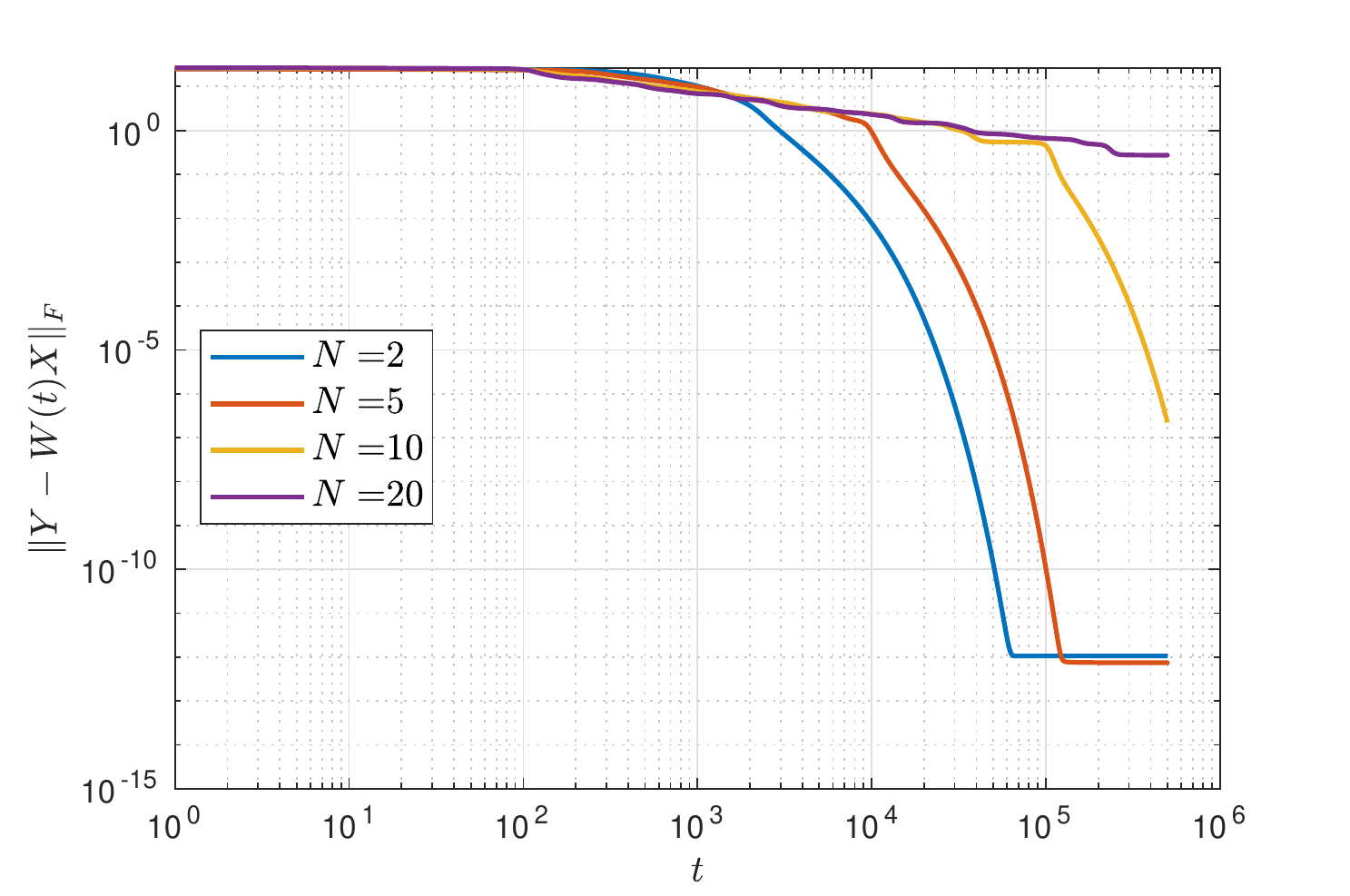} 
	\caption{General supervised learning errors with balanced initial conditions for dimensions {\em left panel:} $d_x = 20$, $r=2$; {\em right panel:} $d_x = 200$, $r=20$.}
	\label{fig:SLerror}
\end{figure}

\subsection{Conclusion}\label{sec:conclusion}
To conclude the numerical section we summarise our results as follows. In the autoencoder case we confirmed that the solutions of the gradient flow converges to $U_rU_r^T$, while in the general supervised learning case we confirmed convergence of the flow to $W$ in $(1)$ of Proposition~\ref{critpoints_general}. Such convergence occurs with either balanced or non-balanced initial conditions albeit a slight faster convergence in the balanced than in the non-balanced. Secondly, in the autoencoder case we numerically confirmed the hypothesis of Conjecture \ref{con:main} and that $W_2(t) = W_1(t)^T$ as claimed for $N=2$ with balanced initial conditions, which does not necessarily hold with non-balanced initial conditions. Moreover, in both the autoencoder and the general supervised learning setting we see that as the size ($N,d_x,r$) of the problem instance  increases the convergence rates decrease. In the autoencoder case we saw stronger dependence in $d_x$ and $r$ than in the general supervised learning case. On the other hand the dependence on $N$ seems to be stronger in the general supervised learning case than in the autoencoder case.


\appendix 
\label{appendix}

\section{Proof of Proposition~\ref{metric-explicit}}
\label{appendix-metric-explicit}

The proof is based on the next lemma, which follows from \cite{bhuc09}.
\begin{lemma}\label{lem:inv} Let $A \in \RR^{m \times m}$, $B\in \RR^{n \times n}$, be positive definite matrices and $Y \in \RR^{m \times n}$. Then, for $p \in \NN$, $p \geq 2$, the solution $X \in \RR^{m \times n}$
of the matrix equation
\begin{equation}\label{op-eq1}
A^{p-1} X + A^{p-2} X B + \cdots + A X B^{p-2} + X B^{p-1} = Y
\end{equation}
satisfies
\begin{align}
X & = \frac{\sin(\pi/p)}{\pi} \int_0^\infty (t \id_m + A^p)^{-1} Y (t \id_n +B^p)^{-1} t^{1/p} dt \label{solution1}\\
   & = \frac{1}{p \Gamma(1-1/p)} \int_0^\infty \int_0^t  e^{-s A^p} Y e^{-(t-s)B^p} ds \, t^{-(1+1/p)} dt. \label{solution2}
\end{align}
\end{lemma}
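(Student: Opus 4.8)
The plan is to show first that the linear map $T\colon X\mapsto\sum_{i=0}^{p-1}A^{p-1-i}XB^i$ is invertible, so that \eqref{op-eq1} has a unique solution $X=T^{-1}Y$, and then to check that each of the two proposed integrals equals $T^{-1}Y$. Everything reduces to a scalar computation by simultaneous orthogonal diagonalization: write $A=P\Lambda P^T$ and $B=R\,M\,R^T$ with $P,R$ orthogonal, $\Lambda=\diag(\alpha_1,\dots,\alpha_m)$, $M=\diag(\beta_1,\dots,\beta_n)$ (all $\alpha_k,\beta_l>0$), and substitute $\widetilde X:=P^TXR$, $\widetilde Y:=P^TYR$. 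Then \eqref{op-eq1} becomes $\sum_{i=0}^{p-1}\Lambda^{p-1-i}\widetilde X M^i=\widetilde Y$, i.e. $\big(\sum_{i=0}^{p-1}\alpha_k^{p-1-i}\beta_l^{\,i}\big)\,\widetilde X_{kl}=\widetilde Y_{kl}$; since each coefficient is strictly positive, $T$ is invertible and $\widetilde X_{kl}=\widetilde Y_{kl}\big/\sum_{i=0}^{p-1}\alpha_k^{p-1-i}\beta_l^{\,i}$.

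Next I would observe that under the same substitution the two integrals pass to their diagonal analogues, using $(t\id+A^p)^{-1}=P(t\id+\Lambda^p)^{-1}P^T$, $e^{-sA^p}=Pe^{-s\Lambda^p}P^T$, and likewise for $B$. Hence it suffices, for fixed $\alpha,\beta>0$ and with $a:=\alpha^p$, $b:=\beta^p$, to prove the scalar identities
\begin{equation*}
\frac{\sin(\pi/p)}{\pi}\int_0^\infty\frac{t^{1/p}}{(t+a)(t+b)}\,dt
\;=\;\frac{1}{p\,\Gamma(1-1/p)}\int_0^\infty\frac{e^{-tb}-e^{-ta}}{a-b}\,t^{-(1+1/p)}\,dt
\;=\;\frac{a^{1/p}-b^{1/p}}{a-b}\;=\;\frac{1}{\sum_{i=0}^{p-1}\alpha^{p-1-i}\beta^{\,i}},
\end{equation*}
where the middle integrand comes from $\int_0^t e^{-sa}e^{-(t-s)b}\,ds=(e^{-tb}-e^{-ta})/(a-b)$ and the last equality is $\alpha^p-\beta^p=(\alpha-\beta)\sum_i\alpha^{p-1-i}\beta^i$. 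The first equality is the classical beta-type formula $\int_0^\infty t^{s-1}(t+a)^{-1}(t+b)^{-1}\,dt=\frac{\pi}{\sin\pi s}\cdot\frac{a^{s-1}-b^{s-1}}{b-a}$, valid for $0<\Re s<2$ (by a keyhole-contour integration of $z^{s-1}/((z+a)(z+b))$), applied at $s=1+1/p$ together with $\sin(\pi+\pi/p)=-\sin(\pi/p)$; the second equality is the analytically continued Gamma integral $\int_0^\infty(e^{-bt}-e^{-at})t^{\sigma-1}\,dt=\Gamma(\sigma)(b^{-\sigma}-a^{-\sigma})$, valid for $-1<\Re\sigma<0$ (obtained, e.g., by one integration by parts, which shifts to $\sigma+1$ where the two pieces split into ordinary Gamma integrals), taken at $\sigma=-1/p$ together with $\Gamma(-1/p)=-p\,\Gamma(1-1/p)$. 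Undoing the substitution, both \eqref{solution1} and \eqref{solution2} reproduce $\widetilde X_{kl}=\widetilde Y_{kl}\big/\sum_i\alpha_k^{p-1-i}\beta_l^i$, hence coincide with $T^{-1}Y$. The coincident case $a=b$ is handled by continuity of all three expressions in $(a,b)$, or by invoking \cite{bhuc09} directly.

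An alternative route to \eqref{solution2} is to insert $(t\id+A^p)^{-1}=\int_0^\infty e^{-\lambda(t\id+A^p)}\,d\lambda$ and the analogous identity for $B$ into \eqref{solution1}, integrate in $t$ first (a Gamma integral), and rescale; this is pure bookkeeping. I expect the only slightly delicate points to be the application of Fubini's theorem to these iterated integrals (all integrands are absolutely integrable once one accounts for the subtractions producing decay $\sim t^{1/p-2}$ at $\infty$ and $\sim t$ near $0$) and the evaluation of the two mildly improper scalar integrals above; conceptually there is no obstacle, and in any event the representation is essentially the content of \cite{bhuc09}.
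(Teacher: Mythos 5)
Your proof is correct, but it takes a genuinely different route from the paper. The paper does not argue from scratch: it quotes the Bhatia--Uchiyama result \cite{bhuc09}, which gives \eqref{solution1} for equal dimensions (and \eqref{solution2} for $A=B$) for matrices whose spectrum lies in the sector $\{z\neq 0,\ |\arg z|<\pi/p\}$, and then extends to rectangular $Y$ and distinct $A,B$ via the ``Berberian trick'' with the block matrices $\widetilde A=\operatorname{diag}(A,B)$, $\widetilde Y=\bigl(\begin{smallmatrix}0&Y\\0&0\end{smallmatrix}\bigr)$. You instead give a self-contained argument: simultaneous orthogonal diagonalization reduces both the equation and the two integral formulas to the scalar identity $\frac{\sin(\pi/p)}{\pi}\int_0^\infty \frac{t^{1/p}}{(t+a)(t+b)}\,dt=\frac{a^{1/p}-b^{1/p}}{a-b}=\bigl(\sum_{i=0}^{p-1}\alpha^{p-1-i}\beta^{i}\bigr)^{-1}$ and its Gamma-function counterpart at $\sigma=-1/p$ (using $\Gamma(-1/p)=-p\,\Gamma(1-1/p)$), and these scalar evaluations, the inner convolution integral $\int_0^t e^{-sa}e^{-(t-s)b}\,ds=(e^{-tb}-e^{-ta})/(a-b)$, the sign bookkeeping via $\sin(\pi+\pi/p)=-\sin(\pi/p)$, and the continuity argument for $a=b$ are all correct; as a bonus you get invertibility of the map $X\mapsto\sum_i A^{p-1-i}XB^{i}$ explicitly, so uniqueness of $X$ is immediate. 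The trade-off is generality versus self-containedness: your diagonalization step reads ``positive definite'' as \emph{symmetric} positive definite (which is exactly what is needed in the paper, where the lemma is applied to $(u\,\id+WW^T)^{1/N}$ and $(u\,\id+W^TW)^{1/N}$), whereas the cited result of \cite{bhuc09} covers non-normal matrices with spectrum in a sector; on the other hand, your proof is elementary, avoids the block-matrix extension entirely, and makes the formulas transparent as entrywise divided differences $\frac{a^{1/p}-b^{1/p}}{a-b}$ of the eigenvalues.
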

\begin{proof} The first formula \eqref{solution1} is shown for $n=m$ in \cite{bhuc09}, for matrices with eigenvalues in $\{z \in \CC: z \neq 0, - \pi/p < \arg z < \pi/p\}$.
Positive definite matrices clearly have their eigenvalues in this set. Formula \eqref{solution1} extends to squares $A,B$ of possibly different dimensions $m,n$.
In fact, \cite{bhuc09} first proves \eqref{solution1} for $A = B$, see \cite[eq.~(8)]{bhuc09} and then extends the solution by the ``Berberian trick'' which introduces the block matrices
\[
\widetilde{A} =  \left( \begin{matrix} A & 0 \\ 0 & B\end{matrix} \right), \quad \widetilde{Y} =  \left( \begin{matrix} 0 & Y \\ 0 & 0\end{matrix} \right),
\quad \widetilde{X} = \left( \begin{matrix} X_{11} & X_{12} \\ X_{21} & X_{22} \end{matrix} \right).
\]
If $\widetilde{X}$ solves $\sum_{i=1}^{p} \widetilde{A}^{p-i} \widetilde{X} \widetilde{A}^{i-1} = \widetilde{Y}$, then the submatrix $X=X_{12}$ solves \eqref{op-eq1}
and one obtains \eqref{solution1}. This argument works for general $m,n$ so that \eqref{solution1} holds under the conditions of the lemma.

In the case that $A = B$, \cite[eq.~(15)]{bhuc09} implies \eqref{solution2}. The general case of possibly different $A,B$ is then established again by the Berberian trick as above. 
\end{proof} 

\begin{proof}[Proof of Proposition~\ref{metric-explicit}] 
We aim at applying Lemma~\ref{lem:inv} for the matrices $A = (WW^T)^{1/N}$ and $B = (W^T W)^{1/N}$ in order to obtain a formula for $\bar{\mathcal{A}}_W^{-1}$. Unfortunately, in the rank deficient case, these matrices are only positive semi-definite and not positive definite. We will overcome this problem by using an approximation argument. For $u > 0$, the matrices $(u\id_{d_y} + WW^T)^{1/N}$ and $(u\id_{d_x} + W^T W)^{1/N}$ are positive definite and hence, the linear operator 
$$
\mathcal{A}_{W,u} : \RR^{d_y \times d_x} \to \RR^{d_y \times d_x}, \quad \mathcal{A}_{W,u}(Z) = \sum_{j=1}^{N} (u \id_{d_y} + W W^T)^{\frac{N-j}{N}} Z (u\id_{d_x} + W^TW)^{\frac{j-1}{N}}
$$
is invertible by Lemma~\ref{lem:inv} with inverse $\mathcal{A}_{W,u}^{-1} : \RR^{d_y \times d_x} \to \RR^{d_y \times d_x}$,
\begin{equation}\label{inverseAW}
\mathcal{A}_{W,u}^{-1}(Z) = \frac{\sin(\pi/N)}{\pi} \int_0^\infty \left( (t + u)\id_{d_y} + WW^T\right)^{-1} Z \left((t+ u) \id_{d_x} + W^T W\right)^{-1}  t^{1/N} dt. 
\end{equation}
Furthermore, $\mathcal{A}_{W,u}$ maps $T_W(\mathcal{M}_k)$ into $T_W(\mathcal{M}_k)$ for all $u \geq 0$. Indeed, let $W = U \Sigma V^T$ be the full singular value decomposition of $W$ with $U$, $V$ being (square) orthogonal matrices and $\Sigma= \diag(\sigma_1,\hdots,\sigma_k,0,\hdots,0)$.
If $Z = WA + BW \in T_W(\mathcal{M}_k)$ then 
for $\alpha,\beta \geq 0$,
\begin{align*}
  & (u \id_{d_y} + W W^T)^{\alpha} Z (u\id_{d_x} + W^TW)^{\beta} \\
  & \quad 
    = U \diag( ( u+ \sigma_1^2)^{\alpha}, \hdots, ( u+ \sigma_k^2)^{\alpha}, 
      u^{\alpha},\hdots, u^{\alpha})) U^T U \Sigma V^T A  (u\id_{d_x} + W^TW)^{\beta} 
\\
  & \quad\quad
    + (u\id_{d_y} + W W^T)^{\alpha} B U \Sigma V^T V 
      \diag( ( u+ \sigma_1^2)^{\beta}, \hdots, ( u+ \sigma_k^2)^{\beta} , 
        u^{\beta},\hdots, u^{\beta})) V^T
\\
  & \quad
    = U \Sigma \diag(  ( u+ \sigma_1^2)^{\alpha}, \hdots, ( u+ \sigma_k^2)^{\alpha}, 
      0 ,\hdots, 0) V^T A (u\id_{d_x} + W^TW)^{\beta}  
\\
  & \quad\quad
    + (u\id_{d_y} + W W^T)^{\alpha} B U \Sigma \diag( ( u+ \sigma_1^2)^{\beta}, \hdots,  ( u+ \sigma_k^2)^{\beta} , 0, \hdots, 0)  V^T
\\
  & \quad
    = W V \diag(( u+ \sigma_1^2)^{\alpha}, \hdots, ( u+ \sigma_k^2)^{\alpha}, 0, \hdots,0) V^T A
      (u\id_{d_x} + W^TW)^{\beta} 
\\
  & \quad\quad
    + (u\id_{d_y} + W W^T)^{\alpha} B U \diag(( u+ \sigma_1^2)^{\beta}, \hdots, ( u+ \sigma_k^2)^{\beta}, 
      0, \hdots,0) U^T W.
\end{align*}
The last expression is clearly an element of $T_W(\mathcal{M}_k)$ and by the formula for $\mathcal{A}_{W,u}$ this implies that 
this operator maps the tangent space $T_W(\mathcal{M}_k)$ into itself. Let us denote $\bar{\mathcal{A}}_{W,u} : T_W(\mathcal{M}_k) \to T_W(\mathcal{M}_k)$
the corresponding restriction and by $\bar{\mathcal{A}}^{-1}_{W,u} : T_W(\mathcal{M}_k) \to T_W(\mathcal{M}_k)$ the restriction of the inverse map to 
$T_W(\mathcal{M}_k)$. Clearly, \eqref{inverseAW} still holds for the restriction $\bar{\mathcal{A}}^{-1}_{W,u}$. Lemma~\ref{PosDefLemma} implies that the restrictions
$\bar{\mathcal{A}}_{W,u}$ and $\bar{\mathcal{A}}^{-1}_{W,u}$ are both well-defined also for $u=0$ with $\bar{\mathcal{A}}_{W,0} = \bar{\mathcal{A}}_{W}$
and  $\bar{\mathcal{A}}^{-1}_{W,0} = \bar{\mathcal{A}}^{-1}_{W}$. Moreover, the map $u \mapsto \bar{\mathcal{A}}_{W,u}$ is continuous in $u \geq 0$ and, hence,
also $u \mapsto \bar{\mathcal{A}}^{-1}_{W,u}$ is continuous in $u \geq 0$. We claim that also the right hand side of \eqref{inverseAW} with $Z \in T_W(\mathcal{M}_k)$
is well-defined and continuous for all $u \geq 0$, which will give an inversion formula for $\bar{\mathcal{A}}^{-1}_{W}$ by setting $u=0$. 

In order to show continuity of the right hand side of \eqref{inverseAW} in $u$,
we investigate uniform integrability of the integrand for  $Z \in
T_W(\mathcal{M}_k)$. By Lemma~\ref{lem:proj-tangent} we can write $Z = P_W(Z) =
U P_k U^T Z (\id_{d_x} - V P_k V^T) + Z V P_k V^T$, where $P_k$ is the diagonal matrix
from the lemma and $W = U\Sigma V^T$ is the (full) singular value decomposition
of $W$. In particular, the matrix $\Sigma \in \RR^{d_y \times d_x}$ has the singular values $\sigma_1
\geq \ldots \geq \sigma_k > 0$ on the diagonal, with all other entries equal to
zero. For simplicity we write $E = \id_{d_x} - V P_k V^T$. Denoting
\begin{align*}
\Lambda_{t,u} & := ((t+u) \id_{d_y} + \Sigma \Sigma^T)^{-1} = \diag\left( \frac{1}{t+u+\sigma_1^2}, \hdots, \frac{1}{t+u+\sigma_k^2}, \frac{1}{t+u} ,\hdots, \frac{1}{t+u}\right)\displaybreak[2] \in \RR^{d_y \times d_y},\\
\widetilde{\Lambda}_{t,u} & := ((t+u) \id_{d_x} + \Sigma^T \Sigma)^{-1} = \diag\left( \frac{1}{t+u+\sigma_1^2}, \hdots, \frac{1}{t+u+\sigma_k^2}, \frac{1}{t+u} ,\hdots, \frac{1}{t+u}\right)\displaybreak[2]  \in \RR^{d_x \times d_x},\\
K_{t,u} & := ((t+u) \id_{d_y} + \Sigma \Sigma^T)^{-1} P_k = \diag\left( \frac{1}{t+u+\sigma_1^2}, \hdots, \frac{1}{t+u+\sigma_k^2}, 0 ,\hdots, 0\right) \in \RR^{d_y \times d_y},
\displaybreak[2]\\
\widetilde{K}_{t,u} & := P_k ((t+u) \id_{d_x} + \Sigma^T \Sigma)^{-1} = \diag\left( \frac{1}{t+u+\sigma_1^2}, \hdots, \frac{1}{t+u+\sigma_k^2}, 0 ,\hdots, 0\right)
\in \RR^{d_x \times d_x}
\end{align*}
we have
\begin{align*}
 \left( (t + u)\id_{d_y} + WW^T\right)^{-1} Z \left((t+ u) \id_{d_x} + W^T W\right)^{-1} 
 & = U K_{t,u}  U^T Z E V \widetilde{\Lambda}_{t,u} V^T + U \Lambda_{t,u} U^T  Z  V \widetilde{K}_{t,u} V^T. 
\end{align*}
Taking the spectral norm gives 
\[
\| \left( (t + u)\id_{d_y} + WW^T\right)^{-1} Z \left((t+ u) \id_{d_x} + W^T W\right)^{-1} \|_{2 \to 2} \leq 2 \|Z\|_{2 \to 2} \sigma_k^{-2} (t+u)^{-1} 
\leq 2 \|Z\|_{2 \to 2} \sigma_k^{-2} t^{-1}.
\] 
This estimate will be good enough for $t \to 0$, for any $u>0$. For $t \to
\infty$ we need a second estimate
\begin{align*}
  & \| \left( (t + u)\id_{d_y} + WW^T\right)^{-1} Z \left((t+ u) \id_{d_x} + W^T W\right)^{-1} \|_{2 \to 2} 
\\
  & \quad
    \leq \| \left((t + u)\id_{d_y} + WW^T\right)^{-1} \|_{2 \to 2} \|Z\|_{2 \to 2} 
      \|\left((t + u)\id_{d_x} + W^TW\right)^{-1}\|_{2 \to 2} 
\\
  & \quad
    \leq \|Z\|_{2 \to 2} (t+u)^{-2} \leq  \|Z\|_{2 \to 2} t^{-2}, \vspace{-0.1cm}
\end{align*}
which holds uniformly in $u>0$ and follows from the fact that $W W^T$ and $W^T
W$ are positive semidefinite.

Altogether, for $Z \in T_W(\mathcal{M}_k)$ the integrand in \eqref{inverseAW} satisfies \vspace{-0.1cm}
\[
\| \left( (t + u)\id_{d_y} + WW^T\right)^{-1} Z \left((t+ u) \id_{d_x} + W^T W\right)^{-1} t^{1/N} \|_{2 \to 2} 
\leq \|Z\|_{2 \to 2} \min\{ \sigma_k^{-2} t^{-1+1/N}, t^{-2+1/N}\}.\vspace{-0.1cm}
\]
The latter function is integrable over $t \in (0,\infty)$ since $N\geq 2$, and
hence, for all $u \geq 0$, the integrand in \eqref{inverseAW} is uniformly
dominated by an integrable function. By Lebesgue's dominated convergence theorem
and continuity of $u \mapsto \left( (t + u)\id_{d_y} + WW^T\right)^{-1} Z \left((t+ u)
\id_{d_x} + W^T W\right)^{-1} t^{1/N}$ for all $t \in (0,\infty)$, the function $u
\mapsto \bar{\mathcal{A}}_{W,u}^{-1}(Z)$ is continuous for all $Z \in
T_W(\mathcal{M}_k)$. Altogether, we showed that
\[
\bar{\mathcal{A}}_{W}^{-1}(Z) = \frac{\sin(\pi/N)}{\pi} \int_0^\infty \left( t \id_{d_y} + WW^T\right)^{-1} Z \left(t\id_{d_x} + W^T W\right)^{-1}  t^{1/N} dt, \quad Z \in T_W(\mathcal{M}_k),\vspace{-0.2cm}
\]
and this implies \eqref{metric_F1}.


 A similar argument based on \eqref{solution2} proves \eqref{metric_F2}. In the case $N=2$, it can be shown as in \cite[Theorem VII.2.3]{bhatia97} and with the approximation argument above that
 \[
 \bar{\mathcal{A}}_W^{-1}(Z) = \int_0^{\infty} e^{-t(WW^T)^{\frac 1 2}}Z e^{-t(W^TW)^{\frac 1 2}} \,dt,\vspace{-0.2cm}
 \]
 which implies \eqref{metric_F3N2}. 
\end{proof}

\section{Proof of Proposition~\ref{C1metric}}
\label{appendix-Prop-metric-C1}

Given a system $\{\gamma_1,\hdots,\gamma_n\}$ 
 of sufficiently smooth local coordinates on $\mathcal{M}_k$, where $n$ is the dimension of $\mathcal{M}_k$, the vectors $\xi_j(W):=\frac{\partial}{\partial \gamma_j}(W)$, $j=1,\hdots,n$, form a basis of the tangent space $T_W(\mathcal{M}_k)$. We need to show that the maps
 \[
W \mapsto F_{i,j}(W) := g_W \left(  \xi_i(W),  \xi_j(W) \right)
 \]
 are continuously differentiable for all $i,j$. Note that the vector fields $\xi_j(W)$ are smooth in $W$.
 We consider the representation \eqref{metric_F1} and introduce the functions
 \[
 K_{i,j,t}(W) :=  \tr\left( (t + WW^T)^{-1} \xi_i(W) (t + W^T W)^{-1} \xi_j^T(W)\right),
 \]	
 where we write $(t+WW^T)$ for $(t \id_{d_y} + WW^T)$ and likewise $(t + W^T W) = (t\id_{d_x} + W^T W)$. For a function $f$ 
 we denote the differential of $f$ at $A$ applied to $Y$ 
 by $Df(A)[Y]$.  Denoting $\phi_t(W) = (t + WW^T)^{-1}$ and $\psi_t(W) = (t+W^TW)^{-1}$ 
 the product rule gives, for $W \in \mathcal{M}_k$ and $Y \in T_W(\mathcal{M}_k)$, 
 \begin{align}
 D K_{i,j,t}(W)[Y] & = \tr(D \phi_t(W)[Y] \xi_i(W) \psi_t(W) \xi_j^T(W)) + 
 \tr(\phi_t(W) D\xi_i(W)[Y] \psi_t(W) \xi_j^T(W)) \label{DK1}\\
 & + \tr(\phi_t(W)\xi_i(W) D\psi_t(W)[Y] \xi_j^T(W)) + \tr(\phi_t(W)\xi_i(W) \psi_t(W) D\xi_j^T(W)[Y]). \label{DK2}
 \end{align}
 The differential of the function $\phi(A) = A^{-1}$ satisfies $D\phi(A)[Y] = - A^{-1} Y A^{-1}$ so that 
 \begin{align*}
 D \phi_t(W)[Y] & = -(t+WW^T)^{-1}(W Y^T + Y W^T) (t+WW^T)^{-1}, \\ 
 D \psi_t(W)[Y] & = -(t+W^TW)^{-1}(W^T Y + Y^T W) (t+W^TW)^{-1}.
 \end{align*}
Let $W = U \Sigma V^T$ be the (full) singular value decomposition of $W$, i.e.,
$U \in \RR^{d_y \times d_y}$, $V \in \RR^{d_x \times d_x}$ with $U^T U = \id_{d_y}$,
$V^T V = \id_{d_x}$ and $\Sigma = \diag(\sigma_1,\hdots,\sigma_k,0,\hdots,0) \in
\RR^{d_y \times d_x}$ with $\sigma_1\geq \cdots \geq \sigma_k > 0$. The first
term on the right hand side of \eqref{DK1} satisfies
\begin{align*}
& \tr((D \phi_t(W)[Y] \xi_i(W) \psi_t(W) \xi_j^T(W))\\
 & = - \tr\left( (t+WW^T)^{-1}(W Y^T + Y W^T) (t+WW^T)^{-1} \xi_i(W) (t + W^T W)^{-1} \xi_j^T(W)\right).
\end{align*}
Note that $ (t+WW^T)^{-1}(W Y^T + Y W^T) (t+WW^T)^{-1} = Q + Q^T$ with
\begin{align*}
& Q=  (t+WW^T)^{-1}W Y^T (t+WW^T)^{-1}\\
&= U \diag\left(\frac{1}{t+\sigma_1^2}, \hdots,\frac{1}{t+\sigma_k^2},\frac{1}{t},\hdots,\frac{1}{t}\right)U^T(U \Sigma V^T Y^T)  U \diag\left(\frac{1}{t+\sigma_1^2}, \hdots,\frac{1}{t+\sigma_k^2},\frac{1}{t},\hdots,\frac{1}{t}\right)U^T\\
& = U \diag\left( \frac{\sigma_1}{t+\sigma_1^2}, \hdots, \frac{\sigma_k}{t + \sigma_k^2}, 0, \hdots,0\right) V^T Y^T U \diag\left(\frac{1}{t+\sigma_1^2}, \hdots,\frac{1}{t+\sigma_k^2},\frac{1}{t},\hdots,\frac{1}{t}\right)U^T.
\end{align*}
By Lemma~\ref{lem:proj-tangent} it holds
\begin{equation}\label{def:zeta}
\xi_i(W) = P_W(\xi_i(W)) = U P_k U^T \xi_i(W) (\id_{d_x} - V P_k V^T) + \xi_i(W) V P_k V^T =  \zeta^{1}_i(W) + \zeta^2_i(W).
\end{equation}
where $P_k = \diag(1,\hdots,1,0,\hdots,0)$ (with $k$ ones on the diagonal), $\zeta^{1}_i(W) = U P_k U^T \xi_i(W) (\id_{d_x} - V P_k V^T)$ and 
$\zeta^2_i(W) = \xi_i(W) V P_k V^T$. Note that also $U$ and $V$ are functions of $W$, which may be non-unique, but in this case, we just fix one choice. 
Then we have
\begin{align*}
 \xi_i(W) (t + W^T W)^{-1} \xi_j^T(W)  
 & = \overbrace{\zeta^1_i(W) (t+W^T W)^{-1} (\zeta^1_j(W))^T}^{=:E_1}  + \overbrace{\zeta^1_i(W) (t+W^T W)^{-1} (\zeta^2_j(W))^T}^{=:E_2}\\
 & + \underbrace{\zeta^2_i(W) (t + W^T W)^{-1} (\zeta^1_j(W))^T }_{=:E_3} + \underbrace{\zeta^2_i(W)(t+W^T W)^{-1} (\zeta^2_j(W))^T}_{=:E_4}.  
\end{align*}
Using cyclicity of the trace, we obtain
\begin{align*}
\tr(Q E_1) &= \tr\left(\diag\left( \frac{\sigma_1}{t+\sigma_1^2}, \hdots, \frac{\sigma_k}{t + \sigma_k^2}, 0, \hdots,0\right) V^T Y^T U \diag\left(\frac{1}{t+\sigma_1^2}, \hdots,\frac{1}{t+\sigma_k^2},0,\hdots,0 \right)  \right.\\
& \phantom{\tr(}  \left. \times  U^T \xi_i(W) (\id_{d_x} - V P_k V^T) V\diag\left(\frac{1}{t + \sigma_1^2},\hdots,\frac{1}{t+\sigma_k^2}, \frac{1}{t},\hdots,\frac{1}{t}\right) V^T (\id_{d_x} - V P_k V^T) \xi_j^T(W) U \right), \displaybreak[2]\\
\tr(Q E_2) & =  \tr\left(\diag\left( \frac{\sigma_1}{t+\sigma_1^2}, \hdots, \frac{\sigma_k}{t + \sigma_k^2}, 0, \hdots,0\right) V^T Y^T U \diag\left(\frac{1}{t+\sigma_1^2}, \hdots,\frac{1}{t+\sigma_k^2},0,\hdots,0 \right)  \right.\\
& \phantom{\tr(}  \left. \times  U^T \xi_i(W) (\id_{d_x} - V P_k V^T) V \diag\left(\frac{1}{t + \sigma_1^2},\hdots,\frac{1}{t+\sigma_k^2}, 0,\hdots,0\right) V^T \xi_j^T(W) U \right),\displaybreak[2] \\
\tr(Q E_3) & =  \tr\left(\diag\left( \frac{\sigma_1}{t+\sigma_1^2}, \hdots, \frac{\sigma_k}{t + \sigma_k^2}, 0, \hdots,0\right) V^T Y^T U \diag\left(\frac{1}{t+\sigma_1^2}, \hdots,\frac{1}{t+\sigma_k^2},\frac{1}{t},\hdots,\frac{1}{t} \right)  \right.\\
& \phantom{\tr(}  \left. \times U^T \xi_i(W) V \diag\left(\frac{1}{t + \sigma_1^2},\hdots,\frac{1}{t+\sigma_k^2}, 0,\hdots,0 \right) V^T (\id_{d_x} - V P_k V^T) \xi_j^T(W) U \right),\displaybreak[2] \\
\tr(Q E_4) & = \tr\left(\diag\left( \frac{\sigma_1}{t+\sigma_1^2}, \hdots, \frac{\sigma_k}{t + \sigma_k^2}, 0, \hdots,0\right) V^T Y^T U \diag\left(\frac{1}{t+\sigma_1^2}, \hdots,\frac{1}{t+\sigma_k^2},\frac{1}{t},\hdots,\frac{1}{t} \right)   \right.\\
& \phantom{\tr(}  \left. \times U^T \xi_i(W) V \diag\left(\frac{1}{t + \sigma_1^2},\hdots,\frac{1}{t+\sigma_k^2}, 0,\hdots,0 \right) V^T \xi_j^T(W) U\right).
\end{align*}
Note that $1/(t+\sigma_i^2) \leq \min\{ 1/t, 1/\sigma_i^2\}$ for all $t>0$.
Using Cauchy-Schwarz inequality for the Frobenius inner product, the fact that
$\|AB\|_F \leq \|A\|_{2\to 2} \|B\|_F$, and unitarity of $U$ and $V$, we obtain
\[
|\tr (Q E_\ell)| \leq \|Y\|_F \| \xi_i(W)\|_F \|\xi_j(W)\|_F \sigma_k^{-3} t^{-1}, \quad \ell = 1,2,3,4.
\]
By continuity of $\xi_i$ and $\xi_j$, it follows that there exists a neighborhood $\mathcal{U} \subset \mathcal{M}_k$ around a fixed $W_0 \in \mathcal{M}_k$ (in which $\sigma_k(W) \geq c >0$ for some $c> 0$ and all $W \in \mathcal{U}$) and a constant $C> 0$ (depending only on the neighborhood) such that
\[
|\tr(Q \xi_i(W) (t + W^T W)^{-1} \xi_j^T(W) )| \leq C \|Y\|_F t^{-1} \quad \mbox{ for all } t > 0 \mbox{ and } W \in \mathcal{U}.
\]
In the same way, one shows the above inequality for $Q$ replaced by $Q^T$ and hence
\[
|\tr((D \phi_t(W)[Y] \xi_i(W) \psi_t(W) \xi_j^T(W))|
\leq 2C \|Y\|_F t^{-1} \quad \mbox{ for all } t > 0 \mbox{ and } W \in \mathcal{U}.
\]
Moreover, by the Cauchy-Schwarz inequality for the Frobenius inner product and since $W W^T$ as well as $W^T W$ are positive semidefinite, it holds
\begin{align}
  & |\tr((D \phi_t(W)[Y] \xi_i(W) \psi_t(W) \xi_j^T(W))| \notag 
\\
  & \quad
    \leq \| (t+WW^T)^{-1}(W Y^T + Y W^T) (t+WW^T)^{-1} \|_F 
      \| \xi_i(W) (t + W^T W)^{-1} \xi_j^T(W) \|_F\notag \displaybreak[2]
\\
  & \quad
    \leq \|(t+WW^T)^{-1}\|_{2 \to 2}^2 \|(t + W^T W)^{-1}\|_{2 \to 2} 
      \|W Y^T + Y W^T \|_F \|\xi_i(W)\|_F \|\xi_j(W)\|_F \notag \displaybreak[2]
\\
  & \quad
    \leq 2 t^{-3}  \|W Y^T\|_F \|\xi_i(W)\|_F \|\xi_j(W)\|_F. 
\label{B1large-t}
\end{align}
Altogether, it holds, for a suitable constant $C_1> 0$,
\[
  |\tr((D \phi_t(W)[Y] \xi_i(W) \psi_t(W) \xi_j^T(W))| \leq C_1 \|Y\|_F \min\{ t^{-1}, t^{-3} \} \quad \mbox{ for all } t > 0 \mbox{ and } W \in \mathcal{U}.
\]
Let us now consider the second term on the right hand side of \eqref{DK1}. As in \eqref{B1large-t}, we obtain
\[
| \tr(\phi_t(W) D\xi_i(W)[Y] \psi_t(W) \xi_j^T(W))| \leq t^{-2} \|D\xi_i(W)[Y]\|_F \| \xi_j^T(W) \|_F.
\]
By Lemma~\ref{lem:proj-tangent} we can write $\xi_j(W) = P_W(\xi_j(W)) = \zeta^1_j(W) + \zeta^2_j(W)$ with
$\zeta^1_j(W) = U P_k U^T \xi_j(W) (\id_{d_x} - V P_k V^T)$ and $\zeta^2_j(W) = \xi_j(W) V P_k V^T$ as in \eqref{def:zeta}. With $E = \id_{d_x} - V P_k V^T$
this gives
\begin{align*}
& \tr(\phi_t(W) D\xi_i(W)[Y] \psi_t(W) \xi_j^T(W)) \\
 & = \tr\left( (t+ WW^T)^{-1} D \xi_i(W)[Y](t+ W^TW)^{-1} 
 (\zeta^1_j(W) + \zeta^2_j(W))^T \right) \displaybreak[2]\\
 &= \tr\left( \diag\left(\frac{1}{t+ \sigma_1^2},\hdots,\frac{1}{t+ \sigma_k^2},0,\hdots,0\right) U^T D\xi_i(W)[Y] V \diag\left(\frac{1}{t+ \sigma_1^2},\hdots,\frac{1}{t+ \sigma_k^2},\frac{1}{t},\hdots,\frac{1}{t}\right) V^T E \xi_j^T(W)  U \right) \displaybreak[2] \\
 & + \tr\left( \diag\left(\frac{1}{t+ \sigma_1^2},\hdots,\frac{1}{t+ \sigma_k^2},\frac{1}{t},\hdots,\frac{1}{t}\right) U^T D\xi_i(W)[Y] V \diag\left(\frac{1}{t+ \sigma_1^2},\hdots,\frac{1}{t+ \sigma_k^2},0,\hdots,0 \right) V^T \xi_j^T(W) U \right).
\end{align*}
By the Cauchy-Schwarz inequality for the Frobenius inner product it follows that
\[
| \tr(\phi_t(W) D\xi_i(W)[Y] \psi_t(W) \xi_j^T(W)) | \leq 2 \| D \xi_i(W)[Y] \|_F \|\xi_j(W)\|_F \sigma_k^{-2} t^{-1}.
\]
Since the $\xi_i$ are continuously differentiable
there exists $C_2>0$ such that
\[
|\tr(\phi_t(W) D\xi_i(W)[Y] \psi_t(W) \xi_j^T(W))| \leq C_2 \|Y\|_F \min\{ t^{-1}, t^{-2}\} \quad \mbox{ for all } t > 0 \mbox{ and } W \in \mathcal{U}.
\]
The terms in \eqref{DK2} can be bounded in the same way as the ones in \eqref{DK1}, hence
\[
|D K_{i,j,t}(W)[Y]| \leq C' \|Y\|_F \min\{t^{-1},t^{-2}\} \quad \mbox{ for all } t > 0 \mbox{ and } W \in \mathcal{U}. 
\]
It follows that
$
\int_0^\infty |D K_{i,j,t}(W)[Y]| t^{1/N} dt
$
exists and is uniformly bounded in $W \in \mathcal{U}$. By Lebesgue's dominated convergence
theorem, it follows that we can interchange integration and differentiation, and hence, all directional derivatives of $F_{i,j}$ at $W$ in the direction of $Y$ exist and are continuous 
since $D K_{i,j,t}(W)[Y]$ is continuous in $W$ for all $Y \in T_W(\mathcal{M}_k)$. Hence, by \eqref{metric_F1}, $F_{i,j}$ is (totally) continuously differentiable for all $i,j$ with
\[
D F_{i,j}(W)[Y] = \frac{\sin(\pi/N)}{\pi} \int_0^\infty D K_{i,j,t}(W)[Y] t^{1/N} dt.
\]
Hence, the metric is of class $C^1$ as claimed.  

 \section{Some results on flows on manifolds}
\label{appendix:flows}

Here we summarize some notions and results on flows on manifolds that can be found in \cite[Chapter IV,§2]{Lang99} to which we also refer for more details.

Let $p\geq 2$ be an integer or $p=\infty$, let $\mathcal M$ be a (finite dimensional) $C^p$-manifold and let $\xi$ be a vector field of class $C^{p-1}$ on $\mathcal M$.
An \emph{integral curve} for $\xi$ with initial condition $x_0\in \mathcal M$ is a $C^{p-1}$-curve
$$\gamma: J \to \mathcal M,$$
where $J$ is an open intervall containing $0$ such that $$
\dot \gamma(t)=\xi(\gamma(t))\ \ \forall t\in J \ \text{ and } \gamma(0)=x_0.
$$
\begin{theorem}[Theorem 2.1 in Chapter IV,§2 in \cite{Lang99}]\label{uniqueness}
	If $\gamma_1: J_1 \to \mathcal M$ and $\gamma_2: J_2 \to \mathcal M$ are integral curves for $\xi$ with the same initial condition then $\gamma_1=\gamma_2$ on $J_1\cap J_2$.
\end{theorem}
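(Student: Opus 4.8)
The plan is to reduce the statement to the classical uniqueness theorem for ordinary differential equations in a chart and then to globalize by a connectedness argument. I would work on the interval $J := J_1 \cap J_2$, which is open, contains $0$, and --- being an intersection of two open intervals each containing $0$ --- is connected.

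First I would introduce the agreement set $A := \{ t \in J : \gamma_1(t) = \gamma_2(t) \}$. It is nonempty, since $0 \in A$ by the common initial condition, and it is closed in $J$ by continuity of $\gamma_1$ and $\gamma_2$. The main point is then to show that $A$ is also open in $J$; connectedness of $J$ will force $A = J$, which is precisely the assertion.

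To prove openness, I would fix $t_0 \in A$, put $y := \gamma_1(t_0) = \gamma_2(t_0)$, and choose a chart $\varphi : \mathcal{U} \to \Omega \subset \RR^n$ of $\mathcal{M}$ around $y$. Picking $\varepsilon > 0$ small enough that $\gamma_i\big((t_0-\varepsilon,t_0+\varepsilon)\big) \subset \mathcal{U}$ for $i = 1,2$, and setting $c_i(s) := \varphi\big(\gamma_i(t_0+s)\big)$ for $|s| < \varepsilon$, the chain rule together with $\dot\gamma_i = \xi(\gamma_i)$ gives
\[
\dot c_i(s) = f\big(c_i(s)\big), \qquad c_i(0) = \varphi(y),
\]
where $f(z) := D\varphi\big(\varphi^{-1}(z)\big)\,\xi\big(\varphi^{-1}(z)\big)$ is the coordinate representative of $\xi$ on $\Omega$. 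Since $\xi$ is of class $C^{p-1}$ with $p \geq 2$, the map $f$ is at least $C^1$, hence locally Lipschitz on $\Omega$; the Picard--Lindel\"of theorem --- or, directly, a Gronwall estimate for $s \mapsto |c_1(s) - c_2(s)|^2$ using a Lipschitz constant of $f$ on a compact neighborhood of $\varphi(y)$ --- yields $c_1 \equiv c_2$, and hence $\gamma_1 \equiv \gamma_2$, on a neighborhood of $t_0$. Therefore $A$ is open, and since $J$ is connected the proof is complete.

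I do not anticipate a genuine obstacle here. The only points that require a little care are that the hypothesis $p \geq 2$ is exactly what makes the coordinate vector field $f$ locally Lipschitz, so that ODE uniqueness applies at all, and that one must reparametrize by $s = t - t_0$ before invoking the local uniqueness theorem at a general agreement point $t_0$, rather than only at $0$; everything else is the standard clopen-set argument on the connected interval $J_1 \cap J_2$.
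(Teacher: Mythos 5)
Your proposal is correct and follows essentially the same route as the cited source: the paper gives no proof of this statement, quoting it from Lang (Theorem 2.1, Chapter IV, \S 2), and Lang's argument is precisely your reduction to local Picard--Lindel\"of uniqueness in a chart (using $p\geq 2$ to get a locally Lipschitz coordinate vector field) followed by the clopen/connectedness argument on $J_1\cap J_2$. The only implicit ingredient worth naming is that closedness of the agreement set uses the Hausdorff property of $\mathcal M$, which is part of the standing assumptions on manifolds here.
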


Let $D(\xi)\subseteq \RR\times \mathcal M$ be the set of all pairs $(t,x_0)$ such that for any $x_0\in \mathcal M$ the set $$J(x_0):=\{t\in \RR \ | \ (t,x_0)\in D(\xi)\}$$ is the maximal open existence interval  of an integral curve for $\xi$ with initial condition $x_0$. For any $x_0\in\mathcal M$, this interval is non-empty (locally one can argue as in the case $\mathcal M=\RR^n$).

A \emph{global flow} for $\xi$ is a mapping $$
\alpha: D(\xi)\to \mathcal M
$$ such that for all $x_0\in \mathcal M$ the map $t\mapsto \alpha(t,x_0)$ for $ t\in J(x_0)$ is an integral curve for $\xi$ with initial condition $x_0$, i.e. $\alpha(0,x_0)=x_0$ and $\dot \alpha(t,x_0)=\xi(\alpha(t,x_0))$ for any $t\in J(x_0)$. Note that by Theorem \ref{uniqueness} there is only one such mapping  $\alpha$.
For $t\in \RR$ let $$D_t(\xi):=\{x\in \mathcal M \ | \ (t,x)\in D(\xi)\}$$
and define the map $\alpha_t\colon D_t(\xi)\to \mathcal{M}$ by $\alpha_t(x)= \alpha(t,x)$.

\begin{theorem}[Theorems 2.6 and 2.9 in Chapter IV,§2 in \cite{Lang99}]\label{flowprop}
$\hbox{}$
  \begin{enumerate}
		\item The set $D(\xi)$ is open  in $\RR\times \mathcal M $  and $\alpha$ is a $C^{p-1}$-morphism.
		\item For any $t\in \RR$, the set  $D_t(\xi)$ is open in $\mathcal M$ and (for $D_t(\xi)$ non-empty) $\alpha_t$ defines a diffeomorphism of $D_t(\xi)$ onto an open subset of $\mathcal M$ (namely $\alpha_t(D_t(\xi))=D_{-t}(\xi))$ and $\alpha_t^{-1}=\alpha_{-t}$).
	\end{enumerate}
\end{theorem}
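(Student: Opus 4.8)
The plan is to follow the classical argument of \cite[Chapter~IV, \S\,2]{Lang99}, reducing the global statement to the local theory of ordinary differential equations together with the uniqueness already recorded in Theorem~\ref{uniqueness}. Two inputs are essential. First, the \emph{local flow theorem}: for every $q\in\mathcal M$ there are an open neighborhood $U_q\ni q$, a number $\varepsilon_q>0$, and a $C^{p-1}$ map $\beta_q\colon(-\varepsilon_q,\varepsilon_q)\times U_q\to\mathcal M$ with $\partial_t\beta_q(t,x)=\xi(\beta_q(t,x))$ and $\beta_q(0,x)=x$; this is Picard--Lindel\"of in a chart, supplemented by the theorem on $C^{p-1}$ dependence of solutions on initial conditions. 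Second, the \emph{cocycle identity} $\alpha(s+t,x)=\alpha\bigl(s,\alpha(t,x)\bigr)$, valid whenever either side is defined: by Theorem~\ref{uniqueness}, $s'\mapsto\alpha(s'+t,x)$ and $s'\mapsto\alpha(s',\alpha(t,x))$ are integral curves of $\xi$ sharing the value $\alpha(t,x)$ at $s'=0$, hence coincide; the same comparison shows that, once $(t,x)\in D(\xi)$, one has $(s+t,x)\in D(\xi)$ precisely when $(s,\alpha(t,x))\in D(\xi)$.

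For part (1), uniqueness forces the local flows $\beta_q$ to agree on overlaps and to agree with the maximal flow $\alpha$, so $\alpha$ is $C^{p-1}$ on a neighborhood of $\{0\}\times\mathcal M$. To extend this to all of $D(\xi)$, fix $(T,x_0)\in D(\xi)$ with $T\ge0$ (the case $T<0$ is symmetric, $T=0$ already covered). The curve $\gamma(s):=\alpha(s,x_0)$ is continuous on the compact interval $[0,T]$, so its image is covered by finitely many flow boxes from the local theorem; choosing a partition $0=s_0<s_1<\dots<s_n=T$ subordinate to this cover, I would show by induction on $i$ that there exist a neighborhood $U\ni x_0$ and $\delta>0$ such that for every $x\in U$ the curve $\alpha(\cdot,x)$ is defined on $[0,s_i+\delta]$, remains inside the relevant boxes, and---by repeated use of the cocycle identity---is a composition of the $C^{p-1}$ local flows, so that $\alpha$ is $C^{p-1}$ on $(-\delta,s_i+\delta)\times U$. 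Shrinking $U$ once per step yields the conclusion at $i=n$: $(T,x_0)$ is interior to $D(\xi)$ and $\alpha$ is $C^{p-1}$ near it. Since $(T,x_0)$ was arbitrary, $D(\xi)$ is open and $\alpha$ is a $C^{p-1}$-morphism.

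For part (2), openness of $D_t(\xi)$ is immediate, as $D_t(\xi)$ is the preimage of the open set $D(\xi)$ under the continuous embedding $x\mapsto(t,x)$. For the diffeomorphism claim, apply the cocycle identity with $s=-t$: if $x\in D_t(\xi)$ then $(-t,\alpha(t,x))\in D(\xi)$ and $\alpha\bigl(-t,\alpha(t,x)\bigr)=\alpha(0,x)=x$, so $\alpha_{-t}\circ\alpha_t=\mathrm{id}$ on $D_t(\xi)$; in particular $\alpha_t$ is injective and $\alpha_t(D_t(\xi))\subseteq D_{-t}(\xi)$. The same inclusion applied to $-t$ gives $\alpha_t(D_t(\xi))=D_{-t}(\xi)$ and $\alpha_t^{-1}=\alpha_{-t}$. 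Both $\alpha_t$ and $\alpha_{-t}$ are restrictions of the $C^{p-1}$ map $\alpha$ to a time-slice, hence $C^{p-1}$, so $\alpha_t$ is a $C^{p-1}$-diffeomorphism onto the open set $D_{-t}(\xi)$.

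The main obstacle is the inductive patching in part (1): producing a \emph{single} neighborhood $U$ of $x_0$ on which the flow is defined up to time $T+\delta$ and is globally $C^{p-1}$ requires carefully stitching the finitely many local flow boxes along the compact arc $\gamma([0,T])$ via the cocycle identity and tracking the successive shrinkings of $U$. Everything else is either the cited local ODE theory or a direct consequence of uniqueness; accordingly, in the paper we simply invoke \cite[Theorems 2.6 and 2.9, Chapter~IV, \S\,2]{Lang99}.
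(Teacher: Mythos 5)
Your proposal is correct and follows exactly the classical argument that the paper invokes by citation: the paper gives no proof of Theorem~\ref{flowprop} at all, referring to \cite[Chapter~IV, \S 2, Theorems 2.6 and 2.9]{Lang99}, and your reduction to the local flow theorem (Picard--Lindel\"of with $C^{p-1}$ dependence on initial data), the uniqueness-based cocycle identity, and the compactness/patching induction along the arc $\gamma([0,T])$ is precisely Lang's proof. Your derivation of part (2) from the cocycle identity together with the regularity from part (1) is likewise the standard deduction, so nothing further is needed.
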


\section{The non-symmetric autoencoder case for $N=2$}
\label{appendix:nonsymmetric}

Here we consider  the optimization problem (\ref{eqNNMinimization}) with $N=2$ and the  additional constraint that $Y=X$, but we do not assume that  $W_2=W_1^T$. We also assume balanced starting conditions, i.e., $W_2(0)^TW_2(0)=W_1(0)W_1(0)^T$.
In this special case, we can use a more direct approach
than in Section~\ref{sec:saddle-points} to establish some additional explicit statements below.

We write again $d$ for $d_x=d_y$ and  $r$ for $d_1$.
The equations for the flow here are:
\begin{align}\label{flow2layers}
\begin{split}
\dot{W_1} &= -W_2^TW_2W_1XX^T+W_2^T XX^T,\\
\dot{W_2} &= -W_2W_1XX^TW_1^T+XX^TW_1^T.
\end{split}
\end{align}

Next we analyze the equilibrium points of the flow (\ref{flow2layers}) and of the product $W=W_2W_1$ again assuming balanced initial conditions.
We begin by exploring the equilibrium points of the flow
(\ref{flow2layers}) by setting the expressions in (\ref{flow2layers}) equal to zero:
\begin{align}\label{equilib}
\begin{split}
-W_2^TW_2W_1XX^T+W_2^T XX^T &= 0,\\
-W_2W_1XX^TW_1^T+XX^TW_1^T &= 0.
\end{split}
\end{align}
If $W_2\in\RR^{d\times r}$ is the zero matrix then (since $XX^T$ has full rank) it  follows that
(\ref{equilib}) is solved if and only if  $W_1$ is the $r\times d$ zero-matrix, hence $W$ is the $d\times d$ zero-matrix.
The following lemma characterizes the non-trivial solutions. 
(The second part of the lemma is a special case of Proposition~\ref{critpoints_general} below.)
\begin{lemma}\label{equilpoints}
	The balanced  nonzero solutions (i.e. solutions with $W_2\neq 0$)  of (\ref{equilib}) are precisely the matrices of the form 
	\begin{align}
	W_2 =UV^T,  \quad 
	W_1=W_2^T=VU^T, \quad 
	W =W_2W_1=UU^T, \label{equi3}
	\end{align}
	where
		$U\in \RR^{d\times k}$ for some $1 \leq k\leq r$ and where the columns of $U$ are orthonormal eigenvectors of $XX^T$ and 
		 	$V\in \RR^{r\times k}$ has orthonormal columns.

In particular, the equilibrium points for $W=W_2W_1$ are precisely the matrices of the form 
\begin{equation}\label{equi-points}
W =\sum_{j=1}^k u_ju_j^T,
\end{equation}
where $k\in \{1,\hdots,r\}$
and $u_1,\hdots, u_k$ (the columns of $U$ above) are orthonormal eigenvectors of $XX^T$. 
%
\end{lemma}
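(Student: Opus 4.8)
The plan is to turn the two equilibrium equations \eqref{equilib} into algebraic identities for a singular value decomposition of $W_2$, using that $XX^T$ is invertible and that balancedness ties $W_1$ and $W_2$ together. Since $XX^T$ has full rank, \eqref{equilib} is equivalent to $W_2^TW_2W_1 = W_2^T$ and $(W_2W_1-I_d)XX^TW_1^T = 0$. The case $W_2 = 0$ has already been treated in the excerpt (it forces $W_1 = 0$), so I would assume $W_2 \neq 0$ and set $k := \rank(W_2)$, which automatically satisfies $1 \leq k \leq r$ because $W_2 \in \RR^{d\times r}$.

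First I would fix a reduced singular value decomposition $W_2 = U\Sigma V^T$ with $U \in \RR^{d\times k}$ and $V \in \RR^{r\times k}$ having orthonormal columns and $\Sigma = \diag(\sigma_1,\dots,\sigma_k)$, $\sigma_i > 0$. Balancedness gives $W_1W_1^T = W_2^TW_2 = V\Sigma^2V^T$ (cf. Lemma~\ref{L:PREL}(4)), hence $W_1 = V\Sigma\widetilde V^T$ for some $\widetilde V \in \RR^{d\times k}$ with orthonormal columns, and then $W = W_2W_1 = U\Sigma^2\widetilde V^T$. Plugging these forms into the first equation, $W_2^TW_2W_1 = V\Sigma^3\widetilde V^T$ while $W_2^T = V\Sigma U^T$; cancelling $V$ on the left (orthonormal columns) and one factor of $\Sigma$ yields $\Sigma^2\widetilde V^T = U^T$, i.e. $\widetilde V = U\Sigma^{-2}$. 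Imposing $\widetilde V^T\widetilde V = I_k$ then forces $\Sigma^{-4} = I_k$, so $\Sigma = I_k$; therefore $\widetilde V = U$, $W_1 = VU^T = W_2^T$ and $W = UU^T$, which is already most of \eqref{equi3}.

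It remains to extract the eigenvector property of $U$ from the second equation. With $W = UU^T$ and $W_1^T = UV^T$ it reads $(UU^T-I_d)XX^TUV^T = 0$; multiplying on the right by $V$ gives $(I_d - UU^T)XX^TU = 0$, which says that $\operatorname{range}(U)$ is invariant under the symmetric matrix $XX^T$. Diagonalising $XX^T$ on this subspace lets me write $U = \widehat UQ$ with $\widehat U$ having orthonormal eigenvector columns and $Q \in O(k)$; absorbing $Q$ into $V$ (which keeps orthonormal columns) produces exactly the form \eqref{equi3}, and \eqref{equi-points} is just $W = \widehat U\widehat U^T = \sum_{j=1}^k u_ju_j^T$. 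The converse inclusion — that every triple \eqref{equi3} with eigenvector columns is balanced and solves \eqref{equilib} — is a one-line substitution using $XX^TU = U\Lambda$ with $\Lambda$ diagonal (alternatively, a special case of Proposition~\ref{critpoints_general}). I expect the only fiddly point to be the SVD bookkeeping in the second paragraph: ensuring the reduced decompositions of $W_1$ and $W_2$ are taken compatibly so that the product $W$ has the stated shape, and checking that each cancellation of a one-sided orthonormal or diagonal factor is legitimate; once $(XX^T)^{-1}$ has been used to clear \eqref{equilib}, everything else is routine linear algebra.
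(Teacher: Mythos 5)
Your proposal is correct and follows essentially the same route as the paper's proof: the first equation of \eqref{equilib} combined with balancedness forces all nonzero singular values of $W_1,W_2$ to equal $1$ and $W_1=W_2^T$, and the second equation then shows that the range of $W_2$ (equivalently of $U$) is invariant under $XX^T$, so its columns can be rotated into orthonormal eigenvectors, with the converse a direct substitution. The only differences are cosmetic bookkeeping — you obtain $\Sigma=I_k$ by imposing orthonormality on $\widetilde V$ in an explicit SVD parametrization, whereas the paper compares singular values in the identity $W_2^T=W_1W_1^TW_1$ — and both invariance arguments are the same computation written with $(I_d-UU^T)XX^TU=0$ versus the projector $W_2W_2^T$.
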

\begin{remark} Note that $W=0$ is also an equilibrium point of 
(\ref{flow2layers}) 
which formally corresponds to taking $k=0$ in \eqref{equi-points}.
\end{remark}
\begin{proof}
	Since $W_2\neq 0$,  the rank $k$ of  $W_2$ is at least  $1$.
The balancedness condition $W_2^TW_2=W_1W_1^T$ implies that $W_1$ and $W_2$ have the same singular values.
Since $XX^T$ has full rank, the first equation of (\ref{equilib}) yields $W_2^T=W_2^TW_2W_1$. 
Again due to balancedness, this shows that $W_2^T=W_1W_1^TW_1$. It follows that all positive singular values of $W_1$ and of $W_2$ are equal to $1$ and that $W_2=W_1^T$. 
The second  equation of (\ref{equilib}) thus gives  the equation 
\begin{equation}\label{equilibsimple}
(I_d-W_2W_2^T)XX^TW_2=0. 
\end{equation}
(The equilibrium points of full rank $r$ could now be obtained using   \cite[Propositon 4.1]{yahemo94}  again, but we are interested in all solutions here.) 
Since the positive singular values of $W_2$ are all equal to $1$, it follows that we can write
$$
W_2W_2^T=\sum_{i=1}^k u_iu_i^T,
$$
where the $u_i$ are orthonormal. We extend the system $u_1,\hdots, u_k$ to an orthonormal basis $u_1,\hdots, u_d$
of $\RR^d$.
From (\ref{equilibsimple}) we obtain
$(I_d-W_2W_2^T)XX^TW_2W_2^T=0$, hence
$
\sum_{j=k+1}^d u_ju_j^T XX^T \sum_{i=1}^k u_iu_i^T=0
$
and consequently 
$$
\sum_{j=k+1}^d  \sum_{i=1}^k (u_j^T XX^Tu_i)u_ju_i^T=0.
$$
It follows that for all $j\in\{k+1,\hdots,d\}$ and for all  $i\in\{1,\hdots,k\}$ we have $u_j^T XX^Tu_i=0$.
This in turn implies that $XX^T$ maps the span of $u_1,\hdots, u_k$ into itself and also maps the  span of $u_{k+1},\hdots, u_d$ into itself.
This implies that we can choose    $u_1,\hdots, u_d$ 
as orthonormal eigenvectors of $XX^T$.
Thus we can indeed write the (reduced) singular value decomposition of $W_2$ as $W_2=UV^T$, where the columns 
 $u_1,\hdots, u_k$ of $U$  are orthonormal eigenvectors of $XX^T$ and where $V$ is  as in the statement of the lemma. Since $W_1=W_2^T$ and $W=W_2W_1$, it follows that $W_1=VU^T$ and $W=UU^T$ as claimed. Altogether, we have shown that the equations 
 \eqref{equi3} are necessary for having a balanced solution of \eqref{equilib}.
One easily checks that $W_1,W_2$ defined by 
\eqref{equi3} also satisfy 
\eqref{equilib}, which shows sufficiency. This completes the proof.
\end{proof}

\begin{corollary}
	Consider a linear autoencoder with one hidden layer of size $r$ with balanced initial conditions and assume that $XX^T$ has eigenvalues $\lambda_1\geq \hdots \geq  \lambda_d>0$ and corresponding orthonormal eigenvectors $u_1,\hdots, u_d$.
	 \begin{enumerate}
	\item The flow  $W(t)$ always converges to an equilibrium point of the form $	W=\sum_{j\in J_W} u_ju_j^T,$ where $J_W$ is a (possibly empty) subset of $\{1,\hdots, d\}$ of at most $r$ elements. 
	\item The flow $W_2(t)$ converges to $UV^T=:W_2$, where  the columns of $U$ are the $u_j, j\in J_W$ and $V\in\RR^{r\times k} $ has orthonormal columns ($k=|J_W|$) . Furthermore $W_1(t)$ converges to $W_2^T$.
	\item If $L^1(W(0))< \frac 1 2\sum_{i=r, i\neq r+1}^d \lambda_i$ then $W(t)$ converges to the optimal equilibrium $	W=\sum_{j=1}^r u_ju_j^T$.
\item If $\lambda_r>\lambda_{r+1}$, then there is an open neighbourhood of the optimal equilibrium point in  which we have convergence of the flow $W(t)$ to the optimal equilibrium point.
\end{enumerate}
\end{corollary}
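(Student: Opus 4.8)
The plan is to reduce all four parts to the general convergence result Theorem~\ref{globconv}, to the characterization of balanced equilibria in Lemma~\ref{equilpoints}, and to the monotonicity of $L^1$ along the flow that was established inside the proof of Theorem~\ref{globconv}.

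For parts (1) and (2), note first that $XX^T$ has full rank (its eigenvalues are all positive), so Theorem~\ref{globconv} applies: the pair $(W_1(t),W_2(t))$ solving \eqref{flow2layers} is defined for all $t\ge0$ and converges to a critical point $(W_1,W_2)$ of $L^2$, i.e.\ to a solution of \eqref{equilib}. By Lemma~\ref{L:PREL}(4) balancedness is preserved along the flow, and since the equation $W_2^TW_2=W_1W_1^T$ is a closed condition, the limit $(W_1,W_2)$ is again balanced. Hence $(W_1,W_2)$ is a balanced solution of \eqref{equilib}, and Lemma~\ref{equilpoints}, in particular \eqref{equi3} and \eqref{equi-points}, applies: either $W_2=0$ (take $J_W=\emptyset$) and $W=0$, or $W_2=UV^T$, $W_1=VU^T$, $W=UU^T=\sum_{j\in J_W}u_ju_j^T$ with $U$ having orthonormal eigenvectors $u_j$ ($j\in J_W$) of $XX^T$ as columns and $k:=|J_W|\le r$. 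Since the whole tuple converges, $W_2(t)\to W_2$, $W_1(t)\to W_2^T$, and $W(t)=W_2(t)W_1(t)\to W$ by continuity of matrix multiplication; this gives (1) and (2).

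For part (3), the computation in the proof of Theorem~\ref{globconv} gives $\frac{d}{dt}L^1(W(t))=\frac{d}{dt}L^2(W_1(t),W_2(t))=-\sum_{i=1}^2\norm{\nabla_{W_i}L^2}_F^2\le0$, so $t\mapsto L^1(W(t))$ is non-increasing and, by continuity of $L^1$, the limit $W=\sum_{j\in J_W}u_ju_j^T$ of part (1) satisfies $L^1(W)\le L^1(W(0))$. For any equilibrium $W=\sum_{j\in J}u_ju_j^T$ the matrix $I-W$ is the orthogonal projection onto $\linspan\{u_j:j\notin J\}$ and commutes with $XX^T$, so (using $Y=X$) $L^1(W)=\tfrac12\norm{(I-W)X}_F^2=\tfrac12\tr((I-W)XX^T)=\tfrac12\sum_{j\notin J}\lambda_j$. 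Hence the hypothesis forces $\sum_{j\in J_W}\lambda_j>\sum_{i=1}^{r-1}\lambda_i+\lambda_{r+1}$. Since $|J_W|=k\le r$ and all $\lambda_i>0$, an elementary case check shows this can only happen for $J_W=\{1,\dots,r\}$: if $k<r$ then $\sum_{j\in J_W}\lambda_j\le\sum_{i=1}^{r-1}\lambda_i<\sum_{i=1}^{r-1}\lambda_i+\lambda_{r+1}$; and if $k=r$, writing $J_W=\{j_1<\dots<j_r\}$ one has $\lambda_{j_i}\le\lambda_i$, so $j_r\ge r+1$ would give $\sum_{j\in J_W}\lambda_j\le\sum_{i=1}^{r-1}\lambda_i+\lambda_{r+1}$, again a contradiction, whence $j_r=r$ and $J_W=\{1,\dots,r\}$. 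Thus $W(t)\to\sum_{j=1}^r u_ju_j^T$. (This hypothesis is non-vacuous only when $\lambda_r>\lambda_{r+1}$, in which case the limit is the unique top-$r$ spectral projection.)

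For part (4), if $\lambda_r>\lambda_{r+1}$ then the optimal equilibrium $W^\ast=\sum_{j=1}^r u_ju_j^T$ has $L^1(W^\ast)=\tfrac12\sum_{j>r}\lambda_j<\tfrac12(\lambda_r+\sum_{i\ge r+2}\lambda_i)$. By continuity of $L^1$ on $\RR^{d\times d}$ there is an open set $\mathcal U\ni W^\ast$ on which $L^1<\tfrac12(\lambda_r+\sum_{i\ge r+2}\lambda_i)$; for every balanced initialization with $W(0)=W_2(0)W_1(0)\in\mathcal U$ — and every rank-$\le r$ matrix admits such a balanced factorization, cf.\ the proof of Proposition~\ref{rank_const} — part (3) yields $W(t)\to W^\ast$, which is part (4). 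I expect the only non-formal step to be the combinatorial bookkeeping in (3): recognizing the threshold as $\tfrac12$ of $\tr(XX^T)$ minus the largest value of $\sum_{j\in J}\lambda_j$ attainable over index sets $J$ with $|J|\le r$ other than the top-$r$ set, and disposing of the cases $|J_W|<r$ and of degenerate eigenvalues. One should also be slightly careful in (4) that ``neighbourhood of $W^\ast$'' is meant relatively within the set of rank-$\le r$ matrices, since nearby matrices of rank $>r$ are not products $W_2W_1$.
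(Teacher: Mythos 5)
Your proof is correct and follows essentially the same route as the paper: parts (1) and (2) via Theorem~\ref{globconv}, preservation of balancedness, and Lemma~\ref{equilpoints}; part (3) via the equilibrium loss formula $L^1(W)=\tfrac12\sum_{i\notin J_W}\lambda_i$ together with monotone decrease of $L^1$ along the flow (your case check just spells out the paper's observation that the second-best equilibrium loss equals the stated threshold); and part (4) as a continuity consequence of (3). The only cosmetic difference is that you invoke the dissipation computation from the proof of Theorem~\ref{globconv} for monotonicity where the paper cites \eqref{riemflow}, which is equivalent.
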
 

\begin{proof}
	The first and the second point follow from Lemma \ref{equilpoints} together with Theorem \ref{globconv}. (Note that if $(W_1, W_2)$ is an equilibrium point to which the flow converges then $W_1,W_2$ are  balanced since we assume that the flow has balanced initial conditions.) 
	To prove the third point, note that the loss of an  equilibrium point $	W=\sum_{j\in J_W} u_ju_j^T$ is given by $L^1(W)=\frac 1 2\sum_{i\in K_W}\lambda_i$, where $K_W=\{1,\hdots, d\}\setminus J_W$. This sum is minimal for $J_W=\{1,\hdots, r\}$. Among the remaining possible $J_W$, the value of   $L^1(W)$ is minimal for $J_W=\{1,\hdots, r+1\}\setminus \{r\}$, i.e., $K_W=\{r,\hdots, d\}\setminus \{r+1\}$. Since the value of $L^1(W(t))$  monotonically decreases as $t$ increases (as follows e.g. from equation (\ref{riemflow})),  the claim now follows from the first point. 
	The last point follows from the third point.
\end{proof}

The following result is an analogue to Theorem~\ref{nonstable}.
\begin{theorem}\label{saddlepoints}
If $k\leq r$ and $u_1,\hdots, u_k$ are orthonormal eigenvectors of $XX^T$ which do  not form a system of  eigenvectors to the $r$ largest eigenvalues of $XX^T$ (in particular for $k<r$), in any  neighborhood of the equilibrium point $W =\sum_{j=1}^k u_ju_j^T$ there is some $\widetilde W$ of rank at most $r$ for which $L^1(\widetilde{W})<L^1(W)$. In particular, the equilibrium in $W$ is non-stable.
\end{theorem}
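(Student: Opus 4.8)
The plan is to reduce the statement to the assertion that $W$ is not a local minimizer of $L^1$ among matrices of rank at most $r$, to exhibit an explicit cheaper competitor $\widetilde W$ arbitrarily close to $W$, and then to upgrade this to non-stability using that $L^1$ is non-increasing along the flow. Write the given eigenvectors as $w_1,\dots,w_k$ with $XX^Tw_j=\mu_jw_j$, so that $W=\sum_{j=1}^kw_jw_j^T$ is the orthogonal projection onto the $XX^T$-invariant subspace $S:=\linspan(w_1,\dots,w_k)$; since $I_d-W$ is again an orthogonal projection commuting with $XX^T$, one gets
\[
L^1(W)=\tfrac12\tr\big((I_d-W)XX^T\big)=\tfrac12\Big(\tr(XX^T)-\sum_{j=1}^k\mu_j\Big).
\]

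If $k<r$, then $S^\perp$ is a nonzero $XX^T$-invariant subspace, so I pick a unit eigenvector $v\in S^\perp$ with $XX^Tv=\lambda v$, $\lambda>0$ (positive since $XX^T$ has full rank), and set $\widetilde W_\varepsilon:=W+\varepsilon\,vv^T$ for small $\varepsilon>0$; this has rank $k+1\le r$, tends to $W$, and using $(I_d-W)v=v$ a direct trace computation gives $L^1(\widetilde W_\varepsilon)=L^1(W)-\tfrac{\varepsilon(2-\varepsilon)}{2}\lambda<L^1(W)$ for $\varepsilon\in(0,2)$. If $k=r$, then by hypothesis $w_1,\dots,w_r$ are not eigenvectors for the $r$ largest eigenvalues, so with $\mu_{j_0}:=\min_{1\le j\le r}\mu_j$ there is an eigenvalue of $XX^T$ on $S^\perp$ strictly larger than $\mu_{j_0}$ — otherwise all eigenvalues on $S$ would dominate all eigenvalues on $S^\perp$ and $\{\mu_j\}$ would be the $r$ largest. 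Choosing a unit eigenvector $v\in S^\perp$ with eigenvalue $\lambda>\mu_{j_0}$ and $w_{j_0}(\varepsilon):=\varepsilon v+\sqrt{1-\varepsilon^2}\,w_{j_0}$, the projection $\widetilde W_\varepsilon:=w_{j_0}(\varepsilon)w_{j_0}(\varepsilon)^T+\sum_{j\ne j_0}w_jw_j^T$ has rank $r$, tends to $W$, and — exactly as in the proof of Theorem~\ref{nonstable} — satisfies $L^1(\widetilde W_\varepsilon)=L^1(W)-\tfrac{\varepsilon^2}{2}(\lambda-\mu_{j_0})<L^1(W)$ for $\varepsilon\in(0,1]$. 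In either case $\widetilde W_\varepsilon=\widetilde W_{2,\varepsilon}\widetilde W_{1,\varepsilon}$ with $\widetilde W_{1,\varepsilon},\widetilde W_{2,\varepsilon}$ balanced of the correct sizes and converging, as $\varepsilon\to0$, to a balanced factorization $W=W_2W_1$: when $k=r$ one takes $\widetilde W_{2,\varepsilon}=\widetilde U_\varepsilon V^T$, $\widetilde W_{1,\varepsilon}=V\widetilde U_\varepsilon^T$ with $\widetilde U_\varepsilon$ the orthonormal matrix of columns $w_{j_0}(\varepsilon)$ and $\{w_j\}_{j\ne j_0}$; when $k<r$ one writes $\widetilde W_\varepsilon=\widetilde U_\varepsilon\widetilde D_\varepsilon\widetilde U_\varepsilon^T$ with $\widetilde U_\varepsilon\in\RR^{d\times(k+1)}$ orthonormal and $\widetilde D_\varepsilon=\diag(1,\dots,1,\varepsilon)$, and sets $\widetilde W_{2,\varepsilon}=\widetilde U_\varepsilon\widetilde D_\varepsilon^{1/2}R^T$, $\widetilde W_{1,\varepsilon}=R\widetilde D_\varepsilon^{1/2}\widetilde U_\varepsilon^T$ for a fixed $R\in\RR^{r\times(k+1)}$ with orthonormal columns whose first $k$ columns are those of $V$.

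For non-stability, recall that $L^1(W(t))$ is non-increasing along the flow \eqref{flow2layers} (Lemma~\ref{L:PREL}, cf.\ \eqref{Wdotbalanced}) and, by Theorem~\ref{globconv}, every flow converges to an equilibrium, which by Lemma~\ref{equilpoints} has the form $\sum_j u'_ju'^T_j$ with $u'_j$ an eigenvector of $XX^T$; in particular all equilibria within a sufficiently small neighbourhood of $W$ have rank $k$ and loss exactly $L^1(W)$ (each $u'_j$ must lie in the $\mu_j$-eigenspace). If $W$ were stable, fix such a neighbourhood $\mathcal V$ together with a smaller one from which the flow never leaves $\mathcal V$; running the flow from the balanced factors of a $\widetilde W_\varepsilon$ lying in the smaller neighbourhood (so $L^1(\widetilde W_\varepsilon)<L^1(W)$) gives a trajectory inside $\mathcal V$ converging to an equilibrium of loss $\le L^1(\widetilde W_\varepsilon)<L^1(W)$, a contradiction.

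I expect the perturbation estimates to be routine; the delicate point is the non-stability step, where one must (i) realize $\widetilde W_\varepsilon$ as a product of balanced matrices close to the equilibrium factors — slightly subtle in the rank-increasing case $k<r$ because the reduced SVD is discontinuous at $W$, which is why the padding matrix $R$ is needed — and (ii) rule out escape of the flow towards a different nearby equilibrium of equal or smaller loss, handled by shrinking $\mathcal V$ and invoking the explicit list of equilibria from Lemma~\ref{equilpoints}.
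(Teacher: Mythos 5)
Your proposal is correct and follows essentially the same route as the paper: for $k<r$ the rank-increasing perturbation $W+\varepsilon vv^T$, and for $k=r$ the rotation $w_{j_0}(\varepsilon)=\varepsilon v+\sqrt{1-\varepsilon^2}\,w_{j_0}$ exactly as in Theorem~\ref{nonstable}, with the same trace computations. The additional material (existence of the eigenvector $v$ when eigenvalues have multiplicities, the balanced factorizations of $\widetilde W_\varepsilon$, and the monotonicity-plus-Lemma~\ref{equilpoints} argument for instability) just fills in details the paper leaves implicit in ``From this the claim follows,'' and is sound.
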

\begin{proof}
	If $k<r$ and $W =\sum_{j=1}^k u_ju_j^T$ for orthonormal eigenvectors $u_j$ of $XX^T$ then for any additional eigenvector $u_{k+1}$ orthonormal to the $u_j$ and for any $\varepsilon >0 $, we can choose $\widetilde{W}=W+\varepsilon u_{k+1}u_{k+1}^T$ to obtain $L^1(\widetilde{W})<L^1(W)$.
	Let now $k=r$. This case can be treated analogously to the proof of Theorem \ref{nonstable}:
	let $u_i$ be one of the eigenvectors  $u_1,\hdots, u_r$  whose eigenvalue does not belong to the $r$ largest eigenvalues of $XX^T$. Let $v$ be an eigenvector of $XX^T$ of unit length which is orthogonal to the eigenvectors   $u_1,\hdots, u_r$  and whose eigenvalue   belongs to the $r$ largest eigenvalues of $XX^T$. Now for any $\varepsilon\in [0,1]$ consider $u_i(\varepsilon):=\varepsilon v +\sqrt{1-\varepsilon^2}u_{i}$. Then $W(\varepsilon):=u_i(\varepsilon)u_i(\varepsilon)^T+ \sum_{j=1,j\neq i}^r u_ju_j^T$ satisfies $L^1(W(\varepsilon))<L^1(W)$ for $\varepsilon\in (0,1]$. From this the claim follows.
\end{proof}

\begin{remark}
	With the notation $$
	V=\begin{pmatrix}
	W_1^T\\W_2
	\end{pmatrix}\in \RR^{2d\times r}  \text{ and } C=XX^T
	\in \RR^{d\times d} $$
and assuming that $C$ has full rank, the flow (\ref{flow2layers}) can be written as the following Riccati-type-like ODE.
\begin{equation}\label{riccati}
\dot{V}=\left(I_{2d}+\begin{pmatrix}
-C&0\\
0&0
\end{pmatrix}VV^T
\begin{pmatrix}
0&0\\
C^{-1}&0
\end{pmatrix}
+
\begin{pmatrix}
0&0\\
0&-I_d
\end{pmatrix}VV^T
\begin{pmatrix}
0&I_d\\
0&0
\end{pmatrix}
\right)
\begin{pmatrix}
0&C\\
C&0
\end{pmatrix}V.
\end{equation}
\end{remark}



\end{document}